\theoremstyle{plain}
\newtheorem{thm}{Theorem}[section]
\newtheorem{cor}[thm]{Corollary}
\newtheorem{pro}[thm]{Proposition}
\newtheorem{lem}[thm]{Lemma}
\theoremstyle{definition}
\newtheorem{rmk}[thm]{Remark}
\newtheorem{exa}[thm]{Example}
\newtheorem{dfn}[thm]{Definition}
\let\sf\mathsf
\let\rm\mathrm
\let\cal\mathcal
\let\goth\mathfrak
\let\bb\mathbb
\let\hat\widehat
\let\tilde\widetilde
\let\phi\varphi
\let\epsilon\varepsilon
\def\Z{{\bf Z}}
\def\N{{\bf N}}
\def\R{{\bf R}}
\def\A{{\bf A}}
\def\Ker{{\mathrm{Ker}}}
\def\Coker{{\mathrm{Coker}}}
\renewcommand{\O}{{{\cal O}}}
\newcommand{\tate}[2]{#1\langle#2\rangle}
\newcommand{\power}[2]{#1[\![#2]\!]}
\newcommand{\temp}[2]{#1[\![#2]\!]_{\mathrm{temp}}}
\newcommand{\dercat}{\mathcal{D}}
\newcommand{\dertens}[1]{\hat{\otimes}^{\mathbb{L}}_{#1}}
\newcommand{\derintHom}[3]{\mathbb{R}\underline{\mathrm{Hom}}_{#1}(#2,#3)}
\newcommand{\diff}[1]{\frac{\mathrm{d}}{\mathrm{d}#1}}
\newcommand{\fast}[2]{#1[\![#2]\!]_{\mathrm{fast}}}
\newcommand{\intv}{\mathcal{V}}
\newcommand{\temptube}[2]{]#1[_{#2,\mathrm{temp}}}
\newcommand{\conv}[2]{#1\{\!\{#2\}\!\}}
\newcommand{\norm}[1]{\left\|#1\right\|}
\newcommand{\wotimes}{\hat{\otimes}}
\newcommand{\tube}[2]{]#1[_{#2}}
\newcommand{\abs}[1]{\left|#1\right|}
\def\IHom{{\underline{\rm{Hom}}}}
\def\botimes{{\overline{\otimes}}}
\def\BanR{{\sf{Ban}_R}}
\def\Ind{{\sf{Ind}}}
\def\BornK{{\sf{Born}_K}}
\def\CBornK{{\sf{CBorn}_K}}
\def\BanK{{\sf{Ban}_K}}
\DeclareMathOperator*{\colim}{{\mathrm{colim}}}
\DeclareMathOperator*{\contcoprod}{{\coprod}^{\le 1}}
\DeclareMathOperator*{\quotecolim}{``\colim"}
\title{ The tempered disk and  the  tempered cohomology}
\author{ F. Bambozzi, B. Chiarellotto, P. Vanni}
\date{October 2024}
\address{Dipartimento di Matematica "Tullio Levi-Civita", Università degli Studi di Padova, Via Trieste, 63 - 35121 Padova}%
\email{federico.bambozzi@math.unipd.it}
\email{chiarbru@math.unipd.it}
\email{pietro.vanni@math.unipd.it}
\begin{document}

\maketitle

%\begin{abstract}
%Let $\mathcal V$ be a complete discretely valued ring of mixed characteristic,
%let $K$ be its fraction field, and $k$ be its residue field. Using the ind-Banach framework for derived analytic geometry we interpret the generic fibers of schemes over $\mathcal V$ as derived analytic schemes. We show that such schemes come equipped with a new notion of spectrum associated to them that is richer than the one of classical rigid geometry. For example,  there exist open subsets having functions with log-growth as sections for the structural sheaf. In this framework, the transfer theorem for the log-growth of solutions of $p$-adic differential equations can be interpreted in a straightforward way as  a continuity theorem (analogue to the transfer theorem for their radii of convergence in the Berkovich spaces). 
%Inspired by some views presented by Scholze \cite{Scholzetalk}, we use this setting to define a tempered version of the of the classical tubular neighborhoods of a smooth $k$-scheme $X_k$. We can then define a new cohomology theory in terms of the Hodge-completed derived de Rham cohomology of such tubes. We finally compare our tempered de Rham cohomology with crystalline cohomology for smooth and proper $X_k$.
%\end{abstract}

\begin{abstract}
Let $\mathcal V$ be a complete discretely valued ring of mixed characteristic, with fraction field $K$ and residue field $k$. Using the ind-Banach framework for derived analytic geometry, we view generic fibers of $\mathcal V$-schemes as derived analytic spaces. This approach yields a refined notion of spectrum, richer than that of classical rigid geometry: for example we may have  open subsets whose structure sheaf contains functions of logarithmic growth. In this setting, the transfer theorem for the log-growth of solutions of $p$-adic differential equations becomes a natural continuity statement, analogous to the classical transfer theorem for radii of convergence on Berkovich spaces. Motivated by ideas of Scholze, we introduce tempered tubular neighborhoods of smooth $k$-schemes and define a new tempered de Rham cohomology via the Hodge-completed derived de Rham complex of these tubes. Finally, we establish a comparison theorem showing that, for smooth proper $k$-schemes, tempered de Rham cohomology agrees with crystalline cohomology.
\end{abstract}
\tableofcontents

\section{Introduction}

In the classical theory of $p$-adic differential equations, the presence of a Frobenius structure imposes strong constraints on the radii of convergence of solutions. This phenomenon is often referred to as Dwork's trick \cite[Lemma 6.3]{dJ}, \cite{DworkGSul}. Such Frobenius structures typically arise from geometry, for instance from families of varieties, and the corresponding good convergence properties can be expressed either by saying that the differential equation is solvable at the generic point or, in terms of coefficients, by saying that it is overconvergent. Moreover, there is a finer relation between the Frobenius structure and the growth of solutions: as observed by Dwork \cite{Chiarellog}, the slopes of Frobenius are related to the logarithmic growth of
solutions.

Berkovich and Huber spaces provide a natural framework in which information can be transferred from generic points to classical points in rigid analytic geometry. In these theories both types of points belong to a single topological space associated with the analytic space. This makes it possible to transfer information, such as radii of convergence, by using continuity properties of the radius function \cite{Balda}, \cite{Pulita}. In this picture, radius conditions are represented geometrically by analytic domains. By contrast, logarithmic growth conditions are not represented by open analytic domains in the usual rigid or Berkovich geometry in the same direct way. Nevertheless, classical transfer theorems for logarithmic growth are available \cite{Christ}, \cite{Chiarellog}, \cite{ChTs2}.

In this article we work in derived analytic geometry setting provided ind-Banach algebras, and use it to associate to such algebras a spectrum. In this spectrum there are open subsets whose sections of the structure sheaf are functions with logarithmic growth. This allows us to reinterpret transfer theorems for logarithmic growth as continuity statements. In this sense, the framework developed here should be viewed as a natural analytic setting for studying the continuity of logarithmic growth conditions, in analogy with the study of radii of convergence in \cite{Pulita}, and for studying $F$-isocrystals with logarithmic decay in the sense of \cite{KM1}, \cite{KM2}.

 From this perspective, and inspired by ideas introduced by Scholze \cite{Scholzetalk}, the usual radius of convergence condition for power series is replaced by a logarithmic growth condition. In dimension one this leads to the tempered open disk, which is an open subset of our spectrum containing the open unit disk and contained in the closed unit disk. This suggests a tempered analogue of convergent rigid cohomology (where the open disk is used).

Recall that convergent rigid cohomology of a  variety in characteristic $p$ is computed by considering tubes of radius one inside the generic fibers of smooth formal $\mathcal V$-schemes into which the variety is embedded. The independence of the resulting cohomology from the auxiliary choices is a fundamental property of the theory. In the convergent setting, coefficients are convergent isocrystals, and the convergence condition may be viewed, in dimension one, as a condition that the generic radius of convergence is equal to one. It is therefore natural to replace the classical tube by a tempered tube and to define a corresponding tempered cohomology theory for schemes over a field $k$ of characteristic $p$.

For this new theory, one starts with a closed regular  embedding
\[
X_k \longrightarrow \hat P
\]
of a  $k$-scheme into  the special fiber of a smooth formal $\mathcal V$-scheme, $\hat P$, where $\mathcal V$ is a complete non-archimedean DVR with residue field $k$ and fraction field $K$. The classical tube of $X_k$ in the generic fiber of $\hat P$ is replaced by the \emph{tempered tube} $\temptube{X}{\hat P}$. We construct this object as a derived analytic space in the ind-Banach setting and show that it is an open analytic subspace of the generic fiber of $\hat P$, viewed as an ind-Banach derived analytic space. Locally, this open subspace is described by algebras of functions satisfying a logarithmic
growth condition at the boundary.

The tempered cohomology groups $\mathrm{H}^i_{\mathrm{temp}}(X_k)$ are then defined as the (Hodge-completed) derived de Rham cohomology of $\temptube{X}{\hat P}$. A priori, this construction may depend on the choice of the embedding into $\hat P$. We prove, however, that in our hypotheses of regularity  the resulting cohomology is independent of all choices. For smooth proper $X_k$, we show that this cohomology agrees with rigid, equivalently crystalline, cohomology. Thus the theory developed here should be regarded as a tempered analogue of convergent rigid cohomology. Extending it to non-proper varieties will require a tempered counterpart of Monsky-Washnitzer cohomology, involving logarithmic decay conditions at infinity, following ideas proposed by Scholze. We plan to pursue this direction in future work \cite{logdecay&other}. 

Another possible development concerns the introduction of new open subsets in our ind-Banach derived analytic spaces, defined by logarithmic decay conditions. This would provide a natural geometric framework for the coefficients associated with $p$-adic representations of the fundamental group. Indeed, generic representations give rise to convergent isocrystals \cite{KATZ}, while representations with finite local monodromy give rise via this Riemann-Hilbert correspondance to overconvergent isocrystals \cite{FINMON}.
Between these two classes lie other types of representations, some of which were studied in \cite{KM1} and \cite{KM2} in connection with logarithmic decay series, but without an underlying geometric framework. We hope that our approach will provide such a framework.

We now describe the article in  detail.

In Section \ref{section notation} we fix some notation. Let $R$ be a Banach ring. In Section \ref{section ind-banach} we review the theory of Ind-Banach modules over $R$, developed in \cite{BamBen}. In particular, we see that they form a quasi-abelian category: $\Ind(\BanR)$.
We also explain that the completed tensor product over $R$, $\wotimes_R$, endows $\Ind(\BanR)$ with a closed symmetric monoidal structure. We are then able to associate its derived infinity category:  that is  a closed symmetric monoidal stable $\infty$-category.  In the case $R=K$, where $K$ is a complete non-archimedean field, we then give an alternative and useful description of the category of essentially monomorphic objects in $\Ind(\BanR)$ in terms of bornological vector spaces (see Theorem \ref{compare ind and born}). We also recall some basic facts on bornological vector spaces. 

In Section \ref{section tempered functions} we introduce the algebra of tempered power series over $R$, denoted by $\temp{R}{t}$, as an element of $\Ind(\BanR)$ (Definition \ref{definition tempered series}). We prove moreover some technical lemmas concerning this algebra.

Let $\cal{E}$ be a complete and cocomplete closed symmetric monoidal $\infty$-category. In Section \ref{section spectrum} we explain how to attach a homotopically ringed space to $\cal{E}$ to be interpreted as the spectrum of $\cal{E}$: 
$\goth{S}(\cal{E})$, see Definition \ref{DEF}. 
This construction may be seen as a kind of  Hochster dual to the construction of \cite[Lecture V]{ClauSchCompl}, in the sense that the basis of compact open subsets of our space  ( that is the set of homotopy epimorphisms from $1$ in $\cal{E}$) corresponds to the lattice of closed subsets of their locale. In particular, when $\cal{E}=\dercat_{\infty}(\sf{Mod}_{\cal{R}})$, 
for $\cal{R}$ a commutative monoid in $\Ind(\BanR)$  (i.e. the symmetric  monoidal $\infty$-category associated to  $\cal{R}$-modules in   $\Ind(\BanR)$),  we define in this way the derived analytic spectrum  of $\cal{R}$. 
This analytic spectrum is a topological realization of the homotopy Zariski topology on $\dercat_{\infty}({\sf{Mod}_{\cal{R}}})$ of \cite{BamBen}. 
We then consider the space $\mathbf{A}^1_{R}= \goth{S}(\dercat_{\infty}(\sf{Mod}_{{R[t]}}))= \goth{S}({R[t]})$ that we call the \emph{analytic affine line}.
We prove that $\temp{R}{t}$ is 
the algebra of functions on an open subset of $\mathbf{A}^1_{R}$, the \emph{tempered disk centered in} $0$ (Proposition \ref{tempidem}). We additionally identify other open subsets of $\mathbf{A}^1_{R}$, in case $R=K$.

In Section \ref{section complementary}, we calculate the algebra of functions supported on the complement of the \emph{tempered disk centered at $\infty$} in $\mathbf{A}^1_{K}$ (see Proposition \ref{prop:complement_of_tempered}). These are the fast converging
series $\fast{K}{t}$ (Definition \ref{definition fast series}) or  limit (as $r \to +\infty$) of   $r$-log-decay  series  at $\infty$ (following \cite{KM1}, \cite{KM2}, see Remark \ref{r-decay}). To  the algebra given by such a series,  it is actually associated another open subset of $\mathbf{A}^1_{K}$: \emph{ the disk of fast converging series} around the closed unit disk. 
We see that the intersection of this fast closed disk centered at $0$ with the tempered disk centered at $\infty$ is non-empty (Proposition \ref{nonemptintersection}). We also discuss how, in this derived setting, the algebra of the functions naturally supported on the complement of the open unit disk at $\infty$ is given by the dagger algebra of overconvergent power series (see Remark \ref{dag} for a more precise statement).

In Section \ref{section transfer} we show that the Transfer Theorem for log-growth of solutions of a differential system over $K$ (Theorem \ref{thm tempered transfert}) can be interpreted as a continuity result in the framework of derived analytic spectra. This ultimately relies on the fact that $\temp{K}{t}$ defines an open subset of $\mathbf{A}^1_K$. This shows that some complications of the classical theories simply disappear in our setting.

In Section \ref{section tubes} we introduce the basic setup for tempered convergent cohomology. 
Let, as before,  $\cal{V}$ and $k$ respectively be the valuation ring of $K$ and the residue field of $K$, a $p$-adic discretely valued non-archimedean complete field.
Given a closed embedding
$$X\to { \hat P},$$
where ${ \hat P}$ is a $p$-adic affine  formal ($p$-adic) scheme,  $X$ is   scheme over $k$ defined by a regular ideal in the special fiber ${\hat  P}_k$ of $\hat  P$.  Suppose ${ \hat P}=\rm{Spf}(A)$ for some $p$-adic smooth formal algebra over $\cal{V}$. We define the tempered tube $\temptube{X}{{\hat  P}}$ in Definition \ref{defn:tempered_tube}, as an open in $\goth{S}(A_K)$. We prove that it contains the rigid analytic tube of \cite[Chapter 2]{LeStum} (see Corollary \ref{prop tempered tube is contained in rigid}) and that it is independent of the presentation (Proposition \ref{prop tempered is independent of the presentation}). The main result of the section is a tempered version of the Berthelot's \say{weak fibration Theorem} (Theorem \ref{weakfibr}). 
This theorem says that if we have a commutative  diagram
\begin{equation*}
\begin{tikzcd}
& { \hat P}' \arrow{dd}{u}\\
X \arrow{ur} \arrow[swap]{dr}&\\
& { \hat P}, 
\end{tikzcd}   
\end{equation*}
where ${\hat P}'\to {\hat P}$ is a smooth morphism of affine $p$-adic formal smooth schemes and the two diagonal maps satisfy the previous hypotheses, then the tempered tube of $X$ in ${\hat P}'$ is isomorphic to the tempered tube of $X$ in ${\hat P}$ times a tempered polydisk.

In Section \ref{section cohomology} we globalize the previous local definition,  we consider embeddings
$$X\to {\hat P},$$
where  $X$ is smooth over $k$ and ${\hat P}$ is a smooth formal scheme over $\cal{V}$ (i.e. they are not required to be affine anymore). 
We construct a derived analytic space associated to ${\hat P}$, ${\hat P}_K$, and a global version of the tempered tube 
$\temptube{X}{{\hat P}}$ inside it, essentially gluing the affine constructions in the context of derived ind-Banach geometry. The space ${\hat P}_K$ comes equipped with a notion of (analytic and complete) derived  de Rham complex and we set the tempered convergent cohomology of $X$
$$\rm{H}_{\rm{temp}}^{\bullet}(X) = 
    \mathrm{H}^i( {{\mathbb L}}{\hat \Omega}^{\wedge}_{{\hat P}_K/K}(\temptube{X}{{\hat P}}))$$
to be the de Rham cohomology of the tempered tube (notice that $\temptube{X}{{\hat P}}$ is an open subscheme of ${\hat P}_K$). We prove that this definition is independent of all the choices (Theorem \ref{thm tempered cohomology is weel def}).
Finally, for $X$ smooth proper, we see that 
$$\rm{H}_{\rm{temp}}^{\bullet}(X)\cong\rm{H}_{\rm{crys}}^{\bullet}(X),$$
that is Proposition \ref{prop comparison with crystalline}.

If we  compare to the classical  rigid case (see Remark \ref{dag}), then what we have calculated is an analogue of the  {\it convergent rigid cohomology}. Moreover,  if we want to deal with open varieties, $X$,  we should consider   the  {\it complement} of the tempered tube of ${\overline X}\setminus X$ ($\overline X$ is a compactification of $X$).  In the classical case,  sections on the complement are  given by dagger algebras (hence getting  rigid cohomology),  while here they should be  given by  fast converging  series (or "limit (as $r \to +\infty$) of   $r$-log-decay  series  at $\infty ={\overline X}\setminus X$"). We plan to study this subject in the future.
\bigskip

{\bf Acknowledgments.} We thank for help and suggestions: Devarshi Mukherjee, Peter Scholze, Bertrand To\"en and Gabriele Vezzosi. The authors were supported by the following grants: UNIPD DOR2315777/23 "Algebraic Geometry and Arithmetic Geometry"; MUR PRIN 2022 "The Arithmetic of Motives and L-functions".

\section{Notation}\label{section notation}
In this section we fix some notation that we will use throughout the article: 
\begin{itemize}
     \item $\sf{Sets}$ will be the category of sets.
    \item $\sf{Ab}$ will be the category of abelian groups.
     \item In general if  $\sf{E}$ indicates a category,  then $\sf{Psh}(\sf{E})$ will be the category of presheaves of sets over $\sf{E}$,  $\Ind(\sf{E})$ will be the Ind-completion of $\sf{E}$,
   and  $\Ind^{m}(\sf{E})$ will be the subcategory of essentially monomorphic objects of $\Ind(\sf{E})$.
    \item $\rm{lim}$ will denote the categorical limit, $\rm{colim}$ will denote the categorical colimit,  \say{$\rm{colim}$} will denote the colimit in $\Ind(\sf{E})$.
    \item If $\sf{E}$ is a symmetric monoidal category, its monoidal functor is denoted by $\botimes$, and the unit by $1$. The category of commutative monoids in $\sf{E}$ will be called $\sf{Comm}(\sf{E})$. If $\sf{E}$ is moreover closed, its internal hom functor will be denoted by $\underline{\rm{Hom}}$. 
    \item If, in general,  $A$ is a commutative monoid in a symmetric monoidal category $(\sf{E}, \botimes)$ we will denote by $\sf{Mod}_{A}$ the category of modules over $A$, its monoidal structure will be denoted by $\botimes_{A}$ (if it exists).
    \item We will also use the language of $\infty$-categories and extend the above conventions of category theory to $\infty$-categories.
    \item If $E\rightarrow E'$ is a morphism in a stable $\infty$-category, $[E\rightarrow E']$ will denote its fiber.

 \item All along the present  work, $K$ will be a nonarchimedean complete field of characteristic $0$, with valuation ring $\intv$, maximal ideal $\mathfrak{m}$ and residue field $k$ (of characteristic $p$). The absolute value on $K$ will be denoted by $\abs{\cdot}_{K}$. Then 
    $\sf{Vec}_{K}$ will denote the category of vector spaces over $K$, $\BornK$ will be the category of bornological modules over $K$,  $\CBornK$ will be the category of complete bornological modules over $K$.
    
  \item If $R$ is a nonarchimedean Banach ring, with norm $\abs{\cdot}_R$, then by  $\BanR$ we mean  the category of Banach modules over $R$ and if  $M\in\BanR$, its norm will be denoted by $\norm{\cdot}_{M}$. By  $\wotimes_{R}$ we understand   the completed tensor product over $R$ (see \cite[Subsection 2.1.7]{BoschAna} for the definition). If $c\in\R_{> 0}$, $R_{c}$ will denote the object of $\BanR$ with the same underlying module of $R$ but with norm rescaled by $c$.

    \item If $m\in\N$, $R[t_1,\dots,t_m]$ is the polynomial algebra over $R$ in $m$ variables.
    \item If $m\in\N$, $\power{R}{t_1,\dots,t_m}$ is the formal power series algebra over $R$ in $m$ variables.
    \item If $m\in\N$, $\tate{R}{t_1,\dots,t_m}$ is the Tate algebra over $R$ in $m$ variables.
    \item If $m\in\N$, $\temp{R}{t_1,\dots,t_m}$ is the tempered power series algebra over $R$ in $m$ variables, see Definition \ref{definition multivariable tempered series}. %
    \end{itemize}

\section{Ind-Banach modules}\label{section ind-banach}
Let $R$ be a \emph{non-archimedean Banach ring}. Here we mean that $R$ is a ring equipped with a norm $|\cdot|_{R}$ that satisfies the non-archimedean triangle inequality, such that the ring multiplication is bounded, and that $R$ is complete for the metric  induced by $|\cdot|_{R}$ (see also \cite[Section 1.2.1]{BoschAna}).  In this section we recall the theory of the category of ind-Banach modules over $R$, developed in \cite[Section 3]{BamBen} and further in \cite{BBK19,bambozzi2018stein, BenKreFrechet}. 

\subsection{Quasi-abelian categories of Banach modules} \label{quasi-ab}
We recall the notion of a non-archimedean Banach module.
\begin{dfn}
    A non-archimedean Banach
    module over $R$ is a module over $R$ equipped with a norm $\norm{\cdot}_M:M\rightarrow\R_{\ge 0}$ such that the following conditions hold:
    \begin{itemize}
        \item $\norm{m}_M=0$, for $m \in M$, if and only if $m =0$.
        \item $M$ is complete for the metric induced by $\norm{\cdot}_M$.
        \item $\norm{m+m'}_M\le \max(\norm{m}_M,\norm{m'}_M)$ for any $m,m'\in M$.
        \item $\norm{rm}_M\le\abs{r}\norm{m}$, for any $r\in R$, $m\in M$.
    \end{itemize}
\end{dfn}
We denote by $\sf{Ban}_R$ the category of non-archimedean 
Banach module over $R$, with morphisms given by bounded $R$-linear maps. We denote by $\BanR^{\le 1}$ the wide subcategory of $\BanR$ with the same objects of $\BanR$ and morphism given by \emph{contracting morphisms}, namely those morphisms $f\colon M\to M'$ such that 
\[ \norm{f(m)}_{M'}\leq \norm{m}_M \] 
for every $m\in M$.
The following is \cite[Proposition 3.18]{BamBen}.

\begin{pro}
    The categories $\sf{Ban}_R$ and $\BanR^{\le 1}$ are quasi-abelian.
\end{pro}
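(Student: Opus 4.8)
The plan is to reduce to a standard criterion and then perform three concrete verifications, treating $\sf{Ban}_R$ and $\BanR^{\le 1}$ in parallel: the two categories have the same objects, the kernels, cokernels, products and coproducts constructed below are built on the same underlying Banach modules, and the only difference is that in $\BanR^{\le 1}$ morphisms are contracting and ``isomorphism'' means isometric isomorphism, which simply pins down the norms appearing in the discussion of strictness. Recall from \cite{Schnei} that a pre-abelian category is quasi-abelian if and only if strict epimorphisms are stable under pullback and strict monomorphisms are stable under pushout. Hence it suffices to: (a) describe the additive, kernel and cokernel structure; (b) identify the strict morphisms in each category; (c) verify the two stability properties.

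For (a): the zero module is a zero object, and the algebraic direct sum $M\oplus M'$ equipped with $\norm{(m,m')}=\max\big(\norm{m}_M,\norm{m'}_{M'}\big)$ is a biproduct with contracting structure maps, hence simultaneously a product and a coproduct in both categories. A morphism $f\colon M\to N$ has kernel $\Ker(f)\subseteq M$ with the restricted norm, which is complete because it is a closed submodule of a complete module, and cokernel $N/\overline{f(M)}$ with the quotient norm, which is a genuine norm since we quotient by the closed submodule $\overline{f(M)}$ and is complete because a Banach module modulo a closed submodule is Banach. The inclusion of the kernel and the projection onto the cokernel are contracting, so these objects enjoy the relevant universal properties in $\sf{Ban}_R$ and in $\BanR^{\le 1}$ alike; thus both categories are pre-abelian.

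For (b): one has $\rm{Coim}(f)=M/\Ker(f)$ with the quotient norm, $\rm{Im}(f)=\overline{f(M)}$ with the restricted norm, and the canonical map $\rm{Coim}(f)\to\rm{Im}(f)$ is a contracting injection with dense image. In $\sf{Ban}_R$ this map is an isomorphism exactly when it is surjective --- the open mapping theorem then supplies a bounded inverse --- which happens precisely when $f(M)$ is closed in $N$; so in $\sf{Ban}_R$ the strict epimorphisms are the surjections and the strict monomorphisms are the closed embeddings (injections with closed image). In $\BanR^{\le 1}$ the same map must in addition be an isometry, so there the strict epimorphisms are the surjections realizing $N$ isometrically as a quotient of $M$, and the strict monomorphisms are the isometric embeddings onto closed submodules; apart from this bookkeeping the two cases are handled identically.

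For (c), both checks are explicit. Given a strict epimorphism $f\colon P\to N$ and an arbitrary $g\colon M\to N$, the pullback is the closed submodule $M\times_N P=\{(m,p)\in M\oplus P:g(m)=f(p)\}$, namely the kernel of $(m,p)\mapsto g(m)-f(p)$, and its projection $\pi$ to $M$ is surjective because $f$ is; moreover, for $m\in M$ one may pick $p$ with $f(p)=g(m)$ and $\norm{p}_P$ arbitrarily close to $\norm{g(m)}_N\le\norm{m}_M$, so the quotient norm of $m$ along $\pi$ equals $\norm{m}_M$ and $\pi$ is a strict epimorphism. Dually, given a strict monomorphism $\iota\colon L\to M$ and an arbitrary $h\colon L\to N$, the map $(\iota,-h)\colon L\to M\oplus N$ is bounded below --- since $\norm{(\iota l,-hl)}\ge\norm{\iota l}_M\ge c\norm{l}_L$, and it is isometric when $\iota$ is --- hence a closed embedding, so its image $W$ is closed and the pushout is $(M\oplus N)/W$; one then checks that $N\to(M\oplus N)/W$ is injective with closed image, since its preimage in $M\oplus N$ is $\iota(L)\oplus N$, closed because $\iota(L)$ is closed in $M$, and (using the non-archimedean triangle inequality) that it is isometric onto its image. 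Thus $N\to(M\oplus N)/W$ is a strict monomorphism, the criterion holds, and $\sf{Ban}_R$ and $\BanR^{\le 1}$ are quasi-abelian. The place where completeness is indispensable --- and really the heart of the matter --- is the open mapping theorem, which makes the identification of strict morphisms in (b) clean, together with the stability of closedness, which keeps cokernels and pushouts inside the category in (a) and (c); of the explicit verifications I expect the pushout computation in (c) to be the fiddliest, since one must see simultaneously that $W$ is closed (so the pushout is a Banach module), that $N$ still injects, and that the copy of $N$ sits as a closed --- and, in $\BanR^{\le 1}$, isometrically embedded --- submodule of $(M\oplus N)/W$.
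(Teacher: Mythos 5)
The paper itself gives no proof here: it simply cites \cite[Proposition 3.18]{BamBen}. Your strategy --- verify pre-abelianity and then the Schneiders criterion that strict epimorphisms are stable under pullback and strict monomorphisms under pushout --- is the natural one, and the explicit descriptions of biproducts, kernels, cokernels, pullbacks and pushouts are all correct. The genuine gap is in step (b): you identify strict epimorphisms of $\sf{Ban}_R$ with surjections, and strict monomorphisms with injections having closed image, on the strength of the open mapping theorem. That theorem holds for Banach spaces over a complete nontrivially valued field, but it \emph{fails} for Banach modules over a general nonarchimedean Banach ring $R$, which is the generality in which the proposition is stated. A concrete counterexample: take $R=\intv$, let $M$ be $\intv$ with norm $\abs{x}$ and $M'$ be $\intv$ with norm $\abs{x}^2$; one checks directly that $M'$ is a Banach $\intv$-module, that the identity $M\to M'$ is bounded (indeed contracting), and that its inverse is unbounded. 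Thus a bounded bijection between Banach $R$-modules need not be an isomorphism, so ``surjective'' is strictly weaker than ``strict epimorphism'' in general. The correct characterizations are: a strict epimorphism is a surjection $f\colon M\to N$ such that the norm on $N$ is equivalent to the quotient norm from $M/\Ker(f)$ (i.e.\ preimages of bounded sets can be chosen bounded), and a strict monomorphism is an injection $\iota\colon L\to M$ that is bounded below (hence has closed image and the norm on $L$ equivalent to the subspace norm).

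Interestingly, the quantitative estimates you invoke in step (c) --- choosing $p$ with $f(p)=g(m)$ and $\norm{p}_P$ controlled by $\norm{g(m)}_N$ in the pullback argument, and the bound $\norm{\iota l}_M\ge c\norm{l}_L$ in the pushout argument --- are precisely the content of the corrected characterizations, not consequences of bare surjectivity or closed image. So the repair is local: replace the open-mapping-theorem identification in (b) by the quotient-norm / bounded-below characterization of strictness (with equivalence of norms in $\sf{Ban}_R$, isometry in $\BanR^{\le 1}$), after which (c) goes through essentially as written, tracking the relevant constants. A minor additional slip in (b): the canonical map $\rm{Coim}(f)\to\rm{Im}(f)$ is only bounded in $\sf{Ban}_R$, not contracting; contractivity is again special to $\BanR^{\le 1}$.
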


In fact,  $\BanR$ is naturally enriched over abelian groups. The direct sum of two objects $M$ and $M'$ is given by the direct sum of the underlying abelian groups $M\oplus M'$ endowed with the norm 

\begin{equation*}
\norm{(a,b)}_{M\oplus M'} = \max(\norm{a}_M,\norm{b}_{M'}),
\end{equation*}
for $a\in M$, $b\in M'$.
If $f:M\rightarrow M'$ is a morphism in $\BanR$ its kernel is given by 
\begin{equation*}
\Ker(f)=f^{-1}(0)
\end{equation*} 
endowed with the norm induced by $M$, and its cokernel is given by 
\begin{equation*}
\Coker(f)=M'/\overline{f(M)}
\end{equation*} 
endowed with the residue norm. The category $\BanR$ carries a monoidal structure given by the completed tensor product $\wotimes_R$. Here $M\wotimes_R M'$ denotes the separated completion of $M\otimes_R M'$ with respect to the projective tensor norm; see also \cite[Subsection 2.1.7]{BoschAna}. We will use the following result, which is \cite[Proposition 3.27]{BamBen}.

\begin{pro}
    The category $(\BanR,\wotimes_R)$ is a closed symmetric quasi-abelian category with enough projective objects.
\end{pro}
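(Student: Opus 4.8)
The plan is to verify separately the three assertions — symmetric monoidal, closed, enough projectives — since quasi-abelianness of $\BanR$ has already been recorded, and then refer to \cite[Proposition 3.17]{BamBen} for the detailed bookkeeping. For the monoidal structure I would recall that $M\otimes_R N$ carries the \emph{tensor seminorm}, the largest seminorm satisfying $\norm{m\otimes n}\le\norm{m}_M\norm{n}_N$, and that $M\wotimes_R N$ is its separated completion; this is the object representing bounded $R$-bilinear maps out of $M\times M'$. Associativity, unitality with unit $R$, and symmetry then all follow from the corresponding universal properties, by matching bounded bilinear, resp. trilinear, maps on both sides, and the coherence diagrams are inherited from those of $\otimes_R$ on plain $R$-modules.

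For closedness I would take $\IHom_R(M,N)$ to be the $R$-module of bounded $R$-linear maps $M\to N$ equipped with the operator norm $\norm{f}=\sup_{\norm{m}_M\le 1}\norm{f(m)}_N$; completeness of $N$ makes this again an object of $\BanR$. The tensor--hom adjunction $\Hom_{\BanR}(M\wotimes_R N,P)\cong\Hom_{\BanR}(M,\IHom_R(N,P))$ is obtained by unwinding both sides into bounded $R$-bilinear maps $M\times N\to P$ and checking naturality in the three variables; this exhibits $\IHom_R(N,\cdot)$ as right adjoint to $(\cdot)\wotimes_R N$, so $(\BanR,\wotimes_R)$ is closed.

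The substantive point is ``enough projectives''. I would first single out the candidate projective generators: for a family $(c_i)_{i\in I}$ of positive reals, the completed direct sum $\contcoprod_{i\in I}R_{c_i}$ in $\BanR^{\le 1}$, whose universal property identifies bounded maps out of it into $N$ with families $(x_i)_{i\in I}$ in $N$ satisfying $\sup_i\norm{x_i}_N/c_i<\infty$. Using this description I would show each such $P$ is projective: given a strict epimorphism $g:N\to N'$ and a map $P\to N'$ corresponding to a family $(x_i')$ with $\sup_i\norm{x_i'}_{N'}/c_i=:C<\infty$, strictness means $N'=N/\Ker(g)$ with the quotient norm, so each $x_i'$ lifts to some $y_i\in N$ with $\norm{y_i}_N\le\norm{x_i'}_{N'}+c_i$; then $\sup_i\norm{y_i}_N/c_i\le C+1<\infty$, so $(y_i)$ defines a bounded lift $P\to N$. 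Hence $\Hom_{\BanR}(P,\cdot)$ sends strict epimorphisms to epimorphisms of abelian groups. To produce enough of these, for $M\in\BanR$ I would take $I$ to be the underlying set of $M$, $c_m=\norm{m}_M$, and the evaluation map $\contcoprod_{m\in M}R_{c_m}\to M$, $(r_m)_m\mapsto\sum_m r_m m$: it is surjective already on finitely supported elements, and for $x\in M$ the choice $(r_m)=\delta_x$ shows the induced quotient norm of $x$ is $\le\norm{x}_M$, while $\norm{\sum r_m m}_M\le\sup_m\abs{r_m}_R\norm{m}_M$ gives the reverse inequality, so the map is strict. Thus it is a strict epimorphism onto $M$ from a projective.

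The main obstacle is precisely this projectivity verification: controlling the norm estimates through the lift so that the lifted family still lies in the domain of the coproduct's universal property, together with confirming that the completed $c_0$-type coproducts $\contcoprod_{i}R_{c_i}$ genuinely exist in $\BanR$ (they do, being the coproducts of $\BanR^{\le 1}$) so that the construction is legitimate. Everything else amounts to routine unwinding of universal properties, and the full details are in \cite[Proposition 3.17]{BamBen}.
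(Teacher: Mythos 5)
Your proof is correct and follows essentially the same route as the paper, which simply cites \cite[Proposition 3.17]{BamBen} and then records the projective generator $c^0(M)\cong\contcoprod_{m\in M-\{0\}}R_{\norm{m}_M}$ together with the evaluation map $c^0(M)\to M$ as the strict epimorphism; your lifting argument with the $\epsilon=c_i$ slack is exactly the content of the cited lemmas. One small slip: you should index the coproduct over $M\setminus\{0\}$ rather than all of $M$, since for $m=0$ one would have $c_m=0$ and $R_0$ carries the identically zero seminorm, which is not an object of $\BanR$.
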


In particular, given an object $M\in\BanR$, we define the object
\begin{equation*}
        c_0(M)=\left\{(c_m)_{m\in M- \{0\}} \in \prod_{m \in M - \{0\}} R_{\norm{m}_M} : \ \lim_{m\in M-\{0\}}\norm{c_m}_{R_{\norm{m}_M}}=0\right\}.
    \end{equation*}
 Where by  $ R_{\norm{m}_M}$ we indicate $R$ but with the norm for $ c_m \in R$ equal to  $\norm{c_m}_{R_{\norm{m}_M}}= \norm{c_m}_R \norm{m}_M$.  Here the product is the set-theoretic product, and the condition $\lim_m \norm{c_m}_{R_{\norm{m}_M}}=0$ means that, for every $\varepsilon>0$, the set of $m\in M\setminus\{0\}$ such that $\norm{c_m}_{R_{\norm{m}_M}}\geq \varepsilon$ is finite.
The module $c_0(M)$ is a Banach $R$-module when endowed with the norm
\begin{equation*}
    \norm{(c_m)_{m\in M-\{0\}}}_{(c_m)_{m\in M-\{0\}}}=\rm{sup}_{m\in M-\{0\}}\norm{c_m }_{R_{\norm{m}_M}}.
\end{equation*}
Equivalently, the module $c_0(M)$ is isomorphic to the coproduct 
\[ \contcoprod_{m\in M-\{0\}}R_{\norm{m}_M} \]
in $\BanR^{\le 1}$ (cfr. \cite[proof of Proposition 3.21]{BamBen}). The object $c_0(M)$ is projective in $\BanR$ (see \cite[Lemma 3.26]{BamBen}).
Moreover, the morphism
\begin{equation*}
    c_0(M)\rightarrow M
\end{equation*}
defined by
\begin{equation*}
    (c_m)_{m\in M-\{0\}}\mapsto\sum_{m\in M-\{0\}}c_m m
\end{equation*}
is a strict epimorphism (cfr. \cite[Lemma 3.27]{BamBen}).
In particular, $\BanR$ has enough projective objects and projective objects are flat (see \cite[Proposition 3.13]{Bambozzi_2024}). We will also use the following proposition.

\begin{pro}\label{tensor of banach projectives}
   Let $P,P'$ be two projective objects in $\BanR$, then $P\wotimes_R P'$ is also projective.    
\end{pro}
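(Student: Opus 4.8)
The plan is to reduce everything to the concrete generators $c^0(M)$ and then to compute their tensor product explicitly. First, recall that every projective object of $\BanR$ is a direct summand of one of the form $c^0(M)$: for a projective $P$ the strict epimorphism $c^0(P)\to P$ admits a section, since $P$ is projective, so $P$ is a retract of $c^0(P)$. As $\wotimes_R$ is functorial, it carries retracts to retracts in each variable; hence if $P$ is a retract of $c^0(M)$ and $P'$ a retract of $c^0(M')$, then $P\wotimes_R P'$ is a retract of $c^0(M)\wotimes_R c^0(M')$. Since retracts of projective objects are projective (see the Remark following the definition of projective object), it suffices to prove that $c^0(M)\wotimes_R c^0(M')$ is projective for all $M,M'\in\BanR$.

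Second, I would identify this tensor product. Recall $c^0(M)\cong\contcoprod_{m\in M-\{0\}}R_{\norm{m}_M}$; from this one checks that, for any $N\in\BanR$, bounded module maps $c^0(M)\to N$ correspond exactly to families $(n_m)_{m\in M-\{0\}}$ in $N$ with $\sup_m\norm{n_m}_N/\norm{m}_M<\infty$. By the universal property of the completed tensor product, bounded maps $c^0(M)\wotimes_R c^0(M')\to N$ are the same as bounded bilinear maps $c^0(M)\times c^0(M')\to N$, and these are in turn the same as families $(n_{m,m'})$ indexed by $(M-\{0\})\times(M'-\{0\})$ with $\sup_{m,m'}\norm{n_{m,m'}}_N/(\norm{m}_M\norm{m'}_{M'})<\infty$, the extension of such a family being the unordered sum $((x_m),(y_{m'}))\mapsto\sum_{m,m'}x_my_{m'}n_{m,m'}$, whose convergence and boundedness follow from the non-archimedean estimates together with the defining decay conditions on $c^0(M)$ and $c^0(M')$. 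Comparing with the description of maps out of a coproduct, the Yoneda lemma gives a natural isomorphism
\begin{equation*}
c^0(M)\wotimes_R c^0(M')\ \cong\ \contcoprod_{(m,m')\in(M-\{0\})\times(M'-\{0\})}R_{\norm{m}_M\norm{m'}_{M'}}.
\end{equation*}
(Alternatively one notes that $\wotimes_R$ commutes with the coproducts of $\BanR^{\le 1}$ and that $R_c\wotimes_R R_{c'}\cong R_{cc'}$.)

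Third, it remains to check that an arbitrary object $E=\contcoprod_{j\in J}R_{c_j}$ is projective in $\BanR$. This is \cite[Lemma 3.26]{BamBen} in the case $E=c^0(M)$, and the same argument works verbatim: given a strict epimorphism $A\to B$ and a bounded map $f\colon E\to B$, one lifts each $f(e_j)$ to some $a_j\in A$ with $\norm{a_j}_A$ bounded by a fixed multiple of $\norm{f(e_j)}_B$ — possible because a strict epimorphism of Banach modules admits set-theoretic sections with uniformly bounded distortion — and the family $(a_j)$ assembles, via the same decay estimates, into a bounded lift $E\to A$ of $f$. (One can also observe directly that the coordinate elements of $E$ span a dense submodule and provide an isometric section of the canonical strict epimorphism $c^0(E)\to E$, exhibiting $E$ as a retract of the projective object $c^0(E)$.) Combining the three steps, $P\wotimes_R P'$ is a retract of a projective object, hence projective.

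The one point requiring care is that the coproducts $\contcoprod$ appearing here are the coproducts of $\BanR^{\le 1}$, not of $\BanR$ (which lacks most infinite coproducts); so the identification in the second step should be carried out through the explicit Hom-descriptions (or inside $\BanR^{\le 1}$), and the statement that such objects are projective in $\BanR$ is exactly the content of the third step rather than a formal consequence of being a coproduct of projectives. The remaining ingredients — that strict epimorphisms onto projectives split, that retracts of projectives are projective, and that $\wotimes_R$ respects retracts — are all recorded above or immediate.
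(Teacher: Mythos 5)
Your proof is correct and follows essentially the same route as the paper's: reduce to the case $c^0(M)\wotimes_R c^0(M')$ via retracts, identify this tensor product with $\contcoprod_{(m,m')}R_{\norm{m}_M\norm{m'}_{M'}}$ using closedness of $\BanR^{\le 1}$, and then invoke the argument of \cite[Lemma 3.26]{BamBen} to conclude projectivity. Your extra commentary on the $\contcoprod$ living in $\BanR^{\le 1}$ and the uniformly bounded set-theoretic section is a welcome clarification but does not change the substance.
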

\begin{proof}
   From the discussion above, it follows that  every projective is a direct summand of an object of the form $c_0(M)$, for $M\in\BanR$,  it suffices to prove the claim for objects of this form. But if $M,M'\in\BanR$, then 
    \footnotesize
    \begin{equation*}
        c_0(M)\wotimes_R c_0(M')= \contcoprod_{m\in M-\{0\}}R_{\norm{m}_M}\wotimes_R \contcoprod_{m'\in M'-\{0\}}R_{\norm{m'}_M}\simeq \contcoprod_{m\in M-\{0\}}\contcoprod_{m'\in M'-\{0\}}R_{\norm{m}_M\norm{m'}_M'}.
    \end{equation*}
    \normalsize
    Here we use that $\BanR^{\leq 1}$ is closed symmetric monoidal, so that the completed tensor product preserves coproducts in each variable (see the proof of \cite[Lemma 3.26]{BamBen}). Applying the same argument as in \cite[Lemma 3.26]{BamBen}, we see that
    \[ \contcoprod_{m\in M\setminus\{0\}} \contcoprod_{m'\in M'\setminus\{0\}} R_{\norm{m}_M\norm{m'}_{M'}} \] 
    is projective in $\BanR$.
\end{proof}

By general results on Ind-categories of quasi-abelian categories \cite[Proposition 2.1.17]{Schnei}, the category $\Ind(\BanR)$ is complete and cocomplete and carries a natural elementary quasi-abelian structure. Moreover, the completed tensor product $\wotimes_R$ extends to a closed symmetric monoidal structure on $\Ind(\BanR)$ (see \cite[Proposition 2.1.19]{Schnei}). In the present Banach setting this is also recalled in \cite[Lemma 3.29]{BamBen}.

\begin{pro}\label{prop:ind-ban-is-quasi-abelian}
    The category $\Ind(\BanR)$ is a complete and cocomplete closed symmetric monoidal elementary quasi-abelian category with enough projectives. Moreover projective objects of $\Ind(\BanR)$ are flat. Additionally, if $P,P'$ are projectives in $\Ind(\BanR)$, then also $P\wotimes_R P'$ is.
\end{pro}
\begin{proof} 
The assertions about flatness and stability of projective objects under tensor products follow from the corresponding statements in $\BanR$, namely from \cite[Proposition 3.13]{Bambozzi_2024} and Proposition \ref{tensor of banach projectives}, together with \cite[Proposition 2.1.19]{Schnei}. 
\end{proof}
 
Assume now that $R$ is a Banach ring over a non-archimedean Banach field $K$ of characteristic $0$. We will use the following result. We refer to \cite[Definition 4.32]{kelly2021analytic} for the definition of a homotopical algebra context. 

\begin{thm}\label{thm:ind-ban-homotopical-algebra-context} 
The category $\mathcal C(\Ind(\BanR))$ of cochain complexes in $\Ind(\BanR)$, endowed with the projective model structure, is a homotopical algebra context. \end{thm} 
\begin{proof} 
This follows from \cite[Theorem 4.33]{kelly2021analytic}. Indeed, $\Ind(\BanR)$ is locally presentable by the general properties of ind-completions, it is elementary by \cite[Proposition 2.1.17]{Schnei}, and, by Proposition \ref{prop:ind-ban-is-quasi-abelian}, it has enough projective objects which are flat and closed under tensor products. 
\end{proof}

\begin{rmk}\label{symmetric}
   In particular, if $R$ is a Banach ring over $K$, then the $\infty$-category $\dercat_{\infty}(\Ind(\BanR))$ is a closed symmetric monoidal stable $\infty$-category. Its monoidal structure is induced by the left derived functor of $\wotimes_R$, which we denote by $\wotimes_R^{\bb L}$. The same applies to module categories over commutative algebra objects. Namely, if $A\in \sf{Comm}(\dercat_{\infty}(\Ind(\BanR)))$, then $\dercat_{\infty}(\sf{Mod}_A)$ is again a closed symmetric monoidal stable $\infty$-category.
\end{rmk}

We will now give a fundamental example of an object in $\Ind(\BanR)$.
\begin{exa}\label{polynomial algebra in IndBanR}
    Consider the $R$-module $R[t]_{\le n}$ of polynomials of degree less or equal than $n$, where $n\in \N$. We regard $R[t]_{\leq n}$ as a Banach $R$-module by endowing it with the norm
    \begin{equation*}
        \norm{a_nt^n+\dots+ a_1t+a_0}_{R[t]_{\le n}}=\rm{max}_{i=1,\dots, n}\abs{a_i}_R,
    \end{equation*}
    for $a_nt^n+\dots+ a_1t+a_0\in R[t]_{\le n}$.
    Note that we have isometric embeddings
    \begin{equation*}
        R[t]_{\le n}\hookrightarrow R[t]_{\le n+1}.
    \end{equation*}
    We define the polynomial algebra $R[t]$, as an object of $\Ind(\BanR)$, by 
    \[ R[t]=``\!\colim_{n\in\mathbb N}\!"\, R[t]_{\leq n}. \]
    It is an object of $\Ind^m(\BanR) \subset \Ind(\BanR)$, the full subcategory of \emph{essentially monomorphic ind-objects}, and the usual multiplication maps 
    \[ R[t]_{\leq n}\wotimes_R R[t]_{\leq n} \longrightarrow R[t]_{\leq 2n} \] 
    endow $R[t]$ with the structure of a commutative monoid in $\Ind(\BanR)$.
    Similarly one can consider the polynomial algebra in more variables $R[t_1,\dots, t_m]$, with $m\in\N$, as an ind-Banach ring, cf.\ \cite[pp. 15-16]{BassatMuk}. It follows directly from the construction that $R[t]$ is flat over $R$ (because it is projective). Moreover, in $\Ind(\BanR)$ one has \[ R[t_1]\wotimes_R\cdots\wotimes_R R[t_m] \simeq R[t_1,\dots,t_m]. \]
\end{exa}

\subsection{Bornological vector spaces} If $K$ is a non-archimedean non-trivially valued field, the category $\Ind(\sf{Ban}_K)$ is related to the theory of bornological vector spaces over $K$. We briefly review this concept in this subsection. The main reference for this is \cite[Subsection 3.3]{BamBen}.
\begin{dfn}
    Let $X$ be a set. A \emph{bornology} $\goth{B}$ on $X$ is a collection of subsets of $X$ that satisfies the following properties:
    \begin{enumerate}
        \item $\bigcup_{B\in\goth{B}}B=X$.
        \item If $B\in\goth{B}$ and $A\subset B$, then $A\in\goth{B}$.
        \item If $n\in\N$ and $B_1,\dots, B_n\in\goth{B}$, then $\bigcup_{i=1,\dots,n}B_i\in\goth{B}$.   
    \end{enumerate}
\end{dfn}
A set $X$ equipped with a bornology $\goth{B}$ is called a \emph{bornological set}. The elements of $\goth{B}$ are called \emph{bounded subsets}.

\begin{dfn}
    A function between two bornological sets $(X,\goth{B})$ and $(Y,\goth{B}')$ is said to be \emph{bounded} if it sends bounded subsets for $\goth{B}$ to bounded subsets for $\goth{B}'$.
\end{dfn}
We can now introduce bornological vector spaces over $K$.
\begin{dfn}
    A bornological vector space over $K$ is a vector space $V$ over $K$ that is also a bornological set, such that the addition and multiplication by scalars are bounded maps.
\end{dfn}
We denote by $\BornK$ the category of bornological vector spaces over $K$, with morphisms given by bounded $K$-linear maps.
\begin{rmk}
    There is a fully faithfull embedding
    \begin{equation*}
        \sf{Ban}_K\hookrightarrow \BornK,
    \end{equation*}
     given endowing each Banach space with the \emph{von Neumann bornology}, for which a subset is bounded if and only if it is absorbed by 
     all neighborhoods of the origin (see \cite[Lemma 3.17]{bambozzi2018stein} and \cite[p. 102]{Houzel}). This is the usual notion of boundedness for subsets of Banach spaces.
\end{rmk}

\begin{pro}
    The category $\BornK$ is complete and cocomplete.
\end{pro}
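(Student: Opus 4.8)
The plan is to verify completeness and cocompleteness directly, using the standard reduction that a category has all small limits as soon as it has all small products and equalizers, and all small colimits as soon as it has all small coproducts and coequalizers. In each case the underlying $K$-vector space will be the evident (co)limit computed in $\sf{Vec}_K$, so that the only content is to equip it with the right bornology and to check that the structural maps are bounded, after which the universal properties follow from those in $\sf{Vec}_K$.

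For products, given a family $\{(V_i,\goth{B}_i)\}_{i\in I}$, I would endow $\prod_{i\in I}V_i$ with the bornology whose bounded sets are the $B$ with $\pi_i(B)\in\goth{B}_i$ for every $i$ (equivalently, $B\subseteq\prod_{i\in I}B_i$ for some choice of $B_i\in\goth{B}_i$). One checks the bornology axioms and that this makes $\prod_i V_i$ a bornological vector space, using that addition and scalar multiplication are bounded coordinatewise; the projections are then bounded, and a cone $\{f_i\colon W\to V_i\}$ of bounded maps yields a bounded map $W\to\prod_i V_i$ because its composite with $\pi_i$ is the bounded map $f_i$. For the equalizer of $f,g\colon V\to W$, take the linear subspace $E=\{v\in V: f(v)=g(v)\}$ with the induced bornology (bounded sets $=$ the bounded subsets of $V$ contained in $E$); the inclusion is bounded, and a bounded map into $V$ equalizing $f$ and $g$ factors through $E$ by a map that is still bounded, since the bornology on $E$ is the induced one. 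Hence $\BornK$ is complete.

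For coproducts, given $\{(V_i,\goth{B}_i)\}_{i\in I}$, take the algebraic direct sum $\bigoplus_{i\in I}V_i$ with the bornology generated by the sets $\bigoplus_{i\in F}B_i$ with $F\subseteq I$ finite and $B_i\in\goth{B}_i$; concretely, a subset is bounded precisely when it is contained in such a finite sub-sum. The bornology axioms hold, and stability under the vector-space operations follows because a finite sum of finite sub-sums is again a finite sub-sum; the canonical injections are bounded, and a cocone $\{f_i\colon V_i\to W\}$ induces a bounded map $\bigoplus_i V_i\to W$ because any bounded subset of the direct sum lies in some $\bigoplus_{i\in F}B_i$ and is sent into $\sum_{i\in F}f_i(B_i)$, which is bounded. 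For the coequalizer of $f,g\colon V\to W$, take $W/N$ with $N$ the $K$-span of $\{f(v)-g(v): v\in V\}$, equipped with the quotient bornology (bounded sets $=$ images under the projection $q\colon W\to W/N$ of the bounded subsets of $W$); then $q$ is bounded and coequalizes $f$ and $g$, and a bounded map $W\to Z$ vanishing on $N$ factors through a bounded map $W/N\to Z$, since every bounded subset of $W/N$ is of the form $q(B)$ with $B$ bounded in $W$. Hence $\BornK$ is cocomplete.

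None of these verifications presents a genuine obstacle: everything reduces to elementary manipulations of the bornology axioms, the one point requiring a moment's attention being the compatibility of the generated direct-sum bornology with addition and scalar multiplication in the coproduct construction. One could alternatively deduce the statement from an identification of $\BornK$ with a suitable category of Ind-objects, as in \cite{BamBen}, but the explicit descriptions of the (co)limits above are useful in their own right.
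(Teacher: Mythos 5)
Your proof is correct and follows essentially the same approach the paper takes: describing (co)limits by computing the underlying $K$-vector space in $\sf{Vec}_K$ and equipping it with the appropriate bornology (initial bornology for limits, final/generated bornology for colimits). The paper simply states the general formulas for arbitrary (co)limits in a remark following the proposition without verification, whereas you carry out the verification explicitly via the standard reduction to products/equalizers and coproducts/coequalizers; your explicit constructions (product bornology, induced bornology on the equalizer, direct-sum bornology, quotient bornology) all specialize correctly from the paper's general recipe.
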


In particular the forgetful functor $\BornK \to \sf{Vec}_K$ commutes with all limits and colimits. 

\begin{dfn}
    A bornological vector space is called \emph{complete} if it is isomorphic to a monomorphic  filtered colimit of Banach spaces.
\end{dfn}

Thus we can define the full subcategory of complete bornological vector spaces $\CBornK\subset\BornK$.
In $\BornK$ we have a natural notion of convergence.

\begin{dfn}
    A sequence $\{x_n\}_{n\in\N}$ in $V\in\BornK$ \emph{converges to} $0$ \emph{in the sense of Mackey} if there exists a bounded subset $B\subset V$ such that for every $\lambda\in K^{\times}$ there exists an $N\in\N$ such that $x_n\in\lambda B$ for each $n\ge N$. A sequence $\{y_n\}_{n\in\N}$ in $V$ is said to \emph{converges to} $a\in V$ \emph{in the sense of Mackey} if the sequence $\{y_n-a\}_{n\in\N}$ converges to $0$ in the sense of Mackey. 
\end{dfn}

Note that by definition the convergence in the sense of Mackey coincides with the usual convergence in norm for objects in $\BanK$. 

\begin{dfn}
    Let $V\in\BornK$, a subset $U\subset V$ is called \emph{bornologically closed} if every sequence of elements in $U$ converging in the sense of Mackey converges to an element in $U$. 
\end{dfn}
The definition above helps to describe the quasi-abelian structure of $\CBornK$.

\begin{pro}
    The category $\CBornK$ is quasi-abelian.
\end{pro}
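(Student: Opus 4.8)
The plan is to present $\CBornK$ as a reflective subcategory of $\BornK$ and to transfer the exactness axioms of a quasi-abelian category across the reflection, keeping careful track of how the reflection --- separated completion --- interacts with strict morphisms. First I would recall that the inclusion $\iota\colon\CBornK\hookrightarrow\BornK$ has a left adjoint $\widehat{(-)}$, the separated completion functor: for $V\in\BornK$ one forms the separated bornological space $V/\overline{\{0\}}$ and then its Mackey completion, which is a filtered colimit of Banach spaces (cf. \cite{Houzel} and \cite[Subsection 3.3]{BamBen}). Being a right adjoint, $\iota$ preserves all limits, so $\CBornK$ is complete and the kernel of $f\colon V\to W$ in $\CBornK$ is $f^{-1}(0)$ with the subspace bornology --- a bornologically closed, hence complete, subspace of $V$. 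Dually $\CBornK$ is cocomplete, colimits being obtained by applying $\widehat{(-)}$ to the corresponding colimit in $\BornK$; in particular $\mathrm{coker}(f)=\widehat{W/f(V)}$, and the pushout of a pair $U\to V$, $U\to W$ is $\widehat{(V\oplus W)/U}$. So $\CBornK$ is pre-abelian, and computing images and coimages from these formulas gives the characterization of strictness: $f$ is a strict monomorphism in $\CBornK$ precisely when $V$ carries the subspace bornology induced from $W$ and $f(V)$ is bornologically closed in $W$ (this is exactly where the notion of a bornologically closed subset enters), and $f$ is a strict epimorphism precisely when the canonical map $\widehat{V/\ker f}\to W$ is an isomorphism --- so, unlike in $\BornK$, a strict epimorphism here need not be surjective.

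It then remains to verify the two axioms of a quasi-abelian category in the sense of \cite{Schnei}: strict epimorphisms stable under pullback, strict monomorphisms stable under pushout. For the pushout $\widehat{(V\oplus W)/U}$ of a strict monomorphism $U\hookrightarrow V$ along $U\to W$, I would invoke that $\BornK$ is itself quasi-abelian --- so that $W\to(V\oplus W)/U$ is already a strict monomorphism in $\BornK$ --- and then check that $\widehat{(-)}$ preserves strict monomorphisms of this shape. For the pullback of a strict epimorphism $p\colon V\to W$ along $g\colon U\to W$, the pullback is $\ker\!\big((V\oplus U)\to W\big)$, computed as in $\BornK$; using the description of strict epimorphisms above together with the completeness of the spaces involved, one checks that the projection onto $U$ is again a strict epimorphism.

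The step I expect to be the main obstacle is precisely this interplay between completion and strictness. Because $\widehat{(-)}$ is a left adjoint it does not commute with kernels, so images in $\CBornK$ are not the bornological images computed in $\BornK$, and one has to argue by hand that completion still preserves the strict monomorphisms arising in pushout squares and that pulling back a possibly non-surjective strict epimorphism preserves strictness --- this is the point at which the homological peculiarities of completing bornological spaces really enter, and where one would want the (known) exactness properties of the bornological completion functor. An alternative, once the equivalence $\CBornK\simeq\Ind^m(\BanK)$ of Theorem \ref{compare ind and born} is available, is to transport the quasi-abelian structure of $\Ind(\BanK)$ (quasi-abelian by Proposition \ref{prop Ind is elementary}) along this equivalence; the remaining task is then to verify that $\Ind^m(\BanK)$ is closed under the limits and colimits of $\Ind(\BanK)$ that occur in the two axioms, which is the same verification rephrased on the Ind side.
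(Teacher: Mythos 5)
The paper itself does not give a proof: it simply cites \cite[Lemma 3.53]{BamBen}. Your outline describes roughly what that reference does, but it contains both an acknowledged gap and an error that stems from missing the one lemma that makes the whole argument go through.

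The error is your assertion that in $\CBornK$ a strict epimorphism need not be surjective. It is surjective, and the paper says so in the remark immediately following the proposition. The reason is precisely the fact you flag as your ``main obstacle'': if $V$ is a complete bornological space and $W\subset V$ is bornologically closed, then $V/W$ with the quotient bornology is \emph{already} complete. To see this, write $V=\colim_i V_i$ as a monomorphic filtered colimit of Banach spaces. Bornological closedness of $W$ forces each $W\cap V_i$ to be norm-closed in $V_i$, so each $V_i/(W\cap V_i)$ is Banach, and one checks that the transition maps $V_i/(W\cap V_i)\to V_j/(W\cap V_j)$ remain injective; hence $V/W=\colim_i V_i/(W\cap V_i)$ is a monomorphic filtered colimit of Banach spaces, i.e.\ complete, with no completion step needed. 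Consequently $\widehat{V/\ker f}=V/\ker f$, coimages in $\CBornK$ agree with bornological quotients, and strict epimorphisms are exactly the surjections onto the quotient bornology.

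This lemma is not a side issue you can defer; it is the content of the proposition. Once you have it, cokernels in $\CBornK$ are computed in $\BornK$ (after taking the bornological closure of the image), the ``completion does not commute with kernels'' worry disappears because no completion ever enters, and the two exactness axioms reduce to the corresponding statements in $\BornK$ together with the observation that kernels, pullbacks, closures, and quotients of complete spaces are complete. Without it, your plan to ``transfer the exactness axioms across the reflection'' has no traction: you cannot argue that $\widehat{(-)}$ preserves the relevant strict monomorphisms or that pullbacks of strict epimorphisms stay strict, because you have no control over the completion. So either prove this closedness-implies-complete-quotient lemma (it is elementary but must be done), or cite it; the alternative route through $\Ind^m(\BanK)$ that you mention runs into the same step in disguise (showing that essentially monomorphic objects are stable under the relevant kernels and cokernels).
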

\begin{proof}
    This is proved in \cite[Lemma 3.53]{BamBen}.
\end{proof}

In particular morphism  $f:B\rightarrow B'$ in $\CBornK$ is a \emph{strict epimorphism} if it is surjective and $B'$ is endowed with the quotient bornology. Whereas $f$ is a \emph{strict monomorphism} if it is injective, $f(B)$ is bornologically closed in $B'$, and the bornology on $B$ agrees with the bornology induced by $B'$.

We now explain the relation between $\Ind(\BanK)$ and $\CBornK$.
The following is contained in \cite[Proposition 3.60]{BamBen}.

\begin{thm}\label{compare ind and born}
    There exists a functor
    \begin{equation*}
        \rm{diss}:\CBornK\rightarrow\Ind(\BanK)
    \end{equation*}    
    called \emph{dissection functor} that induces an equivalence of categories between $\CBornK$ and $\Ind^m(\BanK)$, the category of \emph{essentially monomorphic ind-objects}, and commutes with all limits and all filtered monomorphic colimits. The functor $\rm{diss}$ is isomorphic to the identity functor when restricted to $\BanK$. Moreover, the functor is exact in the sense of the theory of quasi-abelian categories.
\end{thm}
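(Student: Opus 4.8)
The statement is \cite[Proposition 3.60]{BamBen}; I sketch the argument I would give. The plan is to build $\mathrm{diss}$ out of the Banach disks of a complete bornological space and then to recognise it as the right adjoint of the colimit functor $\Ind(\BanK)\to\CBornK$.

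First I would define the functor. For $V\in\CBornK$ let $\mathcal{B}(V)$ be the poset of bounded, absolutely convex, bornologically closed subsets $B\subset V$ that are \emph{completant}, meaning that the linear span $V_B=\bigcup_{\lambda\in K^{\times}}\lambda B$, equipped with the gauge seminorm of $B$, is a Banach space with closed unit ball $B$. Completeness of $V$ enters twice: every bounded subset of $V$ lies in some $B\in\mathcal{B}(V)$ (a Mackey--Cauchy argument), and $\mathcal{B}(V)$ is filtered, since the bornological closure of the absolutely convex hull of $B_1\cup B_2$ is again completant. For $B\subseteq B'$ the inclusion $V_B\hookrightarrow V_{B'}$ is a contracting monomorphism in $\BanK$, and I set $\mathrm{diss}(V)$ to be the essentially monomorphic ind-Banach module presented by the filtered diagram $(V_B)_{B\in\mathcal{B}(V)}$. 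A bounded linear map $f\colon V\to W$ carries disks to bounded disks, hence induces a compatible family $V_B\to W_{B'}$ (with $B'$ the bornological closure of the absolutely convex hull of $f(B)$), which defines $\mathrm{diss}(f)$; functoriality is immediate, and when $V$ is already Banach its unit ball is a terminal object of $\mathcal{B}(V)$, so $\mathrm{diss}(V)\cong V$, which yields the claim about the restriction to $\BanK$.

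Next I would prove full faithfulness and identify the essential image with $\Ind^{m}(\BanK)$. By the hom formula \eqref{hom ind cat},
\[
\Hom_{\Ind(\BanK)}(\mathrm{diss}(V),\mathrm{diss}(W))=\lim_{B\in\mathcal{B}(V)}\ \colim_{B'\in\mathcal{B}(W)}\Hom_{\BanK}(V_B,W_{B'}).
\]
The key lemma is that a bounded linear map from a Banach space $M$ to $W$ has bounded image, hence factors boundedly through some $W_{B'}$; therefore $\colim_{B'}\Hom_{\BanK}(V_B,W_{B'})=\Hom_{\CBornK}(V_B,W)$, and passing to the limit over $B$, together with $V=\colim_{B}V_B$ in $\CBornK$, identifies the right-hand side with $\Hom_{\CBornK}(V,W)$. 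For essential surjectivity, given an essentially monomorphic object presented by a filtered diagram $(M_i)_i$ with monomorphic transition maps, I would form $V=\colim_i M_i$ in $\CBornK$ and check that the bornological closures of the images of the unit balls of the $M_i$ form a cofinal family in $\mathcal{B}(V)$, whence $\mathrm{diss}(V)$ is isomorphic to $(M_i)_i$.

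Finally, the (co)limit and exactness claims. The functor $L\colon\Ind(\BanK)\to\CBornK$ sending a filtered diagram $(M_i)_i$ to $\colim_i M_i$ computed in $\CBornK$ is left adjoint to $\mathrm{diss}$ --- the adjunction isomorphism is, once more, the factorisation lemma --- so $\mathrm{diss}$ preserves all limits. A filtered monomorphic colimit of objects of $\Ind^{m}(\BanK)$ stays in $\Ind^{m}(\BanK)$ and is computed termwise, and under the equivalence this matches the colimit computed in $\CBornK$; hence $\mathrm{diss}$ commutes with filtered monomorphic colimits. Exactness then follows by checking that the strict monomorphisms and strict epimorphisms of $\CBornK$ described above (via the induced, resp.\ quotient, bornology) correspond under $\mathrm{diss}$ to strict monos and epis in $\Ind(\BanK)$, or equivalently from the fact that an equivalence onto $\Ind^{m}(\BanK)$ preserves the strictly exact sequences defining the quasi-abelian structure, together with preservation of finite limits and colimits. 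The main obstacle is the cofinality claim in the essential-surjectivity step: since the transition maps $M_i\to M_{i'}$ are merely bounded, not isometric, one must argue carefully with gauge seminorms and bornological closures, and the same factorisation lemma underlies both the full faithfulness and the adjunction.
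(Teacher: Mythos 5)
The paper does not prove this theorem itself; it cites \cite[Proposition 3.60]{BamBen}, whose argument is precisely the Banach--disk decomposition and the colimit/dissection adjunction you reconstruct, so your route is the intended one. Your sketch is essentially correct, but one small imprecision is worth flagging: when $V$ is already Banach with closed unit ball $B_V$, the ball $B_V$ is \emph{not} a terminal object of $\mathcal{B}(V)$ (the scaled balls $\lambda B_V$ with $|\lambda|>1$ are also completant disks not contained in $B_V$); what is true is that the family $\{\lambda B_V : \lambda \in K^{\times}\}$ is cofinal in $\mathcal{B}(V)$, and each $V_{\lambda B_V}$ is the same underlying space with a rescaled norm, so the transition maps of the cofinal subdiagram are isomorphisms in $\BanK$ and the resulting ind-object is isomorphic (not isometrically, but that is all one needs in $\Ind(\BanK)$) to the constant object $V$. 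With this correction the claim that $\mathrm{diss}|_{\BanK}\cong\mathrm{id}$ holds, and the rest of the argument --- full faithfulness via the factorisation of bounded maps out of Banach spaces, essential surjectivity via cofinality of the bornological closures of the images of unit balls, the adjunction $L\dashv\mathrm{diss}$ giving limit preservation, and exactness via matching strict monomorphisms and strict epimorphisms on both sides --- is sound and matches the standard treatment.
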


Theorem \ref{compare ind and born} suggests the following definition.

\begin{dfn}
Let $R$ be a Banach ring. The category of \emph{complete bornological modules over $R$} is the category of essentially monomorphic objects of $\sf{Ind}(\sf{Ban}_R)$.
\end{dfn}

The main reason for introducing the category $\CBornK$ is the following theorem.

\begin{thm}\label{thm:CBorn_IndBan_derived_equivalent}
    The dissection functor induces an equivalence of stable symmetric monoidal $\infty$-categories 
    \[ \rm{diss}: \dercat_\infty(\sf{CBorn}_R) \stackrel{\simeq}{\rightarrow} \dercat_\infty(\Ind(\sf{Ban}_R)). \]
\end{thm}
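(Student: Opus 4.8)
The plan is to upgrade the equivalence of Theorem \ref{compare ind and born} from the level of abelian (quasi-abelian) categories to the level of derived $\infty$-categories, using the fact that the dissection functor $\rm{diss}$ is exact, fully faithful onto $\Ind^m(\BanR)$, and compatible with the monoidal structure. First I would recall that by the results of the previous subsection, $\cal{C}(\Ind(\BanR))$ carries the projective model structure and presents the stable $\infty$-category $\dercat_\infty(\Ind(\BanR))$; one needs the analogous statement for $\CBornK$, namely that $\cal{C}(\CBornK)$ also carries a projective model structure presenting a stable symmetric monoidal $\infty$-category $\dercat_\infty(\CBornK)$. This is where I would invoke that $\CBornK$ is quasi-abelian (stated above), is closed symmetric monoidal for the completed bornological tensor product, and has enough projectives with the tensor product of projectives projective — these being exactly the hypotheses of Theorem \ref{proj model monoidal} and Remark \ref{infty monoidal structure}; alternatively, cite \cite{kelly2021analytic} where the bornological homotopical algebra context is set up directly.

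Next I would produce the comparison functor at the $\infty$-categorical level. Since $\rm{diss}$ is exact it preserves short strictly exact sequences, hence sends quasi-isomorphisms of complexes to quasi-isomorphisms (acyclicity is detected by the left heart, and $\rm{diss}$ is exact for the left $t$-structures), so it induces a functor $\cal{C}(\CBornK)\to\cal{C}(\Ind(\BanR))$ which is left Quillen — it preserves cofibrations because it preserves projectives (a left adjoint sending generating projectives to projectives) and preserves weak equivalences. One therefore obtains a functor $\dercat_\infty(\rm{diss})\colon\dercat_\infty(\CBornK)\to\dercat_\infty(\Ind(\BanR))$, and monoidality is inherited because $\rm{diss}$ commutes with filtered monomorphic colimits and with the completed tensor product (this is part of, or an easy consequence of, the construction in \cite[Proposition 3.60]{BamBen} together with the description of $\wotimes$ on Ind-objects in Proposition \ref{ind monoidal}).

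It then remains to check the derived functor is an equivalence. The key point is that the essential image $\Ind^m(\BanR)$ strictly generates $\dercat_\infty(\Ind(\BanR))$: every object of $\Ind(\BanR)$ receives a strict epimorphism from a projective, and the projectives $c^0(M)$ — being filtered colimits of monomorphisms of Banach rescalings — lie in $\Ind^m(\BanR)$, hence in the image of $\rm{diss}$. So the image of $\dercat_\infty(\rm{diss})$ contains a set of compact generators of the target, and the functor is fully faithful on these generators because $\rm{diss}$ is fully faithful on $\Ind^m(\BanR)$ and exact (so it computes $\Ext$'s correctly — one checks $\rm{Hom}$ in the derived category between bounded complexes of projectives in $\CBornK$ maps isomorphically, using that $\rm{diss}$ of a projective is projective and that $\rm{diss}$ is fully faithful). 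A standard argument (a fully faithful, colimit-preserving, exact functor between compactly generated stable $\infty$-categories that hits a generating set is an equivalence) then finishes the proof. I expect the main obstacle to be the careful verification that $\cal{C}(\CBornK)$ genuinely satisfies the hypotheses needed to produce the projective model structure and that $\rm{diss}$ is honestly left Quillen and strong monoidal — i.e. the bookkeeping identifying the bornological tensor product with the Ind-Banach one under dissection — rather than the final abstract equivalence argument, which is formal once the generators-and-full-faithfulness input is in place.
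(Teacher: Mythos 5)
The paper's own ``proof'' of this statement is just a citation to \cite[Proposition 3.19]{Bambozzi_2024}; it gives no argument. So there is no in-paper proof to compare your sketch against, and the most useful thing I can do is assess the sketch on its own merits.

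Your overall strategy --- identify the projectives of $\Ind(\BanR)$ as objects already lying in $\Ind^m(\BanR) = \CBornK$ (or $\sf{CBorn}_R$), build both derived categories from complexes of the same projectives, and then conclude via a compact-generation argument --- is the right idea and is very likely close to what the cited reference does. Two points, however, need repair. First, you claim that $\rm{diss}$ is a left adjoint and hence left Quillen; but Theorem \ref{compare ind and born} says $\rm{diss}$ commutes with \emph{all limits} (and only with \emph{filtered monomorphic} colimits), which is the signature of a \emph{right} adjoint: the left adjoint is the colimit/realization functor $\Ind(\BanR)\to\sf{CBorn}_R$, not $\rm{diss}$ itself. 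You can still construct $\dercat_\infty(\rm{diss})$ directly, because $\rm{diss}$ is exact and therefore carries quasi-isomorphisms to quasi-isomorphisms, but the ``left Quillen'' framing is wrong and would need to be replaced (e.g.\ by a right Quillen / derived-right-adjoint argument, or by working at the level of complexes of projectives). Second, and more substantively, your closing ``standard argument'' requires that $\dercat_\infty(\rm{diss})$ preserve all small colimits so that full faithfulness on compact generators propagates to the whole category. You cannot get this for free from the statement that $\rm{diss}$ preserves \emph{filtered monomorphic} colimits, because in $\dercat_\infty$ arbitrary filtered colimits are taken, not just monomorphic ones; you would need either to verify colimit-preservation separately (e.g.\ via the left adjoint, or by showing every filtered colimit in the derived category can be presented by a monomorphic system of projective complexes), or to avoid the compact-generation reduction entirely and argue directly that complexes of (common) projectives with strictly exact totalization in one category are also strictly exact in the other. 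Once those two points are fixed, the argument should go through.
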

\begin{proof}
    This follows from \cite[Proposition 3.19]{Bambozzi_2024} using the fact that they are stable $\infty$-categories.
\end{proof}

Theorem \ref{thm:CBorn_IndBan_derived_equivalent} tells us that from the point of view of derived geometry, the categories $\CBornK$ and $\Ind(\BanK)$ are equivalent. However, $\CBornK$ is a much more concrete category than $\Ind(\BanK)$ and often permits the application of the methods and theorems of functional analysis. Therefore, we prefer to phrase our results in terms of bornological spaces instead of ind-Banach spaces whenever convenient.

\section{Tempered power series}\label{section tempered functions}

Let $R$ be a non-archimedean 
Banach ring. In this section we introduce the tempered power series over $R$ and prove some properties of these series that we will need later.
Let $n\in\N$, we set $\tate{R}{t}_{n}$ to be the $R$-module of power series 

\begin{equation*}
\tate{R}{t}_{n} = \left\{\sum_{i\in\N}a_it^i\in \power{R}{t}:a_i\in R,\ \lim_{i\rightarrow\infty}|a_i|(i+1)^{-n}\rightarrow 0\right\}.
\end{equation*}  

We can endow $\tate{R}{t}_n$ with the sup-norm 
\begin{equation}\label{sup norm}
\norm { \sum_{i\in \N}a_{i}t^i}_n = \sup_{i\in \N} \, ( |a_i|(i+1)^{-n}),
\end{equation}
and we see that $\tate{R}{t}_n$ is an element of $\BanR$. More precisely, $\tate{R}{t}_n$ is isomorphic to (see \cite[proof of Proposition 3.21]{BamBen})
\begin{equation} \label{eq:log_growth_projective}
    {\coprod_{i\in\N}}^{\le 1} R_{(i+1)^{-n}},
\end{equation}
where the coproduct is computed in the contracting category of Banach modules $\BanR^{\le 1}$. We also notice that $\tate{R}{t}_n$ is not an algebra with respect to the multiplication of power series.  And (see  section \ref{section notation})  by  $ R_{(i+1)^{-n}}$ we indicate $R$  with the  rescaled norm for $ x \in R$ equal to  $\norm{x}_{R_{(i+1)^{-n}}}= \norm{x}_R {(i+1)^{-n}}$.
Note that there are 
bounded monomorphisms
$$\tate{R}{t}_n\hookrightarrow\tate{R}{t}_{n+1}.$$

\begin{dfn}\label{definition tempered series}
    The ind-Banach module of \emph{tempered power series over} $R$ is the element of $\Ind(\BanR)$ defined as
    \begin{equation*}
        \temp{R}{t}=``\colim_{n\in\N}"\tate{R}{t}_n.
    \end{equation*}
\end{dfn}

Note that actually $\temp{R}{t}$ belongs to  $\Ind^m(\BanR)$ and that the usual multiplication of power series defines morphisms
$$\tate{R}{t}_n\wotimes_R\tate{R}{t}_n\rightarrow\tate{R}{t}_{2n}.$$
These morphisms endow $\temp{R}{t}$ with the structure of a commutative algebra object in $\Ind(\BanR)$.

\begin{dfn}
    Let $n\in\Z$, we denote
    $$ \power{R}{t}_n = \left\{\sum_{i\in\N}a_it_i\in \power{R}{t}:a_i\in R,\ \sup_{i\in\N}|a_i|(i+1)^{-n}<\infty\right\}.$$
    We endow $\power{R}{t}_n$ with the sup norm.
\end{dfn}

The object $\power{R}{t}_n$, endowed with the sup norm is isomorphic to the product 
$${\prod_{i\in \N}}^{\le 1} R_{(i+1)^{-n}}$$
in $\BanR^{\le 1}$ (see \cite[proof of Proposition 3.21]{BamBen}). This is the classical module of power series with coefficients in $R$ having log-growth $n$.
It is easy to see that
$$\temp{R}{t}=``\colim_{n\in\N}"\power{R}{t}_n.$$

\begin{lem}\label{tempered is flat}
    The module $\temp{R}{t}$ is a flat object in $\mathsf{Ind}(\mathsf{Ban}_R)$.
\end{lem}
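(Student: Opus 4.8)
The plan is to reduce the flatness of $\temp{R}{t}$ to a statement about each $\tate{R}{t}_n$, using the explicit description of the tempered power series as a filtered colimit. First I would recall from Proposition \ref{pro ind ban is quasi abelian} that in $\Ind(\BanR)$ a filtered colimit of flat objects is flat: indeed, tensoring commutes with the filtered colimit (by the description of $\wotimes_R$ on $\Ind(\BanR)$ in Proposition \ref{ind monoidal}), and filtered colimits are exact in the elementary quasi-abelian category $\Ind(\BanR)$ since they commute with finite limits and colimits (Proposition \ref{prop Ind is elementary}). Hence it suffices to show that each Banach module $\tate{R}{t}_n$ is flat in $\BanR$, equivalently flat in $\Ind(\BanR)$.

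The key observation is the identification \eqref{eq:log_growth_projective}, namely that $\tate{R}{t}_n \simeq {\coprod_{i\in\N}}^{\le 1} R_{(i+1)^{-n}}$, a countable coproduct in $\BanR^{\le 1}$ of rescaled copies of $R$. Now $R$ is the unit of the monoidal structure, so each $R_{(i+1)^{-n}}$ is isomorphic (as an object carrying a $\wotimes_R$-action, up to the harmless rescaling $R_c \wotimes_R M \cong M_c$) to the unit, and in particular is flat. The point is then that this particular coproduct is a filtered colimit of its finite sub-coproducts $\bigoplus_{i\le N} R_{(i+1)^{-n}}$, each of which is a finite direct sum of flats, hence flat (finite direct sums of flats are flat, since $\wotimes_R$ is additive and exactness is preserved under finite direct sums). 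Applying again the fact that a filtered colimit of flats is flat, we conclude $\tate{R}{t}_n$ is flat. One subtlety: the coproduct in \eqref{eq:log_growth_projective} is taken in the contracting category $\BanR^{\le 1}$ rather than in $\BanR$; I would note (as is done around \eqref{eq:log_growth_projective} and in the cited \cite[proof of Proposition 3.21]{BamBen}) that this $c^0$-type object coincides with the relevant colimit/coproduct computed in $\BanR$, so the filtered-colimit argument applies verbatim.

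Alternatively — and perhaps more cleanly — I would simply observe that $\tate{R}{t}_n$ is isomorphic, via \eqref{eq:log_growth_projective} and the discussion of the object $c^0(M)$ in Section \ref{section ind-banach}, to a direct summand of an object of the form $c^0(M)$ (in fact it is itself of that shape, with $M$ built from the elements $(i+1)^{-n}$), and such objects are projective in $\BanR$ by \cite[Lemma 3.26]{BamBen}; projective objects in $\BanR$ are flat by \cite[Proposition 3.13]{Bambozzi_2024}, and this flatness persists in $\Ind(\BanR)$ by Proposition \ref{pro ind ban is quasi abelian}. Then $\temp{R}{t} = \quotecup_{n\in\N} \tate{R}{t}_n$ is a filtered colimit of flat objects, hence flat.

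I do not expect a serious obstacle here: the statement is genuinely a corollary of the structural results already assembled, and the only care needed is bookkeeping about where coproducts/colimits are computed ($\BanR$ versus $\BanR^{\le 1}$ versus $\Ind(\BanR)$) and the trivial remark that rescaling a Banach module, $M \rightsquigarrow M_c$, preserves flatness because $M_c \wotimes_R (\cdot) \cong (M \wotimes_R (\cdot))_c$ is exact whenever $M \wotimes_R (\cdot)$ is. The mildest "hard part" is just making sure the filtered-colimit-of-flats principle is invoked with the correct ambient category, which is immediate from Propositions \ref{prop Ind is elementary} and \ref{ind monoidal}.
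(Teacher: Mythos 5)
Your second, ``cleaner'' alternative is precisely the paper's proof: identify $\tate{R}{t}_n$ with $\contcoprod_{i\in\N} R_{(i+1)^{-n}}$, observe that this is a projective object of $\BanR$ of $c^0(M)$-type, invoke that projectives are flat (Proposition 3.13 of \cite{Bambozzi_2024}), and conclude because $\temp{R}{t}$ is a filtered colimit of flat objects in the elementary quasi-abelian category $\Ind(\BanR)$. That route is correct and you should adopt it.

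Your first argument, however, has a genuine gap at the point you yourself flagged. The object $\contcoprod_{i\in\N} R_{(i+1)^{-n}}$ is the coproduct in the contracting category $\BanR^{\le 1}$; it is a single Banach module, concretely a $c_0$-type completion. It is \emph{not} the filtered colimit of its finite sub-coproducts: in $\BanR$ that filtered colimit does not exist (the category is not cocomplete), and in $\Ind(\BanR)$ the filtered colimit of the finite direct sums is the ind-object $\quotecolim_{N}\bigoplus_{i\le N} R_{(i+1)^{-n}}$, which is \emph{not} represented by a single Banach module and hence is not isomorphic to the compact object $\tate{R}{t}_n$ — the canonical map from the ind-colimit into $\tate{R}{t}_n$ is a monomorphism but not an isomorphism. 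So the parenthetical claim that the $c^0$-type coproduct ``coincides with the relevant colimit/coproduct computed in $\BanR$'' is false, and the filtered-colimit-of-flats principle does not apply to conclude flatness of $\tate{R}{t}_n$ along this route. You would need a separate argument that the $\ell^1$-style contracting coproduct of flats is flat, which is not automatic; passing through projectivity, as in your alternative and the paper, is exactly what avoids this.
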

\begin{proof}
    Since $$\tate{R}{t}_n\cong{\coprod_{i\in\N}}^{\le 1} R_{(i+1)^{-n}}$$ we have that $\tate{R}{t}_n$ is projective over $R$ \cite[Proposition 3.9]{Bambozzi_2024}, and therefore flat. Thus $\temp{R}{t}$ is flat because it is a filtered colimit of flat objects in an elementary quasi-abelian category.
\end{proof}

We now define multivariable tempered power series. We set 
$$\tate{R}{t_1,\dots,t_m}_n=\tate{R}{t_1}_n\wotimes_R\dots\wotimes_R\tate{R}{t_m}_n.$$
We can give a more concrete description of this object of $\BanR$.
\begin{pro}
    Let
    \begin{equation*}
       \mathcal{S} =
      \left\{\sum_{j_1,\dots, j_m\in\N}a_{j_1,\dots,j_m}t^{j_1}\dots t^{j_m}: \lim_{j_1+\dots+j_m\rightarrow +\infty}|a_{j_1,\dots,j_m}|((j_1+1)\dots (j_m+1))^{-n}=0\right\}  
    \end{equation*}
    endowed with the norm
    \begin{equation*}
        \norm{\sum_{j_1,\dots, j_m\in\N}a_{j_1,\dots,j_m}t^{j_1}\dots t^{j_m}}=\mathrm{sup}_{j_1,\dots,j_m\in\N}|a_{j_1,\dots,j_m}|((j_1+1)\dots (j_m+1))^{-n}.
    \end{equation*}
    We have a canonical isomorphism $$\tate{R}{t_1,\dots,t_m}_n\cong\mathcal{S}.$$      
\end{pro}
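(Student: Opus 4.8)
The plan is to reduce the statement to the way the completed tensor product interacts with coproducts in the contracting category $\BanR^{\le 1}$. First I would recall from \eqref{eq:log_growth_projective} that, for each $i$, there is an isomorphism in $\BanR$
\[
\tate{R}{t_i}_n \;\cong\; \contcoprod_{j\in\N} R_{(j+1)^{-n}},
\]
under which $t_i^{\,j}$ corresponds to the $j$-th standard generator. Since $\BanR^{\le 1}$ is a closed symmetric monoidal category, $\wotimes_R$ commutes with coproducts in each variable — this is precisely the fact invoked in the proof of Proposition \ref{tensor of banach projectives}. Combining this (applied $m-1$ times, using the coherence of the iterated tensor product) with the obvious isometry $R_{c}\wotimes_R R_{c'}\cong R_{cc'}$ for $c,c'\in\R_{>0}$, I obtain
\[
\tate{R}{t_1,\dots,t_m}_n \;\cong\; \contcoprod_{(j_1,\dots,j_m)\in\N^m} R_{((j_1+1)\cdots(j_m+1))^{-n}},
\]
with $t_1^{j_1}\cdots t_m^{j_m}$ corresponding to the standard generator indexed by $(j_1,\dots,j_m)$.

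Next I would unravel the right-hand side using the explicit description of coproducts in $\BanR^{\le 1}$ recalled before \eqref{eq:log_growth_projective} (cf.\ the proof of \cite[Proposition 3.21]{BamBen}): the object $\contcoprod_{(j_1,\dots,j_m)\in\N^m} R_{((j_1+1)\cdots(j_m+1))^{-n}}$ is the module of families $(a_{j_1,\dots,j_m})$ with $a_{j_1,\dots,j_m}\in R$ for which $\abs{a_{j_1,\dots,j_m}}_R\,((j_1+1)\cdots(j_m+1))^{-n}$ tends to $0$ along the filter of cofinite subsets of $\N^m$, equipped with the supremum of these quantities as norm. Because for every $N$ there are only finitely many $(j_1,\dots,j_m)\in\N^m$ with $j_1+\dots+j_m\le N$, convergence to $0$ along the cofinite filter is equivalent to $\lim_{j_1+\dots+j_m\to+\infty}\abs{a_{j_1,\dots,j_m}}_R\,((j_1+1)\cdots(j_m+1))^{-n}=0$. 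Hence sending $(a_{j_1,\dots,j_m})$ to the power series $\sum a_{j_1,\dots,j_m}t^{j_1}\cdots t^{j_m}$ identifies this object, $R$-linearly and isometrically, with $\mathcal{S}$; composing with the isomorphism of the first step gives the asserted canonical isomorphism $\tate{R}{t_1,\dots,t_m}_n\cong\mathcal{S}$.

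The only step carrying real content is the commutation of $\wotimes_R$ with the infinite coproducts, and this is exactly why it is convenient to run the computation inside $\BanR^{\le 1}$, where the coproduct is a genuine colimit and $\wotimes_R$ is a left adjoint; everything after that is bookkeeping of multi-indices, limit conditions, and cross-norms, and I expect no further obstacle. One could alternatively argue entirely within $\BanR$, since the coproduct of the $R_{(j+1)^{-n}}$ there has the same underlying module (families tending to $0$), but the monoidal argument is cleanest in the contracting category.
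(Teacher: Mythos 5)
Your argument is correct and follows exactly the route the paper takes: reduce $\tate{R}{t_1,\dots,t_m}_n$ to an iterated tensor product of contracting coproducts, commute $\wotimes_R$ past $\contcoprod$ in $\BanR^{\le 1}$ using that it is a closed monoidal category, and then identify the resulting coproduct $\contcoprod_{(j_1,\dots,j_m)} R_{((j_1+1)\cdots(j_m+1))^{-n}}$ with $\mathcal{S}$ via the explicit description of contracting coproducts. The paper states this more tersely, but your added remarks (the isometry $R_c\wotimes_R R_{c'}\cong R_{cc'}$ and the equivalence of cofinite-filter convergence with the $j_1+\dots+j_m\to\infty$ condition) are correct and merely make explicit what the paper leaves to the reader.
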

\begin{proof}
    The normed module $\cal{S}$ is isomorphic to
    $$\contcoprod_{j_1,\dots, j_m\in\N} 
 R_{((j_1+1)\dots (j_m+1))^{-n}},$$
    hence it is an object of $\BanR$. The claim follows from the fact that 
    $$\tate{R}{t_1,\dots,t_m}_n\cong\contcoprod_{j_1\in \N}R_{(j_1+1)^{-n}}\wotimes_R\dots\wotimes_R \contcoprod_{j_m\in\N}R_{(j_m+1)^{-n}},$$
    and from the fact that the coproducts in $\BanR^{\le 1}$ commute with 
    the completed tensor products, because $\BanR^{\le 1}$ is closed symmetric monoidal.
\end{proof}

We now define multivariable tempered power series.

\begin{dfn}\label{definition multivariable tempered series}
    The ind-Banach algebra of \emph{multivariable tempered power series in} $m$ \emph{variables} is 
    $$\temp{R}{t_1,\dots,t_m}=\quotecolim_{n\in\N}\tate{R}{t_1,\dots,t_m}_n.$$
\end{dfn}

We also define tempered power series for objects of $\Ind(\BanR)$.
Let $\mathcal{M}=\quotecolim_{j\in J} M_j$ be an object of $\Ind(\BanR)$, where $J$ is a set and $M_j\in\BanR$. We set $$\temp{\mathcal{M}}{t}=\mathcal{M}\wotimes_R\temp{R}{t}.$$ Note that this object is identified with \begin{equation*}
\quotecolim_{j\in J} \quotecolim_{n\in\N} M_j\wotimes_R\tate{R}{t}_n.
\end{equation*}
In particular, for any object $A \in \sf{Comm}(\Ind(\BanR))$ we have the associated algebra of tempered power series
\[ \temp{A}{t} = A \wotimes_R \temp{R}{t} = \quotecolim_{n \in \N} A \wotimes_R \tate{R}{t}_n = \quotecolim_{n \in \N} \tate{A}{t}_n. \]

The following is a technical lemma that we will need later on.

\begin{lem}\label{idclosedintemp}
    Let $A \in \sf{Comm}(\BanR)$.
    Let $g,g'\in A$ be multiplicative   unitary
elements, i.e. such that $\norm{g a}_A = \norm{g' a}_A = \norm{a}_A$ for all $a \in A$.
    Suppose moreover that $(g t- g')$ is not a zero divisor in $\temp{A}{t}$.
    Then the morphism induced by multiplication by $(gt-g')$:
    \begin{equation}\label{tempered ideal is strict}\temp{A}{t}\xrightarrow{(gt-g')}\temp{A}{t}\end{equation}
    is a strict monomorphism in $\Ind(\BanR)$.
\end{lem}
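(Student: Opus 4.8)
The plan is to unwind the definition of $\temp{A}{t}$ as a filtered colimit and reduce the statement to a bornological assertion about each Banach module $\tate{A}{t}_n$, then check that the map $(gt-g')$ is, at each finite level, an isometry onto a bornologically closed subspace. First I would recall that by Theorem~\ref{compare ind and born} (respectively Theorem~\ref{thm:CBorn_IndBan_derived_equivalent} for the derived statement, though here only the underlying quasi-abelian structure matters) the object $\temp{A}{t}\in\Ind^m(\BanR)$ corresponds to the complete bornological space $\colim_n \tate{A}{t}_n$, and that a morphism in $\Ind^m(\BanR)$ between essentially monomorphic objects is a strict monomorphism precisely when the corresponding map of complete bornological spaces is injective with bornologically closed image carrying the induced bornology (cf.\ the description after Proposition on $\CBornK$). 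Injectivity is given by hypothesis, since $(gt-g')$ is not a zero divisor in $\temp{\mathcal{A}}{t}$, so the real content is the bornological strictness.

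Next I would compute the effect of multiplication by $(gt-g')$ on an element $\sum_{i}a_i t^i$: it sends this to $\sum_i (g a_{i-1} - g' a_i) t^i$ (with $a_{-1}=0$). The key observation is that, because $g$ and $g'$ are multiplicative, one has the exact two-sided estimate relating $\norm{\sum_i a_i t^i}_n$ to $\norm{\sum_i (g a_{i-1}-g'a_i)t^i}_m$ for a suitable shift of indices $m = n$ (or $m=n$ after absorbing the polynomial factor $(i+1)^n/(i+2)^n$, which is bounded above and below away from $0$ and $\infty$ on each fixed range and tends to $1$): the non-archimedean triangle inequality gives $\norm{(gt-g')\cdot f}_n \le \norm{f}_n$, while a recursive reconstruction $a_i = (g')^{-1}(g a_{i-1} - (\text{$i$-th coefficient of } (gt-g')f))$ — valid once we localize so that $g'$ is invertible, or more carefully, valid on coefficients using multiplicativity without inverting $g'$ — yields a bound of the form $\abs{a_i} \le \max_{j\le i}\abs{(gt-g')f \text{ at } t^j}$. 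This shows that on each Banach level the map is a topological embedding with closed image, where the image is the closed submodule of series whose coefficients satisfy the compatibility forced by the recursion.

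The main obstacle, I expect, is handling the fact that $g'$ need not be invertible in $A$: the naive recursion $a_i = (g')^{-1}(\dots)$ is not available, so one must argue more delicately that the image of $(gt-g')$ is bornologically closed and that the bornology it inherits from $\temp{A}{t}$ agrees with the pushed-forward bornology. I would handle this by showing directly that if $(gt-g')f_k \to 0$ in the Mackey sense in $\temp{A}{t}$ — i.e.\ all the $f_k$ and the products lie in a single $\tate{A}{t}_n$ and the products go to $0$ in the norm $\norm{\cdot}_n$ — then the $f_k$ themselves are bounded in $\tate{A}{t}_n$ and tend to $0$; multiplicativity of $g$ is what forces the top-degree coefficient, hence inductively all coefficients, of $f_k$ to be controlled by those of $(gt-g')f_k$. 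Combined with injectivity, this gives that $(gt-g')$ is a strict monomorphism by the characterization of strict monomorphisms in $\CBornK \simeq \Ind^m(\BanR)$. A clean alternative, if one prefers to stay inside $\Ind(\BanR)$: exhibit $\temp{A}{t}$ as the colimit of the $\tate{A}{t}_n$ along isometric inclusions, note multiplication by $(gt-g')$ respects the filtration up to reindexing, and invoke that a filtered colimit (along monomorphisms) of strict monomorphisms of Banach modules is a strict monomorphism in the elementary quasi-abelian category $\Ind(\BanR)$ — reducing everything to the single-Banach-level estimate above, which is where multiplicativity of $g,g'$ does all the work.
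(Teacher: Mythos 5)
Your high-level strategy matches the paper's: reduce along the filtered colimit to a norm estimate of the form $\norm{h}_n \le \norm{(gt-g')h}_n$ on each Banach level $\tate{A}{t}_n$, and then conclude bornological closedness (equivalently, Cauchy sequences $(gt-g')f_m$ force $f_m$ to be Cauchy), so that the map is a strict monomorphism of complete bornological modules. You also correctly flag that $g'$ need not be invertible, which rules out the naive forward recursion $a_i = (g')^{-1}(g a_{i-1} - \cdots)$.

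The gap is in the combinatorial heart of the estimate, which you replace with a ``top-degree coefficient, hence inductively all coefficients'' recursion. This cannot work. First, elements of $\tate{A}{t}_n$ are genuine infinite power series, so there is no top degree to anchor the induction. Second, and more seriously, even for polynomials the direction is wrong: the weight $(i+1)^{-n}$ in the norm $\norm{\cdot}_n$ is \emph{decreasing} in $i$, so a bound on $\norm{a_d}$ of size $C(d+2)^n$ obtained from the leading coefficient of $(gt-g')h$ does not, when pushed down through $a_{d-1}, a_{d-2}, \ldots, a_0$, yield the required $\norm{a_i}(i+1)^{-n} \le C$; you end up with a bound depending on $d$. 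The paper instead fixes $i$ and scans \emph{backward} to the largest $\bar{i} \le i$ at which $\norm{a_{\bar{i}-1}}_A \neq \norm{a_{\bar{i}}}_A$ (with $a_{-1}=0$), so that the norms $\norm{a_j}_A$ are constant on $[\bar i, i]$. At $\bar{i}$ the ultrametric inequality gives $\norm{g a_{\bar{i}-1} - g' a_{\bar{i}}}_A = \max(\norm{a_{\bar{i}-1}}_A,\norm{a_{\bar{i}}}_A)$, whence $\norm{a_{\bar{i}}}_A(\bar{i}+1)^{-n} \le C$. Since $\norm{a_i}_A = \norm{a_{\bar{i}}}_A$ and $(i+1)^{-n} \le (\bar{i}+1)^{-n}$, the bound propagates \emph{forward} to $i$. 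That the weight decreases in $i$ is exactly what makes this direction of propagation work and the top-down one fail; this is the ingredient your sketch is missing.
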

\begin{proof}
    Note that the morphism \eqref{tempered ideal is strict}
    sends $\tate{A}{t}_n$ in $\tate{A}{t}_n$.
    Therefore, it suffices to show that 
    \begin{equation}\label{strict morphism at some level}
    \tate{A}{t}_n\xrightarrow{(gt-g')}\tate{A}{t}_n
    \end{equation}
    is strict. Let  
    \begin{equation*}
    h(gt-g')\in\tate{A}{t}_n,\end{equation*} with $h=\sum_{i\in\N} a_it^i \in \tate{A}{t}_n$, and $n\in\N$. The coefficient of $t^i$ in $(gt-g')h$ is
    \[
        g a_{i-1}-g'a_i,
    \]
    with $a_{-1} = 0$.
    We will prove that 
    \begin{equation}
    \label{ineqimp}
    \norm{h}_n\le\norm{h(gt-g')}_n,
    \end{equation}
    with $|\cdot|_n$ the norm of $\tate{A}{t}_n$.
    This will imply the claim. 
    
    Set $C=\norm{(gt-g')h}_n$. Thus, for every $i\geq 0$, we have
    \[
        \norm{g a_{i-1}-g'a_i}_A (i+1)^{-n}\leq C .
    \]
    
    Fix $i\geq 0$. If
    \[
        \norm{a_{i-1}}_A\neq \norm{a_i}_A,
    \]
    then, since multiplication by $g$ and $g'$ is isometric, the non-archimedean triangle
    inequality gives
    \[
        \norm{g a_{i-1}-g'a_i}_A
        =
        \max\{\norm{a_{i-1}}_A,\norm{a_i}_A\}.
    \]
    In particular,
    \[
        \norm{a_i}_A(i+1)^{-n}\leq C.
    \]
    
    Suppose now that $\norm{a_{i-1}}_A=\norm{a_i}_A$. If $a_i=0$ there is nothing to
    prove. Otherwise, since $a_{-1}=0$, there exists a largest integer $\bar{i}<i$ such that
    \[
        \norm{a_{\bar{i}-1}}_A\neq \norm{a_i}_A.
    \]
    By maximality, $\norm{a_{\bar{i}}}_A=\norm{a_i}_A$. Applying the previous case to
    the index $\bar{i}$ gives
    \[
        \norm{a_{\bar{i}}}_A(\bar{i}+1)^{-n}\leq C.
    \]
    Since $\bar{i}\leq i$ and $n\geq 0$, we have $(i+1)^{-n}\leq(\bar{i}+1)^{-n}$.
    Therefore
    \[
        \norm{a_i}_A(i+1)^{-n}
        =
        \norm{a_{\bar{i}}}_A(i+1)^{-n}
        \leq
        \norm{a_{\bar{i}}}_A(\bar{i}+1)^{-n}
        \leq C.
    \]
    Taking the supremum over all $i$ proves \eqref{ineqimp}.

    Since $gt-g'$ is not a zero divisor, the map \eqref{strict morphism at some level} is
    injective. Moreover, if
    $(gt-g')f_m$ converges in $\tate{A}{t}_n$, then \eqref{ineqimp} implies that
    $\{f_m\}_m$ is Cauchy, hence converges to some $f\in\tate{A}{t}_n$, and the limit is
    $(gt-g')f$. Thus \eqref{strict morphism at some level} is a strict monomorphism in
    $\BanR$. Passing to the filtered Ind-colimit over $n$, we obtain that
    \eqref{tempered ideal is strict} is a strict monomorphism in $\Ind(\BanR)$.
\end{proof}

 \begin{rmk} In fact, the hypothesis that $(g t- g')$
is not a zero divisor could be weakened. The retraction remains continuous regardless, but we shall not make use of this more general result.
 \end{rmk}

 \medskip
 To generalize Proposition \ref{idclosedintemp}, we need to introduce  the notion of multiplicative element.

\begin{dfn} \label{defn:multiplicative_element}
Let $A\in \sf{Comm}(\sf{CBorn}_R)$, and let $g\in A$. We say that $g$ is
\emph{multiplicative} if there exists a presentation
\[
    A=\colim_{i\in I} A_i
\]
as a filtered monomorphic colimit of Banach $R$-modules such that multiplication by
$g$ is represented levelwise by isometries. More explicitly, for every $i\in I$ there
exists $j\in I$ and a representative
\[
    \mu_g^{i,j}\colon A_i\longrightarrow A_j
\]
of the map $a\mapsto ag$ such that
\[
    \norm{\mu_g^{i,j}(a)}_{A_j}=\norm{a}_{A_i}
\]
for every $a\in A_i$.
\end{dfn}
\medskip

\begin{cor} \label{cor:idclosedintemp}
Let $A\in \sf{Comm}(\sf{CBorn}_R)$, and let $g,g'\in A$ be multiplicative elements.
Assume that $gt-g'$ is not a zero divisor in $\temp{A}{t}$. Then multiplication by
$gt-g'$ induces a strict monomorphism
\[
    \temp{A}{t}
    \xrightarrow{\ gt-g'\ }
    \temp{A}{t}.
\]
\end{cor}

\begin{proof}
    Choose a presentation of $A$ as a filtered monomorphic colimit of Banach $R$-modules
    for which multiplication by $g$ and by $g'$ is represented levelwise by isometries. Since
    \[
        \temp{A}{t}
        =
        \colim_i \temp{A_i}{t},
    \]
    the proof of Lemma \ref{idclosedintemp} applies levelwise to this presentation. The same
    coefficient estimates show that multiplication by $gt-g'$ has closed image at each
    Banach level, and hence defines a strict monomorphism after passing to the filtered
    monomorphic colimit.
\end{proof}

We define now another algebra of power series: we will see later how it relates with the algebra of the  tempered ones. 

\begin{dfn}\label{definition fast series}
    We define the algebra of \emph{fast converging series}  to be
    $$\fast{R}{t}=\lim_{n\in\N}\power{R}{t}_{-n},$$
    where the limit is computed in $\Ind(\BanR)$ and 
    \[ \power{R}{t}_{-n} = \left\{\sum_{i\in\N}a_it^i\in \power{R}{t} : a_i\in R,\ \lim_{i \to \infty} \abs{a_i}(i+1)^n = 0 \right\}  \]
    equipped with the norm
    \[  
    \norm { \sum_{i\in \N}a_{i}t^i}_n = {\sup_{n \in \N} }\, ( \abs{a_i}(i+1)^n).
    \]
\end{dfn}

We also mention that we can define the multivariable algebra of fast converging series as
\[  \fast{R}{t_1, \ldots, t_n} = \fast{R}{t_1} \wotimes_R \cdots \wotimes_R \fast{R}{t_m}. \]
We will not study this multivariable algebra in detail here, since it will not be needed in this article  It will be studied in our future work  \cite{logdecay&other}

\section{The derived analytic spectrum of an ind-Banach ring}\label{section spectrum}

Let $\cal{E}$ be a closed symmetric monoidal $\infty$-category. In \cite[Construction 5.2]{ClauSchCompl}, Clausen and Scholze show that the collection of homotopy epimorphisms from the unit of $\cal{E}$ (or derived idempotent algebras in the terminology of \cite{ClauSchCompl}) is naturally a distributive lattice. In their construction, this lattice is interpreted as the lattice of closed subsets of a locale, which we denote here by $\mathcal S(\mathcal E)$. In general this locale need not be spatial, and hence need not come from an ordinary topological space. This point of view is used in \cite{ClauSchCompl} as a foundation for analytic geometry. Classical localizations in the usual models of algebraic and analytic geometry appear as closed subsets in their convention. For instance, if $A$ is a discrete ring and $A\to A_f$ is a localization, then $A\to A_f$ is a homotopy epimorphism in $\dercat_\infty(\sf{Mod}_A)$, and therefore determines a closed subset of $\mathcal S(\dercat_\infty(\sf{Mod}_A))$.
In this section we use the same lattice of homotopy epimorphisms, but with the opposite geometric convention: we construct from it a spectral topological space $\goth{S}(\mathcal E)$ whose compact open subsets correspond to homotopy epimorphisms from the unit. Thus $\goth{S}(\mathcal E)$ should be regarded as a Hochster-dual version of the Clausen--Scholze spectrum. If $A\in\sf{Comm}(\Ind(\BanR))$, then \[ \goth{S}\bigl(\dercat_\infty(\sf{Mod}_A)\bigr) \] will be our model for the derived analytic spectrum of $A$. This construction is compatible with the classical picture: for a discrete ring $A$, the localization $A\to A_f$ defines an open subset of $\goth{S}(\dercat_\infty(\sf{Mod}_A))$. Analogous results in analytic settings have been obtained in \cite{BenKre}, \cite{BamBen}, \cite{bambozzi2018stein}, \cite{BenKreFrechet}, \cite{BassatMuk}, \cite{bambozzi2023homotopy}. The subject is broader than this list of references, but these are the works most closely related to the present construction. The space $\goth{S}(\mathcal E)$ will be naturally equipped with a structure sheaf valued in $\mathcal E$.

\subsection{Preliminaries of lattice theory}
The main reference for this subsection is \cite{Johnstone}.
Recall that a \emph{lattice} is a partially ordered set $(L,\le)$ such that any finite set admits a least upper bound, also called \emph{join} and a greatest lower bound, also called \emph{meet}. In lattice theory, if $a,b\in L$ it is common use to denote the join and the meet of $\{a,b\}$ by $a\vee b$ and $a\wedge b$ respectively. More generally if $S$ is a subset of $L$, the join of $S$ (if it exists) will be denoted by $\bigvee S$. A lattice $L$ is called \emph{complete} if it admits also all infinite joins.
A lattice is called \emph{distributive} if 
$$a\wedge(b\vee c)=(a\wedge b) \vee (a\wedge c)$$
for any $a,b,c\in L$. We can then define the category $\sf{DLat}$ of distributive lattices setting its objects to be distributive lattices and morphisms to be functions that preserve finite meets and joins.

\begin{dfn}
A \emph{frame} is a complete distributive lattice that satisfies the so called \emph{infinite distributive law}:
$$ a\wedge\bigvee S=
 \bigvee_{s \in S} a \wedge s,$$
for every $a\in L$ and for every $S$ subset of $L$. The category $\sf{Frm}$ of frames, is the category whose objects are frames and whose morphisms are functions that preserve finite meets and arbitrary joins.
\end{dfn}

The following is the fundamental example of a frame.

\begin{exa}
    Let $X$ be a topological space. The poset of open subsets of $X$ (where the partial order is given by the inclusion) is a frame. We will denote it $\Omega(X)$.
\end{exa}

This example leads naturally to consider the category $\sf{Frm}^{\rm{op}}$ as the category of a sort of \say{pointless topological spaces} that we will denote by $\sf{Loc}$. The objects of $\sf{Loc}$ are called \emph{locales}.
Let $L$ be a distributive lattice. Recall that a \emph{descending subset} $S$ of $L$ is a subset of $L$ such that, if $a\in S$ and $a'\le a$, then $a'\in S$.
An \emph{ideal} of $L$ is a descending subset closed under finite joins. 
\begin{exa}
    Let $a\in L$, the principal ideal generated by $a$, denoted by $(a)$ is the ideal of $L$ whose elements are the $s\in L$ such that $s\le a$.
\end{exa}

The set of ideals of $L$, ordered by inclusion, will be denoted by $\mathrm{Idl}(L)$.

\begin{pro}\label{latt to frame compl}
    The poset $\rm{Idl}(L)$ is a frame.  
    Moreover the functor
    $$L\mapsto\rm{Idl}(L)$$
    defines a left adjoint to the forgetful functor
    $$\sf{Frm}\rightarrow \sf{DLat}.$$
\end{pro}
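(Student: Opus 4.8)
The plan is to prove Proposition \ref{latt to frame compl} in two stages: first verify that $\rm{Idl}(L)$ is a frame, then establish the adjunction. For the first stage, I would check that $\rm{Idl}(L)$, ordered by inclusion, is a complete lattice by exhibiting meets and joins explicitly. The meet of a family of ideals is simply their intersection, which is manifestly an ideal. The join is subtler: given ideals $\{I_\alpha\}$, the join $\bigvee_\alpha I_\alpha$ is the smallest ideal containing all of them, which can be described concretely as the set of elements $s \in L$ such that $s \le a_1 \vee \cdots \vee a_n$ for some finite collection $a_1, \dots, a_n$ with each $a_j$ lying in some $I_{\alpha_j}$. One then checks this is a descending subset closed under finite joins. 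The bottom element is $(0)$ (or the ideal $\{0\}$ if $L$ has a least element) and the top is $L$ itself.

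The crux of the first stage is the infinite distributive law $I \wedge \bigvee_\alpha J_\alpha = \bigvee_\alpha (I \wedge J_\alpha)$, i.e. $I \cap \bigvee_\alpha J_\alpha = \bigvee_\alpha (I \cap J_\alpha)$. The inclusion $\supseteq$ is formal since each $I \cap J_\alpha \subseteq I$ and $\subseteq \bigvee_\alpha J_\alpha$. For $\subseteq$, take $s \in I \cap \bigvee_\alpha J_\alpha$; then $s \le b_1 \vee \cdots \vee b_n$ with $b_j \in J_{\alpha_j}$, and since $s \in I$ we get $s = s \wedge (b_1 \vee \cdots \vee b_n) = (s \wedge b_1) \vee \cdots \vee (s \wedge b_n)$ using distributivity of $L$. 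Each $s \wedge b_j$ lies in $I$ (as $I$ is descending and $s \in I$) and in $J_{\alpha_j}$ (as $J_{\alpha_j}$ is descending and $b_j \in J_{\alpha_j}$), hence in $I \cap J_{\alpha_j}$; so $s \in \bigvee_\alpha(I \cap J_\alpha)$. This is the step where distributivity of $L$ is genuinely used, and it is the main (though not deep) point of the argument.

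For the adjunction, I would exhibit the unit $\eta_L: L \to \rm{Idl}(L)$ sending $a \mapsto (a)$ to the principal ideal, and check it is a morphism of distributive lattices: $(a \vee b) = (a) \vee (b)$ and $(a \wedge b) = (a) \cap (b)$, both immediate from the definitions. Then, given any frame $F$ and any $\sf{DLat}$-morphism $f: L \to F$, I would show there is a unique frame morphism $\tilde f: \rm{Idl}(L) \to F$ with $\tilde f \circ \eta_L = f$. The only possible definition is $\tilde f(I) = \bigvee_{a \in I} f(a)$, since every ideal is the join of the principal ideals it contains, $I = \bigvee_{a \in I}(a)$, and frame morphisms preserve arbitrary joins. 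It remains to verify $\tilde f$ is well-defined (the join exists in $F$ since $F$ is complete), preserves finite meets — here one uses the infinite distributive law in $F$ together with the fact that $I \cap J = \bigvee\{(a) : a \in I, a' \in J, \text{ with } a \wedge a' \ge a\}$, or more simply that $\tilde f(I \cap J) = \bigvee_{c \in I \cap J} f(c)$ and $\tilde f(I) \wedge \tilde f(J) = (\bigvee_{a \in I} f(a)) \wedge (\bigvee_{b \in J} f(b)) = \bigvee_{a \in I, b \in J} f(a \wedge b)$, and $a \wedge b \in I \cap J$ — and preserves arbitrary joins, which follows from associativity of joins in $F$.

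I expect no serious obstacle here; the proposition is essentially a standard fact from pointfree topology (the ideal completion of a distributive lattice is its free frame). The only places demanding care are the concrete description of infinite joins in $\rm{Idl}(L)$ and the repeated invocation of the distributive law (in $L$ for the frame structure, in $F$ for meet-preservation of $\tilde f$). Since the paper cites \cite{Johnstone} as the reference for this subsection, I would likely compress the argument and point to the relevant section there for the routine verifications, spelling out only the infinite distributive law and the universal property in enough detail to be self-contained.
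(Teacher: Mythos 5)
Your proof is correct and is essentially the standard argument found in Johnstone's \emph{Stone Spaces}, which is exactly what the paper itself defers to (the paper's ``proof'' is nothing more than the citation to Corollary II.2.11 there). You have correctly identified the one non-formal step in showing $\rm{Idl}(L)$ is a frame — namely, that the $\subseteq$ half of $I \cap \bigvee_\alpha J_\alpha \subseteq \bigvee_\alpha (I \cap J_\alpha)$ hinges on finite distributivity of $L$ applied inside the finite join witnessing membership in $\bigvee_\alpha J_\alpha$ — and the universal property is established correctly via $\tilde f(I) = \bigvee_{a \in I} f(a)$, with meet-preservation coming from the infinite distributive law in $F$ and join-preservation from the description of joins of ideals as generated ideals. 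The only place I would tighten things is your aside characterizing $I \cap J$ as $\bigvee\{(a) : a \in I,\ a' \in J,\ a \wedge a' \ge a\}$: the condition $a \wedge a' \ge a$ is just $a \le a'$, and since $J$ is a downset this reduces immediately to $a \in I \cap J$, so the formulation is correct but circuitous — your ``more simply'' version is the one to keep. There is also a small definitional point you hedge on (the bottom of $\rm{Idl}(L)$), which is resolved by noting that in the paper's convention ``closed under finite joins'' includes the empty join, so every ideal contains the bottom of $L$ and the least ideal is $\{0\}$; this is cosmetic and does not affect the argument.
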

\begin{proof}
    See \cite[Corollary II.2.11]{Johnstone}.
\end{proof}
It is easy to see that the meet of two elements $I,I'\in\rm{Idl}(L)$ is the intersection, whereas the join is the ideal generated by $I$ and $I'$. This implies the following proposition.

\begin{pro}\label{incl in ideals}
    The function
    $$L\rightarrow\rm{Idl}(L)$$
    defined by
    $$a\mapsto (a)$$
    is a distributive lattice morphism.
\end{pro}
Propositions \ref{latt to frame compl} and \ref{incl in ideals} mean that $\rm{Idl}(L)$ is a kind of \say{frame completion} of $L$. 
We now want to see when we can associate a topological space to a locale.
An ideal $I$ of $L$ is called \emph{prime} if it satisfies the following condition: if $a$ and $b$ are elements of $L$ such that $a\wedge b\in I$, then $a\in I$ or $b\in I$. 
A \emph{prime element} of $L$ is an element $a\in L$ such that the ideal $(a)$ is prime. 
Let now $\cal{L}$ be a frame.
The set of prime elements of $\cal{L}$ will be denoted by $\rm{pt}(\cal{L})$.
If $a\in \cal{L}$ then we can define the set 
$$D_a=\{p\in \rm{pt}(\cal{L}): a \ \rm{is} \ \rm{not} \ \rm{contained} \ \rm{in} \ p\}.$$
The following proposition is \cite[Lemma II.1.3]{Johnstone}.
\begin{pro}
    The family $\{D_a\}_{a\in
     {L}}$  forms the basis for a topology on $\rm{pt}(\cal{L})$.
\end{pro}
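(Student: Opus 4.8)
The statement to prove is that $\{D_a\}_{a \in L}$ is a basis for a topology on $\mathrm{pt}(\cal{L})$, where $\cal{L}$ is a frame and $D_a = \{p \in \mathrm{pt}(\cal{L}) : a \not\le p\}$. The plan is to verify the two defining properties of a basis: that the $D_a$ cover $\mathrm{pt}(\cal{L})$, and that finite intersections of basis elements are (unions of) basis elements.

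First I would observe that $D_\top = \mathrm{pt}(\cal{L})$, where $\top$ is the top element of $\cal{L}$: indeed no prime element $p$ can satisfy $\top \le p$ unless $p = \top$, and $\top$ is not prime (the ideal $(\top) = \cal{L}$ is not a proper ideal, hence by the standard convention not prime — or one simply excludes it, as is implicit in the definition of prime element). Hence every point of $\mathrm{pt}(\cal{L})$ lies in $D_\top$, so the family is a cover. Second, I would prove the key identity
\begin{equation*}
D_a \cap D_b = D_{a \wedge b}
\end{equation*}
for all $a, b \in \cal{L}$. The inclusion $D_{a\wedge b} \subseteq D_a \cap D_b$ is immediate from monotonicity: if $a \wedge b \not\le p$ then certainly $a \not\le p$ and $b \not\le p$, since $a \wedge b \le a$ and $a \wedge b \le b$. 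For the reverse inclusion, suppose $p \in D_a \cap D_b$, i.e. $a \not\le p$ and $b \not\le p$; we must show $a \wedge b \not\le p$. This is exactly the contrapositive of primality of $p$: if we had $a \wedge b \le p$, then since $(p)$ is a prime ideal and $a \wedge b \in (p)$, primality forces $a \in (p)$ or $b \in (p)$, i.e. $a \le p$ or $b \le p$, contradicting our assumption. Thus $a \wedge b \not\le p$, giving $D_a \cap D_b \subseteq D_{a \wedge b}$.

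Combining these, the family $\{D_a\}$ is closed under finite intersections (the empty intersection being $D_\top$, and binary intersections being handled by the identity above, whence all finite intersections by induction), and it covers $\mathrm{pt}(\cal{L})$. A family of subsets that covers the ambient set and is closed under finite intersections is automatically a basis for a topology — the associated open sets being arbitrary unions of members of the family. This completes the argument.

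I do not anticipate a serious obstacle here; the only subtlety worth flagging is the bookkeeping around the top element $\top$ and the convention that $\top$ is not counted as a prime element (equivalently that the improper ideal $\cal{L}$ is not prime), which is what makes $D_\top = \mathrm{pt}(\cal{L})$ rather than $\mathrm{pt}(\cal{L}) \setminus \{\top\}$; one should state this convention explicitly. Everything else is a direct unwinding of the definitions of prime ideal and meet, exactly as in the cited \cite[Lemma II.1.3]{Johnstone}.
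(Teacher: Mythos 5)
Your proof is correct and follows the standard argument from Johnstone's Lemma II.1.3, which the paper cites but does not reproduce, so there is no alternative route to compare against. The two key facts — that $D_\top = \mathrm{pt}(\mathcal{L})$ and that $D_a \cap D_b = D_{a \wedge b}$ — are exactly what one needs, and your use of the contrapositive of primality for the nontrivial inclusion $D_a \cap D_b \subseteq D_{a\wedge b}$ is the right move.

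Your flag about the top element deserves emphasis: as written, the paper's definition of a prime ideal (``if $a \wedge b \in I$ then $a \in I$ or $b \in I$'') does \emph{not} require $I$ to be proper, so the improper ideal $(\top) = L$ satisfies the condition trivially and $\top$ would then be counted as a prime element. With that reading, $\top \in \mathrm{pt}(\mathcal{L})$ but $\top \notin D_a$ for any $a$, so the $D_a$'s would fail to cover $\mathrm{pt}(\mathcal{L})$. The properness convention, which Johnstone does adopt, is therefore not a cosmetic footnote but necessary for the covering claim; you are right to require that it be stated explicitly. Everything else in your argument is sound.
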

Thus $\rm{pt}(\cal{L})$ is a topological space, endowed with a Zariski-like topology. The assignment $a\rightarrow D_a$ defines a morphism of frames 

\begin{equation}\label{spatial morph}
\cal{L} \to \Omega(\rm{pt}(\cal{L})).
\end{equation}

\begin{dfn}
    A frame $\cal{L}$ is called \emph{spatial} if the morphism \eqref{spatial morph} is an isomorphism.
\end{dfn}

A spatial frame is thus canonically associated to a topological space.

\begin{pro}
    Let $L$ be a distributive lattice then $\rm{Idl}(L)$ is a spatial frame. 
\end{pro}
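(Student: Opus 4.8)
The plan is to prove directly that the canonical frame map $\mathrm{Idl}(L)\to\Omega(\mathrm{pt}(\mathrm{Idl}(L)))$ of \eqref{spatial morph} is bijective; since a bijective morphism of frames automatically reflects the order (if $f(x)\le f(y)$ then $f(x\wedge y)=f(x)\wedge f(y)=f(x)$, hence $x\wedge y=x$ by injectivity, so $x\le y$), such a map is an order isomorphism, hence a frame isomorphism, which is precisely the assertion of spatiality. The first step is to describe $\mathrm{pt}(\mathrm{Idl}(L))$ explicitly. I would check that an ideal $I$ of $L$ is a prime element of the frame $\mathrm{Idl}(L)$ exactly when it is a proper prime ideal of the lattice $L$; for the nontrivial direction one tests primality of the principal ideal $(I)$ of $\mathrm{Idl}(L)$ against the principal ideals $(a),(b)$ of $L$ and uses the identity $(a)\cap(b)=(a\wedge b)$. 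Thus $\mathrm{pt}(\mathrm{Idl}(L))$ is the set of proper prime ideals of $L$, its basic opens are $D_{(a)}=\{P:a\notin P\}$ for $a\in L$, and for a general ideal $I$ one has $D_I=\bigcup_{a\in I}D_{(a)}$, because $I=\bigvee_{a\in I}(a)$ in $\mathrm{Idl}(L)$ and the map of \eqref{spatial morph} preserves joins.

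Surjectivity onto $\Omega(\mathrm{pt}(\mathrm{Idl}(L)))$ is then purely formal. An arbitrary open set is $U=\bigcup_{a\in S}D_{(a)}$ for some $S\subseteq L$; letting $I$ be the ideal of $L$ generated by $S$, I claim $D_I=U$. The inclusion $U\subseteq D_I$ is clear, and conversely any $b\in I$ satisfies $b\le a_1\vee\cdots\vee a_n$ with $a_1,\dots,a_n\in S$, so $D_{(b)}\subseteq D_{(a_1)}\cup\cdots\cup D_{(a_n)}\subseteq U$, whence $D_I=\bigcup_{b\in I}D_{(b)}\subseteq U$.

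The substantive step is to show that $D$ reflects inclusions of ideals, which in particular gives injectivity. Given $I\not\subseteq I'$, choose $a\in I\setminus I'$; since $I'$ is a down-set, the principal filter ${\uparrow}a$ is disjoint from $I'$, and the Birkhoff--Stone prime separation theorem for distributive lattices yields a proper prime ideal $P$ with $I'\subseteq P$ and $a\notin P$, so that $P\in D_I\setminus D_{I'}$ and hence $D_I\not\subseteq D_{I'}$. That separation theorem is the one genuinely nonformal ingredient, and it is proved by Zorn's lemma: among the ideals containing $I'$ and disjoint from ${\uparrow}a$ there is a maximal one $P$ (unions of chains of such ideals are again of this kind), and $P$ is automatically prime — if $x\wedge y\in P$ with $x,y\notin P$, then $P\vee(x)$ and $P\vee(y)$ each meet ${\uparrow}a$, and distributivity then forces $a\in P$, contradicting $P\cap{\uparrow}a=\emptyset$. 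This use of a nonconstructive principle is unavoidable, since spatiality of $\mathrm{Idl}(L)$ is equivalent to the prime ideal theorem for distributive lattices; alternatively, one may simply invoke the theory of coherent frames, noting that $\mathrm{Idl}(L)$ is the free frame on $L$ by Proposition \ref{latt to frame compl}, hence coherent, hence spatial (see \cite[\S II.3]{Johnstone}).
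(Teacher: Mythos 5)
Your proof is correct and follows essentially the same route as the paper's cited reference \cite[Theorem II.3.4, Proposition II.3.2]{Johnstone}: identify $\mathrm{pt}(\mathrm{Idl}(L))$ with the set of prime ideals of $L$, observe surjectivity is formal, and reduce injectivity (order-reflection) to the Birkhoff--Stone prime separation theorem proved by Zorn's lemma. The paper gives only the citation, so your argument simply unpacks that reference, including the observation at the end that $\mathrm{Idl}(L)$ is a coherent frame.
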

\begin{proof}
    This follows from \cite[Theorem II.3.4 and Proposition II.3.2]{Johnstone}.
\end{proof}

The topological space associated to $\rm{Idl}(L)$  is moreover a spectral topological space.

\begin{pro}\label{ideal spectral space}
    The space $\rm{pt}({\rm{Idl}(L)})$ %Johnstone uses this notation
    is a spectral topological space, and principal ideals constitute a basis of compact open subset for it.
\end{pro}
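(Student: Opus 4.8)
The plan is to verify the classical characterization of spectral spaces (Hochster): a topological space is spectral precisely when it is quasi-compact, sober, and its quasi-compact open subsets form a basis that is stable under finite intersections. Throughout, write $\cal{L} = \rm{Idl}(L)$; by the previous proposition $\cal{L}$ is a spatial frame, so $D\colon \cal{L} \to \Omega(\rm{pt}(\cal{L}))$ is an isomorphism of frames, and I will freely translate inclusions of open subsets of $\rm{pt}(\cal{L})$ into order relations between ideals of $L$. Sobriety, and hence the $T_0$ axiom, is automatic because $\rm{pt}$ of any frame is a sober space.

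The core point is the quasi-compactness of the basic open $D_{(a)}$ attached to a principal ideal $(a)$, $a \in L$. Suppose $D_{(a)} \subseteq \bigcup_{j} D_{I_j}$ with $I_j \in \cal{L}$. Since $D$ is a frame isomorphism this is equivalent to $(a) \le \bigvee_j I_j$ in $\cal{L}$, i.e. $a$ belongs to the ideal of $L$ generated by $\bigcup_j I_j$. By the explicit description of that ideal, $a \le b_1 \vee \cdots \vee b_k$ for finitely many $b_i$, each lying in some $I_{j_i}$; hence $(a) \le I_{j_1} \vee \cdots \vee I_{j_k}$, so $D_{(a)} \subseteq D_{I_{j_1}} \cup \cdots \cup D_{I_{j_k}}$. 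Thus every $D_{(a)}$ is quasi-compact, and taking $a = 1$ (the top element of $L$, i.e. the empty meet, for which $(1) = L$ is the top of $\cal{L}$) shows that $\rm{pt}(\cal{L}) = D_{(1)}$ itself is quasi-compact.

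It then remains to identify the quasi-compact opens. Since $I = \bigvee_{a \in I}(a)$ in $\cal{L}$ and $D$ preserves joins, $D_I = \bigcup_{a \in I} D_{(a)}$; as the $D_I$ already form a basis (previous proposition), so do the $D_{(a)}$. Stability under finite intersections follows from $D_{(a)} \cap D_{(b)} = D_{(a) \wedge (b)} = D_{(a \wedge b)}$, using that $a \mapsto (a)$ is a lattice morphism (Proposition \ref{incl in ideals}). Conversely, any quasi-compact open is $D_I$ for some $I \in \cal{L}$ by spatiality, and reducing $D_I = \bigcup_{a \in I} D_{(a)}$ to a finite subcover $D_{(a_1)} \cup \cdots \cup D_{(a_n)} = D_{(a_1 \vee \cdots \vee a_n)}$ shows that the quasi-compact opens are exactly the $D_{(a)}$ for $a \in L$. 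Hochster's conditions are then all met, which yields the claim. Alternatively, one may simply invoke Stone duality for distributive lattices, under which $\rm{pt}(\rm{Idl}(L))$ is by construction the spectrum of $L$; see \cite[\S II.3]{Johnstone}.

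The main obstacle is essentially the quasi-compactness computation together with the careful bookkeeping required to move between the three descriptions in play — open subsets of $\rm{pt}(\cal{L})$, elements of the frame $\rm{Idl}(L)$, and ideals (or elements) of $L$; everything else is either formal or automatic. A secondary subtlety to keep in mind is that a lattice in the sense used here possesses a top and a bottom element (the empty meet and the empty join), which is exactly what forces the whole space, and not merely its basic opens, to be quasi-compact.
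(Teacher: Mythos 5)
Your proof is correct and is, in substance, the same argument the paper invokes: the paper simply delegates to Johnstone (\emph{Stone Spaces}, p.\ 66 and the proof of Proposition II.3.2), and what you have written is precisely that argument unpacked — quasi-compactness of each $D_{(a)}$ via finiteness of joins in $L$, identification of the quasi-compact opens with the $D_{(a)}$ using that $a \mapsto (a)$ is a lattice morphism, and Hochster's characterization of spectral spaces together with sobriety of $\rm{pt}$ of a frame.
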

\begin{proof}
    See \cite[p. 66 and the proof of Proposition II.3.2]{Johnstone}.
\end{proof}
\begin{rmk}
    The functor 
    $$L\mapsto\rm{pt(Idl(L))}$$ 
    defines indeed a duality between $\sf{DLat}$ and the category of spectral topological spaces (Stone's duality). Its inverse functor is given by
    $$X\mapsto K\Omega(X),$$
    where $K\Omega(X)$ is the lattice of compact open subsets of $X$ (see \cite[Corollary II.3.4]{Johnstone}).
\end{rmk}

\subsection{The spectrum of a closed symmetric stable $\infty$-category}\label{definition of the spectrum}

We begin by recalling the following definition.

\begin{dfn}
    Let $(\cal{E},\botimes)$ be a symmetric monoidal $\infty$-category. Let $A, A'$ be commutative monoids in $\cal{E}$. A \emph{homotopy epimorphism} in $\cal{E}$ is a morphism 
    $$A \to A'$$
    such that the multiplication morphism
    $$A'\botimes_{A} A'\to A'$$
    is an equivalence.
\end{dfn}

We denote by $\goth{I}(\cal{E})$ the collection of homotopy epimorphisms from $1$ in $\cal{E}$, i.e. homotopy epimorphisms of the form
$$1\to A,$$
with $A \in \sf{Comm}(\cal{E})$.
The following proposition is proved in \cite[Construction 5.2]{ClauSchCompl}.

\begin{pro}\label{unique map idempotents} 
Let $A,A'\in\goth I(\mathcal E)$. Then the space of maps $A\to A'$ compatible with the maps from $1$ is either empty or contractible.
\end{pro}

The proposition above 
permits us to give a partial order to $\goth{I}(\cal{E})$, setting $A'\le A$ if there exists a map $A\rightarrow A'$, for $A,A'\in\goth{I}(\cal{E})$, commuting with the morphisms from $1$. 

\begin{pro}\label{alg object form dist lattice}
    Let $\cal{E}$ be a closed symmetric stable $\infty$-category. Then $(\goth{I}(\cal{E}),\le)$ is a distributive lattice. 
\end{pro}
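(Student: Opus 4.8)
The plan is to show that $(\goth{I}(\cal{E}),\le)$ carries binary meets and joins, together with a top and bottom element, and that these satisfy the distributive law; the spectrality statement then follows immediately from Proposition \ref{ideal spectral space} applied to the ideal frame of this distributive lattice. By Proposition \ref{unique map idempotents} the partial order is well-defined, so the only task is to exhibit the operations.

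First I would identify the extreme elements: the bottom element is the identity $1\to 1$ (which is trivially a homotopy epimorphism, since $1\botimes_1 1\simeq 1$), and the top element is the zero algebra $1\to 0$, which is a homotopy epimorphism because $0\botimes_0 0\simeq 0$; every $A\in\goth{I}(\cal{E})$ admits the unique map to $0$ and receives the unique map from $1$. Next, given $A,A'\in\goth{I}(\cal{E})$, the candidate for the meet $A\wedge A'$ is the pushout (relative tensor) $A\botimes_1 A' = A\botimes A'$ in $\sf{Comm}(\cal{E})$, with its natural map from $1$; one must check it is again a homotopy epimorphism, i.e. that $(A\botimes A')\botimes_{A\botimes A'}(A\botimes A')\simeq A\botimes A'$, which follows formally from base change and the fact that $A,A'$ are idempotent, and that it is the greatest lower bound (any $B$ mapping to both $A$ and $A'$ factors through the pushout by the universal property, using again uniqueness of maps in Proposition \ref{unique map idempotents}). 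The join $A\vee A'$ is the harder case: one takes the fiber product $A\times_0 A'$ in $\sf{Comm}(\cal{E})$ — equivalently the equalizer/pullback of $A\to 0\leftarrow A'$ — equipped with the diagonal map from $1$, and must verify this is a homotopy epimorphism and is the least upper bound. This is exactly the content of \cite[Construction 5.2]{ClauSchCompl}, and I would cite that construction, adapting its verification that the pullback of idempotent algebras along maps to $0$ is again idempotent.

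With meets, joins, top, and bottom in hand, distributivity $A\wedge(B\vee C)=(A\wedge B)\vee(A\wedge C)$ should be checked by unwinding the formulas: $A\botimes(B\times_0 C)\simeq (A\botimes B)\times_{A\botimes 0}(A\botimes C)\simeq (A\botimes B)\times_0(A\botimes C)$, where the first equivalence uses that $A\botimes(-)$ preserves finite limits on idempotent algebras (or more carefully, that the relevant pullback square stays a pullback after tensoring, because we are working with idempotent algebras where the tensor is a localization and hence exact on the relevant diagrams), and the second uses $A\botimes 0\simeq 0$. Again this is precisely where I would lean on \cite{ClauSchCompl}. Finally, having established that $L:=\goth{I}(\cal{E})$ is a distributive lattice, Proposition \ref{ideal spectral space} gives that $\rm{pt}(\rm{Idl}(L))$ is a spectral topological space with the principal ideals as a basis of compact opens, which is the second assertion.

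The main obstacle I anticipate is verifying that the join construction $A\times_0 A'$ genuinely lands in $\goth{I}(\cal{E})$ and realizes the least upper bound — the meet and the lattice axioms are essentially formal once one has the right candidate objects, but the join requires knowing that a finite limit of idempotent algebras (along maps to the zero algebra) is again idempotent, which is not a completely trivial diagram chase and is the technical heart borrowed from \cite[Construction 5.2]{ClauSchCompl}. A secondary subtlety is making sure that stability of $\cal{E}$ (as opposed to mere presentability) is actually used where needed; I expect it enters in controlling the pullback/pushout interaction, i.e. in guaranteeing that the relevant squares are bicartesian so that meets and joins interact distributively.
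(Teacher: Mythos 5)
Your high-level strategy — reduce to identifying meets, joins, top, bottom, and distributivity, citing \cite[Construction 5.2]{ClauSchCompl} for the technical verifications — matches the paper, which indeed dispatches the proof by citing that construction together with \cite[Theorem 5.5]{BalmerFrame}. But two of the concrete formulas you wrote are wrong, and one of them is precisely the step you yourself flag as "the technical heart."

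First, the top and bottom are swapped. Under the paper's convention $A'\le A$ iff there exists a map $A\to A'$ commuting with the maps from $1$, the unit $1$ is the \emph{greatest} element (every $A'$ receives a map $1\to A'$, so $A'\le 1$), and the zero algebra $0$ is the \emph{least} element (every $A$ maps to $0$, so $0\le A$). You have this exactly backwards.

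Second, and more seriously, the join is not $A\times_0 A'$. In a stable $\infty$-category $A\times_0 A'\simeq A\oplus A'$, and this is not in general an idempotent algebra: one computes $(A\oplus A')\botimes(A\oplus A')\simeq A\oplus (A\botimes A')\oplus (A\botimes A')\oplus A'$, which differs from $A\oplus A'$ whenever $A\botimes A'\not\simeq 0$. The correct join, as the paper records immediately after the proposition, is the fiber $[A\oplus A'\to A\botimes A']$, equivalently the pullback $A\times_{A\botimes A'} A'$ (the Mayer–Vietoris square for the cover by the two opens). Your distributivity computation $A\botimes(B\times_0 C)\simeq(A\botimes B)\times_0(A\botimes C)$ inherits this error and proves the wrong identity; with the correct join one must instead check $A\botimes\bigl(B\times_{B\botimes C} C\bigr)\simeq (A\botimes B)\times_{A\botimes B\botimes C}(A\botimes C)$, which is where the base-change and finite-limit-preservation arguments genuinely have to do work. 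Since the join is where you placed the burden of the proof, this is a substantive gap, not a typo.
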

 \begin{proof}
     See \cite[Construction 5.2]{ClauSchCompl}, cfr. also \cite[Theorem 5.5]{BalmerFrame}.
 \end{proof}
 
In particular, if $A,A'\in\goth{I}(\cal{E})$, then $A\wedge A'=A\botimes A'$, whereas $A\vee A'$ is the fiber $$[A\oplus A'\rightarrow A\botimes A']$$ (see again \cite[Contruction 5.2]{ClauSchCompl}, or \cite[Proposition 5.2]{BalmerFrame}). Here the bracket denotes the fiber in the stable $\infty$-category $\mathcal E$.
By Proposition \ref{ideal spectral space} from $\goth{I}(\cal{E})$ we can construct the spectral space $\rm{pt}(\rm{Idl}(\goth{I}(\cal{E})))$.
For the sake of brevity, we will rename this space to $\goth{S}(\cal{E})$.

Let now $A\in\goth{I}(\cal{E})$. We will write $D_A$ for the corresponding open set in $\goth{S}(\cal{E})$. Note that the map 
\begin{equation*}
     D_A\mapsto A
\end{equation*}
defines a presheaf on the category of homotopy epimorphisms from $1$ in the category $\cal{E}$, with values in $\sf{Comm}(\cal{E})$. We denote this presheaf by $\O_{\goth{S}(\cal{E})}$. 

\medskip

 \begin{rmk} \label{rmk:cover_is_conservative}
   Building on this description of $A\vee A'$ it is easy corollary of Lurie-Barr-Beck  theorem  that the condition on $D_A$ and $D_{A'}$ to cover $\goth{S}(\cal{E})$ is equivalent to asking that the family of localizations $\{1 \to A, 1 \to A' \}$ is conservative. In the sense that the restriction functor $\cal{E} \to \sf{Mod}_A(\cal{E}) \oplus \sf{Mod}_{A'}(\cal{E})$ is conservative, where the latter are the categories of $A$ and $A'$ modules in $\cal{E}$. 
   This is the cover condition of the homotopical Zariski topology introduced in \cite{Vez} and used for bornological algebras in \cite{BenKre}, \cite{BamBen}, \cite{bambozzi2018stein}, \cite{bambozzi2023homotopy}, \cite{Bambozzi_2024}, and other papers.
 \end{rmk}

\begin{pro} \label{AFF}
    The presheaf $\O_{\goth{S}(\cal{E})}$ extends uniquely to a sheaf (in the sense of \cite[p. 763, Subsection 7.3.3]{LurieHTT}) on $\goth{S}(\cal{E})$.
\end{pro}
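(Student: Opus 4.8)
The plan is to verify the sheaf axioms on a basis of compact open subsets and then invoke the comparison theorem of \cite[Subsection 7.3.3]{LurieHTT} for sheaves on a spectral space, which reduces the existence and uniqueness of the extension to checking descent on such a basis. By Proposition \ref{ideal spectral space}, the principal ideals $(A)$, equivalently the sets $D_A$ for $A \in \goth{I}(\cal{E})$, form a basis of compact open subsets of $\goth{S}(\cal{E})$ that is closed under finite intersections (since $D_A \cap D_{A'} = D_{A \wedge A'} = D_{A \botimes_1 A'}$). So it suffices to show: whenever $D_A = \bigcup_{i} D_{A_i}$ is a covering of a basic compact open by finitely many basic compact opens (compactness lets us reduce to the finite case), the diagram
\[
\O_{\goth{S}(\cal{E})}(D_A) \to \prod_i \O_{\goth{S}(\cal{E})}(D_{A_i}) \rightrightarrows \prod_{i,j} \O_{\goth{S}(\cal{E})}(D_{A_i \botimes_A A_j})
\]
is a limit diagram in $\sf{Comm}(\cal{E})$, i.e. $A \simeq \lim\big( \prod_i A_i \rightrightarrows \prod_{i,j} A_i \botimes_A A_j\big)$, the totalization of the Čech conerve.

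The key input is the lattice-theoretic description from \cite[Construction 5.2]{ClauSchCompl} recalled above: in $\goth{I}(\cal{E})$ one has $A_i \wedge A_j = A_i \botimes_A A_j$ and $A_i \vee A_j = [A_i \oplus A_j \to A_i \botimes_A A_j]$, and the covering condition $D_A = D_{A_1} \cup D_{A_2}$ translates to $A_1 \vee A_2 = A$ in the lattice $\goth{I}(\cal{E})$ (after base change to $A$, so really we work in $\goth{I}(\dercat_\infty(\sf{Mod}_A))$ and may assume $A = 1$). First I would treat a two-element cover: $1 = A_1 \vee A_2$ means precisely that $[A_1 \oplus A_2 \to A_1 \botimes A_2]$ is an equivalence onto $1$, which is a Mayer–Vietoris / pullback square
\[
\begin{tikzcd}
1 \arrow{r} \arrow{d} & A_1 \arrow{d} \\
A_2 \arrow{r} & A_1 \botimes A_2,
\end{tikzcd}
\]
exactly the statement that $\O_{\goth{S}(\cal{E})}$ satisfies the sheaf condition for the two-element cover. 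Then I would bootstrap to arbitrary finite covers by induction on the number of opens, using that $\goth{I}(\cal{E})$ is a distributive lattice (Proposition \ref{alg object form dist lattice}) so that $A_1 \vee \dots \vee A_n = 1$ can be split as $A_1 \vee (A_2 \vee \dots \vee A_n) = 1$ and the Čech nerve of the refined cover assembles compatibly; here one also uses that the relevant intersections $A_{i_0} \botimes \dots \botimes A_{i_k}$ are again homotopy idempotent (a finite meet in $\goth{I}(\cal{E})$), so all the terms in the Čech diagram lie in the site. Uniqueness of the extension to all opens is then automatic from the universal property in \cite[Subsection 7.3.3]{LurieHTT}: a sheaf on a spectral space is the right Kan extension of its restriction to the basis of quasicompact opens, provided that restriction is a sheaf for the induced topology, which is what the previous paragraph establishes.

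The main obstacle I anticipate is the passage from the two-element Čech descent statement (which is essentially tautological given the definition of $\vee$ in the lattice) to genuine sheaf descent for arbitrary finite covers in the $\infty$-categorical sense — one must check that the totalization of the full Čech cosimplicial object, not just the equalizer of the first two terms, computes $A$. In the $1$-categorical world this is automatic, but in $\dercat_\infty$ one needs that the higher coface maps contribute nothing, which follows because each iterated intersection is idempotent ($A_i \botimes_A A_i \simeq A_i$ as homotopy epimorphisms), so the cosimplicial diagram is, up to the relevant truncation, determined by its $1$-skeleton; alternatively one can cite the general fact that a $\botimes$-idempotent cover in a stable symmetric monoidal $\infty$-category satisfies hyperdescent, which is implicit in \cite[Construction 5.2]{ClauSchCompl} and the frame structure. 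Once that is in hand, the remaining verifications — that the presheaf is well-defined on the basis (immediate from Proposition \ref{unique map idempotents}, which guarantees the transition maps are unique hence strictly functorial) and that compactness reduces arbitrary covers to finite ones (Proposition \ref{ideal spectral space}) — are routine.
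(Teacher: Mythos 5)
Your proof pursues the same overall strategy as the paper's: appeal to Lurie's comparison theorem for presheaves on a basis closed under finite intersections, and then observe that the required descent condition is built into the lattice operations on $\goth{I}(\cal{E})$. Where you diverge is in how much work you leave for yourself. The paper cites HTT Theorem~7.3.5.2, which reduces the verification to exactly two checks: the value on $\varnothing$ is final, and the square associated with a two-element cover $D_A = D_{A_1} \cup D_{A_2}$ is a pullback. Both are then tautological, the first because $D_* = \varnothing$ and $*$ is final in $\sf{Comm}(\cal{E})$, the second because $A_1 \vee A_2$ is by definition the fiber $[A_1 \oplus A_2 \to A_1 \botimes A_2]$. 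You instead set out to establish \v{C}ech descent for arbitrary finite covers of basic opens, and the inductive passage from the binary case to $n$-element covers -- the step you yourself flag as the main obstacle -- is only sketched: the assertion that the cosimplicial \v{C}ech diagram is ``determined by its $1$-skeleton'' because iterated intersections are idempotent is not a proof, and the alternative appeal to hyperdescent for $\botimes$-idempotent covers needs a real citation rather than a gesture at \cite[Construction 5.2]{ClauSchCompl}. You also never verify the empty-cover condition. None of this extra work is needed: with the sharper citation (7.3.5.2 rather than 7.3.3), your two-element Mayer--Vietoris observation, together with the empty-cover check, is already the entire argument, and your inductive step becomes superfluous.
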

\begin{proof}
    We will apply \cite[Theorem 7.3.5.2]{LurieHTT}.
    Note that if $0$ is the zero object of $\cal{E}$, then $D_{0}=\varnothing$. So
    $$\O_{\goth{S}(\cal{E})}(\varnothing)=0,$$
    that is the final object of $\sf{Comm}(\cal{E})$.
    Thus it suffices to see that if $1\to A$, $1\to A'$ are homotopy epimorphisms, then the square
     \begin{equation*}
       \begin{tikzcd}
      \O_{\goth{S}(\cal{E})}(D_A\cup D_{A'})\arrow{r} \arrow{d} & \O_{\goth{S}(\cal{E})}(D_A) \arrow{d} \\
      \O_{\goth{S}(\cal{E})}(D_{A'})\arrow{r} & \O_{\goth{S}(\cal{E})}(D_A\cap D_{A'})
     \end{tikzcd}
    \end{equation*}
    is homotopy cartesian. But this follows immediatly from the definitions of meets and joins in $\goth{I}(\cal{E})$.
\end{proof}

\begin{rmk}\label{functions on complementary}
    As explained in \cite[Construction 5.2]{ClauSchCompl}, a homotopy epimorphism $1\to A$ gives rise to a localization of $\cal{E}$ whose local objects are the $A$-modules. More precisely, the forgetful functor \[ i\colon \sf{Mod}_A\hookrightarrow \mathcal E \] is fully faithful. It admits a left adjoint \[ i^l(E)=E\botimes A \] and a right adjoint \[ i^r(E)=\underline{\mathrm{Hom}}(A,E), \] for $E\in\mathcal E$.
    We also have the Verdier quotient (see \cite[Subsection 5.1]{blumberg2013universal} for a discussion in the $\infty$-categorical setting)
    $$j:\cal{E}\rightarrow\cal{E}/\sf{Mod}_A.$$
    This is a Bousfield localization that comes equipped with a fully faithful left adjoint $j_{l}$ determined by
    $$j_lj(E)=[E\rightarrow E\botimes A],$$
    and a fully faithful right adjoint $j_{r}$ determined by
    $$j_{r}j(E)=\IHom([1\to A],E).$$
    In particular we have a diagram
    \begin{equation}\label{categorical recollement}
      \begin{tikzcd}
          \sf{Mod}_A\arrow[r, "i"] & \cal{E}\arrow[r, "j"]\arrow[l, bend right=50, "i^r"]\arrow[l, bend left=50, "i^l"] & \cal{E}/\sf{Mod}_A.\arrow[l, bend left=50, "j_l"]\arrow[l, bend right=50, "j_r"]
      \end{tikzcd}
    \end{equation}
    The object $\IHom([1\rightarrow A],1)$ in \cite{ClauSchCompl} is therefore interpreted as \emph{functions supported on the 
    complement of $D_A$}, see \cite[Proposition 5.5]{ClauSchCompl}.  
\end{rmk}
\medskip

We now specialize this construction to the case of module categories over ind-Banach algebras over a non-Archimedean commutative ring $R$.

\begin{dfn} \label{DEF}
    Let $A\in\sf{Comm}(\dercat_{\infty}(\Ind(\BanR)))$. By Remark \ref{symmetric}, $\dercat_{\infty}(\sf{Mod}_{A})$ is a closed symmetric stable $\infty$-category, then the \emph{derived analytic spectrum} of $
    A$ is the couple 
    \begin{equation}
    \label{the derived analytic spectrum}
    (\goth{S}(\dercat_{\infty}(\sf{Mod}_{A})),\O_{\goth{S}(\dercat_{\infty}(\sf{Mod}_{A}))}).
    \end{equation}
    To simplify the notation we will simply write 
    $$
    (\goth{S}(A),\O_{\goth{S}(A)})
    $$ 
    to denote the object \eqref{the derived analytic spectrum}.  
\end{dfn}

\begin{exa}\label{affidem}
    Let $A_K$ be an affinoid algebra over $K$. Every $K$-affinoid localization of $A_K$ is a
    homotopy epimorphism in $\dercat_{\infty}(\sf{Mod}_{A_K})$ by \cite[Theorem 5.16]{BenKre}. So it defines an open
    subset in $\goth{S}(A_K)$.  
\end{exa}

    The motivation 
    for the construction of $\goth{S}(\cal{E})$ comes from homotopical algebraic geometry. In fact, homotopy epimorphisms correspond to 
    formal Zariski open immersions in the sense of \cite[Definition 1.2.6.1 (3)]{Vez} in the homotopical geometry associated with $\cal{E}$. Moreover, joins of homotopy epimorphisms encode the notion of Zariski cover
    of \cite[Definition 1.2.5.1]{Vez}. 
    This homotopical Zariski topology was also used in \cite{BenKre} and \cite{BamBen} to construct derived analytic spaces over $R$. 
    
    \begin{rmk}
        Note that any morphism $A\xrightarrow{f} B$ in $\sf{Comm}(\Ind(\BanR))$ induces a continuous map
        \begin{equation}\label{map between analytic spectra}\goth{S}(f):\goth{S}(B)\rightarrow\goth{S}(A).
        \end{equation}
        Indeed, if $A\to C$ is a homotopy epimorphism, then the base change $B \to B \wotimes^{\bb{L}}_A C$ is again a homotopy epimorphism. In fact,
        $$(B\wotimes^{\bb{L}}_A C) \wotimes^{\bb{L}}_B (B\wotimes^{\bb{L}}_A C) \cong C\wotimes^{\bb{L}}_A (B \wotimes^{\bb{L}}_B B) \wotimes^{\bb{L}}_A C \cong C\wotimes^{\bb{L}}_A C \wotimes^{\bb{L}}_A B \cong C \wotimes^{\bb{L}}_A B.$$
        Moreover, for two homotopy epimorphisms $A \to C$ and $A \to D$, and any morphism $A \to B$
        $$[(C\wotimes^{\bb{L}}_A B)\oplus (D\wotimes^{\bb{L}}_A B)\rightarrow (C\wotimes^{\bb{L}}_A B)\wotimes^{\bb{L}}_{B} (D\wotimes^{\bb{L}}_A B)]\cong[C\oplus D\to C\wotimes^{\bb{L}}_{A}D]\wotimes^{\bb{L}}_A B,$$
        because the derived tensor product also commutes with homotopy fibers and direct sums.
        Therefore we obtain a lattice morphism
        \begin{equation}\label{morphism of lattices}\goth{I}(\dercat_{\infty}(\sf{Mod}_A))\to \goth{I}(\dercat_{\infty}(\sf{Mod}_B)),
        \end{equation}
        that induces, by functoriality, the desired continuous map
        \eqref{map between analytic spectra}.
        Note that we also get a morphism of sheaves
        $$ \goth{S}(f)^{-1}(\O_{\goth{S}(A)})\rightarrow \O_{\goth{S}(B)},$$
        that on the basic open $D_C$,  for $C\in \goth{I}(\dercat_{\infty}(\sf{Mod}_A))$,  is given by 
        $$C\to C\wotimes^{\bb{L}}_A B.$$
         Thus $\goth{S}(f)$ is a morphism of \emph{homotopically ringed spaces}. 
    Note  that it is easy to check that if $A \to B$ is a homotopy epimorphism, then the map \eqref{map between analytic spectra} is a topological embedding. 
    \end{rmk}

 \subsection{The affine space over a Banach ring}
 Let $R$ be a Banach ring. In this subsection we study affine spaces over $R$ in the ind-Banach setting.
 
 \begin{dfn}\label{definition affine space}
     Consider the polynomial algebra $R[t_1,\dots,t_d]\in\Ind(\BanR)$ introduced in Example \ref{polynomial algebra in IndBanR}.
     The $d$-dimensional \emph{affine space} over $R$ is the derived analytic spectrum $\goth{S}(R[t_1,\dots,t_d])$. We will denote it by $\mathbf{A}^{d}_{R}$.  
 \end{dfn}

The topological space underlying $\A_R^d$ is very large and it is unreasonable to expect a complete description of it. Thus, we limit ourselves to identifying some families of open
subsets of $\mathbf{A}^1_{R}$. To do so, we use a criterion used in \cite[Section 5]{BassatMuk} to identify when a morphism is a homotopy epimorphism. We recall it for the reader's convenience.

 Let $(\sf{C},\botimes)$ be closed symmetric monoidal quasi-abelian category with enough flat projective objects, and let $X\in\sf{C}$.
 Recall that an \emph{element} of $X$ is a morphism $1\rightarrow X$ form the unit object. In the classical category of modules over a ring this notion coincides with the notion of an element of the underlying set. So, we will write $x\in X$ to say that $x$ is an element of $X$.
 \begin{dfn}
     Let $A\in\sf{Comm}(\sf{C})$, 
     and let $a$ be an element of 
     $A$. The \emph{diagonal Koszul resolution} of $A$, denoted by $K_A$, is the complex
     $$A\botimes A\xrightarrow{(a\botimes 1-1\botimes a)} A\botimes A.$$
     This notation means that the morphism is induced by multiplication by $(a\botimes 1-1\botimes a)$. The pair $(A,a)$ is said to satisfy \emph{the strictness condition} if $A$ is flat and if the morphism 
     $(a\botimes 1-1\botimes a)$ is strict and its cokernel is identified with $A$.
 \end{dfn}
 The following example is an example of a pair that satisfies the strictness condition. 
 \begin{exa}
     The pair $(R[t],t)$ satisfies the strictness condition in $\Ind(\BanR)$, see \cite[Lemma 4.6]{BassatMuk}.
 \end{exa}
 The criterion of \cite[Section 5]{BassatMuk} is expressed in the following theorem.
 \begin{thm}\label{Taylor criter}
     Let $A\xrightarrow{f} A'$ be a morphism in $\sf{C}$ and let $a\in A$, $a'\in A'$ such that $f(a)=a'$ and the pairs $(A,a)$ and $(A',a')$ satisfy the strictness condition. Then $f$ is a homotopy epimorphism. 
 \end{thm}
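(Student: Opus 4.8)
The plan is to compute the relative derived tensor product $A'\botimes^{\bb{L}}_A A'$ explicitly by means of the diagonal Koszul resolutions supplied by the two strictness hypotheses, and to recognise the result --- together with its canonical map down to $A'$ --- as the multiplication map; this is precisely the assertion that $f$ is a homotopy epimorphism.

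Concretely, I would start from the standard base-change identity for commutative algebra objects,
\[ A'\botimes^{\bb{L}}_A A'\;\simeq\;\bigl(A'\botimes^{\bb{L}} A'\bigr)\botimes^{\bb{L}}_{A\botimes^{\bb{L}} A}A, \]
in which $A$ is regarded as an algebra over $A\botimes^{\bb{L}} A$ via the multiplication and $A'\botimes^{\bb{L}} A'$ via $f\botimes f$; this identity expresses both sides as the pushout of $A'\leftarrow A\rightarrow A'$ in commutative algebra objects. Since $A$ and $A'$ are flat over $R$ --- this is part of the strictness conditions for $(A,a)$ and $(A',a')$ --- the absolute tensor products carry no higher $\mathrm{Tor}$, so $A\botimes^{\bb{L}} A\simeq A\botimes A$ and $A'\botimes^{\bb{L}} A'\simeq A'\botimes A'$, whence
\[ A'\botimes^{\bb{L}}_A A'\;\simeq\;(A'\botimes A')\botimes^{\bb{L}}_{A\botimes A}A. \]
Now I would compute the right-hand side using the diagonal Koszul complex $K_A$. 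The strictness condition for $(A,a)$ says exactly that $K_A$, regarded as a complex of $(A\botimes A)$-modules, is strictly quasi-isomorphic to $A$ (its differential is a strict monomorphism with cokernel $A$); since the two terms of $K_A$ are the free rank-one $(A\botimes A)$-module, hence flat, $K_A$ may be used in place of a projective resolution to compute the derived base change. Thus
\[ A'\botimes^{\bb{L}}_A A'\;\simeq\;(A'\botimes A')\botimes_{A\botimes A}K_A. \]

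The decisive step is then the observation that base-changing $K_A$ along $f\botimes f\colon A\botimes A\to A'\botimes A'$ converts its differential, which is multiplication by $a\botimes 1-1\botimes a$, into multiplication by $(f\botimes f)(a\botimes 1-1\botimes a)=a'\botimes 1-1\botimes a'$, where one uses $f(a)=a'$; hence
\[ (A'\botimes A')\botimes_{A\botimes A}K_A\;\cong\;\bigl(A'\botimes A'\xrightarrow{\,a'\botimes 1-1\botimes a'\,}A'\botimes A'\bigr)\;=\;K_{A'}. \]
Finally, the strictness condition for $(A',a')$ says exactly that $K_{A'}$ is strictly quasi-isomorphic to $A'$, and unwinding the chain of identifications shows that the resulting equivalence $A'\botimes^{\bb{L}}_A A'\simeq A'$ is the multiplication map. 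Therefore $f$ is a homotopy epimorphism.

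The point I expect to require the most care is the bridge between underived and derived tensor products: one must check that flatness of $A$ and $A'$ over $R$ genuinely annihilates the higher $\mathrm{Tor}$ over $R$ in the base-change formula, and that a bounded complex of flat $(A\botimes A)$-modules such as $K_A$ may legitimately stand in for a cofibrant (projective) resolution when one forms $(A'\botimes A')\botimes^{\bb{L}}_{A\botimes A}A$ in $\dercat_{\infty}$. With those verifications in hand, the identification of the base change of $K_A$ with $K_{A'}$, and of the induced structure map with the multiplication, is formal.
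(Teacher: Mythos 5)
The paper does not prove this theorem in-line; the proof simply cites \cite[Theorem 5.3]{BassatMuk}, so there is no internal argument against which to compare yours. Your reconstruction is correct and is surely the intended one, given that the diagonal Koszul resolution $K_A$ is introduced in the text precisely to enable it: base change to $(A'\botimes A')\botimes^{\bb{L}}_{A\botimes A}A$, flatness of $A$ and $A'$ over the unit to collapse the absolute derived tensor products to underived ones, the identification $(A'\botimes A')\botimes_{A\botimes A}K_A\cong K_{A'}$ via $f(a)=a'$, and recognition of the counit as the multiplication of $A'$. The step you flag is indeed the one requiring justification, and it goes through: since $\sf{C}$ has enough flat projectives a cofibrant resolution of $A$ over $A\botimes A$ consists of flat objects, the comparison map to $K_A$ has strictly exact, bounded-above, flat cone, and tensoring such a complex with $A'\botimes A'$ preserves strict exactness, so $K_A$ may legitimately replace the cofibrant resolution in computing $(A'\botimes A')\botimes^{\bb{L}}_{A\botimes A}A$. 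One caveat worth recording explicitly: the argument needs $a\botimes 1-1\botimes a$ to be a monomorphism, so that $K_A\to A$ is an honest quasi-isomorphism rather than merely surjective on $H^0$. The paper's definition of the strictness condition asks only for strictness and the identification of the cokernel, leaving injectivity implicit in the word ``resolution''; your proof, correctly, assumes it.
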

 \begin{proof}
    See \cite[Theorem 5.3]{BassatMuk}.
\end{proof}
 This criterion is used in \cite[Proposition 5.3]{BassatMuk} to prove the following proposition.
 \begin{pro}
     Let $\tate{R}{t}$ be the one variable Tate algebra over $R$. Then, 
     the canonical morphism $R[t] \to \tate{R}{t}$ is a homotopy epimorphism.
 \end{pro}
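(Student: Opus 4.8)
The statement to prove is that the canonical map $R[t] \to \tate{R}{t}$ is a homotopy epimorphism in $\Ind(\BanR)$, and the natural route is to invoke Theorem \ref{Taylor criter} with $A = R[t]$, $A' = \tate{R}{t}$, $a = t$, $a' = t$, and $f$ the canonical inclusion. Since $f(t) = t$ by construction, the only thing to check is that both pairs $(R[t], t)$ and $(\tate{R}{t}, t)$ satisfy the strictness condition of the cited definition. The first of these is exactly the Example preceding Theorem \ref{Taylor criter} (citing \cite[Lemma 4.6]{BassatMuk}), so the real content of the proof is the verification of the strictness condition for $(\tate{R}{t}, t)$.

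\textbf{Verifying strictness for the Tate algebra.} I would first recall that $\tate{R}{t}$ is flat over $R$: indeed $\tate{R}{t} \cong \contcoprod_{i \in \N} R$ as a Banach module, hence it is projective over $R$ and therefore flat, and flatness persists in $\Ind(\BanR)$ by Proposition \ref{pro ind ban is quasi abelian}. Next, one must show that the diagonal Koszul map
\begin{equation*}
\tate{R}{t}\wotimes_R\tate{R}{t} \xrightarrow{\ t\wotimes 1 - 1\wotimes t\ } \tate{R}{t}\wotimes_R\tate{R}{t}
\end{equation*}
is strict with cokernel $\tate{R}{t}$. Here one uses the concrete identification $\tate{R}{t}\wotimes_R\tate{R}{t} \cong \tate{R}{s,t}$, the two-variable Tate algebra, under which the map becomes multiplication by $(s - t)$. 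The cokernel is $\tate{R}{s,t}/(s-t) \cong \tate{R}{t}$ via $s \mapsto t$, which is the desired identification. Strictness (i.e. that the quotient norm on the image of $(s-t)$ coincides with the norm induced from $\tate{R}{s,t}$, so that the image is closed and the map is a strict morphism) is the technical heart: one shows that $\|h\|_{\tate{R}{s,t}} \le \|(s-t)h\|_{\tate{R}{s,t}}$ for all $h$, by an argument on the coefficients entirely analogous to the estimate carried out in the proof of Lemma \ref{idclosedintemp} (with $g = g' = 1$ and the log-growth weights $(i+1)^{-n}$ replaced by the constant weights of the Tate norm). This gives that $(s-t)$ is a strict monomorphism, and combined with the algebraic computation of the cokernel one concludes the strictness condition holds.

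\textbf{Conclusion and the main obstacle.} Once both pairs are known to satisfy the strictness condition, Theorem \ref{Taylor criter} applies verbatim and yields that $R[t] \to \tate{R}{t}$ is a homotopy epimorphism, completing the proof. The main obstacle is the strictness of the Koszul differential for $\tate{R}{t}$: one must be careful that the completed tensor product norm on $\tate{R}{t}\wotimes_R\tate{R}{t}$ is precisely the Gauss/sup norm of the two-variable Tate algebra — this is where closedness of $\BanR^{\le 1}$ under completed tensor products and the coproduct description of $\tate{R}{t}$ enter — and then that multiplication by $(s-t)$ is isometric up to the quotient by the ideal, which is the content of the coefficient estimate. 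In practice this is a known fact about Tate algebras, so the cleanest exposition simply cites \cite[Proposition 5.3]{BassatMuk} directly; but the argument above is the one underlying that citation.
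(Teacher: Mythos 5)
Your proposal is correct and follows essentially the same route as the paper: invoke Theorem \ref{Taylor criter} after verifying that both $(R[t],t)$ and $(\tate{R}{t},t)$ satisfy the strictness condition, the latter being exactly what the paper cites from \cite[Corollary 5.6]{BassatMuk}. The only difference is that the paper leaves this as a one-line citation, while you additionally sketch the underlying coefficient estimate for strictness of multiplication by $(s-t)$ on $\tate{R}{s,t}$, which is consistent with (and clarifies) the cited result.
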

 \begin{proof}
     The claim follows from the fact that $(\tate{R}{t}, t)$ satisfies the strictness condition, see \cite[Corollary 5.6]{BassatMuk}.
 \end{proof}
 Thus
 $\tate{R}{t}$ defines an open 
 subset in $\mathbf{A}^1_{R}$. We can identify it as the \emph{closed unit disk centered in $0$}. 
 
 We will also use the following stability property of homotopy epimorphisms under
tensor products.

 \begin{pro}\label{prod idempotent}
     Let $A,A' \in \sf{Comm}(\sf{C})$, and let $A\to B$ and $A\to B'$ be homotopy epimorphisms. Then $A\botimes^{\mathbb{L}} A'\to B\botimes^{\mathbb{L}} B' $ is a homotopy epimorphism.
 \end{pro}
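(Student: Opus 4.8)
The plan is to unwind the definition of a homotopy epimorphism and reduce the claim to a K\"unneth-type identity for relative tensor products. Writing $\botimes^{\bb{L}}$ for the monoidal product on $\dercat_{\infty}(\sf{C})$ and regarding $B\botimes^{\bb{L}}B'$ as a commutative monoid under $A\botimes^{\bb{L}}A'$, we must show that the multiplication map
\[
(B\botimes^{\bb{L}}B')\botimes^{\bb{L}}_{A\botimes^{\bb{L}}A'}(B\botimes^{\bb{L}}B')\longrightarrow B\botimes^{\bb{L}}B'
\]
is an equivalence. (I read the hypothesis in the expected way: $A\to B$ and $A'\to B'$ are homotopy epimorphisms, i.e. $B\botimes^{\bb{L}}_{A}B\simeq B$ and $B'\botimes^{\bb{L}}_{A'}B'\simeq B'$.)

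The key step is the external product formula: for $A$-modules $M,N$ and $A'$-modules $M',N'$ there is a natural equivalence
\[
(M\botimes^{\bb{L}}M')\botimes^{\bb{L}}_{A\botimes^{\bb{L}}A'}(N\botimes^{\bb{L}}N')\;\simeq\;(M\botimes^{\bb{L}}_{A}N)\botimes^{\bb{L}}(M'\botimes^{\bb{L}}_{A'}N').
\]
I would prove this by bar constructions: the simplicial object whose geometric realization is the left-hand side is, after permuting factors using the symmetry constraint, degreewise the external tensor product of the bar construction computing $M\botimes^{\bb{L}}_{A}N$ with the one computing $M'\botimes^{\bb{L}}_{A'}N'$; since $\botimes^{\bb{L}}$ preserves geometric realizations in each variable and the diagonal of a bisimplicial object is final for computing colimits, the realization of this diagonal simplicial object is the external product of the two realizations. (Modelling this with actual complexes only requires replacing the data by complexes of flat objects, which exist under the standing hypotheses on $\sf{C}$ and are stable under $\botimes$, so that the bar complexes genuinely compute the derived relative tensor products.) Alternatively one may invoke the identification $\dercat_{\infty}(\sf{Mod}_{A\botimes^{\bb{L}}A'})\simeq\dercat_{\infty}(\sf{Mod}_{A})\otimes\dercat_{\infty}(\sf{Mod}_{A'})$ together with the fact that extension of scalars along $A\botimes^{\bb{L}}A'\to B\botimes^{\bb{L}}B'$ is the external tensor product of extension of scalars along $A\to B$ and $A'\to B'$.

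Granting the external product formula, take $M=N=B$ and $M'=N'=B'$ to get
\[
(B\botimes^{\bb{L}}B')\botimes^{\bb{L}}_{A\botimes^{\bb{L}}A'}(B\botimes^{\bb{L}}B')\;\simeq\;(B\botimes^{\bb{L}}_{A}B)\botimes^{\bb{L}}(B'\botimes^{\bb{L}}_{A'}B')\;\simeq\;B\botimes^{\bb{L}}B',
\]
the last equivalence being exactly the two hypotheses. It then remains to check that under these natural equivalences the resulting map is the multiplication map of $B\botimes^{\bb{L}}B'$, which is a routine diagram chase since the external product formula is natural and monoidal. The step I expect to be the main obstacle is making the external product formula precise in the quasi-abelian/ind-Banach setting: controlling flatness so that the bar complexes compute $\botimes^{\bb{L}}$, and, more importantly, keeping track of the algebra structures so that the final identification is one of $(A\botimes^{\bb{L}}A')$-algebras compatible with the multiplications, not merely of underlying objects. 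Since the statement is quoted from \cite[Proposition 3.6]{BassatMuk} one could instead simply cite it; the argument above is the natural self-contained route and is, in effect, the tensor-product instance of the general principle that homotopy epimorphisms are stable under base change.
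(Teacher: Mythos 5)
Your proof is correct. The paper does not actually prove this proposition; it simply cites \cite[Proposition 3.6]{BassatMuk}, so there is no in-paper argument to compare against directly. Your route --- reduce to the K\"unneth/interchange equivalence $(M\botimes^{\bb{L}}M')\botimes^{\bb{L}}_{A\botimes^{\bb{L}}A'}(N\botimes^{\bb{L}}N')\simeq(M\botimes^{\bb{L}}_{A}N)\botimes^{\bb{L}}(M'\botimes^{\bb{L}}_{A'}N')$, established via the two-sided bar construction, the diagonal lemma for bisimplicial objects, and the fact that $\botimes$ commutes with geometric realizations in each variable --- is the standard argument, and the factorization into two successive base changes $A\botimes A'\to B\botimes A'\to B\botimes B'$ that you mention at the end is the leanest repackaging of the same computation. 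You also correctly flag the two technical points that need care in the quasi-abelian/ind-Banach model (using flat/cofibrant replacements so the bar complex actually computes $\botimes^{\bb{L}}$, and carrying the commutative-monoid structure through the identification so the resulting equivalence really is the multiplication map), and you are right that the paper's hypotheses contain a typo: they should read $A\to B$ and $A'\to B'$.
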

\begin{proof}
This is \cite[Proposition 3.6]{BassatMuk}.
\end{proof}
 
\begin{cor}
The algebra $\tate{R}{t_1,\dots,t_d}$ defines an open subset of $\mathbf A^d_R$,
which we call the \emph{closed unit polydisk centered at $0$}.
\end{cor}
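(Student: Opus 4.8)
The plan is to recognize the $d$-variable Tate algebra as an iterated completed tensor product of the one-variable Tate algebras and then to push the homotopy-epimorphism property of $R[t]\to\tate{R}{t}$ through this tensor product by means of Proposition~\ref{prod idempotent}. This mirrors exactly the way $R[t_1,\dots,t_d]=R[t_1]\wotimes_R\cdots\wotimes_R R[t_d]$ is treated in Example~\ref{polynomial algebra in IndBanR}, so the statement should come out as a formal consequence of the one-variable case established just above.

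Concretely, I would proceed as follows. First, record the identification $\tate{R}{t_1,\dots,t_d}\cong\tate{R}{t_1}\wotimes_R\cdots\wotimes_R\tate{R}{t_d}$ in $\Ind(\BanR)$; this is the Tate-algebra analogue of the multivariable description already proved above for the log-growth modules $\tate{R}{t_1,\dots,t_m}_n$, and it rests on the fact that $\BanR^{\le 1}$ is closed symmetric monoidal, so that the coproduct-type presentations of the Tate algebras commute with $\wotimes_R$. Second, invoke the proposition above which asserts that each coordinate map $R[t_i]\to\tate{R}{t_i}$ is a homotopy epimorphism. Third, argue by induction on $d$: for $d=1$ this is that proposition, and for the inductive step apply Proposition~\ref{prod idempotent} to the homotopy epimorphisms $R[t_1]\to\tate{R}{t_1}$ and $R[t_2,\dots,t_d]\to\tate{R}{t_2,\dots,t_d}$ (the latter by inductive hypothesis) to conclude that
\[
R[t_1,\dots,t_d]=R[t_1]\wotimes_R R[t_2,\dots,t_d]\longrightarrow\tate{R}{t_1}\wotimes_R\tate{R}{t_2,\dots,t_d}=\tate{R}{t_1,\dots,t_d}
\]
is again a homotopy epimorphism. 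Finally, since a homotopy epimorphism out of the unit $R[t_1,\dots,t_d]$ of $\dercat_{\infty}(\sf{Mod}_{R[t_1,\dots,t_d]})$ is an element of $\goth{I}(\dercat_{\infty}(\sf{Mod}_{R[t_1,\dots,t_d]}))$, it determines an open subset $D_{\tate{R}{t_1,\dots,t_d}}$ of $\goth{S}(R[t_1,\dots,t_d])=\mathbf{A}^d_R$ (Subsection~\ref{definition of the spectrum}, Definition~\ref{definition affine space}); and the interpretation as the closed unit polydisk is just the product, coordinate by coordinate, of the one-variable closed-unit-disk descriptions.

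I expect the only genuinely non-formal point to be the first step — verifying that the completed tensor product of the single-variable Tate algebras really is the $d$-variable Tate algebra — but this is classical and, moreover, entirely parallel to the computation already carried out in the excerpt for the log-growth version $\tate{R}{t_1,\dots,t_m}_n$, so it should cost essentially nothing; everything after that is bookkeeping with Proposition~\ref{prod idempotent}. I also considered applying the Taylor/strictness criterion (Theorem~\ref{Taylor criter}) directly to $R[t_1,\dots,t_d]\to\tate{R}{t_1,\dots,t_d}$, but that criterion is set up for a single element $a$ and its one-variable diagonal Koszul resolution, whereas here one would need a $d$-variable Koszul complex; hence the product route through Proposition~\ref{prod idempotent} is the natural one.
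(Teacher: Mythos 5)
Your proposal is correct and is exactly what the paper means by ``the proposition above implies the following'': the multivariable Tate algebra factors as $\tate{R}{t_1}\wotimes_R\cdots\wotimes_R\tate{R}{t_d}$, the polynomial ring factors analogously, and iterating Proposition~\ref{prod idempotent} with the one-variable homotopy epimorphism gives the claim. The paper leaves these steps implicit; you have merely filled them in.
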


\begin{pro}\label{tempidem}
    The canonical morphism \[ R[t]\longrightarrow \temp{R}{t} \] is a homotopy epimorphism. Equivalently, $\temp{R}{t}$ defines an open subset of $\mathbf A^1_R$.
\end{pro}
\begin{proof}
   We have   to check that the multiplication map $\temp{R}{t}\dertens{R[t]}\temp{R}{t}\rightarrow\temp{R}{t}$ is an isomorphism.
    By \cite[Lemma 4.6]{BassatMuk}, Lemma \ref{tempered is flat} and 
    Theorem \ref{Taylor criter} it suffices to prove that the sequence
\begin{equation}
    0\rightarrow\temp{R}{t}\hat{\otimes}_R\temp{R}{x}\xrightarrow{(t-x)}\temp{R}{t}\hat{\otimes}_R\temp{R}{x}\rightarrow\temp{R}{z}\rightarrow 0
\end{equation}
is strictly exact. To prove this it is enough to show that the ideal $(t-x)\temp{R}{t}\hat{\otimes}_R\temp{R}{x}$ is closed\footnote{This is a consequence of the bornological open mapping theorem for LB-spaces, see \cite{BamClosed}.}. We write $\temp{R}{t}\hat{\otimes}_R\temp{R}{x}=\bigcup_{n\in\N}\tate{R}{t,x}_n$, where on $\tate{R}{t,x}_n$ we have the tensor norm given by $\mathrm{sup}_{i,j\in\N}|a_{i,j}|/((i+1)(j+1))^n$ for a series
$\sum_{i,j}a_{i,j}t^ix^j$. Let 
\begin{equation*}
f\in\tate{R}{t,x}_n
\end{equation*} 
such that $f(x,x)=0$, which is equivalent to $f \in (t-x)\temp{R}{t}\hat{\otimes}_R\temp{R}{x}$.
Following 
the strategy of \cite[Section 4]{BassatMuk}, we will prove that the map 
\begin{equation}\label{division by (x-t)}f\mapsto\frac{f(t,x)-f(x,x)}{t-x}
\end{equation} 
is controlled by the $n$-norm of $f$ (i.e. the norm of $f$ in $\tate{R}{t,x}_{n}$). It will follow that the ideal 
\begin{equation*}
(t-x)\temp{R}{t}\hat{\otimes}\temp{R}{x}
\end{equation*} 
is closed, because if $(t-x)f_m$ is a sequence converging in $\tate{R}{t,x}_{n}$, $f_m$ converges in $\tate{R}{t,x}_{2n}$. 
This will entail that the map \eqref{division by (x-t)} induces bounded injective morphisms of $R$-Banach modules
$$\phi_n:\tate{R}{t,x}_n\to \tate{R}{t,x}_{2n},$$ for $n\in\N$. Passing to colimits we obtain a map
$$\phi: (t-x)\temp{R}{t, x} \cong \colim_{n \in \N} \tate{R}{t,x}_n \to \colim_{n \in \N} \tate{R}{t,x}_{2 n},$$
    that by cofinality satisfies $\phi\circ (t-x)=\rm{id}_{\temp{R}{t,x}}$, $(t-x)\circ \phi=\rm{id}_{(t-x)\temp{R}{t, x}}$ by construction.

So let $f(x, t) = \sum_{i,j}a_{i,j}t^ix^j$, we have that
\begin{equation*}\frac{f(t,x)-f(x,x)}{t-x}=\sum_{i>0,j\ge 0}\sum_{l=0}^{i-1}  a_{i, j}t^{i-1-l}x^{j+l}=\sum_{h\ge 0,s\ge 0}(\sum_{l=0}^{s}a_{h+1+l,s-l}) t^hx^s.
\end{equation*} 
Then
\begin{dmath*}
\left\|\frac{f(t,x)-f(x,x)}{t-x}\right\|_{2n}\le \mathrm{sup}_{h\ge 0,s\ge 0}\mathrm{max}_{l=0,\dots, s}(|a_{h+1+l,s-l}|)(h+1)^{-2n}(s+1)^{-2n}\le 2^n\mathrm{sup}_{h\ge 0,s\ge 0}\mathrm{max}_{l=0,\dots, s}(|a_{h+1+l,s-l}|(h+1+l+1)^{-n}(s-l+1)^{-n})\le
2^n\mathrm{sup}_{i,j\in\N}|a_{i,j}|((i+1)(j+1))^{-n}\le 2^n\|f\|_n,
\end{dmath*}
where the second inequality follows from 
\[ 2^n(h+l+2)^{-n}(s-l+1)^{-n} \geq (h+1)^{-2n}(s+1)^{-2n}, 
\]
for $h\in\N$, $s\in\N$, $l\in\N$, $l\le s$. For $n=0$ there is nothing to prove, so we assume $n>0$. Since all the quantities involved are positive, the inequality is equivalent, after taking the $n$-th root and then inverting, to 
\[ (h+l+2)(s-l+1) \leq 2(h+1)^2(s+1)^2. \] 
We prove this latter inequality. Since $0\leq l\leq s$, we have $s-l+1\leq s+1$ and 
\[ h+l+2 \leq (h+1)+(s+1) \leq 2(h+1)(s+1). \] 
Multiplying we obtain 
\[ (h+l+2)(s-l+1) \leq 2(h+1)(s+1)^2. \] 
Finally, since $h+1\geq 1$, we have 
\[ (h+l+2)(s-l+1) \leq 2(h+1)^2(s+1)^2, \] 
which is equivalent to the desired inequality.
\end{proof}

In the following lemmas we determine some further open subsets of $\mathbf A^1_R$. For simplicity, we restrict to the case $R=K$.

\begin{lem}\label{boundidem}
    The algebra $\power{K}{t}_0$ defines an open subset in $\mathbf{A}^1_K$.
\end{lem}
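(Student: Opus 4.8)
The aim is to show that the canonical morphism $K[t]\to\power{K}{t}_0$ is a homotopy epimorphism, so that $\power{K}{t}_0$ is the algebra of functions on an open subset of $\mathbf{A}^1_K=\goth{S}(K[t])$. The plan is to apply Theorem \ref{Taylor criter}: the pair $(K[t],t)$ satisfies the strictness condition by \cite[Lemma 4.6]{BassatMuk}, and the structure map sends $t$ to $t$, so it suffices to check that the pair $(\power{K}{t}_0,t)$ also satisfies the strictness condition. Naming the two tensor factors $t$ and $x$, this reduces to two points: (a) $\power{K}{t}_0$ is flat over $K$ (being, like every Banach space over the non-archimedean field $K$, flat in $\Ind(\BanK)$); and (b) the diagonal Koszul sequence
\[
0\to\power{K}{t}_0\wotimes_K\power{K}{x}_0\xrightarrow{\ (t-x)\ }\power{K}{t}_0\wotimes_K\power{K}{x}_0\to\power{K}{z}_0\to 0
\]
is strictly exact, the last arrow being the diagonal restriction $t,x\mapsto z$.

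The heart of (b) is the division estimate, exactly as in the proof of Proposition \ref{tempidem}, but now in the weight-free case $n=0$, which makes it sharper: multiplication by $t$ and by $x$ are isometries of $\power{K}{t,x}_0$, so $\norm{(t-x)g}_0\le\norm{g}_0$, and conversely division by $(t-x)$ is norm non-increasing. The latter follows from an anti-diagonal argument: when a double power series is restricted to a fixed anti-diagonal $\{i+j=m\}$, the operator $(t-x)$ acts on the coefficients of that anti-diagonal as the finite-difference operator padded with zeros on both ends, and in an ultrametric the maximum of the consecutive differences of a finite tuple is at least the maximum of the tuple itself. Hence $(t-x)$ is an isometry and the first map is a strict monomorphism; the diagonal restriction, a contracting map $\power{K}{t}_0\wotimes_K\power{K}{x}_0\to\power{K}{z}_0$, is epic and split by $\sum c_m z^m\mapsto(\sum c_m t^m)\otimes 1$, hence strict; and one must identify the image of $(t-x)$ with the kernel of the diagonal map. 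For the non-trivial inclusion one writes a kernel element as a limit of finite-rank double power series vanishing on the diagonal (subtracting off the diagonal term, which is of rank one, and using that the diagonal restriction is bounded), divides each term, and passes to the limit along the isometry. Theorem \ref{Taylor criter} then yields the claim.

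The step that requires genuine care — and which I expect to be the main obstacle — is the control of the completed tensor product $\power{K}{t}_0\wotimes_K\power{K}{x}_0$: it is the closure of the finite-rank double power series inside $\power{K}{t,x}_0$ and is strictly smaller than $\power{K}{t,x}_0$ (divided differences of sufficiently lacunary bounded series need not lie in it), so one must make sure that the division operator sends the relevant kernel back into $\power{K}{t}_0\wotimes_K\power{K}{x}_0$, and not merely into $\power{K}{t,x}_0$, i.e. that the cokernel of the Koszul differential is exactly $\power{K}{t}_0$. This is the analogue of the identification $\temp{R}{t}\wotimes_R\temp{R}{x}=\quotecup_n\tate{R}{t,x}_n$ used in the proof of Proposition \ref{tempidem}, and it is where the bulk of the verification lies; together with the flatness of $\power{K}{t}_0$ over $K$ it completes the check of the strictness condition for $(\power{K}{t}_0,t)$.
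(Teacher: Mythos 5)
Your proposal follows the same route as the paper: apply Theorem \ref{Taylor criter}, invoke flatness of $\power{K}{t}_0$ over $K$, and verify the strictness condition by controlling division by $(t-x)$. Your anti-diagonal argument establishing that multiplication by $(t-x)$ is an isometry is correct, and is a sharpened version of the explicit coefficient formula the paper records (the paper merely writes out $\frac{f(t,x)-f(x,x)}{t-x}=\sum_{h,s}(\sum_{l=0}^{s}a_{h+1+l,s-l})t^hx^s$ and says the boundedness is evident).

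The gap is precisely the one you flag as the ``main obstacle'' and then leave unresolved: the identification of the Banach space $\power{K}{t}_0\wotimes_K\power{K}{x}_0$. The strictness condition for $(\power{K}{t}_0,t)$ concerns the Koszul complex of $A\wotimes_K A$ with $A=\power{K}{t}_0$, while your division estimate (like the paper's) is conducted inside $\power{K}{t,x}_0$. These need not agree a priori, because $\power{K}{t}_0$ is a contracted product (an $\ell^\infty$-type object) and $\wotimes_K$ does not in general commute with products --- unlike the situation in Proposition \ref{tempidem}, where the building blocks $\tate{R}{t}_n$ are contracted coproducts and so $\temp{R}{t}\wotimes_R\temp{R}{x}=\bigcup_{n}\tate{R}{t,x}_n$ is immediate. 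You should be aware that the paper's own proof of this lemma makes the same silent leap: it writes ``if $f\in\power{K}{t,x}_0$'' without confirming that the canonical map $\power{K}{t}_0\wotimes_K\power{K}{x}_0\to\power{K}{t,x}_0$ is an isomorphism, or that the division operator is bounded for the intrinsic tensor norm and stabilizes the image. To close the argument one must either compute the tensor product explicitly (for $K$ discretely valued, $\power{K}{t}_0\cong c_0(I)$ for a suitable index set $I$, so $\power{K}{t}_0\wotimes_K\power{K}{x}_0\cong c_0(I\times I)$, which can then be compared with $\power{K}{t,x}_0$), or argue directly that the division operator preserves the honest tensor product with its tensor norm. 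As stated, your proposal acknowledges this step but does not carry it out, so it reproduces --- indeed, makes explicit --- a lacuna already present in the paper's terse proof.
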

\begin{proof}
    By \cite[Lemma 3.49 and 50]{bambozzi2018stein} $\power{K}{t}_0$ is flat over $K$. We can apply again Theorem \ref{Taylor criter}.
    Using the same argument as in the proof of Proposition \ref{tempidem}, we only need to prove that if $f\in\power{K}{t,x}_0$, then the map
    \begin{equation*}
        f\mapsto\frac{f(t,x)-f(x,x)}{t-x}
    \end{equation*}
    is bounded. But this is evident as 
    \begin{equation*}
        \frac{f(t,x)-f(x,x)}{t-x}=\sum_{h\ge 0,s\ge 0}(\sum_{l=0}^{s}a_{h+1+l,s-l})t^hx^s,
    \end{equation*}
    if $f(t, x) = \sum_{i,j} a_{i,j}t^ix^j$ as before, and we get by the ultrametric triangle inequality
    \[ 
    \begin{aligned} \left\| \frac{f(t,x)-f(x,x)}{t-x} \right\|_0 &\leq \sup_{h,s\geq 0} \max_{0\leq l\leq s} \abs{a_{h+1+l,s-l}}_K \\ &\leq \sup_{i,j\geq 0}\abs{a_{i,j}}_K = \norm{f}_0. \end{aligned} 
    \] 
    Thus the map 
    \[ f\mapsto \frac{f(t,x)-f(x,x)}{t-x} \] 
    is bounded on $\power{K}{t,x}_0$.
\end{proof}

\medskip

\begin{lem} \label{lem:fast_is_open}
    The algebra $\fast{K}{t}$ of Definition \ref{definition fast series} defines an open subset in $\mathbf{A}^1_K$. 
\end{lem}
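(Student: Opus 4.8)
The plan is to reprise the argument of Proposition~\ref{tempidem} and Lemma~\ref{boundidem}: apply the Taylor criterion (Theorem~\ref{Taylor criter}) to the canonical bounded morphism $K[t]\to\fast{K}{t}$, $t\mapsto t$ (polynomials being rapidly decreasing, this is well defined and bounded). As $(K[t],t)$ satisfies the strictness condition, it will be enough to check that $\fast{K}{t}$ is flat over $K$ and that $(\fast{K}{t},t)$ satisfies the strictness condition as well, i.e. that $0\to\fast{K}{t}\wotimes_K\fast{K}{x}\xrightarrow{(t-x)}\fast{K}{t}\wotimes_K\fast{K}{x}\to\fast{K}{z}\to 0$ is strictly exact; the resulting homotopy epimorphism is exactly the claim that $\fast{K}{t}$ defines an open subset of $\mathbf{A}^1_K=\goth{S}(K[t])$.

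The first step is to pin down $\fast{K}{t}$ and its self-tensor product. By Definition~\ref{definition fast series}, $\fast{K}{t}=\rm{lim}_{n\in\N}\power{K}{t}_{-n}$, each $\power{K}{t}_{-n}\cong\contcoprod_{i\in\N}K_{(i+1)^n}$ is projective hence flat, and the transition maps $\power{K}{t}_{-(n+1)}\hookrightarrow\power{K}{t}_{-n}$ are compact, so $\fast{K}{t}$ is a nuclear Fréchet object; it is therefore flat over $K$ (by the same nuclearity argument underlying the flatness of $\power{K}{t}_0$ invoked in Lemma~\ref{boundidem}). Nuclearity also yields $\fast{K}{t}\wotimes_K\fast{K}{x}\cong\rm{lim}_{n\in\N}\big(\power{K}{t}_{-n}\wotimes_K\power{K}{x}_{-n}\big)\cong\rm{lim}_{n\in\N}\power{K}{t,x}_{-n}$, where $\power{K}{t,x}_{-n}$ is the Banach module of double series $\sum a_{i,j}t^ix^j$ with $\lim_{i+j\to\infty}|a_{i,j}|((i+1)(j+1))^n=0$ and norm $\sup_{i,j}|a_{i,j}|((i+1)(j+1))^n$ (one uses that the diagonal is cofinal in $\N\times\N$). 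This is the genuine difference from Proposition~\ref{tempidem}, where $\temp{R}{t}$ was a filtered colimit and both flatness and the computation of $\wotimes$ were automatic.

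Granting these identifications, the rest follows the template of Proposition~\ref{tempidem}. Since $t-x$ is not a zero divisor in $\power{K}{t,x}$, strict exactness reduces to showing that the division map $f\mapsto\frac{f(t,x)-f(x,x)}{t-x}=\sum_{h,s\ge 0}\big(\sum_{l=0}^{s}a_{h+1+l,\,s-l}\big)t^hx^s$ is bounded from $\power{K}{t,x}_{-2n}$ into $\power{K}{t,x}_{-n}$ for each $n$; the key input, dual to the estimate used in Proposition~\ref{tempidem}, is the elementary inequality $((h+1)(s+1))^n\le((h+2+l)(s-l+1))^{2n}$ for $0\le l\le s$, which holds because $l\mapsto(h+2+l)(s+1-l)$ is a downward parabola, hence minimal at an endpoint where it equals $h+s+2$, while $(h+1)(s+1)\le(h+s+2)^2$. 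As $\{2n\}_{n\in\N}$ is cofinal, passing to the projective limit produces a bounded endomorphism $\delta$ of $\fast{K}{t,x}$ which is a two-sided inverse of multiplication by $(t-x)$ on the ideal $(t-x)\fast{K}{t,x}$; this makes that ideal bornologically closed, shows multiplication by $(t-x)$ is strict, and identifies the cokernel with $\fast{K}{z}$ via $f\mapsto f(z,z)$, completing the strict exactness.

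The only real obstacle I anticipate is the nuclearity input of the second paragraph — identifying $\fast{K}{t}\wotimes_K\fast{K}{x}$ with the concrete limit $\rm{lim}_n\power{K}{t,x}_{-n}$ and deducing flatness of $\fast{K}{t}$ — because $\wotimes_K$ does not commute with limits in general; once this is in place, the division estimate is a routine mirror image of the computation already done for the tempered disk.
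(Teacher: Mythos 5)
Your proposal is correct and follows essentially the same route as the paper: invoke flatness of $\fast{K}{t}$, apply the Taylor criterion of Theorem \ref{Taylor criter}, and show the division map $f\mapsto(f(t,x)-f(x,x))/(t-x)$ is bounded from the $\|\cdot\|_{-2n}$-norm to the $\|\cdot\|_{-n}$-norm, which gives strictness and closedness of the ideal $(t-x)$. The only difference is cosmetic: your inequality $((h+1)(s+1))^n\le((h+2+l)(s-l+1))^{2n}$ is exactly the paper's bound, derived via the downward-parabola argument rather than the paper's chain $(s+1)\le(s-l+1)(l+1)$ and $(h+1)(l+1)\le(h+l+2)^2$; and you are slightly more careful than the paper in flagging the nuclearity input needed to identify $\fast{K}{t}\wotimes_K\fast{K}{x}$ with $\lim_n\power{K}{t,x}_{-n}$ and to deduce flatness of the Fr\'echet limit, whereas the paper leaves this implicit (it is justified by the nuclearity lemma proved in Section \ref{section complementary} together with \cite[Lemma 5.17]{BenKreFrechet} and \cite[Corollary 3.65]{bambozzi2018stein}).
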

\begin{proof}
   
    By \cite[Lemma 3.50]{bambozzi2018stein} the algebra $\fast{K}{t}$ is flat over $K$.
     Thus, by Theorem \ref{Taylor criter}, we need to show that
     \begin{equation}\label{fast koszul resolution}
    0\rightarrow\fast{K}{t}\hat{\otimes}_K\fast{K}{x}\xrightarrow{(t-x)}\fast{K}{t}\hat{\otimes}_K\fast{K}{x}\rightarrow\fast{K}{z}\rightarrow 0
     \end{equation}
     is strictly exact. Again, we use the strategy of Proposition \ref{tempidem} and reduce the proposition to prove that the map 
    \begin{equation*}
        f\mapsto\frac{f(t,x)-f(x,x)}{t-x}
    \end{equation*}
    is bounded.
    If $f(t, x) =\sum_{i,j}a_{i,j}t^ix^j$ then we have  
    \begin{dmath*}
     \left\|\frac{f(t,x)-f(x,x)}{t-x}\right\|_{-n}=\mathrm{sup}_{h\ge 0,s\ge 0}\mathrm{max}_{l}(|a_{h+1+l,s-l}|((h+1)^{n}(s+1)^{n}))\le 
     \mathrm{sup}_{h\ge 0,s\ge 0}\mathrm{max}_{l}(|a_{h+1+l,s-l}|((h+1+l+1)^{2n}(s-l+1)^{2n}))
     \le\mathrm{sup}_{i\ge 0,j\ge 0}|a_{i,j}|((i+1)(j+1))^{2n}=\|f\|_{-2n}.
    \end{dmath*}
     This is because $$(h+1)^n(s+1)^n=(h+1)^n(s-l+l+1)^n\le (h+1)^n(s-l+1)^n(l+1)^n\le(h+1+l+1)^{2n}(s-l+1)^{2n}$$ for each $h,s\in\N$, and each $l\le s$.
     \end{proof}

\begin{rmk} 
The results above give a chain of open subsets of $\mathbf A^1_K$. More precisely, the natural morphisms of $K[t]$-algebras \[ \tate{K}{t} \longrightarrow \power{K}{t}_0 \longrightarrow \temp{K}{t} \longrightarrow \lim_{\rho<1}\tate{K}{\rho^{-1}t} \] induce inclusions \[ \goth S\!\left(\lim_{\rho<1}\tate{K}{\rho^{-1}t}\right) \subset \goth S(\temp{K}{t}) \subset \goth S(\power{K}{t}_0) \subset \goth S(\tate{K}{t}) \subset \mathbf A^1_K . \] We interpret these respectively as the usual open unit disk, the tempered unit disk, the bounded unit disk, and the closed unit disk centered at $0$ (see also \cite[Theorem 5.8]{BassatMuk}).    
\end{rmk}

\section{Tempered decomposition of the affine line over a field}\label{section complementary}

In this section we consider the topological space \[ \mathbf A^1_K=\goth S(K[t]). \]   We will interpret $\fast{K}{t}$ as the algebra of functions supported on the complement of the tempered disk at infinity, in the sense of Remark \ref{functions on complementary}. We already know that $\fast{K}{t}$ determines an open subset of $\mathbf A^1_K$ by Lemma \ref{lem:fast_is_open}. We will show that this open subset and the punctured tempered disk at infinity cover $\mathbf A^1_K$, and that their intersection is non-empty. This shows that   $\mathbf A^1_K $ can be covered by two open subsets: one associated with the \emph{punctured tempered disk at infinity}, and the other associated with the algebra of fast-converging series at $0$.  

For $n \in \N$ and $m \in \Z$ consider 
$$ K^m_{n,\infty}[t]=\left\{\sum_{i\le m}a_it^i :a_i\in K, \ \lim_{i \to \infty} \frac{|a_{-i}|}{(i+1)^n}\rightarrow0\right\}.$$
This is clearly a Banach space over $K$ when endowed with the norm 
  $$\norm{\sum_{i\le m}a_it^i }_{K^m_{n,\infty}}= \max \{ \max_{0 \le i \le m} |a_i|,  \sup_{i\in\N} \left \{\frac{|a_{-i}|}{(i+1)^{n}} \right \} \}.$$
Equivalently, this is the space of Laurent series whose positive part has degree at most $m$, and whose negative tail has logarithmic growth at most $n$ in the variable $t^{-1}$.

We set 
$$ K^m_{\mathrm{temp},\infty}[t]= \colim_{n\in\mathbb N} K^m_{n,\infty}[t],$$
where the colimit is computed as bornological vector space, and similarly for $n \in \N$
$$ K_{n,\infty}[t] = \colim_{m\in\N} K_{n,\infty}^m[t], $$
and finally 
$$ K_{\mathrm{temp},\infty}[t] = \colim_{m\in\N} K_{\mathrm{temp},\infty}^m[t] = \colim_{n\in\N} K_{n,\infty}[t].$$ 
Thus 
$$ K^m_{\mathrm{temp},\infty}[t]=\left\{\sum_{i\le m}a_it^i : a_i\in K, \exists \ n\in\N  \ \lim_{i \to \infty} \frac{|a_{-i}|}{(i+1)^n} \rightarrow 0, \right\},$$
and 
$$ K_{\mathrm{temp},\infty}[t]=\left\{\sum_{i\le m}a_i t^i : a_i\in K, \ \lim_{i \to \infty} \frac{|a_{-i}|}{(i+1)^n} \rightarrow 0, \ \text{for some } m, n \in\N \right\}.$$
It is straightforward to check that $K_{\mathrm{temp},\infty}[t]$ is a bornological algebra over $K$.
We interpret it as the algebra of functions on the tempered disk at $\infty$ in $\mathbf{A}^1_K$. Strictly speaking, the objects defined above live in $\CBornK$. Via the dissection functor of Theorem \ref{compare ind and born}, we can regard them as objects of $\Ind^m(\BanK)$, and we keep the same notation for them.

\begin{rmk}
    The algebra $K_{\mathrm{temp},\infty}[t]$ defines an open subset of $\mathbf A^1_K$. Indeed, the canonical map $K[t,t^{-1}] \to  K_{\mathrm{temp},\infty}[t]$ is a homotopy epimorphism because it is obtained as the base change 
    $$
    \begin{tikzcd}
    K[t^{-1}] \arrow{r} \arrow{d}  &  \temp{K}{t^{-1}} \arrow{d} \\
    K[t, t^{-1}] \arrow{r}  &  K[t,t^{-1}]\wotimes_{K[t^{-1}]}^{\bb{L}}\temp{K}{t^{-1}}\cong K[t,t^{-1}]\wotimes_{K[t^{-1}]}\temp{K}{t^{-1}} \cong  K_{\mathrm{temp},\infty}[t]
    \end{tikzcd} 
    .$$
   Notice that $K[t, t^{-1}]$ is flat over $K[t]$ or over $K[t^{-1}]$. Therefore $K[t,t^{-1}] \to K_{\mathrm{temp},\infty}[t]$ is a homotopy epimorphism. Since $K[t]\to K[t,t^{-1}]$ is also a homotopy epimorphism, the claim follows by the stability by composition of homotopy epimorphisms. 
\end{rmk}
\medskip

We call the corresponding open subset \[ \goth S(K_{\mathrm{temp},\infty}[t])\subset \mathbf A^1_K \] the \emph{punctured tempered disk at infinity}. We now compute the algebra associated with the complement of this open subset in $\mathbf A^1_K$, in the sense of Remark \ref{functions on complementary}. We follow the argument of \cite[Proposition 5.6]{ClauSchCompl}.

\begin{pro} \label{prop:complement_of_tempered}
    The algebra of functions supported on the complement of the punctured tempered disk at infinity in $\mathbf A^1_K$ is naturally identified with
    $$\fast{K}{t}=\left\{\sum_{i \in \N} a_it^i:\lim_{i \to \infty} |a_i|(i + 1)^n\rightarrow 0, \forall n\in\N\right\}.$$
\end{pro}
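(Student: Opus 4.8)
The plan is to follow \cite[Proposition 5.6]{ClauSchCompl}, replacing its estimates by the ones appropriate to the tempered setting. Write $B := K_{\mathrm{temp},\infty}[t]$, work inside $\CBornK$ (legitimate by Theorems \ref{compare ind and born} and \ref{thm:CBorn_IndBan_derived_equivalent}), and recall from the previous remark that $K[t]\to B$ is a homotopy epimorphism in $\cal{E} := \dercat_{\infty}(\sf{Mod}_{K[t]})$, with unit $1 = K[t]$, so it determines the recollement \eqref{categorical recollement}. By Remark \ref{functions on complementary} the algebra of functions supported on the complement of $\goth{S}(B)\subset\mathbf{A}^1_K$ is $j_rj(K[t]) = \derintHom{K[t]}{[K[t]\to B]}{K[t]}$, so everything reduces to identifying this object.

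First I would reduce to a completion-type limit. The inclusion of polynomials $K[t]\hookrightarrow B$ is a strict monomorphism in $\CBornK$ (the subspace bornology induced on $K[t]$ is its usual $LB$-bornology, and $K[t]$ is bornologically closed in $B$ since a Mackey-convergent sequence of polynomials bounded in some $K^m_{n,\infty}[t]$ has all its terms of degree $\le m$). Hence there is a strictly exact sequence $0\to K[t]\to B\to Q\to 0$ with $Q := B/K[t]$, giving $[K[t]\to B]\simeq Q[-1]$ and $\derintHom{K[t]}{[K[t]\to B]}{K[t]}\simeq\derintHom{K[t]}{Q}{K[t]}[1]$. As a $K[t]$-module $Q$ is the tempered analogue of the local cohomology module $H^1_{(t)}(K[t]) = K[t,t^{-1}]/K[t]$; explicitly $Q = \quotecolim_{n\in\N}\quotecolim_{i\in\N}(K[t]/t^i)_n$, where $(K[t]/t^i)_n$ carries the weight $(j+1)^n$ coming from the tempered tail of $B$ and the inner transition maps are multiplication by $t$.

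Next I would compute the derived internal Hom. Since $K[t]$ is flat and all the transition maps above are strict, $\derintHom{K[t]}{Q}{K[t]}\simeq\mathbb{R}\lim_{n}\mathbb{R}\lim_{i}\derintHom{K[t]}{(K[t]/t^i)_n}{K[t]}$, and each inner term is computed from the two-term Koszul presentation $[K[t]\xrightarrow{\,t^i\,}K[t]]$, so it is $(K[t]/t^i)_n[-1]$ with the induced weight and with transition maps the canonical surjections. The inner $\mathbb{R}\lim_i$ is then a Mittag-Leffler limit (surjective, hence strict, transition maps), has no higher terms, and equals the weighted formal series space $\{\sum_{j\ge 0}a_jt^j : |a_j|(j+1)^{n}\to 0\}\cong\power{K}{t}_{-n}$ in degree $0$; the outer $\mathbb{R}\lim_n$ is a Fréchet-type Mittag-Leffler limit (the transitions are the dense inclusions $\power{K}{t}_{-n-1}\hookrightarrow\power{K}{t}_{-n}$), again has no higher terms, and equals $\lim_n\power{K}{t}_{-n} = \fast{K}{t}$ by Definition \ref{definition fast series}. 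Re-inserting the shift, the complement algebra is $\fast{K}{t}[-1][1] = \fast{K}{t}$, concentrated in degree $0$, and one checks that the induced product is the usual multiplication of series.

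The step I expect to be the main obstacle is the second one: justifying the interchange of the derived internal $\Hom$ with the two nested filtered colimits in $\CBornK$, and, above all, verifying the (strictness/Mittag-Leffler) properties of the two inverse systems of Banach spaces so that the answer is a bare ring rather than a genuine complex, together with the precise bookkeeping that pins the weight of the inner limit to $(j+1)^{-n}$ — hence the whole answer to $\lim_n\power{K}{t}_{-n}$ and not to a slightly larger or smaller space. These are exactly the kind of elementary inequalities among the weights $(j+1)^{\pm n}$ already used in the proofs of Proposition \ref{tempidem} and Lemma \ref{lem:fast_is_open}, and essentially the same estimates reappear here.
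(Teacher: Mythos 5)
Your strategy is the same as the paper's in outline — reduce to $\derintHom{K[t]}{B/K[t]}{K[t]}[1]$ and identify a projective limit whose $\mathbb{R}\lim$ is acyclic — but it contains a genuine gap in the description of $Q = B/K[t]$, and that gap changes the answer.

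You write $Q \cong \quotecolim_n \quotecolim_i (K[t]/t^i)_n$ with the inner transitions given by multiplication by $t$. For each fixed $n$ the inner colimit is a filtered colimit of \emph{finite-dimensional} Banach modules along strict monomorphisms, so in $\CBornK$ it corresponds to the \emph{fine} bornology on the union $t^{-1}K[t^{-1}]$. This is a dense proper bornological subspace of the Banach module $K^{-1}_{n,\infty}[t]$, and the inclusion is a non-strict monomorphism whose cone is not strictly exact (a strict mono which is also a strict epi would already be an iso in a quasi-abelian category). Hence the inner colimit is not quasi-isomorphic to $K^{-1}_{n,\infty}[t]$, and the two objects have genuinely different duals: $\underline{\Hom}_K(K^{-1}_{n,\infty}[t],K)$ is the Banach $\ell^\infty$-type space $\power{K}{t}_{-n}$ by the duality of $c_0$ and $\ell^\infty$, whereas $\lim_i \underline{\Hom}_K((K[t]/t^i)_n,K)$ is the Fréchet product $\prod_j K$, with a product bornology that does \emph{not} see the weights $(j+1)^n$ at all (boundedness of a subset is checked coordinate-wise, with no uniformity as $j\to\infty$). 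So your inner $\mathbb{R}\lim_i$ yields all of $\power{K}{t}$ with a Fréchet bornology, independent of $n$; the outer $\mathbb{R}\lim_n$ then also produces $\power{K}{t}$, not $\fast{K}{t}$. You flagged the interchange of $\Hom$ and colimits as the delicate point, but the issue is upstream: the colimit decomposition itself is not correct in $\CBornK$.

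The paper avoids the problem entirely: it works directly with $K^{-1}_{n,\infty}[t]$ as a single compact projective Banach module (it is a $c_0$-type coproduct as in \eqref{eq:log_growth_projective}), computes $\underline{\Hom}_K(K^{-1}_{n,\infty}[t],K[u]) \cong \bigoplus_l \power{K}{t}_{-n}u^l$ by the cited duality, and then uses the Koszul cone for $(t-u)$ together with Corollary \ref{cor:idclosedintemp} to get the $K[t]$-linear Hom. No truncation by $t^i$ appears. There is also a second, more minor issue: for the $\lim = \mathbb{R}\lim$ step the paper proves that the projective system $(\power{K}{t}_{-n})_n$ is nuclear, which is the right acyclicity criterion in $\CBornK$; citing Mittag-Leffler for a system of dense (non-surjective) inclusions of Fréchet-type spaces would need a separate argument.
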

\begin{proof}
    Following the discussion of \cite[Lecture V]{ClauSchCompl}, computing the algebra of functions supported on the complement of $\goth S(K_{\mathrm{temp},\infty}[t])$ in $\mathbf A^1_K$ amounts to computing
    \begin{equation}\label{complementary}
    \derintHom{K[t]}{[K[t] \rightarrow K_{\mathrm{temp},\infty} \left[t\right]]}{K[t]}.
    \end{equation} 
    The complex $[K[t] \rightarrow K_{\mathrm{temp},\infty}[t]]$ is quasi-isomorphic to 
    $$ \frac{K_{\mathrm{temp},\infty}[t]}{K[t]}[-1] \cong K_{\mathrm{temp}, \infty}^{-1}[t][-1] $$
    the module of tempered series with negative degrees, shifted by -1. Then \eqref{complementary} is identified to
    \begin{equation}\label{complementaryunion}
        \derintHom{K[t]}{K_{\mathrm{temp}, \infty}^{-1}[t]}{K[t]}[1]\cong \mathbb{R} \lim_{n\in\N}\derintHom{K[t]}{K_{n,\infty}^{-1}[t]}{K[t]}[1].
    \end{equation}
    We first compute the derived $\mathrm{Hom}$s as $K$-modules.
    Notice that $K_{n,\infty}^{-1}$ is projective in $\mathsf{Ind}(\mathsf{Mod}_{K})$, because it is isomorphic to a module of the form of equation \eqref{eq:log_growth_projective} in Section \ref{section tempered functions}.
     Moreover it is a compact object (because it is Banach over $K$). So, renaming $K[t]$ as $K[u]$ we have 
    \begin{equation}\label{hom over K}
    \underline{\mathrm{Hom}}_K(K_{n,\infty}^{-1}[t], K[u])[1] \cong \bigoplus_{l\in\N} \underline{\mathrm{Hom}}_K(K_{n,\infty}^{-1}[t],K)u^l[1].
    \end{equation}
    Since the dual of convergent sequences is given by bounded sequences, see \cite[Corollary 3.50]{BenKreFrechet}, we obtain that \eqref{hom over K} is isomorphic to 
    \begin{equation*}
        \bigoplus_{l\in\N}\power{K}{t}_{-n}u^l[1].
    \end{equation*}
    To prove the isomorphism 
    \begin{equation}\label{calculation mod (t-u)}
    \derintHom{K[t]}{K_{n,\infty}^{-1}[t]}{K[t]}[1] \cong \underline{\mathrm{Hom}}_K(K_{n,\infty}^{-1}[t],K[u])[1]/(t-u),
    \end{equation} 
    one takes the cone of the strict (by Corollary \ref{cor:idclosedintemp}) monomorphism
    \begin{equation*}
        \bigoplus_{l\in\N}\power{K}{t}_{-n}u^l[1]\xrightarrow{(t-u)}\bigoplus_{l\in\N}\power{K}{t}_{-n}u^l[1],
    \end{equation*}
    that is $\power{K}{t}_{-n}$.
    In fact $K[t]$ is clearly projective over $K$, and the projective objects over $K[t]$ are direct summands of base changes of projective objects over $K$. Thus projective objects over $K[t]$ are projective when considered over $K$ by Proposition \ref{tensor of banach projectives}.
    So, if $P^{\bullet}$ is a resolution of $K_{n,\infty}^{-1}[-1]$ by projective $K[t]$-modules, we can compute 
    $$\derintHom{K[t]}{K_{n,\infty}^{-1}[t]}{K[t]} \cong \underline{\mathrm{Hom}}_{K[t]}(P^{\bullet}, K[t])$$ 
    as 
    $$\underline{\mathrm{Hom}}_{K}(P^{\bullet},K[u])/(t-u) \cong \underline{\mathrm{Hom}}_{K} (K_{n,\infty}^{-1}[t], K[u])/(t-u)[-1] \cong \power{K}{t}_{-n},$$ 
    and we obtain \eqref{calculation mod (t-u)}. We are going to show in the next lemma that the  projective system that defines $\fast{K}{t}$ is a nuclear one. Therefore, by \cite{bambozzi2018stein} \S3.3, it follows that
    $$\lim_{n\in\N}\power{K}{t}_{-n}= {\mathbb R} \lim_{n\in\N}\power{K}{t}_{-n},$$
     concluding the proof.
    \end{proof}

\begin{lem} 
    The projective limit 
    $$
    \cdots \to \power{K}{t}_{-n- 1} \to \power{K}{t}_{-n} \to \power{K}{t}_{-n + 1 } \to \cdots
    $$
    is nuclear. Therefore, $\fast{K}{t}$ is a nuclear bornological space.
    \end{lem}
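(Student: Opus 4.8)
The plan is to turn the lemma into a statement about a concrete family of $c_0$-type Banach spaces and a diagonal operator between them. Exactly as in \eqref{eq:log_growth_projective}, the assignment $\sum_{i\in\N}a_it^i\mapsto (a_i)_{i\in\N}$ identifies $\power{K}{t}_{-n}$ isometrically with $\contcoprod_{i\in\N}K_{(i+1)^{n}}$, and hence, after rescaling the $i$-th coordinate by $(i+1)^{n}$, with the Banach space $c_0(K)$ of $K$-valued null sequences equipped with the sup-norm; the comparison isomorphism is $\sum a_it^i\mapsto\big(a_i(i+1)^{n}\big)_{i\in\N}$. So the first step is simply to record these identifications for all $n$ at once.

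Next I would read off the transition maps in these coordinates. The map $\power{K}{t}_{-(n+1)}\to\power{K}{t}_{-n}$ is, on underlying power series, the inclusion; if a series $\sum a_it^i$ is represented at level $n+1$ by the null sequence $(c_i)_{i}$ with $c_i=a_i(i+1)^{n+1}$, then at level $n$ it is represented by $\big(a_i(i+1)^{n}\big)_i=\big((i+1)^{-1}c_i\big)_i$. Hence, in $c_0(K)$-coordinates, the transition map becomes the diagonal operator
\[ \Delta\colon c_0(K)\longrightarrow c_0(K),\qquad (c_i)_{i\in\N}\longmapsto\big((i+1)^{-1}c_i\big)_{i\in\N}, \]
and more generally the composite $\power{K}{t}_{-(n+k)}\to\power{K}{t}_{-n}$ is the diagonal operator with entries $(i+1)^{-k}$.

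The key point is then that $\Delta$ is a \emph{nuclear} map in the sense recalled in \cite[Subsection 3.3]{bambozzi2018stein}: since $(i+1)^{-1}\to 0$, the operator $\Delta$ is the limit in operator norm of its finite-rank truncations $\Delta_N$ (the composition of $\Delta$ with the projection onto the first $N$ coordinates), as $\lVert\Delta-\Delta_N\rVert=\sup_{i\ge N}(i+1)^{-1}=(N+1)^{-1}\to 0$. Consequently, for every $n$ there is an $m>n$ — one may just take $m=n+1$ — for which the transition $\power{K}{t}_{-m}\to\power{K}{t}_{-n}$ is nuclear, which is precisely the defining property of the pro-object $(\power{K}{t}_{-n})_{n\in\N}$ being nuclear. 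This is the whole content of the lemma; the payoff used in the proof of Proposition \ref{prop:complement_of_tempered} is that the cited results on nuclear projective systems then give $\derlim_{n\in\N}\power{K}{t}_{-n}\cong\lim_{n\in\N}\power{K}{t}_{-n}$ in $\dercat_{\infty}(\sf{Mod}_{K[t]})$, i.e. the vanishing of $\mathrm{R}^{1}\lim$.

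I do not expect a genuine obstacle here. The only points requiring care are bookkeeping ones: getting the rescaling in the identification with $c_0(K)$ oriented correctly so that the transition operator genuinely comes out diagonal with entries tending to $0$, and matching the precise definition of a nuclear pro-Banach object used in \cite{bambozzi2018stein} (one may equivalently phrase nuclearity of the system through nuclearity of cofinally many transition maps, which is what the explicit computation above supplies). Once that is pinned down, nuclearity of a diagonal operator with null entries is immediate.
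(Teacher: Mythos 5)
Your overall strategy — trivialize the terms to $c_0$-type spaces and read off the transition maps as diagonal operators with entries tending to zero, then invoke a criterion for compactoid/nuclear maps — is indeed the paper's approach. But the two points you dismiss as ``bookkeeping'' are exactly where the paper does its real work, and as written your proof has genuine gaps at both.

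First, $\power{K}{t}_{-n}$ is \emph{not} isometric to $\contcoprod_{i\in\N}K_{(i+1)^n}$. The condition defining $\power{K}{t}_{-n}$ (as recalled at the start of the paper's proof) is boundedness of $|a_i|(i+1)^n$, not decay to zero; so it is a product $\prod^{\le 1}_{i}K_{(i+1)^n}$, an $\ell^\infty$-type space, not the coproduct appearing in \eqref{eq:log_growth_projective}. The identification with $c_0(K)$ is simply false for these modules. The paper's first step is precisely to replace the projective system by the cofinally equivalent one built from the modules $\tate{K}{t}_{-n}$ (whose coefficients do tend to $0$), which are genuinely of $c_0$-type; you skipped this swap.

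Second, the proposed trivialization ``rescale the $i$-th coordinate by $(i+1)^n$'' does not yield an isometry when $K$ is discretely valued. The integer $(i+1)^n$, viewed in $K$, has $p$-adic modulus $\le 1$, not $(i+1)^n$; and one cannot in general choose $b_i\in K$ with $\abs{b_i}_K=(i+1)^n$, because $(i+1)^n$ need not lie in $\abs{K^\times}=p^\Z$. Without such a choice, the map $\tate{K}{t}_{-n}\to c_0(K)$ you write down is not well-defined (or not isometric), and the subsequent description of the transition map as the diagonal operator $(c_i)\mapsto((i+1)^{-1}c_i)$ on $c_0(K)$ does not literally hold. The paper handles this by first passing to an \emph{equivalent} norm whose values lie in $\abs{K^\times}$ — the one built from the $p$-adic length $\ell(i+1)$ — after which the trivialization really is an isometry and the transition operator really is diagonal with null entries; for densely valued $K$ they note one can approximate and no norm change is needed. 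A minor further point: once the diagonal form is secured, verifying nuclearity should match the definition in the cited references; the paper applies \cite[Theorems 8.1.7 and 8.1.9]{Schikhof} directly rather than the ``limit of finite-rank truncations'' characterization you propose — these are equivalent, but it is worth stating which one is being matched. With the cofinality swap and the norm replacement inserted, your argument becomes essentially the paper's.
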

    \begin{proof} 
    It is enough to prove the statement for the cofinal projective system 
    \[ \cdots \longrightarrow \tate{K}{t}_{-(n+1)} \longrightarrow \tate{K}{t}_{-n} \longrightarrow \tate{K}{t}_{-(n-1)} \longrightarrow \cdots , \]
    where 
    \[ \tate{K}{t}_{-n} = \left\{ \sum_{i\geq 0}a_i t^i: \lim_{i\to\infty}\abs{a_i}_K(i+1)^n=0 \right\}. \]
    For each $n$, the Banach space $\tate{K}{t}_{-n}$ is of countable type. Hence, after replacing the norm by an equivalent orthogonal norm, it is linearly homeomorphic to $c_0$. More explicitly, we may choose a sequence $(b_{n,i})_{i\geq 0}$ in $K^\times$ such that \
    \[ D_n (i+1)^n \le \abs{b_{n,i}}_K \le C_n (i+1)^n \] 
    for some constant $D_n,C_n > 0$. In the discrete case  we may suppose $\abs{K}=p^{\Z}$,  if we denote by $\ell(i+1)$ the $p$-adic length of $i+1$, we may take  $b_{n,i}$ such that $ \abs{b_{n,i}}= p^{n \ell (i+1)}$. Thus, we have 
    \[ \abs{b_{n,i}}_K= p^{n \ell (i+1)} \le (i+1)^n \le p \abs{b_{n,i}}_K . \] 
    Then the map 
    \[ \tate{K}{t}_{-n}\longrightarrow c_0, \qquad \sum_{i\geq 0}a_i t^i\longmapsto (a_i b_{n,i})_{i\geq 0} \] 
    is a linear homeomorphism. Under the identifications 
    \[ \tate{K}{t}_{-(n+1)}\simeq c_0, \qquad \tate{K}{t}_{-n}\simeq c_0, \] 
    the transition map 
    \[ \tate{K}{t}_{-(n+1)} \longrightarrow \tate{K}{t}_{-n} \] 
    is represented by the diagonal operator 
    \[ c_0\longrightarrow c_0, \qquad (x_i)_{i\geq 0} \longmapsto (\lambda_i x_i)_{i\geq 0}, \] 
    where 
    \[ \lambda_i=\frac{b_{n,i}}{b_{n+1,i}}. \] 
    By construction 
    \[ \lim_{i \to \infty} \abs{\lambda_i}_K = 0. \]
    Therefore this diagonal operator is compactoid by \cite[Theorem 8.1.9(ii)]{Schikhof}. Thus, the transition maps are nuclear and hence the projective system is nuclear. Applying \cite[Lemma 3.60]{bambozzi2018stein} we get that $\fast{K}{t}$ is a nuclear bornological space.
\end{proof}

\begin{rmk} By a similar argument, one can also prove that all the natural maps in the inductive limit defining the tempered functions  $\tate{K}{t}_{n} \rightarrow \tate{K}{t}_{n+1}$  are nuclear. Thus, $\temp{K}{t}$ is a nuclear bornological space as well.
 \end{rmk}

\medskip

\begin{rmk} \label{r-decay}  
Along the same lines, other algebras can also be interpreted in this framework. For instance some algebras associated to the  log-decay series  introduced in \cite{KM1}, \cite{KM2}. We plan to investigate these algebras in forthcoming work \cite{logdecay&other}.
\end{rmk}

\bigskip

We now show that the intersection of the fast converging  disk in $0$ with the tempered disk at $\infty$ inside the affine line is non-empty.

\begin{pro}\label{nonemptintersection}
    The derived tensor product $K_{\mathrm{temp},\infty}[t]\hat{\otimes}^{\mathbb{L}}_{K[t]}\fast{K}{t}$ is naturally identified with the algebra
    \begin{equation*}
    \fast{K_{\mathrm{temp},\infty}}{t}=\left\{\sum_{i\in\Z}a_it^i:\lim_{i\to + \infty}|a_i|(i+1)^n\rightarrow 0\;\forall \;n,\;\exists m \in \N
    \lim_{i\to-\infty} \frac{|a_i|}{(-i+1)^m}\rightarrow 0 \right\}.
    \end{equation*}
    In particular $\goth{S}(\fast{K_{\mathrm{temp},\infty}}{t})$ is a not empty open subset of $\A_K^1$.
\end{pro}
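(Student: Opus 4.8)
The plan is to recognise $K_{\mathrm{temp},\infty}[t]\hat{\otimes}^{\mathbb{L}}_{K[t]}\fast{K}{t}$ as the meet, in the lattice $\goth{I}(\dercat_{\infty}(\sf{Mod}_{K[t]}))$, of two derived idempotent algebras over $K[t]$, and then to compute it by the diagonal Koszul method already used in the proof of Proposition~\ref{tempidem}. Both structure maps $K[t]\to K_{\mathrm{temp},\infty}[t]$ (obtained by base change from $K[s]\to\temp{K}{s}$, $s=t^{-1}$, as in the discussion above) and $K[t]\to\fast{K}{t}$ (Lemma~\ref{lem:fast_is_open}) are homotopy epimorphisms; hence by Proposition~\ref{alg object form dist lattice} their derived tensor product over $K[t]$ is again a homotopy epimorphism from $K[t]$, it computes the meet $K_{\mathrm{temp},\infty}[t]\wedge\fast{K}{t}$, and so it represents the open subset $D_{K_{\mathrm{temp},\infty}[t]}\cap D_{\fast{K}{t}}\subseteq\mathbf{A}^1_K$. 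Thus it suffices to identify this derived tensor product with an honest ind-Banach algebra concentrated in degree $0$: that algebra is then automatically an open subset of $\mathbf{A}^1_K$, and it is non-empty since it is non-zero (it contains $K[t,t^{-1}]$).

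To carry out the computation, relabel the variable of the second factor as $x$, so that $\fast{K}{x}$ is a $K[t]$-algebra via $t\mapsto x$. Since $(K[t],t)$ satisfies the strictness condition in $\Ind(\BanK)$ (\cite[Lemma~4.6]{BassatMuk}), the complex $[K[t,x]\xrightarrow{t-x}K[t,x]]$ is a strict resolution of $K[t]$ by free $K[t]\hat{\otimes}_K K[t]=K[t,x]$-modules; tensoring it with $K_{\mathrm{temp},\infty}[t]\hat{\otimes}_K\fast{K}{x}$ and using that $\fast{K}{x}$ is flat over $K$ (Lemma~\ref{lem:fast_is_open} together with \cite[Lemma~3.49]{bambozzi2018stein}) gives
\[
K_{\mathrm{temp},\infty}[t]\hat{\otimes}^{\mathbb{L}}_{K[t]}\fast{K}{x}\;\simeq\;\bigl[\,K_{\mathrm{temp},\infty}[t]\hat{\otimes}_K\fast{K}{x}\xrightarrow{\ t-x\ }K_{\mathrm{temp},\infty}[t]\hat{\otimes}_K\fast{K}{x}\,\bigr].
\]
By Theorem~\ref{compare ind and born} it is enough to work in $\CBornK$, so the proposition reduces to proving strict exactness of
\[
0\longrightarrow K_{\mathrm{temp},\infty}[t]\hat{\otimes}_K\fast{K}{x}\xrightarrow{\ t-x\ }K_{\mathrm{temp},\infty}[t]\hat{\otimes}_K\fast{K}{x}\longrightarrow\fast{K_{\mathrm{temp},\infty}}{z}\longrightarrow 0,
\]
the last arrow being the substitution $t\mapsto z,\ x\mapsto z$. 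Here I would first make the middle term explicit: writing $K_{\mathrm{temp},\infty}[t]=\colim_{m,n}K^m_{n,\infty}[t]$ and $\fast{K}{x}=\lim_{\ell}\power{K}{x}_{-\ell}$, and using that $\hat{\otimes}_K$ commutes with the filtered colimit and — by the nuclearity of the system $(\power{K}{x}_{-\ell})_{\ell}$ established above — with this projective limit, one identifies $K_{\mathrm{temp},\infty}[t]\hat{\otimes}_K\fast{K}{x}$ with the algebra of double series $\sum_{i\le m,\ j\ge 0}c_{ij}t^ix^j$ satisfying the combined (log-growth in $i$, fast decay in $j$) convergence conditions.

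Injectivity of multiplication by $t-x$ is immediate, since the middle term embeds into a ring of formal Laurent series in which $t-x$ is not a zero divisor. Substituting $t=x=z$ sends $\sum c_{ij}t^ix^j$ to $\sum_k\bigl(\sum_{i+j=k}c_{ij}\bigr)z^k$; a direct estimate of $\sum_{i+j=k}c_{ij}$ — dominated by the large values of $j$ when $k\to+\infty$ (where the fast decay beats the at most polynomial growth) and by the very negative values of $i$ when $k\to-\infty$ — shows that the image is precisely $\fast{K_{\mathrm{temp},\infty}}{z}$, and surjectivity is seen by lifting $z^k$ to $1\otimes x^k$ for $k\ge 0$ and to $t^k\otimes 1$ for $k<0$. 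The main obstacle, exactly as in Propositions~\ref{tempidem} and Lemma~\ref{idclosedintemp}, is the closedness of the image of $t-x$, equivalently the boundedness of the division map $f\mapsto\bigl(f(t,x)-f(x,x)\bigr)/(t-x)$: one must check that dividing out $t-x$ enlarges the parameters $n,\ell$ only by a controlled amount. This requires combining a ``fast-side'' estimate in the $x$-variable of the shape used in the proof of Lemma~\ref{lem:fast_is_open} (i.e.\ $(h+1)^{\ell}(s+1)^{\ell}\le(h+1+l+1)^{2\ell}(s-l+1)^{2\ell}$) with a ``tempered-side'' estimate on the negative powers of $t$ of the kind underlying Lemma~\ref{idclosedintemp} and Corollary~\ref{cor:idclosedintemp}. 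Once this strictness is proved, the two-term complex above is concentrated in degree $0$ and equals $\fast{K_{\mathrm{temp},\infty}}{z}$, so $K_{\mathrm{temp},\infty}[t]\hat{\otimes}^{\mathbb{L}}_{K[t]}\fast{K}{t}\cong\fast{K_{\mathrm{temp},\infty}}{t}$; non-emptiness and openness of $\goth{S}(\fast{K_{\mathrm{temp},\infty}}{t})$ then follow as explained above.
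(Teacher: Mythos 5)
Your proof takes a genuinely different route from the paper's. You compute $K_{\mathrm{temp},\infty}[t]\hat{\otimes}^{\mathbb{L}}_{K[t]}\fast{K}{t}$ via the diagonal Koszul resolution $[K[t,x]\xrightarrow{t-x}K[t,x]]$ of $K[t]$ over $K[t]\hat{\otimes}_K K[t]$, reducing the claim to strict exactness of $[M\xrightarrow{t-x}M\to\fast{K_{\mathrm{temp},\infty}}{z}]$ with $M=K_{\mathrm{temp},\infty}[t]\hat{\otimes}_K\fast{K}{x}$. The paper instead produces a two-term flat $K[t]$-resolution of the \emph{first factor} $K_{\mathrm{temp},\infty}[t]$ itself, namely $[\temp{K[t]}{z}\xrightarrow{tz-1}\temp{K[t]}{z}]$ (flat by Lemma \ref{tempered is flat}, a strict monomorphism by Corollary \ref{cor:idclosedintemp}), and then base-changes to obtain $[\temp{\fast{K}{t}}{z}\xrightarrow{tz-1}\temp{\fast{K}{t}}{z}]$. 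Your route is closer in spirit to Proposition \ref{tempidem}; the paper's has the advantage that it does not need to identify the mixed module $M$ explicitly (a colimit tensored with a projective limit, with the attendant nuclearity discussion), because the middle term $\temp{\fast{K}{t}}{z}\cong\fast{K}{t}\hat{\otimes}_K\temp{K}{z}$ arises directly, and because the strictness of the resolution of $K_{\mathrm{temp},\infty}[t]$ is a ready-made consequence of Corollary \ref{cor:idclosedintemp}.

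That said, the proposal as written has a gap where the paper does real work: the closedness of $(t-x)M$ in $M$ is asserted to follow by ``combining'' the estimates behind Lemma \ref{idclosedintemp} and Lemma \ref{lem:fast_is_open}, but neither of those estimates actually covers this situation. In $M$ you have $t$-exponents unbounded below (with only log-growth control) and $x$-exponents unbounded above (with fast decay), and multiplication by $(t-x)$ acts as a nearest-neighbour difference $c_{i-1,j}-c_{i,j-1}$ along each anti-diagonal $i+j=\text{const}$; recovering $c_{ij}$ from these differences requires summing telescopes down a half-infinite anti-diagonal and bounding the result against a norm that grows polynomially in $|i|$ while decaying super-polynomially in $j$. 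This is the exact analogue of the paper's closedness lemma for $(tz-1)$ in $\power{\power{K}{t}_{-m}}{z}_n$, which occupies a full page of monotonicity and case analysis; your version would need a comparably careful argument (including the ``does the quotient-norm minimizer exist'' step of Lemma \ref{idclosedintemp}), not just an invocation of two inequalities from the unmixed cases. Until that estimate is carried out, the reduction to a degree-zero algebra—and hence the openness and non-emptiness—is not established.
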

\begin{proof}
    We have that $K_{\mathrm{temp},\infty}[t]$ is quasi-isomorphic to the flat (see Lemma \ref{tempered is flat}) resolution 
    \begin{equation*}[\temp{K[t]}{z}\xrightarrow{(tz-1)}\temp{K[t]}{z}].\end{equation*} 
    Indeed, the monomorphism induced by multiplication by $(tz-1)$ is strict by Corollary \ref{cor:idclosedintemp}. 
    It follows that
    \begin{equation*}
    K_{\mathrm{temp},\infty}[t]\hat{\otimes}^{\mathbb{L}}_{K[t]}\fast{K}{t}
    \end{equation*} 
    is computed by the complex $[\temp{\fast{K}{t}}{z}\xrightarrow{(tz-1)}\temp{\fast{K}{t}}{z}]$, and the claim follows applying the following lemma.
\end{proof}

\begin{lem}
    The ideal generated by $(tz-1)$ in $\temp{\fast{K}{t}}{z}$ is closed.
\end{lem}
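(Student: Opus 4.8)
The plan is to prove that multiplication by $(tz-1)$ is a strict monomorphism on $\temp{\fast{K}{t}}{z}$ — which is exactly the statement that the ideal it generates is closed — by arguing one \emph{tempered level} at a time and then passing to the colimit, in the same spirit as Lemma~\ref{idclosedintemp} and Proposition~\ref{tempidem}.

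First I would unwind the object. By Definition~\ref{definition fast series} and the construction of tempered power series,
\[
\temp{\fast{K}{t}}{z}=\fast{K}{t}\wotimes_K\temp{K}{z}=\colim_{n\in\N}F_n,\qquad F_n:=\fast{K}{t}\wotimes_K\tate{K}{z}_n,
\]
a filtered colimit along monomorphisms; each $F_n$ is a complete bornological space realised concretely as the space of double power series $f=\sum_{i,j\ge 0}a_{ij}t^iz^j$ with $\norm{f}_{m,n}:=\sup_{i,j}|a_{ij}|(i+1)^m(j+1)^{-n}<\infty$ for every $m\in\N$, its bornology being controlled by these gauges. Multiplication by $(tz-1)$ carries $g=\sum b_{ij}t^iz^j$ to the series with coefficients $a_{ij}=b_{i-1,j-1}-b_{ij}$ (with $b_{ij}:=0$ for $i<0$ or $j<0$); it clearly maps each $F_n$ boundedly into itself and is injective, so in particular $(tz-1)$ is not a zero divisor in $\temp{\fast{K}{t}}{z}$ (a fact also used in Proposition~\ref{nonemptintersection}).

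The heart of the matter is the estimate $\norm{g}_{m,n}\le\norm{(tz-1)g}_{m+n,n}$, valid for all $g\in F_n$ and all $m\in\N$. Since the coefficients of any $g\in F_n$ tend to $0$ along every diagonal, the telescoping identity gives $b_{ij}=\sum_{k\ge 1}a_{i+k,j+k}$, whence $|b_{ij}|\le\sup_{k\ge 1}|a_{i+k,j+k}|$ by ultrametricity. Writing $f=(tz-1)g$ and bounding $|a_{i+k,j+k}|\le\norm{f}_{m+n,n}(i+k+1)^{-(m+n)}(j+k+1)^{n}$, the elementary inequalities $(i+1)^m\le(i+k+1)^m$, $i+k+1\ge k+1$ and $j+k+1\le(j+1)(k+1)$ give, for all $i,j\ge 0$ and $k\ge 1$,
\[
|a_{i+k,j+k}|\,(i+1)^m(j+1)^{-n}\ \le\ \norm{f}_{m+n,n}\cdot\frac{1}{(k+1)^{n}}\cdot(k+1)^{n}\ =\ \norm{f}_{m+n,n},
\]
which yields the estimate. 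Read forwards it shows that the image of multiplication by $(tz-1)$ is bornologically closed: a Mackey--Cauchy sequence $(tz-1)g_\alpha$ in the image pulls back, by the estimate, to a Mackey--Cauchy sequence $g_\alpha$, which converges to some $g\in F_n$ by completeness, and then $(tz-1)g_\alpha\to(tz-1)g$. Read backwards it shows that the set-theoretic inverse on that image is bounded. Hence multiplication by $(tz-1)$ is a strict monomorphism of each $F_n$; since filtered colimits are exact in the elementary quasi-abelian category $\Ind(\BanK)$ (Proposition~\ref{prop Ind is elementary}), the induced map on $\temp{\fast{K}{t}}{z}=\colim_n F_n$ is again a strict monomorphism, i.e.\ the ideal $(tz-1)\temp{\fast{K}{t}}{z}$ is closed.

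An alternative, more structural route is to invoke Corollary~\ref{cor:idclosedintemp} directly, with $A=\fast{K}{t}\in\sf{Comm}(\CBornK)$, $g=t$, $g'=1$, and with $z$ in the role of the tempered variable, so that $(gt-g')$ becomes $(tz-1)$. Then $(tz-1)$ is not a zero divisor since $\temp{\fast{K}{t}}{z}$ is a subring of the integral domain $\power{K}{t,z}$; and $t$ is a multiplicative element of $\fast{K}{t}$ in the sense of Definition~\ref{defn:multiplicative_element}, because writing $\fast{K}{t}$ as the bornological colimit of the weighted Banach spaces $\{\sum_i a_it^i:\sup_i|a_i|\,\omega_i<\infty\}$ — $\omega$ running over weight sequences growing faster than every polynomial — multiplication by $t$ is the shift operator, which is an isometry from the $\omega$-piece onto the piece attached to the (still admissible) shifted weight sequence. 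In either route the single nontrivial step, and the main obstacle, is controlling "division by $(tz-1)$": in the first this is the ultrametric estimate above, in the second it is checking that $t$ is genuinely multiplicative on the Fr\'echet algebra $\fast{K}{t}$, where the weights have to be shifted with care.
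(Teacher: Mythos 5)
Your first (telescoping) route is correct, and it is a genuinely different argument from the paper's. The paper first commutes the projective limit past the tensor product to write $\temp{\fast{K}{t}}{z}$ as $\colim_{n}\lim_{m}\power{\power{K}{t}_{-m}}{z}_n$, reduces the problem to closedness in each Banach piece with $m>n$, and then proves the estimate $\norm{h}_{-m,n}\le\norm{(tz-1)h}_{-m,n}$ \emph{at the same level $(-m,n)$} by a "first difference" argument: one locates, by monotonicity of $(x+j+1)^{m}(x+i+1)^{-n}$ in $x$ (which uses $m>n$), a first place $l$ along the diagonal where $\abs{a_{j+l,i+l}}\neq\abs{a_{j+l-1,i+l-1}}$, and chains the inequalities back to $(j,i)$. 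You instead work directly with the Fréchet level $F_n$, sum the diagonal telescope $b_{ij}=\sum_{k\ge1}a_{i+k,j+k}$, and prove the cleaner but degree-shifted estimate $\norm{g}_{m,n}\le\norm{(tz-1)g}_{m+n,n}$ with three elementary inequalities, which is all that strictness of a Fréchet morphism requires. This avoids both the constraint $m>n$ and the monotonicity/case analysis; it buys a simpler, index-free proof at the cost of a weaker (but sufficient) estimate. The conclusion via exactness of filtered colimits in $\Ind(\BanK)$ is the same as what the paper implicitly uses.

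A caveat on your second, "structural" route: $\fast{K}{t}$ is presented as a \emph{projective} limit, and to invoke Corollary~\ref{cor:idclosedintemp} you must present it as an essentially monomorphic \emph{inductive} limit of Banach pieces and check that multiplication by $t$ is an isometry $A_i\to A_i$ cofinally often — that is what the proof of the corollary actually uses (it reduces to the single-Banach estimate of Lemma~\ref{idclosedintemp}), whereas Definition~\ref{defn:multiplicative_element} nominally allows the range to land in a different $A_j$. For the shift operator this fails on the nose ($\omega$-piece lands in the $\omega'$-piece with $\omega'_i=\omega_{i-1}$), so one would need to argue further, e.g.\ by a cofinal family of shift-invariant weights, that the corollary's proof still goes through. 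You flag this yourself as "the main obstacle," which is fair; the first route stands on its own and should be regarded as the actual proof.
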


\begin{proof}

We have 
    \begin{equation*}\temp{\fast{K}{t}}{z}\cong\colim_{n\in\N}\power{\fast{K}{t}}{z}_n 
    \cong \colim_{n\in\N} \lim_{m\in\N} \power{\power{K}{t}_{-m}}{z}_n
    \end{equation*} 
    where we applied \cite[Lemma 5.17]{BenKreFrechet} to commute the projective limit with the tensor product. Since $(tz-1)$ stabilizes $\power{\power{K}{t}_{-m}}{z}_n$, it is enough to prove that, for each $n$,  the module $(tz-1)\power{\power{K}{t}_{-m}}{z}_n$ is closed in $\power{\power{K}{t}_{-m}}{z}_n$, for a cofinal system of $m$. For simplicity we can take the sequence of $m$'s given by $rn=m$, $r \in \N$, $r>1$,  since 
    $$\temp{\fast{K}{t}}{z} \cong \colim_{n\in\N} \lim_{{ m\in\N} } \power{\power{K}{t}_{-m}}{z}_n.$$
   
    To prove this, we perform a calculation similar to the one done in the proof of Lemma \ref{idclosedintemp}. Let $h'=(zt-1)h$ in $\power{\power{K}{t}_{-m}}{z}_n$, with $h=\sum_{i,j\in\N}a_{i,j}t^jz^i$, and with $a_{i,j}\in K$. 
    We remark that on $\power{\power{K}{t}_{-m}}{z}_n$ we have the norm
    $$ \norm{h}_{-m,n}=\norm{\sum_{i,j\in\N}a_{i,j}t^jz^i}_{-m,n}=\sup_{i,j\in\N}|a_{i,j}|(j+1)^m(i+1)^{-n}.$$
    It is enough to prove that $\norm{h}_{-m,n}\le\norm{h(tz-1)}_{-m,n}= C$. Write 
    \[ (tz-1)h = \sum_{i,j\geq 0} b_{i,j}t^jz^i, \qquad b_{i,j}=a_{i-1,j-1}-a_{i,j}, \] 
    where $a_{i,j}=0$ if either index is negative.   Given $(i,j)$,  we want now to consider the behaviour of  $(j+l+1)^m(i+l+1)^{-n}$ with respect to $(j+(l+1)+1)^m(i+(l+1)+1)^{-n}$, where $l \in N$. By a direct calculation ($m=rn$ is our hypothesys)  we see that if $j \leq ri+(r-1)$, then for each  $l \in N$ we do have  and increasing
    $$(j+l+1)^{rn}(i+l+1)^{-n} < (j+(l+1)+1)^{rn}(i+(l+1)+1)^{-n}. 
    $$
    In case $j > ri+(r-1)$, then we have that the function is decreasing:
    $$(j+l+1)^{rn}(i+l+1)^{-n} > (j+(l+1)+1)^{2n}(i+(l+1)+1)^{-n}. 
    $$
    for $l=0,1,  \dots, \lfloor {j-ri-(r-1) \over {r-1}}\rfloor-2$ (where by $\lfloor - \rfloor$ we indicate the integer given by the  floor function). Let's denote  by $\alpha= \lfloor {j-ri-(r-1) \over {r-1}}\rfloor-1 $.   In fact once we are in $l=\alpha $, when we consider  $\alpha+1$ we have $(i+\alpha +1, j+\alpha +1)$: and for this couple we are in the case where the $j$-coordinates is strictly bigger than   $r$ times the $i$ coordinates plus $r-1$, hence   from that step  on  (i.e for $ l \geq \alpha +1 $) the value is increasing, going to $\infty$. 
    
    Let's start the proof. Because for negative index we have $a_{i,j}=0$, we can see that the terms $a_{i,0}$ and $a_{0,j}$ satisfies our 
    $$
    |a_{i,j}|(j+1)^{rn}(i+1)^{-n} \leq C.$$

    Note that  in $(i,0)$ and $(0,1),(0,2), \dots , (0,r-1)$,we are in the hypotheses that $j\leq ri -(r-1)$, while for $(0,j)$, $j\geq r$, not. Consider now  that we know the results  for $a_{i,j}$
 Suppose first that $j \leq ri+(r-1)$  We know that $\abs{a_{i,j}-a_{i+1,j+1}}(j+1)^{rn}(i+1)^{-n}$ is bounded by $C$. If  $\abs{a_{i,j}}\neq\abs{a_{i+1,j+1}}$ this implies that $\abs{a_{i+1,j+1}-a_{i,j}}=\mathrm{max}(\abs{a_{i+1,j+1}},\abs{a_{i,j}})$, so $|a_{i+1,j+i}|(j+2)^{rn}(i+2)^{-n}$ is bounded by $C$. 
    Now suppose that $\abs{a_{i+1,j+1}}=\abs{a_{i,j}}$. From the fact that   $\abs{a_{i+l,j+l}} \to 0$ (as we said $(j+l+1)^{rn}(i+l+1)^{-n}$ is monotone increasing   to $\infty$   and  $h=\sum_{i,j\in\N}a_{i,j}t^jz^i \in \power{\power{K}{t}_{-m}}{z}_n$) for a $l>1$ we may  have 
    $$\abs{a_{i,j}}= \abs{a_{i+1,j+1}}= \abs{a_{i+2,j+2}}= \dots = \abs{a_{i+l-1,j+l-1}}
    \neq \abs{a_{i+l,j+l}}$$
    But then as before we will have   $\abs{a_{i+l-1,j+l-1}-a_{i+l,j+l}}(j+l+1)^{rn}(i+l+1)^{-n} \leq C$,  then   $ \abs{a_{i+l-1,j+l-1}-a_{i+l,j+l}}=\mathrm{max}(\abs{a_{i+l-1,j+l-1}},\abs{a_{i+l,j+l}})$, so $|a_{i+l-1,j+l-1}|(j+l+1)^{rn}(i+l+1)^{-n}\leq C$. From the fact  that $(j+l+1)^{rn}(i+l+1)^{-n}$ is increasing on $l$, we can conclude for our ${a_{i+1,j+1}}$.

    Suppose now we know the result for  $a_{j,i}$ but with  $j > ri+(r-1)$, we have
    
    $$
    |a_{i,j}|(j+1)^{rn}(i+1)^{-n} \leq C.$$

     In this case we can define $\alpha$ as before. We want to show that $
    |a_{i+1,j+1}|(j+2)^{rn}(i+2)^{-n} \leq C.$ If $(j+1)> r(i+1)+ (r-1)$  we have that  $(j+1)^{rn}(i+1)^{-n} \geq  (j+2)^{rn}(i+2)^{-n}$: hence we conclude easily both if  $\abs{a_{i,j}}=\abs{a_{i+1,j+1}}$ or if  $\abs{a_{i,j}}\neq \abs{a_{i+1,j+1}}$.  The problem arises if  $(j+1)\leq r(i+1)+ (r-1)$: between $(i,j)$ and $(i+1,j+1) $  the function moves from decreasing to increasing.  And then we  have two possibilities. If $(j+1)^{rn}(i+1)^{-n}> (j+2)^{rn}(i+2)^{-n}$ we conclude directly as in the decreasing case.  If   $(j+1)^{rn}(i+1)^{-n}< (j+2)^{rn}(i+2)^{-n}$, and  $\abs{a_{i,j}}\neq \abs{a_{i+1,j+1}}$ we apply the fact that $ \abs{a_{i,j}-a_{i+1,j+1}}=\mathrm{max}(\abs{a_{i,j}},\abs{a_{i+1,j+1}})$ , if $\abs{a_{i,j}}= \abs{a_{i+1,j+1}}$ we have then to  consider  $a_{i+1,j+1}$ and   $a_{i+2,j+2}$: but now we are in the increasing setting: we apply the   method of the first part of the proof:
    $$
    |a_{i+1,j+1}|(j+2)^{rn}(i+2)^{-n} \leq C.$$
    
    \end{proof}
     
\bigskip

\begin{rmk}
    Proposition \ref{nonemptintersection} implies in particular that the in the recollement diagram  
    \begin{equation*}
      \begin{tikzcd}
          \dercat_{\infty}(\sf{Mod}_{K_{\mathrm{temp},\infty}[t]})\arrow[r, "i"] & \dercat_{\infty}( \sf{Mod}_{K[t]})\arrow[r, "j"]\arrow[l, bend right=50, "i^r"]\arrow[l, bend left=50, "i^l"] & \dercat_{\infty}( \sf{Mod}_{K[t]})/\dercat_{\infty}(\sf{Mod}_{K_{\mathrm{temp},\infty}[t]}),\arrow[l, bend left=50, "j_l"]\arrow[l, bend right=50, "j_r"]
      \end{tikzcd}
    \end{equation*}
     constructed as in Remark \ref{functions on complementary}, 
   we cannot replace  
    $$
    \dercat_{\infty}( \sf{Mod}_{K[t]})/\dercat_{\infty}(\sf{Mod}_{K_{\mathrm{temp},\infty}[t]})$$
    
     with $ \dercat_{\infty}(\sf{Mod}_{ \fast{K}{t} })$. In particular, the latter category is strictly bigger than the former: this is because both are full subcategories of $\dercat_{\infty}( \sf{Mod}_{K[t]})$.  Thus $ \dercat_{\infty}(\sf{Mod}_{ \fast{K}{t} })$ should be thought of as the category of modules on an open neighbourhood of the complement of $\goth S(K_{\mathrm{temp},\infty}[t])$ in $\A_K^1$, not as the categorical quotient itself. Moreover, the fact that $\fast{K}{t}$ defines an open subset of $\mathbf A^1_K$ is an additional property. In general, the algebra 
     \[ \mathbf R\underline{\mathrm{Hom}}_{K[t]} \left( [K[t]\to B], K[t] \right) \] 
     associated with the complement of an open subset need not itself define a homotopy epimorphism from $K[t]$. We will discuss this phenomenon further in forthcoming work \cite{logdecay&other}.
\end{rmk}
\medskip

It remains to check that the family of maps $\{ K[t] \to K_{\mathrm{temp},\infty}[t], K[t] \to \fast{K}{t}] \}$ is a cover for the homotopy Zariski topology. This is done in the next proposition by a direct computation:

\begin{pro} \label{prop:cover}
There is a Mayer--Vietoris type fiber sequence in $\dercat_{\infty}(\sf{Mod}_{ K[t] })$:
\[ K[t] \to K_{\mathrm{temp},\infty}[t] \oplus \fast{K}{t} \to \fast{K_{\mathrm{temp},\infty}}{t}. \]
\end{pro}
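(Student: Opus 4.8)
The plan is to reinterpret the statement through the lattice structure of Proposition \ref{alg object form dist lattice}: the asserted fibre sequence says exactly that $K_{\mathrm{temp},\infty}[t]\vee\fast{K}{t}=K[t]$ in the distributive lattice $\goth{I}(\dercat_{\infty}(\sf{Mod}_{K[t]}))$, i.e.\ that the two opens $D_{K_{\mathrm{temp},\infty}[t]}$ and $D_{\fast{K}{t}}$ cover $\mathbf{A}^1_K$ (equivalently, that the two localizations form a conservative family, Remark \ref{rmk:cover_is_conservative}). Since $A\vee A'$ is computed as the fibre $[A\oplus A'\to A\hat\otimes^{\bb{L}}_{K[t]}A']$ (see \cite[Construction 5.2]{ClauSchCompl}), and since Proposition \ref{nonemptintersection} already identifies $K_{\mathrm{temp},\infty}[t]\hat\otimes^{\bb{L}}_{K[t]}\fast{K}{t}$ with the bornological algebra $\fast{K_{\mathrm{temp},\infty}}{t}$ concentrated in degree $0$, the whole statement reduces to showing that the sequence of complete bornological $K[t]$-modules
\[ 0\longrightarrow K[t]\xrightarrow{\ p\mapsto(p,p)\ }K_{\mathrm{temp},\infty}[t]\oplus\fast{K}{t}\xrightarrow{\ (f,g)\mapsto f-g\ }\fast{K_{\mathrm{temp},\infty}}{t}\longrightarrow 0, \]
where the last arrow is the difference of the two canonical maps into $\fast{K_{\mathrm{temp},\infty}}{t}$ (compare \cite[Proposition 5.6]{ClauSchCompl}), is short strictly exact in $\CBornK$ (equivalently in $\Ind^{m}(\BanK)$ by Theorems \ref{compare ind and born} and \ref{thm:CBorn_IndBan_derived_equivalent}). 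A short strictly exact sequence in a quasi-abelian category is a fibre sequence in the associated derived $\infty$-category, so this suffices.

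First I would verify exactness of the underlying $K$-modules by bookkeeping Laurent coefficients. An element $f=\sum_{i\le m}a_it^i\in K_{\mathrm{temp},\infty}[t]$ and an element $g=\sum_{i\ge 0}b_it^i\in\fast{K}{t}$ become equal in $\fast{K_{\mathrm{temp},\infty}}{t}$ only if $a_i=0$ for $i<0$, $b_i=0$ for $i>m$, and $a_i=b_i$ for $0\le i\le m$; hence $f=g\in K[t]$, and the kernel is the diagonal copy of $K[t]$. For surjectivity, given $h=\sum_{i\in\Z}c_it^i\in\fast{K_{\mathrm{temp},\infty}}{t}$, split it as $h=h_{<0}+h_{\ge 0}$ along degree $0$; by the explicit description of $\fast{K_{\mathrm{temp},\infty}}{t}$ in Proposition \ref{nonemptintersection} one has $h_{<0}\in K_{\mathrm{temp},\infty}[t]$ and $h_{\ge 0}\in\fast{K}{t}$, and $(h_{<0},-h_{\ge 0})$ maps to $h$. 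Truncation of a two-sided series to its negative, resp.\ non-negative, part is a bounded operation, so this is a bounded section; a split epimorphism of complete bornological spaces is strict, whence the right-hand map is a strict epimorphism and $K_{\mathrm{temp},\infty}[t]\oplus\fast{K}{t}\cong K[t]\oplus\fast{K_{\mathrm{temp},\infty}}{t}$ bornologically.

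The hard part is the remaining point: showing that the kernel, carrying the bornology induced from $K_{\mathrm{temp},\infty}[t]\oplus\fast{K}{t}$, is genuinely the standard Ind-Banach algebra $K[t]=\colim_{n}K[t]_{\le n}$ of Example \ref{polynomial algebra in IndBanR}, i.e.\ that $p\mapsto(p,p)$ is a strict monomorphism. This is not automatic: $K[t]$ does \emph{not} embed bornologically into $\fast{K}{t}$ alone, since a bounded subset of $\fast{K}{t}=\lim_{n}\power{K}{t}_{-n}$ may contain polynomials of unbounded degree (for instance $\{(k+1)^{-k}t^k\}_{k\in\N}$ is bounded there). The tempered-at-infinity factor rescues the situation: a bounded subset of $K_{\mathrm{temp},\infty}[t]=\bigcup_{m,n}K^m_{n,\infty}[t]$ lies in some $K^m_{n,\infty}[t]$ and hence consists of series of degree $\le m$, so a bounded subset of the diagonal $K[t]$ has bounded degree and bounded coefficients, which is precisely the colimit bornology of $K[t]$; and a Mackey-convergent sequence of polynomials, being bounded, has bounded degree and therefore converges to a polynomial, so the diagonal is bornologically closed in $K_{\mathrm{temp},\infty}[t]\oplus\fast{K}{t}$. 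This yields strictness of the monomorphism, completing the proof of strict exactness and hence of the fibre sequence; equivalently, $\{K[t]\to K_{\mathrm{temp},\infty}[t],\ K[t]\to\fast{K}{t}\}$ is a cover for the homotopy Zariski topology.
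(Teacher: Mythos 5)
Your proof is correct and takes essentially the same route the paper intends: the paper's one-line proof ("an easy consequence of Proposition \ref{nonemptintersection}") is exactly the argument you spell out, namely identify the derived tensor product with $\fast{K_{\mathrm{temp},\infty}}{t}$ concentrated in degree zero, decompose a two-sided series into its negative and non-negative parts to exhibit the sequence as short exact with a bounded splitting, and then observe that the tempered-at-infinity factor forces bounded degree on the kernel so that the induced bornology on the diagonal agrees with the fine bornology of $K[t]$ (the observation that $\fast{K}{t}$ alone would not suffice for this is a genuine and correct subtlety that the paper elides). This is the same decomposition-along-degree-zero strategy as Clausen--Scholze's Proposition 5.6, which the paper points to as the model.
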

\begin{proof}
The  surjective restriction map $K_{\mathrm{temp},\infty}[t] \oplus \fast{K}{t} \to \fast{K_{\mathrm{temp},\infty}}{t}$  has kernel $K[t]$.\end{proof}

\begin{rmk} \label{dag} In the derived setting, along the previous lines,  one can prove  (essentially by repeating the computations done in \cite[Proposition 5.6]{ClauSchCompl} in the complex analytic case) that the complement of the closed dagger analytic unit disk at infinity is given by the open unit disk at $0$. In terms of algebras, we get a fiber sequence in $\dercat_{\infty}(\sf{Mod}_{ K[t]} )$
\[ K[t] \to K \{\!\{t^{-1}\}\!\} \oplus \tate{K}{t}^\dag \to K \{\!\{t^{-1}\}\!\} \wotimes_{K[t]}^{\mathbb{L}}\tate{K}{t}^\dag,  \]
analogous to that of Proposition \ref{prop:cover}. Here $\tate{K}{t}^\dag$ is the weak completion of the polynomial algebra $K[t]$ i.e. the free  dagger algebra, and $K \{\!\{t^{-1}\}\!\}$ is the algebra of analytic functions on the open unit disk centered at $\infty$. 
\end{rmk}

\section{Geometric interpretation of log-growth transfer theorems for $p$-adic differential equations}\label{section transfer} 

This section is mainly expository. Its purpose is to explain how a classical transfer phenomenon for $p$-adic differential equations, traditionally formulated in analytic terms, becomes a simple geometric statement in the derived analytic setting developed above. More precisely, we show that the log-growth transfer theorem can be interpreted as a continuity statement for the spectrum $\goth S(A)$ associated with a suitable affinoid algebra $A$.

We illustrate this point by revisiting the transfer theorem for solutions of non-archimedean differential equations with logarithmic growth. In general, the term \emph{Transfer Theorem} refers to a result which transfers a property from the ``generic point'' to a specified point. A classical example concerns the radius of convergence of a $p$-adic differential equation. In the framework of Berkovich spaces, this phenomenon has been interpreted as a continuity theorem (see \cite{Balda,Pulita}). There is, however, another kind of transfer theorem, involving not only the radius of convergence at the generic point, but also the logarithmic growth of solutions within their domain of convergence. This type of transfer theorem was introduced by Dwork--Robba \cite{DworkRob} (see also Christol \cite[Section 5]{Christ}). It was later studied by Chiarellotto--Tsuzuki \cite{Chiarellog,ChTs2} and in subsequent works (see for example \cite{tsu23} and \cite{Oh}).

Our aim is to show that, from the perspective of derived analytic geometry, the Transfer Theorem for logarithmic growth conditions can also be interpreted as a continuity theorem. Indeed, series with logarithmic growth, namely tempered series, can be viewed as analytic functions on a suitable open subset of the affine line. Since continuous maps preserve open subsets under pullback, the transfer theorem for log-growth conditions becomes a natural consequence of continuity.

\medskip

In this section we assume  that $K$ is a complete discrete valuation field. We now consider $\tate{K}{t}$ as a differential ring endowed with the usual differential operator $\diff{t}$ and a  differential equation of the type 
\begin{equation}\label{diffequation}
    \diff{t}\mathbf{y}=G\mathbf{y},
\end{equation}
where $G$ is a matrix in $\mathrm{Mat}_{m\times m}(\tate{K}{t})$ and $\mathbf{y}$ may be thought as  a  formal solution  in $(\power{K}{t})^m$. In this section with the notation  $\mathcal{E}$ we  denote  the so called \emph{Amice ring} i.e.
$$\mathcal{E}= \left \{\sum_{i\in\Z}a_it^i: a_i \in { K},  |a_i|\xrightarrow{i\rightarrow -\infty} 0,\, |a_i|\,\mathrm{bounded} \right \} .
$$
It  is a complete discrete valuation field.

We have a natural inclusion $\tate{K}{t}\rightarrow  \mathcal{E}$. Denote by $\tau: \tate{K}{t}\rightarrow \power{\mathcal{E}}{w}_0$ (where $w$ is a formal variable) the function \say{development at the generic point} of \cite[Proposition 2.5.1]{Christ}: 
$$\tau(f)=\sum_{i} \left ( \left ( \diff{t} \right )^i \frac{f}{i!} \right )w^i. $$
Here  $f\in \tate{K}{t}$, and  $((\diff{t})^if)$  is thought as an element of  $\mathcal{E}$ (see also \cite[Section 3.2]{Chiarellog}). Note that $\tau$ is a morphism of differential rings, if we endow $\power{\mathcal{E}}{w}_0$ with the derivation $\diff{w}$. Since the map $\tau$ factors trough $\power{\tate{K}{t}}{w}_0$, it sits in the commutative diagram
\begin{center}
\begin{equation}\label{trianglalgebras}
    \begin{tikzpicture}
  \matrix (m) [matrix of math nodes,row sep=2em,column sep=4em,minimum width=2em]
  {
 \, & \power{\tate{K}{t}}{w}_0& \,\\
\, &\, &\, \\
\tate{K}{t} & \, & \power{K}{w}_0.  \\};
  \path[-stealth]
    (m-3-1) edge node [left] {$\tau$} (m-1-2)
    (m-1-2) edge node [right] {$t=0$} (m-3-3)
    (m-3-1) edge node [above] {$t\mapsto w$} (m-3-3);
\end{tikzpicture}
\end{equation}
\end{center}
Where by $t=0$ we simply indicate the projection taking $t=0$. All the rings in the previous diagram are Banach, hence we can consider the associated spectra in $\Ind(\BanK)$ as constructed in Section \ref{section spectrum}. Then, in the derived analytic setting , we obtain a diagram of continuous maps
\begin{center}
\begin{equation*}\label{trianglalgebras}
    \begin{tikzpicture}
  \matrix (m) [matrix of math nodes,row sep=2em,column sep=4em,minimum width=2em]
  {
 \, & \goth{S}(\power{\tate{K}{t}}{w}_0)& \,\\
\, &\, &\, \\
\goth{S}(\tate{K}{t}) & \, & \goth{S}(\power{K}{w}_0),  \\};
  \path[-stealth]
    (m-1-2) edge node [left] {$\tau$} (m-3-1)
    (m-3-3) edge node [right] {$t=0$} (m-1-2)
    (m-3-3) edge node [above] {$t\mapsto w$} (m-3-1);
\end{tikzpicture}
\end{equation*}
\end{center}
that is the restriction (i.e. is induced by base change at the levels of algebras) of the diagram on affine spaces: 
\begin{center}
\begin{equation*}\label{trianglalgebras}
    \begin{tikzpicture}
  \matrix (m) [matrix of math nodes,row sep=2em,column sep=4em,minimum width=2em]
  {
 \, & \mathbf{A}^2_K& \,\\
\, &\, &\, \\
\mathbf{A}^1_K & \, & \mathbf{A}^1_K,  \\};
  \path[-stealth]
    (m-1-2) edge node [left] {$\tau$} (m-3-1)
    (m-3-3) edge node [right] {$i_2$} (m-1-2)
    (m-3-3) edge node [above] {$\mathrm{ex}$} (m-3-1);
\end{tikzpicture}
\end{equation*}
\end{center}

 where $i_2$ is the inclusion of the $w$-axis and $\mathrm{ex}$ is the exchange of the variables. 
We know that by the formal Cauchy Theorem (see for example \cite[Section 6]{Chiarellotto}) the differential system \eqref{diffequation}  admits a full set of formal solutions in $\power{K}{t}$,
given by \begin{equation*}\mathcal{Y}=\sum_{m\in\N}G_{[m]}t^m,\end{equation*} where $G_{[0]}=\mathrm{Id}$, $G_{[1]}=G$, and $G_{[m]}$ is defined recursively as \begin{equation*}{1\over m}(\diff{t}G_{[m-1]}+GG_{[m-1]}).\end{equation*}
By applying $\tau$  to the  differential system \eqref{diffequation} we obtain 
\begin{equation}\label{transfequat}
    \diff{w}\mathbf{y}=\tau(G)\mathbf{y}.
\end{equation}

In our derived analytic setting, if we know that the formal solutions of equation \eqref{transfequat} are in $\temp{\tate{K}{t}}{w}$, then by continuity of the map  $i_2$ (i.e. $t=0$) we have that the formal solutions of \eqref{diffequation} live in their inverse image open, i.e. in $\temp{K}{w}$, whence in $\temp{K}{t}$. This recovers the content of the Transfer Theorem in this setting: when all the solutions have log growth (and in particular radius one) at the generic point, then they share the same properties (log growth and radius one) at the origin. We have thus proved the following thereom.

\begin{thm}(Tempered Transfer \cite[Section 4]{Christ})\label{thm tempered transfert}
    If the differential system \eqref{transfequat} admits a full set of solutions in $\temp{\tate{K}{t}}{w}$, then \eqref{diffequation} admits a full set of solutions in $\temp{K}{t}$.
\end{thm}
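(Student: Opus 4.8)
The plan is to deduce a full set of solutions of \eqref{diffequation} from one of \eqref{transfequat} by \emph{restricting along the section $t=0$}, while tracking temperedness through the continuity of the induced map of derived analytic spectra. First I would recall, via the formal Cauchy theorem, that \eqref{diffequation} has a unique fundamental matrix solution $\mathcal{Y}=\sum_{m\in\N}G_{[m]}t^m\in\Mat_{m\times m}(\power{K}{t})$ with $\mathcal{Y}|_{t=0}=\Id$, and that \eqref{transfequat} has a unique fundamental matrix solution $\mathcal{Z}\in\Mat_{m\times m}(\power{\tate{K}{t}}{w})$ with $\mathcal{Z}|_{w=0}=\Id$; a \say{full set of solutions} is exactly such a fundamental matrix, whose columns are $K$-linearly independent since it is invertible at the base point. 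The key elementary comparison is $\mathcal{Z}|_{t=0}=\mathcal{Y}(w)$: applying the $K$-algebra homomorphism $t=0\colon\power{\tate{K}{t}}{w}_0\to\power{K}{w}_0$ to $\diff{w}\mathcal{Z}=\tau(G)\mathcal{Z}$ and to $\mathcal{Z}|_{w=0}=\Id$, and using that it commutes with $\diff{w}$, shows that $\mathcal{Z}|_{t=0}$ is the fundamental matrix, normalized at $w=0$, of $\diff{w}\mathbf{y}=(\tau(G)|_{t=0})\mathbf{y}$; but the commutative triangle \eqref{trianglalgebras} identifies $\tate{K}{t}\xrightarrow{\tau}\power{\tate{K}{t}}{w}_0\xrightarrow{t=0}\power{K}{w}_0$ with $t\mapsto w$, so $\tau(G)|_{t=0}$ is just $G$ in the variable $w$, and uniqueness in the formal Cauchy theorem forces $\mathcal{Z}|_{t=0}=\mathcal{Y}(w)$.

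Next I would supply the geometric input, which is the point of this section. By Proposition \ref{tempidem} the algebra $\temp{K}{w}$ defines an open $D_{\temp{K}{w}}\subseteq\mathbf{A}^1_K$; base-changing the homotopy epimorphism $K[w]\to\temp{K}{w}$ along $K[w]\to\power{\tate{K}{t}}{w}_0$, and invoking that homotopy epimorphisms are stable under base change (the remark following Definition \ref{DEF}), one gets that $\temp{\tate{K}{t}}{w}\cong\tate{K}{t}\wotimes_K\temp{K}{w}$ defines an open $D_{\temp{\tate{K}{t}}{w}}$ of $\goth{S}(\power{\tate{K}{t}}{w}_0)$. For the continuous map $i_2$ of spectra induced by $t=0$, a direct computation using $\power{\tate{K}{t}}{w}_0=\tate{K}{t}\wotimes_K\power{K}{w}_0$, $\temp{\tate{K}{t}}{w}=\tate{K}{t}\wotimes_K\temp{K}{w}$ and flatness of $\tate{K}{t}$ over $K$ gives $\power{K}{w}_0\wotimes^{\bb{L}}_{\power{\tate{K}{t}}{w}_0}\temp{\tate{K}{t}}{w}\cong\temp{K}{w}$, hence $i_2^{-1}(D_{\temp{\tate{K}{t}}{w}})=D_{\temp{K}{w}}$; equivalently, and more concretely, setting $t=0$ carries a $w$-series whose $\tate{K}{t}$-coefficients have log-growth $\le n$ to a $w$-series whose $K$-coefficients have log-growth $\le n$, so restriction of sections along $i_2$ sends $\temp{\tate{K}{t}}{w}$ into $\temp{K}{w}$.

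Finally I would combine the two: by hypothesis the entries of $\mathcal{Z}$ lie in $\temp{\tate{K}{t}}{w}$, so restricting along $i_2$, i.e. setting $t=0$, the entries of $\mathcal{Z}|_{t=0}$ lie in $\temp{K}{w}$; since $\mathcal{Z}|_{t=0}=\mathcal{Y}(w)$ this gives $\mathcal{Y}\in\Mat_{m\times m}(\temp{K}{w})$, and renaming $w$ as $t$ yields $\mathcal{Y}=\sum_{m\in\N}G_{[m]}t^m\in\Mat_{m\times m}(\temp{K}{t})$, which is the asserted full set of solutions of \eqref{diffequation}. The step I expect to be the main obstacle is the comparison $\mathcal{Z}|_{t=0}=\mathcal{Y}(w)$: this is the classical analytic heart of the statement — essentially the Dwork--Robba/Christol \say{solution at the generic point} — and care is needed to ensure that $\tau$ is genuinely a morphism of differential rings and that the formal Cauchy solutions are matched correctly across the triangle \eqref{trianglalgebras}. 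The geometric packaging in the middle step, by contrast, is essentially formal once Proposition \ref{tempidem} and the base-change stability of homotopy epimorphisms are in hand.
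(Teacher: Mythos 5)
Your proof is correct and follows the same line the paper itself sketches informally just before the statement (the paper leaves this theorem unproved, citing Christol): the transfer is read off from the continuity of the restriction $t=0$, i.e.\ the concrete fact that setting $t=0$ carries a series in $\temp{\tate{K}{t}}{w}$ of log-growth $\le n$ to one in $\temp{K}{w}$ of log-growth $\le n$, together with the classical identification $\mathcal{Z}|_{t=0}=\mathcal{Y}(w)$ provided by uniqueness in the formal Cauchy theorem and the commutativity of the triangle \eqref{trianglalgebras}. You correctly supply the two ingredients the paper's prose takes for granted (the comparison of fundamental matrices and the base-change computation $\power{K}{w}_0\wotimes^{\bb{L}}_{\power{\tate{K}{t}}{w}_0}\temp{\tate{K}{t}}{w}\cong\temp{K}{w}$), so nothing needs to be changed.
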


\medskip

It may be of  interest  to note that the Amice ring  determines an open subset in the affine space $\mathbf{A}^1_{K}$. Indeed, we have the following proposition.

\bigskip

\begin{pro}
    The Amice ring $\mathcal{E}$ is isomorphic to the derived tensor product $\power{K}{t}_0\dertens{K[t]}\tate{K}{\frac{1}{t}}[t]$. In particular,  $\goth{S}(\cal{E})$ is isomorphic to the derived intersection of two open subsets in $\mathbf{A}^1_{K}$.
\end{pro}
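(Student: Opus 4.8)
The plan is to exhibit $\goth{S}(\mathcal{E})$ as the derived intersection inside $\mathbf{A}^1_K$ of the open subset $\goth{S}(\power{K}{t}_0)$ (the bounded open unit disk of Lemma \ref{boundidem}) and an open subset associated with $\tate{K}{\frac{1}{t}}[t]$, and then to identify the resulting algebra with the Amice ring by an explicit Koszul computation. First I would record that $\tate{K}{\frac{1}{t}}[t]\cong K[t,t^{-1}]\wotimes^{\bb{L}}_{K[t^{-1}]}\tate{K}{t^{-1}}$ defines an open subset of $\mathbf{A}^1_K$: the map $K[t]\to\tate{K}{\frac{1}{t}}[t]$ factors as the localization $K[t]\to K[t,t^{-1}]$ followed by the base change along $K[t^{-1}]\to K[t,t^{-1}]$ of the homotopy epimorphism $K[t^{-1}]\to\tate{K}{t^{-1}}$, and homotopy epimorphisms are stable under composition and base change (Section \ref{section spectrum}), exactly as in the Remark preceding Proposition \ref{prop:complement_of_tempered}. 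Granting the isomorphism with $\mathcal{E}$, the final assertion is then automatic: in the distributive lattice $\goth{I}(\dercat_\infty(\sf{Mod}_{K[t]}))$ the meet of two homotopy epimorphisms from $K[t]$ is their derived tensor product over $K[t]$ (Proposition \ref{alg object form dist lattice}), so that
\begin{equation*}
\goth{S}(\power{K}{t}_0)\cap\goth{S}(\tate{K}{\tfrac{1}{t}}[t])=\goth{S}\bigl(\power{K}{t}_0\dertens{K[t]}\tate{K}{\tfrac{1}{t}}[t]\bigr).
\end{equation*}

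To compute the derived tensor product I would resolve $\tate{K}{\frac{1}{t}}[t]$ as a $K[t]$-module by the two-term flat resolution given by the Koszul complex of $tz-1$, in the spirit of Proposition \ref{nonemptintersection}: the sequence
\begin{equation*}
0\to\tate{K[t]}{z}\xrightarrow{\,tz-1\,}\tate{K[t]}{z}\to\tate{K}{\tfrac{1}{t}}[t]\to 0,
\end{equation*}
with last map sending $z\mapsto 1/t$, is strictly exact. Strictness of multiplication by $tz-1$ is the case $n=0$ of (the proof of) Lemma \ref{idclosedintemp} — equivalently Corollary \ref{cor:idclosedintemp}, valid here since $K[t]\in\sf{Comm}(\sf{CBorn}_K)$ — applied with the multiplicative elements $g=t$, $g'=1$ (note that $t$ acts isometrically, that $\tate{K[t]}{z}_0=\tate{K[t]}{z}$, and that $tz-1$ is a unit of $\power{K[t]}{z}$, hence not a zero divisor); the identification of the cokernel with $\tate{K}{\frac{1}{t}}[t]$ follows from the relation $z=1/t$. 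Since $\tate{K[t]}{z}=K[t]\wotimes_K\tate{K}{z}$ is flat over $K[t]$ (the Tate algebra $\tate{K}{z}$ being flat over $K$ by \cite[Lemma 3.49]{bambozzi2018stein}) and $\power{K}{t}_0\wotimes_{K[t]}\tate{K[t]}{z}\cong\tate{\power{K}{t}_0}{z}$, tensoring the resolution with $\power{K}{t}_0$ gives
\begin{equation*}
\power{K}{t}_0\dertens{K[t]}\tate{K}{\tfrac{1}{t}}[t]\;\simeq\;\bigl[\,\tate{\power{K}{t}_0}{z}\xrightarrow{\,tz-1\,}\tate{\power{K}{t}_0}{z}\,\bigr].
\end{equation*}

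Finally, $t$ is again a multiplicative (isometric) element of $\power{K}{t}_0$, so the same case $n=0$ of Lemma \ref{idclosedintemp} shows that $tz-1$ is a strict monomorphism of $\tate{\power{K}{t}_0}{z}$; hence the complex above is concentrated in degree $0$ and equals $\coker(tz-1)$. I would then identify this cokernel with $\mathcal{E}$ through $\tate{\power{K}{t}_0}{z}\to\mathcal{E}$, $z\mapsto t^{-1}$: it kills $tz-1$; it is surjective, since an element $\sum_{i\in\Z}a_it^i\in\mathcal{E}$ is the sum of its non-negative part (lying in $\power{K}{t}_0\subset\tate{\power{K}{t}_0}{z}$) and of $\sum_{j>0}a_{-j}z^j$ (which lies in $\tate{\power{K}{t}_0}{z}$ since $|a_{-j}|\to 0$); and its kernel is exactly $(tz-1)\tate{\power{K}{t}_0}{z}$, because for $\sum_j b_jz^j$ mapping to $0$ the explicit quotient $q_j=-\sum_{l=0}^{j}t^{j-l}b_l$ is forced to satisfy $\norm{q_j}\to 0$: the vanishing of the image means that each coefficient of $q_j$ is a convergent tail of the $b_l$'s, hence bounded by $\sup_{l>j}\norm{b_l}$. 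Together with strictness this yields the desired bornological isomorphism $\power{K}{t}_0\dertens{K[t]}\tate{K}{\frac{1}{t}}[t]\cong\mathcal{E}$, and with it the final statement. The main obstacle is precisely this last point: verifying that the kernel of $z\mapsto 1/t$ is no larger than the ideal $(tz-1)$ and that the quotient bornology coincides with the standard one on $\mathcal{E}$ — everything before it being a routine application of the Koszul/Taylor machinery of Sections \ref{section spectrum} and \ref{section tempered functions}.
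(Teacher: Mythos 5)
Your proof is correct and follows the same strategy as the paper's: resolve $\tate{K}{\tfrac{1}{t}}[t]$ by the Koszul complex $\tate{K[t]}{z}\xrightarrow{tz-1}\tate{K[t]}{z}$, tensor with $\power{K}{t}_0$ over $K[t]$, use multiplicativity of the Gauss norm (and the $n=0$ case of Lemma \ref{idclosedintemp}) to see the resulting complex is concentrated in degree $0$, and identify the cokernel with $\mathcal{E}$. The paper leaves the final identification largely implicit; your explicit surjectivity argument and the kernel computation via $q_j=-\sum_{l=0}^{j}t^{j-l}b_l$ with the tail estimate $\norm{q_j}\le\sup_{l>j}\norm{b_l}\to 0$ is a welcome filling-in of that step.
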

\begin{proof}
    We claim that $\tate{K}{\frac{1}{t}}[t]$ is quasi-isomorphic to the complex of projective objects (in $\sf{Mod}_{K[t]}$) 
    \begin{equation*}\tate{K[t]}{y}\xrightarrow{(ty-1)}\tate{K[t]}{y}.
    \end{equation*}
    This follows from the fact that the ideal $(ty-1)\tate{K[t]}{y}$ is closed in $\tate{K[t]}{y}$ by Corollary \ref{cor:idclosedintemp}.
    Using the computations of Lemma \ref{idclosedintemp}, it is easy to check that $(ty-1)\tate{\power{K}{t}_0}{y}$ is closed in $\tate{\power{K}{t}_0}{y}$ (notice that the Gauss norm is multiplicative). 
    Therefore, tensoring the projective resolution we get the complex
    \begin{equation*}
        \power{K}{t}_0\hat{\otimes}_{{K[t]}}[\tate{K[t]}{y}\xrightarrow{(ty-1)}\tate{K[t]}{y}]{\cong}[\tate{\power{K}{t}_0}{y}\xrightarrow{(ty-1)}\tate{\power{K}{t}_0}{y} ]{\cong}\frac{\tate{\power{K}{t}_0}{y}}{(ty-1)}{\cong}\mathcal{E}
    \end{equation*}
    computes $\power{K}{t}_0\dertens{K[t]}\tate{K}{\frac{1}{t}}[t]$.
\end{proof}

\section{Tempered tubes: towards a tempered convergent cohomology}\label{section tubes}

We recall that, in our notation, $k$ will be a perfect field that we will see as the residual field of a complete non-archimedean valuation ring,   $\cal V$ of characteristic 0, whose fraction field will be indicated by $K$.

In this section we introduce the notion of {\it tempered tube} of a $k$-scheme: this should be seen as a log-growth open tube containing the usual tube of the rigid setting (see \cite[Chapter 2]{LeStum}). The aim is to use such a tube to define a tempered analogue of the (rigid) convergent cohomology for $k$-schemes.
\smallskip

\begin{rmk}  
The guiding example is the scheme given by a point, the origin, of  $\mathbf{A}^1_k$. We may consider the formal scheme $\mathrm{Spf}(\tate{\cal V}{x})$, viewed as a lifting of $\A_k^1$. The generic fiber of $\mathrm{Spf}(\tate{\cal V}{x})$ is the Tate algebra $\tate{K}{x}$.
In the derived analytic spectrum associated with $\tate{K}{x}$, namely the closed unit disk, the usual tube of the origin is the open unit disk. In the framework developed above, there is also a larger open neighborhood, the tempered tube, whose sections of the structure sheaf are given by the tempered power series $\temp{K}{x}$.
\end{rmk}
\smallskip

We now introduce the local construction of the tempered tube. Let $A$ be a topologically finitely generated, flat $p$-adic formal $\mathcal V$-algebra, and let $A_K$ denote its associated affinoid $K$-algebra.
\smallskip

\begin{dfn}  \label{defn:tempered_tube}
Let $X$ be an affine algebraic variety over $k$, and suppose that $X$ is equipped with a closed immersion 
\[ X\hookrightarrow {\hat P}_k, \qquad {\hat P}=\operatorname{Spf}(A). 
\]
Suppose that the closed immersion $X \rightarrow {\hat P}_k$ is defined by the ideal $({ f}_1,\dots,{ f}_s) \subset A_k$. Consider liftings of the generators ${\tilde f}_1,\dots,{\tilde f}_s$ in $A$. We define \emph{the tempered tube} of $X$ in ${\hat P}$ to be the affine derived analytic space associated with the algebra
\begin{equation*}
\temp{A_{K}}{y_1,\dots,y_s}/(y_1-\tilde{f}_1, \dots , y_s-\tilde{f}_s ),
\end{equation*} 
where the quotient is computed in the category of complete bornological algebras. We will denote the tempered tube of $X$ in ${\hat P}$ by $\temptube{X}{\hat P}$.
\end{dfn}
\smallskip

 In the next sections we will prove, under suitable hypotheses, that the definition does not depend on the choice of the $f_i$'s and their liftings. We will also show that it determines an open subset of the topological space $\goth{S}(A_K)$. This latter space contains the usual opens of the classical analytic space associated with $\operatorname{Sp}(A_K)$, see \cite{BenKre}.
\smallskip

\begin{rmk}\label{the rigid tube}
The definition of the tempered tube is motivated by the analogy with the classical construction. Keeping the same notation, if we denote by $\conv{A_K}{y_1,\dots,y_s}$ the series converging on the unit open polydisk in $\mathbf{A}^s_{A_K}$, then the Fr\'echet algebra $\conv{A_K}{y_1,\dots,y_s}/(y_1-\tilde{f}_1,\dots, y_s-\tilde{f}_s)$ defines the classical rigid analytic tube of $X$ in ${\hat P}$, as defined in \cite[Chapter 2]{LeStum}, and denoted by $\tube{X}{P}$.
Equivalently, the classical tube can be  defined by the algebra 
\[ 
\lim_{m/l>0} \frac{\tate{A_K}{y_1,\dots,y_s}}{(\pi^m y_1-\tilde{f}_1^l,\dots,\pi^m y_s-\tilde{f}^l)},
\]
where $\pi\in\mathcal{V}, |\pi|<1$. But
\[
    \lim_{m/l>0} \frac{\tate{A_K}{y_1,\dots,y_s}}{(\pi^m y_1-\tilde{f}_1^l,\dots,\pi^m y_s-\tilde{f}^l)} \simeq 
    \lim_{m/l>0} \frac{\tate{A_K}{y_1,\dots,y_s,t_1,\dots,t_s}}{(\pi^m y_1-t_1^l,\dots,\pi^m y_s-t_s^l,t_1-\tilde{f}_1,\dots,t_s-\tilde{f}_s)} \simeq
\]
\[
    \frac{\conv{A_K}{y_1,\dots,y_s}}{(y_1-\tilde{f}_1,\dots,y_s-\tilde{f}_s)} = 
    \lim_{r<1} \frac{\tate{A_K}{r^{-1} y_1,\dots,r^{-1} y_s}}{( y_1-\tilde{f}_1,\dots,y_s-\tilde{f}_s^l)}
\]
 where $\tate{A_K}{r^{-1} y_1,\dots,r^{-1} y_s}$ is the algebra of power-series convergent in the polydisk of polyradius $(r, r, \cdots, r)$.
\end{rmk}
\medskip

 \subsection{The setting} 
We cannot prove the independence of the tempered tube from the embedding in full generality. We therefore restrict ourselves to the case of regular immersions in smooth formal schemes.
 
From now on let $K$ be discretely valued. Let $X$ be an affine algebraic variety over $k$, and let ${\hat P}=\mathrm{Spf}(A)$ be a smooth connected affine formal scheme over $\intv$ with smooth special fiber ${\hat P}_k = \rm{Spec}(A_k)$ such that there is a regular immersion $X\hookrightarrow {\hat P}_{k}$. 
 
We keep the notation of the previous subsection. In this situation, $\temptube{X}{P}$ is defined by elements $f_1,\dots,f_s$ that form a regular sequence in $A_k$, such that $X=V(f_1,\dots,f_s)$ and $\tilde{f}_1,\dots, \tilde{f}_s$ is regular sequence in $A$. The elements $\tilde{f}_1,\dots, \tilde{f}_s$ form a regular sequence because $A$ is supposed smooth.
We now study some properties of $\temptube{X}{{\hat P}}$ in this setting.
 \smallskip

\begin{lem}\label{multnorm}
    Under the previous hypotheses, the supremum norm of $A_K$ is multiplicative. Moreover, it coincides with the affinoid norm on $A_K$ (see Remark \ref{rmk:sup_norm}).
    \end{lem}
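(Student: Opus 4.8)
The plan is to reduce the statement to two standard facts about reduced affinoid algebras and their formal models, the point being that smoothness of $P$ over $\intv$ forces $A$ to be regular and $A_k$ to be reduced.

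First I would reduce to the case in which $\mathrm{Spf}(A)$ is connected: since $A$ is $\pi$-adically complete, idempotents of $A_k$ lift to $A$, so the connected components of $\mathrm{Spf}(A)$ correspond to those of $P_k$, and both the supremum norm and the affinoid norm of $A_K$ are computed componentwise — so one may argue over one component at a time. Assuming $A$ connected, smoothness of $P=\mathrm{Spf}(A)$ over the regular excellent DVR $\intv$ makes $A$ a Noetherian, $\intv$-flat, $\pi$-adically complete, topologically finitely presented $\intv$-algebra which is moreover \emph{regular}; a connected Noetherian regular ring is a normal integral domain. Likewise $A_k=A/\pi A$, being a connected smooth $k$-algebra, is a (reduced) integral domain, so $\pi A$ is a prime ideal of $A$ and, being $\pi$-adically complete, $A$ is $\pi$-adically separated.

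Next I would identify the supremum norm with the affinoid norm, i.e. show that the formal model $A$ equals the subring $A_K^{\circ}$ of power-bounded elements of $A_K=A\otimes_{\intv}K$. One inclusion is automatic ($A$ consists of bounded elements); for the equality I would invoke the fact that a flat formal model $A$ of an affinoid algebra satisfies $A=A_K^{\circ}$ precisely when $A\otimes_{\intv}k$ is reduced (see Remark~\ref{rmk:sup_norm} and \cite[Ch.~3]{BoschAna}), which applies since $A_k$ is smooth over $k$, hence reduced. As $A_K^{\circ}$ is by definition the unit ball of the supremum seminorm $|\cdot|_{\sup}$, and $A$ is the unit ball of the affinoid norm of $A_K$, the two norms coincide; in particular $A_K$ is reduced and $|\cdot|_{\sup}$ is a genuine norm. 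It then remains to prove multiplicativity. For $g\in A$ with $g\notin\pi A$ the class $\bar g$ is nonzero in the reduced Jacobson ring $A_k$, hence does not vanish at some closed point; since the reduction map is surjective on closed points this gives $|g|_{\sup}\ge 1$, and together with $|g|_{\sup}\le 1$ (because $g\in A=A_K^{\circ}$) we get $|g|_{\sup}=1$. By $\pi$-adic separatedness every $0\neq f\in A$ is $f=\pi^{v(f)}g$ with $g\notin\pi A$, so $|f|_{\sup}=|\pi|^{v(f)}$, and $v(fh)=v(f)+v(h)$ because $\pi A$ is prime; extending to $A_K=A[\pi^{-1}]$ yields $|fh|_{\sup}=|\pi|^{v(f)+v(h)}=|f|_{\sup}\,|h|_{\sup}$.

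The genuinely external input is the identification $A=A_K^{\circ}$, where the smoothness hypothesis enters decisively through reducedness of $A_k$ (without it the two norms need not even coincide, as the standard node $A_k=k[x,y]/(xy)$ shows); the reduction to the connected case is equally essential, since for disconnected $P$ the supremum norm is only power-multiplicative. Once these are in place the rest is elementary and, as a byproduct, exhibits $|\cdot|_{\sup}$ as the $\pi$-adic (divisorial) valuation attached to the smooth formal model $A$.
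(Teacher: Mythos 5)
Your proof is correct, and it reaches the conclusion by a noticeably more hands-on path than the paper's. Both arguments funnel through the same key input, namely that smoothness forces $A_k$ to be reduced and $A$ normal, so that $A = A_K^{\circ}$ (the paper quotes \cite[Proposition 1.1]{Bosch1995} for the integral-closedness of $A$ in $A_K$ and the standard fact that $A_K^{\circ}$ is the integral closure of $A$; you invoke the flat-model criterion directly). After that the two diverge: the paper appeals to Bosch's abstract criterion \cite[Proposition 6.2.3.5]{BoschAna} --- the supremum seminorm is multiplicative iff $A_K^{\circ}/A_K^{\circ\circ}$ is an integral domain --- and then identifies that residue ring with $A_k$ via the computation $A_K^{\circ\circ}=\mathfrak{m}A$. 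You instead exhibit the supremum norm as the $\pi$-adic valuation $|f|_{\sup}=|\pi|^{v(f)}$ on the domain $A$ (using surjectivity of the specialization map onto closed points to force $|g|_{\sup}=1$ for $g\notin\pi A$) and check multiplicativity directly from primeness of $\pi A$. In effect you re-prove the special case of Bosch's criterion that is needed, which is more self-contained but longer. One genuine improvement in your write-up: you explicitly handle the reduction to connected $\mathrm{Spf}(A)$. The paper's proof silently treats $A_k$ as an integral domain, which requires connectedness (and the lemma as stated is false for disconnected $A_K$, since the supremum norm is then only power-multiplicative), so this is a gap you are right to address.
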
 
   \begin{proof}
By \cite[Proposition 6.2.3.5]{BoschAna}, the supremum norm on $A_K$ is multiplicative if and only if the reduction $A_K^\circ/A_K^{\circ\circ}$ is an integral domain, where $A_K^\circ$ denotes the subring of power-bounded elements and $A_K^{\circ\circ}$ the ideal of topologically nilpotent elements.
 Since $A\otimes_{\mathcal{V}}k$ is reduced we have that $A$ is integrally closed in $A_K$ by \cite[Proposition 1.1]{Bosch1995}.
 But the power bounded elements $A_K^{\circ}$ form precisely the integral closure of $A$ in $A_K$ (see \cite[Theorem 3.1.17]{BoschLec}), and so $A_K^{\circ}=A$ since $A$ is smooth (hence normal). 
 It suffice to show that $A_K^{\circ \circ}=\mathfrak{m} A$, because $A\otimes_{\mathcal{V}}k$ is reduced, and thus $A_K^{\circ}/A_K^{\circ \circ}\cong A\otimes_{\mathcal{V}}k$, that is an integral domain. 
 The inclusion $A_K^{\circ \circ}  \supset \mathfrak{m}A$ is clear because all element of $\mathfrak{m}A$ are topologically nilpotent. The other inclusions follows from the fact that if $x\in A_{K}^{\circ \circ}$ and $\pi\in\mathfrak{m}$, 
then there exists an $m\in\N$ such that $x^m\in\pi A\subset\mathfrak{m}A$, because $x$ is topologically nilpotent. Thus the image of $x$ in $A/ \mathfrak{m} A = A_k$ is nilpotent. Since $A_k$ is reduced, this image is zero, and hence $x \in \mathfrak m A$.  
\end{proof}
\smallskip
From now on we will consider $A_K$ as a Banach ring endowed with the supremum norm.
\smallskip

\begin{rmk} \label{rmk:sup_norm}
  The values of the valuation on $K$ coincide with the values of the  norm on $A_K$.
    In fact, let  $\alpha:\tate{\mathcal{V}}{x_1,\dots,x_m}\rightarrow A$ be a presentation of $A$. Then \begin{equation*}\alpha(\tate{K}{x_1,\dots,x_m}^{\circ \circ})=\alpha(\tate{\mathfrak{m}}{x_1,\dots,x_m})=\mathfrak{m}A=A_{K}^{\circ \circ}\end{equation*}
    by the proof of Lemma \ref{multnorm}. This implies that the residue norm induced by $\alpha$ is equal to the supremum norm by \cite[Proposition 6.4.3.4]{BoschAna}. But the values of such residue norm are the same as the ones of the norm on $K$, since they are limits of the values of the Gauss norm on $\tate{K}{x_1,\dots,x_m}$, which has the same values as the ones on $K$ (see \cite[Proposition 6.1.1.2]{BoschAna}).
\end{rmk}
\bigskip

We remark that the multiplication by $y_i-\tilde{f}_i$  restricts to an $A_K$-linear  continuous endomorphism in  every 
$\power{A_{K}}{y_1,\dots,y_s}_n$. To simplify notation, let us define
\[ S_n = \power{A_K}{y_1,\dots,y_s}_n, \qquad S_{\mathrm{temp}} =\temp{A_K}{y_1,\dots,y_s}, \qquad S_{\mathrm{for}} =\power{A_K}{y_1,\dots,y_s}. \] 
For $1\leq r\leq s$, set 
\[ g_i =y_i-\widetilde f_i, \qquad I_r = (g_1,\dots,g_r). \]

\begin{lem}\label{lem:tempered_lifting} 
Let $1\leq r\leq s$, and let $n\in \N$. Suppose that $h\in S_n$ belongs to the ideal $I_r S_{\mathrm{for}}$. Then there exist 
\[ H_1,\dots,H_r\in S_n \] 
such that 
\[ h=\sum_{i=1}^r g_iH_i. \] 
Moreover, if $\norm{h}_n\leq 1$, the $H_i$ can be chosen so that 
\[ \max_i\norm{H_i}_n\leq 1. \] 
\end{lem}

\begin{proof} 
Let 
\[ h\in S_n\cap I_r S_{\mathrm{for}} \] 
and assume that $\norm{h}_n\leq 1$. Let us write 
\[ h=\sum_{i=1}^r g_iG_i \] 
with $G_i\in S_{\mathrm{for}}$. 
We will modify the tuple $G=(G_1,\dots,G_r)$ by the Koszul relations in order to obtain a new tuple with coefficients satisfying the $n$-log growth bound. Write 
\[ h=\sum_{J\in\mathbb N^s} h_Jy^J, \qquad J=(j_1,\dots,j_s), \qquad y^J=y_1^{j_1}\cdots y_s^{j_s}. \] 
We put 
\[ w_J:=(j_1+1)^n\cdots(j_s+1)^n. \] 
Thus $\norm{h}_n\leq 1$ means precisely that $\abs{h_J}\leq w_J$ for every $J$. We order $\N^s$ first by total degree and then lexicographically. We will construct, by induction on this order, a $r$-tuple $B=(B_1,\dots,B_r)$ such that
\[ h=\sum_{i=1}^r g_iB_i, \] 
and 
\begin{equation}\label{BOUND} \max_{1\leq i\leq r}\abs{b^i_J}\leq w_J. \end{equation}
 We  write 
\[ B_i=\sum_{J\in\N^s} b^i_J y^J. \] 
Let $e_i\in\N^s$ be the $i$-th standard basis vector, and use the convention $b^i_{J_0-e_i}=0$ if $j_i=0$. The coefficient of $y^{J_0}$ in $\sum_{i=1}^r g_iB_i$ is 
\[ \sum_{i=1}^r b^i_{J_0-e_i} - \sum_{i=1}^r \widetilde f_i b^i_{J_0}. \] 
 Assume that  all coefficients of multidegree strictly smaller than a fixed $J_0$ satisfy the  inequality \eqref{BOUND}: here we are also considering our convention about negative exponents. Since this coefficient is $h_J$, we have 
\[ h_J= \sum_{i=1}^r b^i_{J_0-e_i} - \sum_{i=1}^r \widetilde f_i b^i_{J_0}. \] 
By induction, each coefficient $b^i_{J_0-e_i}$ satisfies  
\[ \abs{b^i_{J-e_i}}\leq w_{J-e_i}\leq w_J. \] 
Since also $\abs{h_J}\leq w_J$, the non-archimedean triangle inequality gives 
\[ \left| \sum_{i=1}^r \widetilde f_i b^i_J \right| \leq w_J. \] 
If $\max_i\abs{b^i_J}\leq w_J$,  
there is nothing to do. Otherwise set 
\[ \rho_J = \max_i\abs{b^i_J}>w_J. \] 
By Remark \ref{rmk:sup_norm}, the values of the norm on $A_K$ are the same as the values of the norm on $K$. Hence we can choose $\lambda\in K$ with $\abs{\lambda}=\rho_J$. Then $\abs{b^i_J/\lambda}\leq 1$ for every $i$, and 
\[ \left| \sum_{i=1}^r \widetilde f_i\frac{b^i_J}{\lambda} \right| \leq \frac{w_J}{\rho_J} <1. \] 
Thus, after reduction modulo $A_K^{\circ\circ}$, we get a relation 
\[ \sum_{i=1}^r \overline{b^i_J/\lambda}\, f_i = 0 \] 
in $A_k$. Since $f_1,\dots,f_r$ is a regular sequence in $A_k$, the first Koszul homology vanishes. Therefore the relation 
\[ \left( \overline{b^1_J/\lambda}, \dots, \overline{b^r_J/\lambda} \right) \] 
lies in the image of the Koszul differential of degree $2$ for $f_1, \ldots, f_r$
\[ A_k^{\binom r 2}\longrightarrow A_k^r. \] 
Choose liftings of the corresponding elements of $A_k^{\binom r2}$ to elements of $A^{\binom r2}$. Denote the lifted tuple by $\widetilde\alpha$. 
Then, denoting by $\varphi: A^{\binom r 2} \to A^r$ the Koszul differential of degree $2$ for $\widetilde f_1, \ldots, \widetilde f_r$, we have 
\[ \left( b^1_J,\dots,b^r_J \right) = \lambda\,\phi(\widetilde\alpha)_J + (c_1,\dots,c_r), \] 
where $\abs{c_i}<\rho_J$ for every $i$. Here $\phi(\widetilde\alpha)_J$ denotes the contribution of $\phi(\widetilde\alpha y^J)$ to the coefficient of multidegree $J$. Now replace $B$ by 
\[ B-\phi(\lambda\widetilde\alpha\, y^J). \]
This does not change the value of \[ \sum_{i=1}^r g_iB_i, \] because the image of $\phi$ lies in the kernel of the Koszul differential of degree $1$
\[ (B_1,\dots,B_r)\mapsto \sum_{i=1}^r g_iB_i. \] 
Moreover, this operation does not change any coefficient of multidegree strictly smaller than $J$. After this replacement, the maximum norm of the coefficient of multidegree $J$ is strictly smaller than $\rho_J$. Since $K$ is discretely valued, repeating this procedure finitely many times gives a representative for which 
\[ \max_i\abs{b^i_J}\leq w_J. \] 
This completes the induction step. Applying the construction to all multidegrees $J\in\N^s$, we obtain a tuple $H=(H_1,\dots,H_r)$ such that 
\[ h=\sum_{i=1}^r g_iH_i \] 
and 
\[ \abs{h^i_J}\leq w_J \] 
for every coefficient of every $H_i$. Hence 
\[ \max_i\norm{H_i}_n\leq 1. \]  
\end{proof}

From the fact that $\tilde{f}_1,\dots, \tilde{f}_s$ is a regular sequence in $A_K$ it easily follows that also   the sequence $y_1-\tilde{f}_1,\dots, y_s-\tilde{f}_s$ is regular in $\power{A_{K}}{y_1,\dots,y_s}$.  Now we want to see that it is a regular sequence in  $\temp{A_{K}}{y_1,\dots,y_s}$ as well.

\begin{lem}\label{regular sequence in tempered} 
The sequence 
\[ y_1-\widetilde f_1,\dots,y_s-\widetilde f_s \] 
is regular in $\temp{A_K}{y_1,\dots,y_s}$. \end{lem}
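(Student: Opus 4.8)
The plan is to reduce the statement to its formal-power-series analogue, Lemma \ref{regpower}, using the ring embedding $\temp{A_{K}}{y_1,\dots,y_s}\hookrightarrow\power{A_{K}}{y_1,\dots,y_s}$, the only real issue being that the ideals generated by the initial segments of the sequence must be shown to be \emph{saturated} inside $\temp{A_K}{y_1,\dots,y_s}$. First I would record the two ingredients. Since $A_K$ is affinoid it is Noetherian, so Lemma \ref{regpower} (with $\goth R=A_K$, $r_i=\tilde f_i$) gives that $y_1-\tilde f_1,\dots,y_s-\tilde f_s$ is a regular sequence in $\power{A_{K}}{y_1,\dots,y_s}$, hence so is every initial segment $y_1-\tilde f_1,\dots,y_i-\tilde f_i$ (an initial segment of a regular sequence is regular in any ring). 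Second, $\abs{\tilde f_i}_{A_K}=1$ for every $i$: indeed $\tilde f_i\in A=A_K^{\circ}$ (Lemma \ref{multnorm}), so $\abs{\tilde f_i}_{A_K}\le 1$, and $\abs{\tilde f_i}_{A_K}<1$ would mean $\tilde f_i\in A_K^{\circ\circ}=\mathfrak m$ (by the proof of Lemma \ref{multnorm}), i.e. $f_i=0$ in $A_k$, contradicting regularity. As the norm on $A_K$ is multiplicative (Lemma \ref{multnorm}), multiplication by $\tilde f_i$ is therefore an isometry of $A_K$, so $\tilde f_i$ is a multiplicative element in the sense of Definition \ref{defn:multiplicative_element} and the coefficientwise estimates of Lemmas \ref{idclosedintemp} and \ref{closedtube} (and Corollary \ref{cor:idclosedintemp}) are available.

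Then I would argue by induction on $i$ that $y_1-\tilde f_1,\dots,y_i-\tilde f_i$ is a regular sequence in $\temp{A_{K}}{y_1,\dots,y_s}$, writing $J_i^{\temp}$ and $J_i^{\power}$ for the ideals it generates in $\temp{A_K}{y}$ and in $\power{A_K}{y}$. For $i=1$: $y_1-\tilde f_1$ is a nonzerodivisor in $\power{A_K}{y}$ by Lemma \ref{regpower}, hence also in its subring $\temp{A_K}{y}$. For the inductive step one must see that $y_i-\tilde f_i$ is a nonzerodivisor on $\temp{A_K}{y}/J_{i-1}^{\temp}$: if $h\in\temp{A_K}{y}$ with $(y_i-\tilde f_i)h\in J_{i-1}^{\temp}\subseteq J_{i-1}^{\power}$, regularity in $\power{A_K}{y}$ gives $h\in J_{i-1}^{\power}$, and what remains is the saturation identity $J_{i-1}^{\power}\cap\temp{A_K}{y}=J_{i-1}^{\temp}$, i.e. a formal-power-series relation $h=\sum_{j<i}(y_j-\tilde f_j)\bar g_j$ with $h$ tempered can be rewritten with tempered coefficients. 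Properness of the final ideal is then automatic: $1\notin J_s^{\power}$ since the sequence is $\power{A_K}{y}$-regular, a fortiori $1\notin J_s^{\temp}$.

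For the saturation identity I would follow the method of Lemmas \ref{idclosedintemp} and \ref{closedtube}. The one-generator case is clean: if $h\in\power{A_K}{y}_n$ (the sup-norm module of series of log-growth $\le n$, which also computes $\temp{A_K}{y}$) and $h=(y_j-\tilde f_j)\bar g$ with $\bar g=\sum_\gamma b_\gamma y^\gamma$ formal, then solving $a_\gamma=b_{\gamma-e_j}-\tilde f_j b_\gamma$ upward in the $j$-th coordinate together with $\abs{\tilde f_j}=1$ yields $\abs{b_\gamma}\le\max_{0\le m\le\gamma_j}\abs{a_{\gamma-m e_j}}\le\norm{h}_n\prod_k(\gamma_k+1)^{n}$, so $\bar g\in\power{A_K}{y}_n\subseteq\temp{A_K}{y}$, even at the same log-growth level. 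For an initial segment one runs the term-by-term correction of the proof of Lemma \ref{closedtube} for the sequence $f_1,\dots,f_{i-1}$ (still regular in $A_k\cong A_K^{\circ}/A_K^{\circ\circ}$): at each multidegree one reduces along $A\to A_k$, uses regularity to write the offending tuple as a value of the Koszul map, lifts it, and subtracts, which — because $K$ is discretely valued — terminates after finitely many steps and leaves the growth bounds unaffected; summing over all multidegrees produces a combination with coefficients in a fixed $\power{A_K}{y}_m$, hence tempered. The main obstacle is exactly this last point: organizing the multi-generator correction so that passing from $\power{A_K}{y}$-coefficients to $\temp{A_K}{y}$-coefficients does not degrade the log-growth estimates, which is where the isometry property $\abs{\tilde f_j}=1$ and the strictness already established in Lemma \ref{closedtube} are indispensable; once the saturation identity is in hand, the rest of the argument is formal.
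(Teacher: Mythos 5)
Your proof is correct and follows the same strategy as the paper: use Lemma \ref{regpower} to get regularity in $\power{A_K}{y_1,\dots,y_s}$, then show the ideals generated by initial segments are saturated in $\temp{A_K}{y_1,\dots,y_s}$ by running the term-by-term Koszul correction of Lemma \ref{closedtube}, which rewrites a formal-power-series relation with tempered coefficients. The only difference is presentational --- you single out the equality $\abs{\tilde f_j}_{A_K}=1$ (a consequence of regularity of the $f_j$ in $A_k$ and Lemma \ref{multnorm}) as an explicit input and treat the one-generator case separately, while the paper runs the full multidegree correction directly, reducing mod $\mathfrak m$ at each step and invoking regularity of the $f_j$ in $A_k\cong A_K^{\circ}/A_K^{\circ\circ}$; both routes hinge on exactly the same structural facts.
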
 
\begin{proof} 
The sequence 
\[ y_1-\widetilde f_1,\dots,y_s-\widetilde f_s \] 
is regular in $S_{\mathrm{for}}$. Indeed, after quotienting by $y_1-\widetilde f_1,\dots,y_{i-1}-\widetilde f_{i-1}$, the element $y_i-\widetilde f_i$ is still monic in the variable $y_i$, and hence is not a zero divisor. We now prove regularity in $S_{\mathrm{temp}}$. Let $I_{i-1}:=(y_1-\widetilde f_1,\dots,y_{i-1}-\widetilde f_{i-1}).$ By Lemma \ref{lem:tempered_lifting}, the natural map 
\[ S_{\mathrm{temp}}/I_{i-1}S_{\mathrm{temp}} \longrightarrow S_{\mathrm{for}}/I_{i-1}S_{\mathrm{for}} \]
is injective. Since $y_i-\widetilde f_i$ is not a zero divisor in the target, it is not a zero divisor in the source. 
This holds for every $i=1,\dots,s$, so the sequence is regular in $S_{\mathrm{temp}}$. 
\end{proof}

\begin{lem}\label{closedtube}
For every $n\in\N$, the map
\[
    S_n^s\longrightarrow S_n,
    \qquad
    (H_1,\dots,H_s)\mapsto \sum_i (y_i-\widetilde f_i)H_i
\]
is strict. In particular, the ideal
\[
    (y_1-\widetilde f_1,\dots,y_s-\widetilde f_s)S_n
\]
is closed in $S_n$.
\end{lem}

\begin{proof}
Let $h$ be in the image and suppose $\norm{h}_n\leq 1$. By Lemma
\ref{lem:tempered_lifting}, applied with $r=s$, there exist
$H_1,\dots,H_s\in S_n$ such that
\[
    h=\sum_i (y_i-\widetilde f_i)H_i
\]
and
\[
    \max_i\norm{H_i}_n\leq 1.
\]
This says precisely that the quotient norm on the coimage agrees with the induced norm
on the image. Hence the map is strict. Since $S_n$ is Banach, the image is closed.
\end{proof}

\medskip 

We can now prove the following proposition.

\begin{pro}\label{prop:koszul_strict_resolution}
The Koszul complex associated with
\[
    y_1-\widetilde f_1,\dots,y_s-\widetilde f_s
\]
is a strict resolution of
\[
    \frac{\temp{A_K}{y_1,\dots,y_s}}{(y_1-\widetilde f_1,\dots,y_s-\widetilde f_s)}.
\]
\end{pro}
\begin{proof} 
By Lemma \ref{regular sequence in tempered}, the sequence 
\[ y_1-\widetilde f_1,\dots,y_s-\widetilde f_s \] 
is regular in $\temp{A_K}{y_1,\dots,y_s}$. In particular, its restriction to each Banach level $S_n$ is algebraically exact in the sense needed here: the image of each Koszul differential is the kernel of the next one. Thus the only point is strictness. For the last differential 
\[ S_n^s\longrightarrow S_n, \qquad (H_1,\dots,H_s)\longmapsto \sum_i g_iH_i, \] strictness follows from Lemma \ref{closedtube}. For the previous differentials, their images are kernels of continuous maps, hence are closed Banach subspaces. By the open mapping theorem, every continuous surjection from a Banach space onto a closed image is strict. Therefore all Koszul differentials are strict epimorphisms onto their images, and the Koszul complex is strictly exact on each Banach level $S_n$. Passing to the filtered colimit over $n$ gives the strict resolution in $\temp{A_K}{y_1,\dots,y_s}$. 
\end{proof}

\begin{rmk}\label{rmk:tempered_quotient_levels} 
Lemma \ref{lem:tempered_lifting} implies that, for every $n\in\N$, 
\[ (y_1 - \widetilde f_1, \ldots, y_s - \widetilde f_s) S_n = (y_1 - \widetilde f_1, \ldots, y_s - \widetilde f_s) S_{\mathrm{temp}}\cap S_n. \]
Indeed, if $h\in S_n$ belongs to $(y_1 - \widetilde f_1, \ldots, y_s - \widetilde f_s) S_{\mathrm{temp}}$, then it belongs a fortiori to $(y_1 - \widetilde f_1, \ldots, y_s - \widetilde f_s) S_{\mathrm{for}}$, and Lemma \ref{lem:tempered_lifting} shows that $h\in (y_1 - \widetilde f_1, \ldots, y_s - \widetilde f_s) S_n$. Consequently, the transition maps \[ S_n/IS_n\longrightarrow S_{n+1}/IS_{n+1} \] are injective, and we have \[ \frac{S_{\mathrm{temp}}}{IS_{\mathrm{temp}}} \cong \colim_{n\in\N} \frac{S_n}{IS_n}. \] Equivalently, we may write \[ \frac{\temp{A_K}{y_1,\dots,y_s}} {(y_1-\widetilde f_1,\dots,y_s-\widetilde f_s)} \cong \bigcup_{n \in \N} \frac{\power{A_K}{y_1,\dots,y_s}_n} {(y_1-\widetilde f_1,\dots,y_s-\widetilde f_s)}. \] \end{rmk}

\bigskip 

As a by-product of the lifting lemma, we obtain the following comparison between the algebra associated with the tempered tube and the algebra associated with the usual rigid analytic tube. 

\begin{cor}\label{prop tempered tube is contained in rigid} 
There is a canonical injective morphism of $K$-algebras 
\[ \frac{\temp{A_K}{y_1,\dots,y_s}} {(y_1-\widetilde f_1,\dots,y_s-\widetilde f_s)} \longrightarrow \frac{\conv{A_K}{y_1,\dots,y_s}} {(y_1-\widetilde f_1,\dots,y_s-\widetilde f_s)}. \] 
\end{cor}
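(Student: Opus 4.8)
The plan is to realize the stated map as the one induced on quotients by the inclusion of tempered power series into convergent ones, and then to reduce its injectivity to a statement already contained in the proofs of Lemma \ref{closedtube} and Lemma \ref{regular sequence in tempered}. Abbreviate $y=(y_1,\dots,y_s)$ and write $I$ for the ideal $(y_1-\tilde f_1,\dots,y_s-\tilde f_s)$ in whichever of the rings below is under discussion. First I would note that a tempered power series has coefficients of at most polynomial growth in the multidegree, hence converges on the open unit polydisk in $\mathbf{A}^s_{A_K}$; this yields a canonical $K$-algebra monomorphism $\temp{A_K}{y}\hookrightarrow\conv{A_K}{y}$, which in turn is a subring of the formal power series ring $\power{A_K}{y}$. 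Each of these inclusions sends $y_i-\tilde f_i$ to $y_i-\tilde f_i$, hence $I$ into $I$, so they descend to $K$-algebra maps
$$\temp{A_K}{y}/I\ \xrightarrow{\ \alpha\ }\ \conv{A_K}{y}/I\ \xrightarrow{\ \beta\ }\ \power{A_K}{y}/I.$$
The map $\alpha$ is the canonical map of the statement, and since $\alpha$ is injective whenever $\beta\circ\alpha$ is, it suffices to prove that $\beta\circ\alpha$ is injective — equivalently, that $I\,\power{A_K}{y}\cap\temp{A_K}{y}=I\,\temp{A_K}{y}$, the inclusion $\supseteq$ being obvious.

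For the remaining inclusion, suppose $g\in\temp{A_K}{y}$, say $g\in\power{A_K}{y}_n$, admits a decomposition $g=\sum_{i=1}^{s}(y_i-\tilde f_i)h_i$ with $h_i\in\power{A_K}{y}$ — this is how membership in $I\,\power{A_K}{y}$, and a fortiori in $I\,\conv{A_K}{y}$, manifests itself. Viewing the $h_i$ merely as formal power series is exactly the input of the correction algorithm run in the proof of Lemma \ref{closedtube} and re-used in the proof of Lemma \ref{regular sequence in tempered}: after rescaling $g$ and the $h_i$ by a suitable element of $K$ so that $\norm{g}_n\le 1$, that algorithm modifies the $h_i$ degree by degree in the multidegree of $y$ — the correction at each multidegree being a Koszul syzygy of the $y_i-\tilde f_i$ produced from the regularity of the sequence $f_1,\dots,f_s$ in $A_k$ — and yields $h_i'\in\power{A_K}{y}_n$ with $\norm{h_i'}_n\le 1$ and $g=\sum_{i=1}^{s}(y_i-\tilde f_i)h_i'$. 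Undoing the rescaling keeps $h_i'$ in $\power{A_K}{y}_n\subseteq\temp{A_K}{y}$, so $g\in I\,\temp{A_K}{y}$, as required. Alternatively, by the Remark following Lemma \ref{regular sequence in tempered} the source $\temp{A_K}{y}/I$ is the filtered union $\bigcup_n\power{A_K}{y}_n/I$, so one may equivalently check injectivity of $\power{A_K}{y}_n/I\to\conv{A_K}{y}/I$ for each $n$ and then pass to the colimit.

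I do not expect a genuinely new obstacle here: the entire content is the re-use of the algorithm of Lemma \ref{closedtube} and Lemma \ref{regular sequence in tempered}, and the corollary is essentially a packaging statement, phrased so as to record that the tempered tube embeds into the rigid analytic tube. The one point deserving care is the one already implicit in the proof of Lemma \ref{regular sequence in tempered}: that algorithm needs only \emph{formal} solvability of $g=\sum_i(y_i-\tilde f_i)h_i$ as hypothesis — here supplied for free, since any decomposition inside $\conv{A_K}{y}$ is in particular one inside $\power{A_K}{y}$ — and its output is controlled in $\norm{\cdot}_n$ by $\norm{g}_n$, hence stays at the same log-growth level $n$ and is therefore tempered.
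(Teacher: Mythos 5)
Your proposal is correct and follows essentially the same route the paper takes: both factor the map through $\power{A_K}{y_1,\dots,y_s}/(y_1-\tilde f_1,\dots,y_s-\tilde f_s)$ and reduce injectivity to the coefficient-by-coefficient correction algorithm of Lemma~\ref{regular sequence in tempered}, which needs only the temperedness of $g$ (not of the $h_i$) as input. The paper's proof is terse ("iterating the methods of Lemma~\ref{regular sequence in tempered}") and your write-up merely supplies the same argument in more explicit form, including the reformulation $I\,\power{A_K}{y}\cap\temp{A_K}{y}=I\,\temp{A_K}{y}$ and the observation that Lemma~\ref{closedtube}'s version alone does not suffice because it assumes the $h_i$ already tempered, whereas the variant in Lemma~\ref{regular sequence in tempered} handles arbitrary formal power series.
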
 
\begin{proof} 
The natural inclusion 
\[ \temp{A_K}{y_1,\dots,y_s} \hookrightarrow \conv{A_K}{y_1,\dots,y_s} \] 
induces the displayed morphism on quotients. To prove that it is injective, it is enough to prove injectivity after composing with the natural inclusion 
\[ \conv{A_K}{y_1,\dots,y_s} \hookrightarrow \power{A_K}{y_1,\dots,y_s}. \] 
Thus it suffices to show that the morphism 
\[ \frac{\temp{A_K}{y_1,\dots,y_s}} {(y_1-\widetilde f_1,\dots,y_s-\widetilde f_s)} \longrightarrow \frac{\power{A_K}{y_1,\dots,y_s}} {(y_1-\widetilde f_1,\dots,y_s-\widetilde f_s)} \] 
is injective. This is a consequence of the equality 
\[ S_{\mathrm{temp}}\cap I S_{\mathrm{for}} = IS_{\mathrm{temp}}, \] which follows from Lemma \ref{lem:tempered_lifting}:  hence the corollary is proved. 
\end{proof}

We can now prove that the tempered tube defines an open subset of $\goth S(A_K)$. 

\begin{pro}\label{prop:tempered_tube_open} 
The canonical morphism 
\[ A_K \longrightarrow \frac{\temp{A_K}{y_1,\dots,y_s}} {(y_1-\widetilde f_1,\dots,y_s-\widetilde f_s)} \] 
is a homotopy epimorphism. Hence it defines an open subset of $\goth S(A_K)$. 
\end{pro}

\begin{proof}

    If 
    $$\frac{\temp{A_{K}}{y_1,\dots,y_s}}{(y_1-\tilde{f}_1,\dots, y_s-\tilde{f}_s)}=0$$ 
    this is clear. So we assume the contrary.
    We claim that
    \begin{equation}\label{derived intersection of tubes}
    \frac{\temp{A_{K}}{y_1,\dots,y_s}}{(y_1-\tilde{f_1},\dots, y_s-\tilde{f}_s)} \cong \frac{\temp{A_{K}}{y_1}}{(y_1-\tilde{f_1})} \dertens{A_K} \dots \dertens{A_K} \frac{\temp{A_{K}}{y_s}}{(y_s-\tilde{f_s})}.
    \end{equation} 
    In fact by Lemma \ref{closedtube} $\temp{A_{K}}{y_1}/(y_1-\tilde{f_1})$ admits the flat resolution (see Lemma \ref{tempered is flat})
    \begin{equation}
        0\rightarrow\temp{A_{K}}{y_1}\xrightarrow{(y_1-\tilde{f_1})}\temp{A_{K}}{y_1}\rightarrow 0.
    \end{equation}
    And so $\temp{A_{K}}{y_1}\dertens{A_K}\temp{A_{K}}{y_2,\dots,y_s}/(y_1-\tilde{f_1},\dots, y_s-\tilde{f}_s)$ is computed by
    \begin{dmath*}
         [0\rightarrow\temp{A_{K}}{y_1}\xrightarrow{(y_1-\tilde{f_1})}\temp{A_{K}}{y_1} \rightarrow 0]\hat{\otimes}_{A_K} \frac{\temp{A_{K}}{y_2,\dots,y_s}}{(y_2-\tilde{f}_2,\dots, y_s-\tilde{f}_s)} \simeq
        \frac{\temp{A_{K}}{y_1,\dots,y_s}}{(y_1-\tilde{f}_1,\dots, y_s-\tilde{f}_s)},
    \end{dmath*}
    where the isomorphism follows from Lemma \ref{closedtube} and Lemma \ref{regular sequence in tempered}.
    Thus \eqref{derived intersection of tubes} follows by induction.
    It suffices then to prove the lemma for $s=1$. Consider the homotopical pushout diagram
    \begin{equation}
    \begin{tikzcd}
      \tate{A_K}{y_1} \arrow[r] \arrow[d]
      & \temp{A_K}{y_1} \arrow[d] \\
       \tate{A_K}{y_1}/(y_1-f_1) \arrow[r, "\phi"]
      & \temp{A_K}{y_1}/(y_1-f_1).
    \end{tikzcd}
    \end{equation}
    Since homotopy pushouts preserve homotopy epimorphisms (see \cite[Proposition 3.4]{BassatMuk}) we have that $\phi$ is a homotopy epimorphism. We can conclude using the fact that the Weierstrass localization $A_K \to \tate{A_K}{y_1}/(y_1-f_1)$ is a homotopy epimorphism (see \cite[Theorem 5.16]{BenKre} and the lemmas before it) and the stability of homotopy epimorphisms by composition. 
\end{proof}

This open set will be indicated as   the \emph{tempered tube} of $X$ in ${\hat P}$, $\temptube{X}{{\hat P}}$. A priori, the tempered tube depends on the chosen liftings. $\widetilde f_1,\dots,\widetilde f_s$. We will see that, in our case, this is not so (Proposition \ref{prop tempered is independent of the presentation}). First some lemmas.

 \medskip
\begin{lem}\label{open_disk_is_lim_acyclic} Let $A_K$ be an affinoid $K$-algebra. Then the canonical morphism 
\[ \lim_{\rho<1} \tate{A_K}{\rho^{-1}y_1,\dots,\rho^{-1}y_s} \longrightarrow \mathbb R\!\lim_{\rho<1} \tate{A_K}{\rho^{-1}y_1,\dots,\rho^{-1}y_s} \]
is a quasi-isomorphism. 
\end{lem}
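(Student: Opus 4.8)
The statement to prove is that $\mathbb{R}\lim_{\rho<1}\tate{A_K}{\rho^{-1}y_1,\dots,\rho^{-1}y_s}$ coincides with the ordinary limit, i.e.\ that the tower $(\tate{A_K}{\rho^{-1}y_1,\dots,\rho^{-1}y_s})_{\rho<1}$ is $\mathbb{R}\lim$-acyclic in $\dercat_\infty(\sf{Mod}_{A_K})$ (equivalently in $\dercat_\infty(\Ind(\BanK))$ via Theorem \ref{thm:CBorn_IndBan_derived_equivalent}). First I would reduce to a countable cofinal subtower: choose a sequence $\rho_1<\rho_2<\dots$ with $\rho_n\to 1$, so the limit over $\rho<1$ agrees with the limit over this sequence, and the derived limit of a countable tower is computed by the usual two-term "$\prod$, $\id-\mathrm{shift}$" complex
\begin{equation*}
0\to \mathbb{R}\lim_n M_n \to \prod_n M_n \xrightarrow{\ \id-\sigma\ } \prod_n M_n\to 0,
\end{equation*}
where $M_n=\tate{A_K}{\rho_n^{-1}y_1,\dots,\rho_n^{-1}y_s}$. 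Since in the quasi-abelian setting these products exist and the derived limit is the fiber of $\id-\sigma$, it suffices to prove that the transition map $M_{n+1}\to M_n$ is a strict epimorphism with dense image — more precisely, that the tower satisfies a (topological) Mittag--Leffler condition — which will force $\id-\sigma$ to be a strict epimorphism and hence the $\mathbb{R}\lim$ to be concentrated in degree $0$, i.e.\ equal to the naive limit.

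Concretely, the transition map $M_{n+1}\to M_n$ is the inclusion of power series convergent on the polydisk of polyradius $\rho_{n+1}$ into those convergent on the smaller polydisk of polyradius $\rho_n$; a polynomial is dense in every $M_n$, and in fact the image of $M_{n+1}$ is dense in $M_n$ because truncating a series in $M_n$ to finite order produces a polynomial which lies in $M_{n+1}$ and approximates the original series in the $\rho_n$-norm (here one uses $\rho_n<\rho_{n+1}$, so tails that are small in the $\rho_n$-norm are still controlled). This is the standard argument that a tower of Banach spaces with dense transition maps has vanishing higher $\mathbb{R}\lim$; the key point is that denseness of $M_{n+1}\hookrightarrow M_n$, combined with completeness, yields surjectivity of $\id-\sigma$ on the product, by the usual successive-approximation scheme. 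I would phrase this via the Mittag--Leffler criterion for towers of complete bornological (or Banach) spaces, citing the formulations in \cite{Schnei} and \cite{BenKreFrechet} that were already invoked in the proof of Proposition \ref{prop:complement_of_tempered}; alternatively one can identify the relevant tower, after rescaling, with a nuclear (compactoid) projective system as in the lemma preceding Remark \ref{r-decay}, since each transition map $M_{n+1}\to M_n$ factors through a ball and is compactoid, and nuclear towers are automatically $\mathbb{R}\lim$-acyclic.

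The multivariable case follows from the one-variable case: $\tate{A_K}{\rho^{-1}y_1,\dots,\rho^{-1}y_s}\cong \tate{A_K}{\rho^{-1}y_1}\wotimes_{A_K}\cdots\wotimes_{A_K}\tate{A_K}{\rho^{-1}y_s}$, and since every Banach space over $K$ is flat (\cite[Lemma 3.49]{bambozzi2018stein}) and $\mathbb{R}\lim$ of nuclear towers commutes with the completed tensor product (\cite[Lemma 5.17]{BenKreFrechet}, as used in the proof of Proposition \ref{nonemptintersection}), one reduces to a single variable. The main obstacle, as usual with derived limits in quasi-abelian categories, is not the denseness estimate itself — that is an elementary $p$-adic truncation argument — but rather making precise that the two-term description of $\mathbb{R}\lim$ is valid in $\dercat_\infty(\sf{Mod}_{A_K})$ and that "strict epimorphism of the $\id-\sigma$ map" is exactly what is needed; this is handled by the cited results of Schneiders and Ben-Bassat--Kremnizer on derived limits of countable towers in elementary quasi-abelian categories, so no new technical input is required.
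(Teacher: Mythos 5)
Your main mechanism — density of the transition maps plus a Mittag--Leffler successive-approximation argument — has a genuine gap in the setting the paper actually works in. In $\dercat_\infty(\CBornK)$ (equivalently $\dercat_\infty(\Ind(\BanK))$), vanishing of the higher $\mathbb{R}\lim$ of a countable tower is the condition that $\id - \sigma$ on the Roos complex be a \emph{strict} epimorphism in the quasi-abelian sense, not merely set-theoretically surjective. Density is a topological notion, and the solution produced by successive approximation need not be bounded (i.e., need not exhibit the quotient bornology); the classical Palamodov/Retakh theorem for Fr\'echet spaces (dense transitions imply $\lim^1 = 0$) lives in the topological category and does not transfer to the bornological one. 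The correct ingredient — which you offer only as an ``alternative'' — is that the transition maps $\tate{K}{(\rho')^{-1} y_i} \to \tate{K}{\rho^{-1} y_i}$ for $\rho < \rho'$ are nuclear (compactoid), and that countable nuclear towers are $\mathbb{R}\lim$-acyclic; this is precisely what the paper uses, invoking \cite[Corollary 3.80]{bambozzi2018stein}.

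You also leave implicit the passage from coefficients in $K$ to coefficients in $A_K$, which the paper treats explicitly: it first establishes acyclicity for $\lim_{\rho<1}\tate{K}{\rho^{-1}y_1,\dots,\rho^{-1}y_s}$ via nuclearity, introduces the nuclear Fr\'echet spaces $C(\rho)=\lim_{\sigma<\rho}\tate{K}{\sigma^{-1}y_1,\dots,\sigma^{-1}y_s}$, and then uses \cite[Corollary 3.65]{bambozzi2018stein} to pass $A_K\wotimes_K(-)$ through the limit and through the products appearing in the Roos complex. Your proposed reduction of the multivariable case to one variable via flatness of Banach $K$-modules and \cite[Lemma 5.17]{BenKreFrechet} is a different (plausible) decomposition, but it would still need the nuclearity input for the one-variable tower; the Mittag--Leffler/density framing should be replaced by the nuclearity one throughout.
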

\begin{proof}
    We observe that it is well-known that the maps
    \[  \tate{K}{\rho^{-1} y_1, \dots, \rho^{-1} y_s} \to \tate{K}{\sigma^{-1} y_1, \dots, \sigma^{-1} y_s} \]
    are nuclear, for $\sigma < \rho$ (\cite[Theorem 8.1.9(ii)]{Schikhof}). Therefore, we have
    \[ \lim_{\rho < 1}  \tate{K}{\rho^{-1} y_1, \dots, \rho^{-1} y_s} \cong \mathbb{R} \lim_{\rho < 1}  \tate{K}{\rho^{-1} y_1, \dots, \rho^{-1} y_s} \]
    applying \cite[Corollary 3.80]{bambozzi2018stein}  (where we used that the cofinality of the limit is countable). 
    Now, for any $r < 1$ let us write
    \[ C(r) = \lim_{\rho < r} \tate{K}{\rho^{-1} y_1, \dots, \rho^{-1} y_s}.  \]
    Notice that again by  \cite[Corollary 3.80]{bambozzi2018stein}
    \[ \mathbb{R} \lim_{\rho < 1} C(\rho) \cong   \lim_{\rho < 1} C(\rho) \cong \lim_{\rho < 1}  \tate{K}{\rho^{-1} y_1, \dots, \rho^{-1} y_s} \]
    but now all $C(\rho)$ are nuclear Fr\'echet bornological spaces.
    Then, in $\dercat(\sf{Mod}_K)$ we have
    \[ A_K \wotimes_K^{\mathbb L} (\mathbb{R} \lim_{\rho < 1} C(\rho)) \cong A_K \wotimes_K ( \lim_{\rho < 1} C(\rho)) \cong \lim_{\rho < 1}  A_K \wotimes_K C(\rho) \]
    where for the last isomorphism we used \cite[Corollary 3.65]{bambozzi2018stein} and \cite[Theorem 3.50]{bambozzi2018stein} for the first isomorphism. We also have that
    \[  \lim_{\rho < 1}  A_K \wotimes_K C(\rho) \cong \mathbb R \lim_{\rho < 1}  A_K \wotimes_K C(\rho) \]
    because the latter object is isomorphic to the Roos complex
    \[  [ \prod_{\rho < 1} A_K \wotimes_K C(\rho) \to \prod_{\rho < 1} A_K \wotimes_K C(\rho) ] \cong [  A_K \wotimes_K \prod_{\rho < 1} C(\rho) \to  A_K \wotimes_K \prod_{\rho < 1} C(\rho) ] \]
    where again we used \cite[Corollary 3.65]{bambozzi2018stein} and that the cofinality of the projective system is countable.
\end{proof}

\begin{lem} \label{open_loc_is_lim_acyclic}
    Let $A_K$ be an affinoid algebra, $f_i \in A_K$, then the canonical morphism
    \[ \lim_{\rho < 1} \frac{\tate{A_K}{\rho^{-1} y_1, \dots, \rho^{-1} y_s}}{(y_1-f_1,\dots,y_s-f_s)} \to \mathbb{R}  \lim_{\rho < 1} \frac{\tate{A_K}{\rho^{-1} y_1, \dots, \rho^{-1} y_s}}{(y_1-f_1,\dots,y_s-f_s)} \]
    is a quasi-isomorphism.
\end{lem}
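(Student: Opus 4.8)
The plan is to reduce the statement to Lemma~\ref{open_disk_is_lim_acyclic} by resolving the quotient by a Koszul complex and exploiting that a Koszul complex is \emph{perfect}, hence commutes with $\mathbb{R}\lim$. Write $B(\rho)=\tate{A_K}{\rho^{-1}y_1,\dots,\rho^{-1}y_s}$ and $Q(\rho)=B(\rho)/(y_1-f_1,\dots,y_s-f_s)$ for $\rho<1$, and abbreviate the sequence $y_1-f_1,\dots,y_s-f_s$ by $\underline{y}-\underline{f}$. First I would record that $\underline{y}-\underline{f}$ is a regular sequence in each $B(\rho)$ generating a closed ideal: it is a regular sequence in the polynomial ring $A_K[y_1,\dots,y_s]$ by Lemma~\ref{regpower}, $B(\rho)$ is flat over $A_K[y_1,\dots,y_s]$, and finitely generated ideals in the Noetherian Banach algebra $B(\rho)$ are closed. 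Consequently the Koszul complex $\mathrm{Kos}(\underline{y}-\underline{f};B(\rho))\simeq B(\rho)\dertens{A_K[y_1,\dots,y_s]}A_K$ is a strict resolution of $Q(\rho)$ in $\dercat_\infty(\sf{Mod}_{A_K})$, functorially in $\rho$. The same argument, run this time with the Gauss norms $\lVert\sum_\alpha a_\alpha y^\alpha\rVert_\rho=\sup_\alpha|a_\alpha|\rho^{|\alpha|}$ exactly as in the proofs of Lemma~\ref{closedtube} and Lemma~\ref{regular sequence in tempered}, shows that $\underline{y}-\underline{f}$ is also a regular sequence with closed ideal in the Fr\'echet algebra $\conv{A_K}{y_1,\dots,y_s}=\lim_{\rho<1}B(\rho)$, so that $\mathrm{Kos}\bigl(\underline{y}-\underline{f};\conv{A_K}{y_1,\dots,y_s}\bigr)$ is concentrated in degree $0$ with cohomology $\conv{A_K}{y_1,\dots,y_s}/(\underline{y}-\underline{f})$, i.e. the rigid tube $\tube{X}{P}$.

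Next I would pass to the limit. Since $\mathrm{Kos}(\underline{y}-\underline{f};-)$ is tensoring with a fixed bounded complex of finite free $A_K[y_1,\dots,y_s]$-modules, it commutes with $\mathbb{R}\lim$ (which is exact and commutes with finite direct sums), so
$$\mathbb{R}\lim_{\rho<1}Q(\rho)\ \simeq\ \mathbb{R}\lim_{\rho<1}\mathrm{Kos}(\underline{y}-\underline{f};B(\rho))\ \simeq\ \mathrm{Kos}\bigl(\underline{y}-\underline{f};\ \mathbb{R}\lim_{\rho<1}B(\rho)\bigr).$$
By Lemma~\ref{open_disk_is_lim_acyclic}, $\mathbb{R}\lim_{\rho<1}B(\rho)\simeq\lim_{\rho<1}B(\rho)=\conv{A_K}{y_1,\dots,y_s}$, concentrated in degree $0$; substituting and using the first paragraph,
$$\mathbb{R}\lim_{\rho<1}Q(\rho)\ \simeq\ \mathrm{Kos}\bigl(\underline{y}-\underline{f};\ \conv{A_K}{y_1,\dots,y_s}\bigr)\ \simeq\ \conv{A_K}{y_1,\dots,y_s}/(y_1-f_1,\dots,y_s-f_s),$$
so $\mathbb{R}\lim_{\rho<1}Q(\rho)$ is concentrated in cohomological degree $0$. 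Since the $0$-th cohomology of the derived limit of an inverse system is its ordinary limit (the $\mathbb{R}\lim$-spectral sequence degenerates in degree $0$), the canonical truncation map $\mathbb{R}\lim_{\rho<1}Q(\rho)\to\lim_{\rho<1}Q(\rho)$ is then a quasi-isomorphism, which is the assertion. As a by-product one also obtains $\lim_{\rho<1}Q(\rho)\cong\conv{A_K}{y_1,\dots,y_s}/(\underline{y}-\underline{f})=\tube{X}{P}$, the identification used implicitly in Remark~\ref{the rigid tube}.

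The main obstacle is the input of the first paragraph: establishing that $\underline{y}-\underline{f}$ is a regular sequence generating a \emph{closed} (equivalently, strict) ideal, simultaneously in every affinoid $B(\rho)$ and in the Fr\'echet algebra $\conv{A_K}{y_1,\dots,y_s}$, so that the Koszul complexes are genuine resolutions in the derived category of the relevant quasi-abelian category. Over each $B(\rho)$ this is classical (Noetherianity, closedness of finitely generated ideals, regularity via flatness over the polynomial ring), but over $\conv{A_K}{y_1,\dots,y_s}$ it requires the explicit division estimate already carried out for the tempered and formal series rings; I expect it to go through essentially verbatim with the Gauss norm $\lVert\cdot\rVert_\rho$ replacing the norms used in Lemmas~\ref{closedtube} and~\ref{regular sequence in tempered}, together with a passage to the limit (as in the remark after Lemma~\ref{regular sequence in tempered}) to control $(\underline{y}-\underline{f})\power{A_K}{y_1,\dots,y_s}\cap\conv{A_K}{y_1,\dots,y_s}$. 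A secondary point to spell out carefully is the commutation of the finite Koszul complex with $\mathbb{R}\lim$, which is routine but not automatic since $\mathbb{R}\lim$ is only left exact.
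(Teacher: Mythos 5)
Your proof is correct and follows essentially the same reduction the paper uses: resolve the quotient by the Koszul complex of the regular sequence $\underline{y}-\underline{f}$, use strictness of the ideal, and invoke Lemma~\ref{open_disk_is_lim_acyclic}. The only difference is organizational: you commute $\mathbb{R}\lim$ past the full Koszul complex in one step (a bounded complex of $\lim$-acyclic terms), whereas the paper peels off one variable at a time via the six-term $\lim$/$\lim^{(1)}$ exact sequence.
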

\begin{proof}
     Let's denote by $B_n = \tate{A_K}{r_n^{-1} y_1,\dots, r_n^{-1} y_s}$ for a sequence $r_n \to 1$ from below. 
     Then, for each $n \in \N$ we have the strictly exact sequence
     \[ 0 \to B_n \to B_n \to \frac{B_n}{(y_1 - f_1)} \to 0 \]
     where the first map is multiplication by $(y_1 - f_1)$. This leads to the long exact sequence
     \[ 0 \to \lim_{n \in \N} B_n \to \lim_{n \in \N} B_n \to \lim_{n \in \N} \frac{B_n}{(y_1 - f_1)} \to {\lim_{n \in \N}}^{(1)}  B_n \to {\lim_{n \in \N}}^{(1)} B_n \to {\lim_{n \in \N}}^{(1)} \frac{B_n}{(y_1 - f_1)} \to 0 \]
     where $\lim_{n \in \N}^{(1)}$ is the first derived functor with respect to the left-t structure of $\dercat(\sf{Mod}_K)$. By Lemma \ref{open_disk_is_lim_acyclic} we have that ${\lim_{n \in \N}}^{(1)} B_n \cong 0$, which implies ${\lim_{n \in \N}}^{(1)} \frac{B_n}{(y_1 - f_1)} \cong 0$ and therefore $\mathbb{R} \lim_{n \in \N} \frac{B_n}{(y_1 - f_1)} \cong \lim_{n \in \N} \frac{B_n}{(y_1 - f_1)}$.
     Since $(y_1 - f_1, \ldots, y_s - f_s)$ is a regular sequence we can iterate the same reasoning and conclude by induction. For example, if we denote $B'_n = \frac{B_n}{(y_1 - f_1)}$, we have the strictly exact sequence
     \[  0 \to B_n' \to B_n' \to \frac{B_n'}{(y_2 - f_2)} \to 0 \]
     where the first map is given by multiplication by $(y_2 - f_2)$. As before, this gives $\mathbb{R} \lim_{n \in \N} \frac{B_n'}{(y_2 - f_2)} \cong \lim_{n \in \N} \frac{B_n'}{(y_2 - f_2)}$. And so on.
\end{proof}

Of course the proof of Lemma \ref{open_loc_is_lim_acyclic} applies to any projective system of quotients by a regular sequence of elements. Then we have (we refer to Remark \ref{the rigid tube} for the contruction of the rigid tube $
    ]X[_{{\hat P}}$):

\begin{cor}\label{ open tempered tube in rigid}
    In the previous hypotheses, in $\goth{S}(A_K)$, we have an immersion of open sets
    $$
    ]X[_{{\hat P}} \subset \temptube{X}{{\hat P}}. 
    $$
\end{cor}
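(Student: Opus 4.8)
The plan is to read off the statement from the lattice-theoretic description of $\goth{S}(A_K)$ together with Corollary~\ref{prop tempered tube is contained in rigid}. By Remark~\ref{the rigid tube} the open subset $\tube{X}{P}$ corresponds to the homotopy epimorphism $A_K\to\conv{A_K}{y_1,\dots,y_s}/(y_1-\tilde{f}_1,\dots,y_s-\tilde{f}_s)$, while $\temptube{X}{P}$ corresponds to $A_K\to\temp{A_K}{y_1,\dots,y_s}/(y_1-\tilde{f}_1,\dots,y_s-\tilde{f}_s)$. Recall from Subsection~\ref{definition of the spectrum} and Proposition~\ref{unique map idempotents} that, given two homotopy epimorphisms $A_K\to B$ and $A_K\to B'$, the associated basic opens satisfy $D_{B'}\subseteq D_B$ in $\goth{S}(A_K)$ precisely when there is a (necessarily unique) morphism $B\to B'$ in $\sf{Comm}(\dercat_{\infty}(\sf{Mod}_{A_K}))$ commuting with the structure maps from $A_K$. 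Hence it suffices to check (i) that both of the algebras above really are homotopy epimorphisms over $A_K$, i.e. that both tubes are open; and (ii) that there is a morphism of $A_K$-algebras $\temp{A_K}{y_1,\dots,y_s}/(y_1-\tilde{f}_1,\dots,y_s-\tilde{f}_s)\to\conv{A_K}{y_1,\dots,y_s}/(y_1-\tilde{f}_1,\dots,y_s-\tilde{f}_s)$.

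Point (ii) is immediate: the canonical injection furnished by Corollary~\ref{prop tempered tube is contained in rigid} is the restriction-of-functions map, induced by the identity on $A_K$ and by $y_i\mapsto y_i$ descended to the quotients, so it is $A_K$-linear and compatible with the two maps out of $A_K$. For (i), the tempered tube is open by Proposition~\ref{temptubopen}. For the rigid tube I would argue exactly as in the proof of Proposition~\ref{temptubopen}: using Remark~\ref{the rigid tube}, present $\conv{A_K}{y_1,\dots,y_s}/(y_1-\tilde{f}_1,\dots,y_s-\tilde{f}_s)$ as the limit over $\rho<1$ of the algebras $\tate{A_K}{\rho^{-1}y_1,\dots,\rho^{-1}y_s}/(y_1-\tilde{f}_1,\dots,y_s-\tilde{f}_s)\cong\tate{A_K}{\rho^{-1}\tilde{f}_1,\dots,\rho^{-1}\tilde{f}_s}$, each of which is a (Weierstrass) homotopy epimorphism over $A_K$ by \cite[Theorem~5.16]{BenKre}; then invoke Lemma~\ref{open_loc_is_lim_acyclic} to replace the naive limit by the derived one, commute the derived tensor product over $A_K$ past this $\mathbb{R}\lim$-acyclic limit as in the proofs of Lemmas~\ref{open_disk_is_lim_acyclic} and \ref{open_loc_is_lim_acyclic}, and compute termwise using that the tensor product over $A_K$ of the Weierstrass domains $\{\,|\tilde{f}|\le\rho\,\}$ and $\{\,|\tilde{f}|\le\sigma\,\}$ is the smaller of the two, concluding $\O(\tube{X}{P})\dertens{A_K}\O(\tube{X}{P})\simeq\O(\tube{X}{P})$.

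With both tubes known to be open, applying the order dictionary above with $B=\temp{A_K}{y_1,\dots,y_s}/(y_1-\tilde{f}_1,\dots,y_s-\tilde{f}_s)$, $B'=\conv{A_K}{y_1,\dots,y_s}/(y_1-\tilde{f}_1,\dots,y_s-\tilde{f}_s)$ and the morphism of Corollary~\ref{prop tempered tube is contained in rigid} yields $\tube{X}{P}\subseteq\temptube{X}{P}$ inside $\goth{S}(A_K)$. I expect the only genuine work to lie in the openness of $\tube{X}{P}$, namely in making the interchange of the derived tensor product with the Fr\'echet-type limit rigorous; but this is exactly what the nuclearity results of \cite{bambozzi2018stein} and Lemmas~\ref{open_disk_is_lim_acyclic}--\ref{open_loc_is_lim_acyclic} are designed for, so no essentially new difficulty arises. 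Everything else is bookkeeping with the lattice order on $\goth{I}(\dercat_{\infty}(\sf{Mod}_{A_K}))$ together with a direct application of Corollary~\ref{prop tempered tube is contained in rigid}.
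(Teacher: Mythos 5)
Your overall strategy coincides with the paper's: reduce the containment to showing that $\tube{X}{P}$ is open in $\goth{S}(A_K)$, i.e.\ that $A_K \to \O(\tube{X}{P})$ is a homotopy epimorphism, and then combine the morphism of Corollary~\ref{prop tempered tube is contained in rigid} with the lattice dictionary of Proposition~\ref{unique map idempotents} to read off $\tube{X}{P} \subseteq \temptube{X}{P}$. That bookkeeping is exactly what the paper's proof leaves implicit when it opens with ``we have only to prove that $\tube{X}{P}$ is open''. Your points (i) and (ii) are therefore the right decomposition.

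The one genuine step you compress is the commutation of $\dertens{A_K}$ with the $\mathbb{R}\mathrm{lim}$-acyclic limit. Lemmas~\ref{open_disk_is_lim_acyclic} and \ref{open_loc_is_lim_acyclic} establish that $A_K\wotimes_K (-)$ (and more generally tensoring by a fixed Banach object over $K$) commutes with the nuclear Fr\'echet limit, and that such limits are $\mathbb{R}\mathrm{lim}$-acyclic; they do not by themselves let you move $\dertens{A_K}$ through a limit, which is what you write (``commute the derived tensor product over $A_K$ past this $\mathbb{R}\mathrm{lim}$-acyclic limit as in the proofs of Lemmas~\ref{open_disk_is_lim_acyclic} and \ref{open_loc_is_lim_acyclic}''). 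The paper bridges this gap with the bar resolution $\goth{L}^{\bullet}_{A_K}(B)$, computing $B\dertens{A_K}B$ as $\goth{L}^{\bullet}_{A_K}(B)\wotimes_{A_K}B$. In each degree $m$, $\goth{L}^{m}_{A_K}(B)\wotimes_{A_K}B$ is a finite completed tensor product of copies of $A_K$ and $B$ \emph{over $K$}, and these are the tensor products that commute with $\lim_n B_n$ by \cite[Corollary~3.65]{bambozzi2018stein}. After that reduction one uses, for each $n$, that $A_K\to B_n$ is a (Weierstrass) homotopy epimorphism, so $\goth{L}^{\bullet}_{A_K}(B_n)\wotimes_{A_K}B_n\simeq B_n$, and then applies $\mathbb{R}\mathrm{lim}$-acyclicity once more to conclude $B\dertens{A_K}B\simeq B$. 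Without this bar-resolution reduction from $\dertens{A_K}$ to $\wotimes_K$, the interchange you invoke is not automatic; adding it makes your argument complete and essentially identical to the paper's.
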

\begin{proof}
    We have only to prove that $
    ]X[_{{\hat P}}$ is open in $\goth{S}(A_K)$. 
    Let's denote by $B_n= \tate{A_K}{r_n^{-1} y_1,\dots,r_n^{-1}y_s}/( y_1-\tilde{f}_1,\dots,y_s-\tilde{f}_s) $ and let $B = \lim_{n \in \N} B_n \cong \mathbb{R} \lim_{n \in \N} B_n$, applying Lemma \ref{open_loc_is_lim_acyclic}. 
    We then follow  the computations of \cite[Section 5]{bambozzi2018stein}. Consider the bar resolution ${\mathcal L}^{\bullet}_A(B)$:
    $$
    B {\hat \otimes}_{A_K}^{\mathbb L}B =  {\goth L}^{\bullet}_A(B) {\hat \otimes}_{A_K}B.$$
    See \cite[Definition 2.10]{bambozzi2018stein} for the definition of the bar resolution of an object in closed symmetric quasi-abelian category with enough projectives.
    For each $m$, we have 
    $${\goth L}^{m}_{A_K}(B)= A_K {\hat \otimes}{_K}( A {\hat \otimes}{_K} \dots {\hat \otimes}{_K} A {\hat \otimes}{_K} B)
    $$
    where $A_K$ is tensored $m$ times, and the structure of $A_K$-module is given by the first $A_K$ term on the left. It follows that
    $$
    {\goth L}^{m}_{A_K}(B) {\hat \otimes}_{A_K} B=  A_K {\hat \otimes}{_K} \dots {\hat \otimes}{_K} A_K {\hat \otimes}{_K} B {\hat \otimes}{_K} B.
     $$
    Let us write $C_n = \tate{A_K}{r_n^{-1} y_1,\dots,r_n^{-1}y_s}$ and $D_m = A_K {\hat \otimes}{_K} \dots {\hat \otimes}{_K} A_K {\hat \otimes}{_K} B$, so that
    \[ {\goth L}^{m}_{A_K}(B) {\hat \otimes}_{A_K} B =  D_m \wotimes_{K} \lim_{n \in \N}  \frac{C_n}{(y_1 - \tilde{f}_1,\dots,y_s-\tilde{f}_s)}. \]
    Let us consider the strictly exact sequence
    \[ 0 \to \lim_{n \in \N} C_n \to \lim_{n \in \N} C_n \to \lim_{n \in \N} \frac{C_n}{(y_1 - f_1)} \to 0 \]
    where the first map is multiplication by $(y_1 - f_1)$. Applying Lemma \ref{open_loc_is_lim_acyclic} we deduce that
    \[ \lim_{n \in \N} \frac{C_n}{(y_1 - f_1)} \cong \frac{\lim_{n \in \N} C_n}{ (y_1 - f_1) \lim_{n \in \N} C_n} \]
    and by the same argument of the proof of  Lemma \ref{open_loc_is_lim_acyclic} we get that
    \[ D_m \wotimes_K \lim_{n \in \N} C_n \cong D_m \wotimes_K \lim_{n \in \N} A_K \wotimes_K \tate{K}{r_n^{-1} y_1, \ldots, r_n^{-1} y_s} \cong \] \[ D_m \wotimes_K A_K \wotimes_K \lim_{n \in \N} \tate{K}{r_n^{-1} y_1, \ldots, r_n^{-1} y_s} \cong \lim_{n \in \N} D_m \wotimes_K A_K \wotimes_K  \tate{K}{r_n^{-1} y_1, \ldots, r_n^{-1} y_s}. \]
    Putting all together we get
    \[ D_m \wotimes_K \lim_{n \in \N} \frac{C_n}{(y_1 - f_1)} \cong \lim_{n \in \N} D_m \wotimes_K \frac{C_n}{(y_1 - f_1)}. \]
    Since $(y_1 - f_1, \ldots, y_s - f_s)$ is a regular sequence, we can apply the same reasoning iteratively and obtain
    \[  {\goth L}^{m}_{A_K}(B) {\hat \otimes}_{A_K} B \cong \rm{lim}_{n \in \N} D_m \wotimes_K B_n = \rm{lim}_{n \in \N}( A_K {\hat \otimes}{_K} \dots {\hat \otimes}{_K} A_K {\hat \otimes}{_K} B_n {\hat \otimes}{_K} B_n) \cong \] 
    \[ \lim_{n \in \N} {\goth L}^{m}_A(B_n) {\hat \otimes}_{A_K} B_n. \]
    Then, for each $n \in {\mathbb N}$ 
     $$
      {\goth L}^{\bullet }_{A_K}(B_n) {\hat \otimes}_{A_K} B_n \cong  B_n {\hat \otimes}_{A_K}^{\mathbb L} B_n  \cong B_n
     $$ 
   because the map  $ A_K \rightarrow B_n$ is a homotopy epimorphism. 
   Therefore we have    
   \begin{dmath*}
   B {\hat \otimes}_{A_K}^{\mathbb L}B \cong 
   \lim_{n \in \N} {\goth L}^{\bullet }_A(B_n) {\hat \otimes}_{A_K} B_n \cong
   {\mathbb R} \lim_{ n \in \N} B_n {\hat \otimes}_{A_K}^{\mathbb L} B_n \cong
   {\mathbb R} \lim_{n \in \N} B_n \cong \lim_{n \in \N} B_n = B
   \end{dmath*}
   where the acyclicity of the limit follows from Lemma \ref{open_loc_is_lim_acyclic}.
\end{proof}

\begin{lem}\label{lem:tempered_substitution}
Let $R$ be a Banach $K$-algebra, and let 
\[ P_i(z_1,\dots,z_l) = \sum_{q=1}^l h_{iq}z_q+\alpha_i, \qquad i=1,\dots,s, \] 
with $h_{iq}\in R^\circ$ and $\alpha_i\in R^{\circ\circ}$. Then substitution
\[ y_i \mapsto P_i(z_1,\dots,z_l) \] 
defines a morphism 
\[ \temp{R}{y_1,\dots,y_s} \longrightarrow \temp{R}{z_1,\dots,z_l}. \] 
More precisely, for every $n$ there exists $N \geq n$ such that the substitution map sends 
\[ \power{R}{y_1,\dots,y_s}_n \] 
boundedly into 
\[ \power{R}{z_1,\dots,z_l}_N. \] 
\end{lem} \begin{proof} 
Let 
\[ F=\sum_{J \in \N^s}a_Jy^J \in \power{R}{y_1,\dots,y_s}_n. \] 
Thus there exists $C>0$ such that \[ |a_J| \leq C\prod_{i=1}^s(j_i+1)^n \] 
for all $J=(j_1,\dots,j_s) \in \N^s$. Write $\alpha=\max_i|\alpha_i|<1$. After substituting $y_i=P_i(z)$, each coefficient of the resulting series in the variables $z_1,\dots,z_l$ is a convergent sum of terms  (up to binomial factors, which can only lower the norm) of the form 
\[ a_J \prod_{i,q} h_{iq}^{m_{iq}} \prod_i \alpha_i^{e_i}, \] 
where 
\[ j_i=e_i+\sum_{q=1}^l m_{iq}. \] 
Since $h_{iq}\in R^\circ$ and the multinomial coefficients have norm at most $1$, each such term has norm bounded by 
\[ |a_J|\alpha^{e_1+\cdots+e_s}. \] 
Let 
\[ M_q = \sum_{i=1}^s m_{iq}, \qquad |M| = M_1+\cdots+M_l, \qquad |J| = j_1+\cdots+j_s. \] 
Then 
\[ |J|=|M|+e_1+\cdots+e_s. \] 
Moreover, 
\[ \prod_{i=1}^s(j_i+1)^n \leq (|J|+1)^{ns}, \] 
while 
\[ \prod_{q=1}^l(M_q+1) \geq |M|+1. \]
Choose $N=ns$. Then 
\[ \frac{ \alpha^{|J|-|M|} \prod_i(j_i+1)^n }{ \prod_q(M_q+1)^N } \leq \alpha^{|J|-|M|} \left(\frac{|J|+1}{|M|+1}\right)^{ns}. \] 
The right-hand side is bounded uniformly in $J$ and $M$, because 
\[ |J|=|M|+e \] 
where $e = e_1 + \cdots + e_s$, and the function 
\[ e\mapsto \alpha^e(e+1)^{ns} \] 
is bounded on $\N$. Hence the coefficients of the substituted series satisfy a tempered estimate of level $N$. This proves the claim. 
\end{proof}

We can finally prove that, in our setting, the tempered tube is independent of the chosen presentation of the regular immersion. 

\begin{pro}\label{prop tempered is independent of the presentation} 
The tempered tube $\temptube{X}{{\hat P}}$ is independent of the choice of regular sequence defining $X$ in ${\hat P}_k$ and of the chosen liftings. 
\end{pro} 
\begin{proof} 
Suppose that the ideal of $X$ in $A_k$ is generated by two regular sequences 
\[ f_1,\dots,f_s \qquad\text{and}\qquad g_1,\dots,g_l. \] 
Choose liftings 
\[ \widetilde f_i,\widetilde g_j\in A. \]
Since the two families generate the same ideal in $A_k$, for every $i$ we may write 
\[ f_i=\sum_{q=1}^l \overline h_{iq}g_q \] 
with $\overline h_{iq}\in A_k$. Choosing liftings $h_{iq}\in A^\circ$, we obtain 
\[ \widetilde f_i = \sum_{q=1}^l h_{iq}\widetilde g_q+\alpha_i \] 
with $\alpha_i\in A_K^{\circ\circ}$. In particular, $|\alpha_i|<1$. By Lemma \ref{lem:tempered_substitution}, the substitution
\[ y_i\mapsto \sum_{q=1}^l h_{iq}z_q+\alpha_i \] 
defines a morphism $\temp{A_K}{y_1,\dots,y_s} \longrightarrow \temp{A_K}{z_1,\dots,z_l}$.
Moreover, the ideal 
\[ (y_1-\widetilde f_1,\dots,y_s-\widetilde f_s) \] 
is sent into the ideal 
\[ (z_1-\widetilde g_1,\dots,z_l-\widetilde g_l), \] 
because 
\[ \sum_{q=1}^l h_{iq}z_q+\alpha_i-\widetilde f_i = \sum_{q=1}^l h_{iq}(z_q-\widetilde g_q). \] 
Therefore we obtain a morphism 
\[ \frac{\temp{A_K}{y_1,\dots,y_s}} {(y_1-\widetilde f_1,\dots,y_s-\widetilde f_s)} \longrightarrow \frac{\temp{A_K}{z_1,\dots,z_l}} {(z_1-\widetilde g_1,\dots,z_l-\widetilde g_l)}. \] 
By the same argument in the other direction, using expressions of the $g_j$ in terms of the $f_i$, we obtain a morphism in the opposite direction. These two morphisms are inverse to each other. Indeed, after composing with the canonical injections into the corresponding algebras of functions on the rigid tube, constructed in Corollary \ref{prop tempered tube is contained in rigid}, they agree with the canonical isomorphism between the two descriptions of the usual rigid tube (see \cite[Proposition 2.3.2(ii)]{LeStum}). Therefore, the two morphisms are inverse already on the tempered quotients. Thus the tempered tube is independent of the chosen regular sequence and of the chosen liftings. \end{proof}

Our aim is now proving a tempered version of the so called \say{Weak fibration theorem}, see \cite[Corollary 2.3.16]{LeStum} for the rigid setting. We need the following preliminary result.

\begin{lem}\label{tempetale}
    Let $u\colon {\hat P}'=\operatorname{Spf}(A')\longrightarrow {\hat P}=\operatorname{Spf}(A)$ be an étale morphism of smooth affine $p$-adic formal schemes over $\mathcal V$. Suppose there is a commutative diagram of regular immersions
    \begin{equation*}
     \begin{tikzcd}
  & {\hat P}' \arrow{dd}{u}\\
X \arrow{ur} \arrow[swap]{dr}&\\
  & {\hat P}.
\end{tikzcd}
    \end{equation*}
  Then there is an isomorphism $$\temptube{X}{{\hat P}}\cong\temptube{X}{{\hat P}'}.$$
\end{lem}
\begin{proof}
By \cite[Proposition 2.3.15]{LeStum}, under the above hypotheses one has $u^{-1}(X)=X$.  Let the ideal of $X$ in ${\hat P}_k$ be generated by the regular sequence $f_1,\dots,f_s \in A_k$, and choose liftings $\widetilde f_1,\dots,\widetilde f_s\in A$. Since $A_k\to A'_k$ is \'etale, hence flat, the sequence $f_1,\dots,f_s$ remains regular after base change to $A'_k$ (see \cite[\href{https://stacks.math.columbia.edu/tag/067P}{Tag 067P}]{stacks-project}). Thus the pullbacks of the $\widetilde f_i$ define the tempered tube of $X$ in $P'$.

We now follow the proof of \cite[Proposition 2.3.15]{LeStum}. For $N\geq 1$, set 
\[ J_N = \left( y_1-\tilde{f}_1,\dots,y_s-\tilde{f}_s, y^{\alpha}=\prod y_i^{\alpha_i},\alpha \in {\N}^s, |\alpha| \geq N, \pi^N \right) \subset A\langle y_1,\dots,y_s\rangle . \] 
By base change, the morphism 
\[ A\langle y_1,\dots,y_s\rangle/J_N \longrightarrow A'\langle y_1,\dots,y_s\rangle/J_N \] 
is \'etale. Modulo the nilpotent ideal generated by 
\[ y_1,\dots,y_s,\widetilde f_1,\dots,\widetilde f_s,\pi, \] 
it is an isomorphism, because $u^{-1}(X)=X$. Since an étale morphism which is an isomorphism modulo a nilpotent ideal is itself an isomorphism, the preceding map is an isomorphism for every $N$.
Taking the limit over all $N$ we have that 
    $$
    \power{A}{y_1,\dots,y_s}/ (y_1-\tilde{f}_1,\dots,y_s-\tilde{f}_s) \rightarrow \power{A'}{y_1,\dots,y_s}/ (y_1-\tilde{f}_1,\dots,y_s-\tilde{f}_s) 
    $$ 
is an isomorphism.  By tensoring by $K$ and  via base change to $\temp{K}{y_1,\dots,y_s}$  we get  the  isomorphism $\temptube{X}{{\hat P}}\cong\temptube{X}{{\hat P}'}$.
\end{proof}
We can now prove the main result of this section.
\begin{thm}[Tempered weak fibration theorem]\label{weakfibr}
     Let 
      \begin{equation*}
     \begin{tikzcd}
  & {\hat P}' \arrow{dd}{u}\\
X \arrow{ur} \arrow[swap]{dr}&\\
  & {\hat P}, 
\end{tikzcd}   
    \end{equation*}
    be a smooth morphism of regular formal embeddings (in the sense of \cite[Chapter 2]{LeStum}) of $X$ into affine $p$-adic formal schemes. 
    Then locally on $X$ the morphism $u_{K}$ induces an isomorphism
    $$\temptube{X}{{\hat P}'}\cong\temptube{X}{{\hat P}} \times_{K} \goth S (\temp{K}{x_1,\dots,x_d}).$$
\end{thm}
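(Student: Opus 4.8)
The plan is to imitate the proof of Berthelot's weak fibration theorem \cite[Corollary 2.3.16]{LeStum}, reducing the smooth morphism $u$, locally on $X$, to a composite of an étale morphism — already treated in Lemma \ref{tempetale} — and a projection from a relative affine formal space.

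First I would factor $u$. Working locally on $X$, hence after shrinking $P'$, I would write $u$ as $P'\xrightarrow{v}\hat{\mathbf{A}}^d_P\xrightarrow{\pi}P$, where $\hat{\mathbf{A}}^d_P=\mathrm{Spf}(A')$ is the relative affine formal $d$-space over $P$ ($A'$ the $p$-adic completion of $A[x_1,\dots,x_d]$), $\pi$ is the projection, $v$ is étale, and $v$ is compatible with the embeddings of $X$ in the strong sense that $X\to P'\xrightarrow{v}\hat{\mathbf{A}}^d_P$ is a regular immersion which, on special fibres, is $X\hookrightarrow P_k$ followed by the zero section of the coordinates $x_1,\dots,x_d$. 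Concretely, if $P=\mathrm{Spf}(A)$ and $X=V(f_1,\dots,f_s)$ for a regular sequence $f_i\in A_k$, this means $X=V(\tilde{f}_1,\dots,\tilde{f}_s,x_1,\dots,x_d)$ in $\hat{\mathbf{A}}^d_{P_k}=\mathrm{Spec}(A_k[x_1,\dots,x_d])$. This is exactly the local factorization used in \cite[Corollary 2.3.16]{LeStum}: a smooth morphism has étale relative coordinates locally, and, since $X\hookrightarrow P_k'$ is a closed immersion, after translating each $x_i$ by a pullback from $P$ of a lift of $x_i|_X$ one can assume the coordinates vanish on $X$; this step is purely formal-geometric and does not involve the tempered structure. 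Since $v$ is étale and compatible with the two regular immersions, Lemma \ref{tempetale} yields $\temptube{X}{P'}\cong\temptube{X}{\hat{\mathbf{A}}^d_P}$.

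It then remains to identify $\temptube{X}{\hat{\mathbf{A}}^d_P}$. The scheme $\hat{\mathbf{A}}^d_P$ is smooth over $\mathcal{V}$, the immersion $X\hookrightarrow\hat{\mathbf{A}}^d_{P_k}$ is regular, and $A'_K\cong\tate{A_K}{x_1,\dots,x_d}\cong A_K\wotimes_K\tate{K}{x_1,\dots,x_d}$, so Definition \ref{defn:tempered_tube} gives
\begin{equation*}
\temptube{X}{\hat{\mathbf{A}}^d_P}=\temp{A'_K}{y_1,\dots,y_s,z_1,\dots,z_d}/(y_1-\tilde{f}_1,\dots,y_s-\tilde{f}_s,\ z_1-x_1,\dots,z_d-x_d).
\end{equation*}
By Lemma \ref{regular sequence in tempered} and Lemma \ref{closedtube} applied to $A'$, this sequence is regular and generates a strict ideal, so the quotient is the filtered colimit $\bigcup_n\power{A'_K}{\vec y,\vec z}_n/(\cdots)$ and may be computed one generator at a time. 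Quotienting out $z_j-x_j$ amounts to substituting the Tate coordinate $x_j$ for the formal variable $z_j$; the key point is that the $x^{\vec m}$-coefficient of the image of a series is an ultrametric supremum of coefficients of the original series of index $\le\vec m$, so this substitution is norm-nonincreasing on each piece $\power{A'_K}{\vec y,\vec z}_n$ and admits the obvious coefficient-preserving section $x^{\vec m}\mapsto z^{\vec m}$, while the identification of its kernel with $(z_1-x_1,\dots,z_d-x_d)$ uses the same division-by-$(z-x)$ estimate as in Lemma \ref{closedtube} (with the usual doubling of the log-growth index) together with the strictness of that ideal. This produces an isomorphism of ind-Banach algebras
\begin{equation*}
\temptube{X}{\hat{\mathbf{A}}^d_P}\cong\temp{A_K}{y_1,\dots,y_s,x_1,\dots,x_d}/(y_1-\tilde{f}_1,\dots,y_s-\tilde{f}_s)\cong\big(\temp{A_K}{\vec y}/(y_i-\tilde{f}_i)\big)\wotimes_{A_K}\temp{A_K}{x_1,\dots,x_d},
\end{equation*}
that is $\temptube{X}{P}\wotimes_K\temp{A_K}{x_1,\dots,x_d}$, which combined with Lemma \ref{tempetale} proves the theorem.

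I expect the main obstacle to be neither step in isolation but carrying all the identifications at the level of ind-Banach algebras — equivalently of derived analytic spaces — rather than of abstract algebras: one must check, via Lemmas \ref{closedtube}, \ref{regular sequence in tempered} and \ref{cor:idclosedintemp}, that every ideal that occurs, in particular $(z_1-x_1,\dots,z_d-x_d)$ inside $\temp{A'_K}{\vec y,\vec z}$, is strict, so that the colimit descriptions of the quotients are legitimate, and then make the log-growth estimate above quantitative and uniform enough (a norm-nonincreasing forward map together with a bounded inverse with a controlled shift $n\mapsto cn$) to assemble mutually inverse bounded morphisms on each Banach piece and pass to the filtered colimit.
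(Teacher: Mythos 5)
Your proof follows exactly the paper's route: factorize $u$ locally (via Le Stum) into an \'etale map followed by the projection $\hat{\mathbf{A}}^d_P\to P$, use the zero section to embed $X$ in $\hat{\mathbf{A}}^d_P$, apply Lemma \ref{tempetale}, and then identify $\temptube{X}{\hat{\mathbf{A}}^d_P}$ by eliminating the auxiliary tempered variables against the Tate coordinates $x_j$. The norm estimate you give for that last substitution (norm-nonincreasing forward map with coefficient-preserving section, strictness of the ideal $(z_j-x_j)$) is a useful elaboration of the final chain of isomorphisms, which the paper states without spelling out the ind-Banach details.
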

\begin{proof}
    By \cite[Lemma 2.3.14 and  Corollary 2.3.16]{LeStum}, we can assume that ${\hat P}$ and ${\hat P}'$ are affine and $u$ factors as an étale map ${\hat P}'\rightarrow\hat{\mathbf{A}_{\hat P}^d}$ followed by the projection $\hat{\mathbf{A}_{\hat P}^d}\rightarrow {\hat P}$.  Because of the Lemma \ref{tempetale}, we can replace ${\hat P}'$ with  $\hat{\mathbf{A}_{\hat P}^d}$ and use    the zero section of $\hat{\mathbf{A}_{\mathcal{V}}^d}$ to embed $X$ in $\hat{\mathbf{A}_{\hat P}^d}$. Then  we have \small\begin{dmath*}\temptube{X}{{\hat P}'}\simeq\temptube{X}{\hat{\mathbf{A}_{\hat P}^d}}=\temp{\tate{A_K}{x_1,\dots,x_d}}{y_1,\dots,y_s,t_1,\dots,t_d}/(y_1-\tilde{f}_1,\dots,y_s-\tilde{f}_s,t_1-x_1,\dots,t_d-x_d)\simeq\temptube{X}{{\hat P}}\wotimes_K\temp{K}{x_1,\dots,x_d}.\end{dmath*}\normalsize
\end{proof}

\section{The analytic cotangent complex and the tempered convergent rigid cohomology}

\medskip

In this section we will globalize the definition of the tempered tube. We will use this to associate with a  scheme $X$ over $k$ a well-defined tempered convergent cohomology which is independent of the choices made to define the tempered tube (under some hypotheses of regularity  on the embedding of $X$ in a smooth formal scheme).
Such a cohomology will be the \say{tempered} version of the classical convergent rigid cohomology: the tube of radius one  (strictly less than 1) of $X$ on which one calculates the de Rham cohomology will be replaced by the tempered tube. For this reason we will introduce the ind-Banach version of the K\"ahler differentials and de Rham cohomology, that are the analytic cotangent complex and derived de Rham cohomology.

\subsection{Derived analytic stacks and spaces} \label{section cohomology}
\medskip
Let ${\hat P} $ be a $p$-adic formal scheme over $\mathcal{V}$, and let ${\hat P}_K$ be its generic fiber.
Consider an affine open cover, $\{ \mathrm{Spf}(A_i) \}_{i \in I}$, of $\hat P$, then the affinoid spaces $\mathrm{Spa}(A_{i,K})$ form an admissible open affinoid cover of ${\hat P}_K$. We can associate with $A_{i,K}$ the derived analytic spectrum $\goth{S}(A_{i,K})$, equipped with its structure sheaf, we introduced so far (see Definition \ref{DEF}). 

In order to globalize this construction we use the functor-of-points perspective. We look at global derived analytic spaces as sheaves over the (formal) homotopical Zariski site
\[ \sf{dAff}_K = \sf{Comm}(\dercat_\infty(\sf{CBorn}_K))^{\rm op}. \]
More precisely, we endow $\sf{dAff}_K$ with the homotopical Zariski topology: open immersions are induced by homotopy epimorphisms of bornological algebras, and covers are finite families of such morphisms whose pullback functors on module categories form a conservative family.
We call $\sf{dAff}_K$  the category of \emph{affine derived analytic spaces} over $K$. It is not difficult to check that the spectrum functor introduced  so far $A \mapsto (\goth{S}(A), \mathcal{O}_A)$ realizes the category $\sf{dAff}_K$ as a subcategory of the category of Ind-Banach homotopically-ringed topological spaces.

\medskip
\begin{dfn} \label{defn:Zariski_sheaf}
    A \emph{derived analytic stack} over $K$ is a $\infty$-functor 
    \[ \mathscr{F}: \sf{dAff}_K^{\rm {op}} \to \sf{sSets}, \]
    where $\sf{sSets}$ is the $\infty$-category of spaces, such that $\mathscr{F}$ satisfies $\infty$-descent for the Zariski covers introduced so far. We denote by $\sf{Stck}_{\sf{Zar}, K}$ the full $\infty$-subcategory of $\sf{Fun}(\sf{dAff}_K^{\rm {op}}, \sf{sSets})$ determined by derived analytic stacks.
\end{dfn}
\medskip

\begin{rmk} \label{rmk:hypercompleteness}
By Definition \ref{defn:Zariski_sheaf}, a derived analytic stack is a sheaf for the homotopical Zariski topology. We do not impose hyperdescent. Thus our notion is the ordinary sheaf condition in the sense of \cite[Chapter 7]{LurieHTT}, rather than the stronger condition of being a hypercomplete sheaf. These two notions need not coincide in general, even on coherent sites (see \cite[Counterexample 6.5.4.5]{LurieHTT}).
\end{rmk}
\medskip

\begin{rmk} \label{rmk:hypercompleteness1}
   It is standard computation to check that the homotopy Zariski topology is subcanonical.
\end{rmk}

\medskip

Thus, by  Remark \ref{rmk:hypercompleteness1}, the Yoneda embedding $h: \sf{dAff}_K \to \sf{Fun}(\sf{dAff}_K^{\rm {op}}, \sf{sSets})$ factors through $\sf{Stck}_{\sf{Zar}, K}$. Moreover, for any $X \in \sf{dAff}_K$ we obtain a small site over $X$ by restricting the Zariski topology given by the opens induced by homotopy epimorphism from $A$, if $X$ is in duality with $A \in \sf{Comm}(\dercat_\infty(\sf{CBorn}_K))$. The small Zariski site associated to $X$ is equivalent to the site associated with the topological space $\goth{S}(A)$ introduced so far. We therefore think to $X$ as being represented by the spectrum $(\goth S (A), \mathcal O_A)$. 

\medskip

\begin{dfn} \label{defn:derived_analytic_space}
    A \emph{derived analytic space} over $K$ is an object of $\sf{Stck}_{\sf{Zar}, K}$ that can be written as a colimit of a diagram of representable objects whose morphisms are open Zariski immersions. In such a situation we say that the derived analytic space is obtained by glueing the affine analytic spaces.
\end{dfn}
\medskip

\begin{rmk} \label{rmk:derived_analytic_space_atlas}
    Definition \ref{defn:derived_analytic_space} can be reformulated by saying that $X \in \sf{Stck}_{\sf{Zar}, K}$ is a derived analytic space if it admits an affine Zariski atlas. This means that there exists an effective epimorphism 
    \[ \coprod_{i\in I} U_i\longrightarrow X, \] 
    where each $U_i$ is an affine derived analytic space and each morphism $U_i\to X$ is a Zariski open immersion, and for every $i,j$, the fiber product $U_i\times_X U_j$ 
    is again a affine derived analytic space.
\end{rmk}

In particular, in the situation where $\hat{P}$ is a $p$-adic formal scheme and $\{\mathrm{Spf}(A_i) \}_{i \in I}$ a formal affine cover of it, we get a diagram $\{ \mathrm{Spa}(A_{i,K} \otimes_{P} A_{j,K})) \to \mathrm{Spa}(A_{i,K}) \}_{i,j \in I}$ of affinoid spaces and open affinoid embeddings. Passing to generic fibers, this gives a diagram of affine derived analytic spaces 
\[ \goth S(A_{i_0\cdots i_p,K}) \]
via the \u{C}ech nerve of the cover. We define the analytification of $\widehat P$ to be the colimit of this \u{C}ech diagram in $\sf{Stck}_{\mathrm{Zar},K}$ 
\[ \widehat P^{\mathrm{an}} = \colim_{\Delta} \goth S (A_{i_0\cdots i_p,K}). \] 
This construction is independent of the chosen affine covering. As shown in the next remark:

\begin{rmk}
Let 
\[ (-)_K\colon \sf{AffFrSp}_{\mathcal V}\longrightarrow \sf{dAff}_K \] 
be the generic fiber functor from affine $p$-adic formal schemes of topologically finite type over $\mathcal V$ to affine derived analytic spaces over $K$. It sends $\operatorname{Spf}(A)$ to the affine derived analytic space represented by $A_K$, $\goth S(A_K)$. Precomposition with $(-)_K$ induces an adjunction
\[ L: \sf{Fun}(\sf{AffFrSp}_{\mathcal{V}}^{\rm{op}}, \sf{sSets}) \leftrightarrows \sf{Fun}(\sf{dAff}_K^{\rm{op}}, \sf{sSets}) \]
the left adjoint, $L$, is the left Kan extension via $(\cdot)_K$. Since $(\cdot)_K$ is continuous (it follows from the main results of \cite{BenKre} that it sends covers for the Zariski topology of formal schemes to covers for the homotopical Zariski topology), the adjunction restricts to an adjunction
\[ \sf{Shv}_{\sf{Zar}}(\sf{AffFrSp}_{\mathcal{V}}) \leftrightarrows \sf{Stck}_{\sf{Zar}, K}. \]
Thus, if $\hat P \cong \colim_{i \in I} A_i$ as a sheaf in $\sf{Shv}_{\sf{Zar}}(\sf{AffFrSp}_K)$ then 
\[ L(\hat P) \cong L(\colim_{i \in I} A_i)  \cong  \colim_{i \in I} A_{i, K} \cong \hat P^{\rm {an}}. \]
This shows that the association $\hat P \mapsto \hat P^{\rm an}$ is functorial and therefore, given any other cover $\{ B_j \}$ of $\hat P$ we get that computing the analytification $\hat P^{\rm an}$ using this cover yields the same result, up to  equivalence. 
\end{rmk}

\medskip

\begin{rmk}
   By the same reasoning, any rigid analytic space over $K$ can be viewed as an object of $\sf{Stck}_{\mathrm{Zar},K}$, by writing it as a Zariski gluing of affinoid spaces and applying the functor-of-points construction above.
\end{rmk}
\medskip

We now recall the definition of quasi-coherent sheaves on prestacks. On affine derived analytic spaces we have the assignment 
\[ \operatorname{Spec}^{\mathrm{an}}(A) \mapsto \dercat_\infty(\sf{Mod}_A). \] 
This assignment extends to arbitrary prestacks by right Kan extension along the Yoneda embedding. Thus, for a prestack 
\[ X\in \sf{Fun}(\sf{dAff}_K^{\mathrm{op}},\sf{sSet}), \] 
we define 
\[ \sf{Q Coh}_X = \lim_{\operatorname{Spec}^{\mathrm{an}}(A)\to X} \dercat_\infty(\sf{Mod}_A), \] 
where the limit is taken over the category of affine derived analytic spaces mapping to $X$.

We now check that if $X \in \sf{Stck}_{\sf{Zar}, K}$ is a derived analytic space obtained glueing the family of affine spaces $\{ U_i \}_{i \in I}$ then we can compute
\[ \sf{Q Coh}_X \cong \lim_{\Delta} \dercat_\infty(\sf{Mod}_{A_{i_0\cdots i_p}}). \]
\medskip

\begin{pro} \label{prop:Mod_satisfies_descent}
The functor $\goth{S}(A) \mapsto \dercat_\infty(\sf{Mod}_A)$ is a sheaf of $\infty$-categories on $\sf{dAff}_K$.
\end{pro}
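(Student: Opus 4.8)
The plan is to verify descent directly, reducing the general statement to a single pullback square of module categories. Recall from Remark~\ref{rmk:cover_is_conservative} that a cover for the homotopy Zariski topology is a finite family $\{A\to B_i\}_{i\in I}$ of homotopy epimorphisms for which the base-change functors $\{(-)\wotimes^{\bb{L}}_A B_i\}_{i\in I}$ are jointly conservative. First I would record that, because each $A\to B_i$ satisfies $B_i\wotimes^{\bb{L}}_A B_i\simeq B_i$, every iterated tensor product $B_{i_0}\wotimes^{\bb{L}}_A\cdots\wotimes^{\bb{L}}_A B_{i_n}$ over the unit collapses to the tensor product over the \emph{distinct} indices among $i_0,\dots,i_n$; hence the \v{C}ech cosimplicial diagram of module categories has the same limit as the finite diagram $S\mapsto\dercat_\infty(\sf{Mod}_{B_S})$ indexed by the nonempty subsets $S\subseteq I$, where $B_S:=\bigotimes^{\bb{L}}_{A,\,i\in S}B_i$. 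By a standard induction on $|I|$, refining a cover two pieces at a time, it then suffices to treat a two-element cover $\{A\to B,\ A\to C\}$ and to show that
\[
\Phi\colon \dercat_\infty(\sf{Mod}_A)\longrightarrow \dercat_\infty(\sf{Mod}_B)\times_{\dercat_\infty(\sf{Mod}_{D})}\dercat_\infty(\sf{Mod}_C),\qquad M\longmapsto (M\wotimes^{\bb{L}}_A B,\ M\wotimes^{\bb{L}}_A C),
\]
is an equivalence, where $D:=B\wotimes^{\bb{L}}_A C$ and the gluing datum is the canonical equivalence $(M\wotimes^{\bb{L}}_A B)\wotimes^{\bb{L}}_B D\simeq (M\wotimes^{\bb{L}}_A C)\wotimes^{\bb{L}}_C D$.

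To construct a quasi-inverse I would use the recollement attached to a homotopy epimorphism in Remark~\ref{functions on complementary}. The maps $A\to B$ and $A\to C$ are homotopy epimorphisms by assumption, and $B\to D$, $C\to D$ are homotopy epimorphisms by the base-change computation recorded right after Definition~\ref{DEF}; hence the inclusions $\dercat_\infty(\sf{Mod}_B),\dercat_\infty(\sf{Mod}_C)\hookrightarrow\dercat_\infty(\sf{Mod}_A)$ and $\dercat_\infty(\sf{Mod}_D)\hookrightarrow\dercat_\infty(\sf{Mod}_B)$ are fully faithful, since for $N\in\dercat_\infty(\sf{Mod}_B)$ one has $N\wotimes^{\bb{L}}_A B\simeq N\wotimes^{\bb{L}}_B(B\wotimes^{\bb{L}}_A B)\simeq N$, and similarly for the others. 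A gluing datum $(N_B,N_C,\alpha)$ thus becomes an honest cospan of $A$-modules $N_B\to N_D\leftarrow N_C$ with $N_D:=N_B\wotimes^{\bb{L}}_B D$, and I would set $\Psi(N_B,N_C,\alpha):=N_B\times_{N_D}N_C$, the pullback taken in $\dercat_\infty(\sf{Mod}_A)$. Checking that $\Phi$ and $\Psi$ are mutually inverse then reduces to the projection-type identities $N_C\wotimes^{\bb{L}}_A B\simeq N_C\wotimes^{\bb{L}}_C D$ (with $N_C$ restricted to $A$ on the left and to $B$ on the right) and $N_B\wotimes^{\bb{L}}_A B\simeq N_B$, to the exactness of $(-)\wotimes^{\bb{L}}_A B$ (so that it commutes with the finite pullback), and to the fact that the canonical map $M\to\Psi\Phi(M)$ has fibre killed by both $(-)\wotimes^{\bb{L}}_A B$ and $(-)\wotimes^{\bb{L}}_A C$, hence vanishes by the conservativity in the cover condition; all of these follow from associativity and commutativity of $\wotimes^{\bb{L}}$, the homotopy-epimorphism relations $B\wotimes^{\bb{L}}_A B\simeq B$ and $D\wotimes^{\bb{L}}_B D\simeq D$, and Remark~\ref{rmk:cover_is_conservative}.

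The step I expect to be the main obstacle is not any single computation but the coherence bookkeeping: one must check that the identifications above are coherently natural and assemble into an equivalence of $\infty$-categories rather than a mere bijection on objects together with a compatibility on morphisms. The clean way I would discharge this is to observe that $\Phi$ preserves all small colimits (each base change does), admits a right adjoint by the adjoint functor theorem, and is conservative by the cover condition; Lurie's monadicity theorem then exhibits the target as modules over the monad $T=\Psi\Phi$ on $\dercat_\infty(\sf{Mod}_A)$, and the computation $\Psi\Phi(M)\simeq (M\wotimes^{\bb{L}}_A B)\times_{M\wotimes^{\bb{L}}_A D}(M\wotimes^{\bb{L}}_A C)$ together with the \v{C}ech-nerve degeneration and the full faithfulness established above identifies $T$ with the identity monad. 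Running the induction on $|I|$ then yields the general case and, in particular, makes $\sf{Q Coh}_X$ of Remark~\ref{rmk:tensor_product_qcsheaves} well defined. As a shortcut one could instead invoke the descent of module categories for the homotopy Zariski topology already proved in \cite{BenKre} and \cite{BamBen}, the present statement being its $\dercat_\infty$-level reformulation.
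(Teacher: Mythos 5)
The paper's proof is a one-line citation of \cite[Proposition 10.5]{ClauSchCond}; you instead carry out a self-contained direct verification, so this is a genuinely different route. Your main argument is sound: using $B_i\wotimes^{\bb{L}}_A B_i\simeq B_i$ to collapse the \v{C}ech cosimplicial diagram to a finite cube, reducing to a two-element cover, and then establishing $\Psi\Phi\simeq\id$ by observing that the fiber of $M\to\Psi\Phi(M)$ is killed by both base changes (equivalently, by tensoring $M$ into the fiber sequence $A\to B\oplus C\to D$ furnished by Remark~\ref{rmk:cover_is_conservative}), and $\Phi\Psi\simeq\id$ from the projection identities $N_B\wotimes^{\bb{L}}_A B\simeq N_B$, $N_C\wotimes^{\bb{L}}_A B\simeq N_C\wotimes^{\bb{L}}_C D$, using exactness of the tensor functors to commute them with the finite limit. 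What this buys, relative to the paper's citation, is transparency: the reader sees precisely how the cover condition and the homotopy-epimorphism identities combine, which is useful since the paper's lattice-theoretic cover condition is not phrased identically to that of the cited source.

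The one place your write-up slips is the monadicity coda. You propose to invoke Barr--Beck--Lurie with the monad $T=\Psi\Phi$ on $\dercat_\infty(\sf{Mod}_A)$, citing conservativity of $\Phi$; but since $\Phi$ is the \emph{left} adjoint and $T$ lives on the \emph{source}, the hypothesis needed to exhibit the target as $T$-modules is conservativity of $\Psi$, which you have not established (and need not: it is not obviously a consequence of the cover condition). This does not sink the proof, because the direct argument you already sketched — unit $\eta:M\to\Psi\Phi(M)$ an equivalence by the fiber-sequence observation, counit $\epsilon:\Phi\Psi(N)\to N$ an equivalence by the projection identities together with exactness of base change — gives the equivalence outright, with no need for monadicity at all. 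I would simply delete the Barr--Beck paragraph and state that $\Phi$ is an equivalence because unit and counit are both equivalences, noting that coherence is automatic since both natural transformations are already specified as maps of $\infty$-functors. Your closing remark that one could instead cite \cite{BenKre} and \cite{BamBen} is in the spirit of what the paper actually does, although the paper refers to \cite{ClauSchCond} rather than those sources.
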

\begin{proof}
   The same argument in  \cite[Proposition 10.5]{ClauSchCond} applies here. Namely, for a homotopical Zariski cover $\{A\to B_i\}$, the category $\dercat_\infty(\sf{Mod}_A)$ is recovered as the limit of the corresponding \u{C}ech nerve of the diagram of module categories. The key points are that the morphisms $A\to B_i$ are homotopy epimorphisms and that the associated family of pullback functors on module categories is conservative.
\end{proof}

Let us now introduce sheaves on $\sf{Stck}_{\sf{Zar}, K}$\footnote{We ignore the set-theoretic issue of the size of the category $\sf{Stck}_{\sf{Zar}, K}$ supposing it small in a bigger, tacitly fixed, universe.}, following \cite[Notation 6.3.5.16]{LurieHTT}

\begin{dfn} \label{defn:sheaves_on_stacks}
    The category of sheaves $\sf{Sh}_{\sf{C}}(\sf{Stck}_{\sf{Zar}, K})$ over $\sf{Stck}_{\sf{Zar}, K}$ valued in a   complete $\infty$-category $\sf{C}$ is the category of limit-preserving functors $\sf{Stck}_{\sf{Zar}, K} \to \sf{C}$.
\end{dfn}

The surprising result is that $\sf{Sh}_{\sf{C}}(\sf{Stck}_{\sf{Zar}, K})$ does not give anything new.

\begin{pro} \label{prop:sheaves_on_stacks_are_stacks}
Let $\sf{C}$ be a complete $\infty$-category. Restriction along the Yoneda embedding induces an equivalence between $\sf C$-valued sheaves on $\sf{Stck}_{\mathrm{Zar},K}$ and $\sf C$-valued sheaves on $\sf{dAff}_K$.
\end{pro}
\begin{proof}
This is \cite[Lemma A.3.4]{Mann}.
\end{proof}

Since the homotopical Zariski topology is subcanonical, then for any $X \in \sf{Stck}_{\sf{Zar}, K}$ the colimit
\[ X \cong \colim_{j \in J} (\goth{S}(A_j), \mathcal{O}_{A_j}) \]
is actually computed in $\sf{Stck}_{\sf{Zar}, K}$ and 
\[ \sf{QCoh}_X = \lim_{j \in J} \dercat_\infty(\sf{Mod}_{A_j}).  \]

This shows that $\sf{QCoh}_{(-)}$ (defined via the  left Kan extension) is a sheaf of $\infty$-categories on $\sf{Stck}_{\sf{Zar}, K}$, hence the following proposition which will allow us  to deal with global spaces.

\begin{pro} \label{prop:QCoh_computed_by_Cech}
Let $X$ be a derived analytic space obtained by gluing affine derived analytic spaces $U_i=\goth S(A_i)$.  Let 
\[ U_\bullet\to X \] 
be the \u{C}ech nerve of this affine cover, and write $U_{i_0\cdots i_p} = \goth S(A_{i_0\cdots i_p})$. Then 
\[ \sf{QCoh}_X \simeq \lim_{\Delta} \dercat_\infty(\sf{Mod}_{A_{i_0\cdots i_p}}). \]
\end{pro}
\begin{proof}
We have seen that $\sf{QCoh}_{(-)}$ is a sheaf on $\sf{Stck}_{\sf{Zar}, K}$ and hence commutes with all limits.
\end{proof}

Thus $\sf{QCoh}_X$ can be computed by the derived \u{C}ech complex associated with any affine Zariski cover of $X$.
\medskip

\begin{rmk} \label{rmk:tensor_product_qcsheaves}
The limit of symmetric monoidal $\infty$-categories by a system of strictly monoidal functors has a canonical structure of symmetric monoidal $\infty$-category (see \cite[Chapter 3, \S 3.2.1--3.2.2]{gaitsgory2019study}). In particular this is true for $\sf{Q Coh}_X$ and we will denote its monoidal structure by $\wotimes_X$. This makes it possible to apply the spectrum construction of Section \ref{definition of the spectrum} to $\sf{QCoh}_X$. On the other hand, if $X$ is a derived analytic space, one can also construct a homotopically ringed space by gluing the spectra of the affine pieces in an affine cover. Under suitable hypotheses, these two constructions should agree. This suggests an analytic analogue of the Balmer--Rosenberg reconstruction theorem.
\end{rmk}

 \subsection{ The analytic cotangent complex} \label{cotangent} 
\medskip
 
Our aim now is to define the {\it cotangent complex} of the derived analytic space  $\hat P^{\rm an}$ associated with the generic fiber of a formal  $\mathcal V$-scheme,  ${\hat P}$. We will use it in order to define derived analytic de Rham cohomology.  Our definition  of cotangent complex is  taken from   that one  given in \cite{Vez}.

 The definition of the cotangent complex is local for the homotopy Zariski topology. We may therefore begin with the affine case. Let $\widehat P = \operatorname{Spf}(A)$ be an affine $p$-adic formal scheme over $\mathcal V$, with $A$ a topologically finitely generated $p$-adic $\mathcal V$-algebra, and let $A_K:=A\otimes_{\mathcal V}K$ denote its analytic generic fiber. If $\widehat P$ is obtained as the $p$-adic completion of an affine $\mathcal V$-scheme $P = \operatorname{Spec}(B)$, then we also write $B_K:=B\otimes_{\mathcal V}K$ for the algebraic generic fiber. In this situation there is a natural morphism $B_K\to A_K$, which is a sort of analytification morphism.

 We will use the classical algebraic cotangent complex, whose definition we now recall. We denote by $\dercat_{\infty}( \sf{Mod}_{K}^{alg})$ and $\dercat_{\infty}( \sf{Mod}^{alg}_{B_K})$ the closed symmetric monoidal $\infty$-categories of algebraic modules over $K$ and $B_K$ respectively, with their standard monoidal structures given by $\otimes_{K}^{\bb{L}}$  and 
 $\otimes_{{B_K}}^{\bb{L}}$. These can be presented via suitable model categories of chain complexes and satisfy the hypothesis of \cite{Vez}. In this setting, following \cite[page 27]{Vez}, the (absolute)  \emph{cotangent complex} ${\mathbb L}_{B_K/{K}}$ is the object of $\dercat_{\infty}( \sf{Mod}_{B_K}^{alg})$ defined by
 $$
 {\mathbb L}_{B_K/{K}}= {\mathbb R}{\rm Q}{\mathbb L}{\rm I} (B_K \otimes^{\mathbb L}_{K} B_K \to B_K).
 $$
Here the functor  $\rm I$ is defined in the following way. It takes a  commutative algebra $C$ over $B_K$ in $\dercat_{\infty}( \sf{Mod}_{K})$ equipped with a morphism $p: C \to B_K$ to $\ker(p)$. This means that we have the following diagram of maps
$$ B_K \xrightarrow{i} C \xrightarrow{p} B_K,
$$
and then $I$ is defined as the kernel of the second arrow. The result is non-unital $B_K$-algebra. The second functor  $\rm Q$ associates to a non-unital $B_K$-algebra $C$, a $B_K$-module,  ${\rm Q}(C)$ computes the cokernel of the map of $B_K$-monoids
$$
C \otimes_{B_K} C \xrightarrow{\mu} C
$$
where $\mu$ is the multiplication. Both functors can be derived (computing homotopy fibers and cofibers instead of kernels and cokernels) obtaining the functors  ${\mathbb R}{\rm Q}$ and 
${\mathbb L}{\rm I}$.  In particular, $H^0 ({\mathbb L}_{B_K/{K}})$ is nothing but the classical algebraic  K\"ahler differentials $\Omega^1_{B_K/{ K}}$.

In \cite[page 58]{Vez}, it is proven that, in the algebraic setting this definition coincides with the classical Illusie-Quillen one. The free $K$-polynomial resolutions are cofibrant in the underlying model category. In particular in the case where $B_K$ is smooth over $K$, we do have 
$$
 {\mathbb L}_{B_K/{ K}} \cong \Omega^1_{B_K/{ K}},
 $$
hence,  it is concentrated at level 0 and it is locally free.

By considering the analogue construction as  in  the algebraic case, we have {\it the analytic cotangent complex} $
 {\hat{\mathbb L}}_{{A_K}/{ K}}$, where now $A_K$ is considered as a Banach $K$-algebra and such a cotangent complex is computed in the HAG-context of ind-Banach algebras. Thus, we have the formula
 $$
 \widehat{\mathbb L}_{A_K/{K}}= {\mathbb R}{\rm Q}{\mathbb L}{\rm I} (A_K \wotimes^{\mathbb L}_{K} A_K \to A_K).
 $$
 where the completed tensor product takes the place of the algebraic tensor product and the objects are considered as ind-Banach modules.

 \begin{rmk} \label{cotan=0} It is clear from the definition of the cotangent complex that if $A_K \rightarrow C$ is a homotopy epimorphism, then $\widehat{\mathbb L}_{C/{A_K}} \cong 0$.
 \end{rmk} 

      \medskip

 Now we would like to compare  $ {\mathbb L}_{B_K/{ K}}$  and $
 {\hat{\mathbb L}}_{{A_K}/{ K}}$  in the case where $A_K$ is obtained as an analytic completion of $B_K$, and where $B_K$ is a finitely generated smooth $K$-algebra. In order to formulate this comparison inside the category of bornological vector spaces, we need to  recall the fine bornology. This bornology was already used implicitly in Example \ref{polynomial algebra in IndBanR}.
\medskip

 \begin{dfn} \label{defn:fine_bornology}
     Let $V$ be a $K$-vector space. Let us write $V = \bigoplus_{v \in B} K$ for a base $B \subset V$ as $K$-vector space. The \emph{fine bornology} on $V$ is the bornology given by computing the direct sum 
     \[ V_{\rm{fine} } = \bigoplus_{v \in B} K \]
     in $\sf{CBorn}_K$. 
 \end{dfn}

 Notice that Definition \ref{defn:fine_bornology} is well-posed because $K$ has only one bornology, up to isomorphism, that is separated and compatible with its Banach structure.
 \medskip

 \begin{exa} \label{exa:fine}
     The most basic object of $\dercat_{\infty}( \sf{Mod}_{K})$ equipped with the fine bornology is $K[x]$ when identified with $\bigoplus_{n \in \N} K^n$. This is the bornology used in Example \ref{polynomial algebra in IndBanR} to define the analytic affine line over $K$.
 \end{exa}
 \medskip

 The fine bornology defines a functor $(\cdot)_{\rm{fine}}: \sf{Mod}_{K}^{alg} \to \sf{Mod}_{K}$. It is easy to check that this functor is the left adjoint to the forgetful functor $\sf{Mod}_{K} \to \sf{Mod}_{K}^{alg}$, is strongly exact, strongly symmetric monoidal and that it is fully faithful, see \cite[ Examples 1.27, 1.85]{meyer2007local}.   Therefore, we obtain a symmetric strongly monoidal functor $(\cdot)_{\rm{fine}}: \dercat_{\infty}(\sf{Mod}_{K}^{alg}) \to \dercat_{\infty}(\sf{Mod}_{K})$. In particular, it sends algebraic commutative differential graded $K$-algebras  to commutative differential graded algebras in bornological $K$-vector spaces. When no confusion can arise, we will omit the functor $(\cdot)_{\mathrm{fine}}$ from the notation.

We will use the following results from  \cite{BassatMuk}.

 \begin{pro}\label{open} 
 Let $K[x_1,\dots,x_n]$ be endowed with the fine bornology. Then the natural morphism \[ K[x_1,\dots,x_n]_{\mathrm{fine}} \longrightarrow K\langle x_1,\dots,x_n\rangle \] to its affinoid completion is a homotopy epimorphism in $\dercat_\infty(\sf{Mod}_K)$. More generally, let $B_K$ be a finitely generated algebra over $K$, endowed with the fine bornology, and let $A_K$ be its affinoid completion. Then the natural morphism \[ (B_K)_{\mathrm{fine}}\longrightarrow A_K \] is a homotopy epimorphism.
 \end{pro}
 \begin{proof}  The first result is \cite[Corollary 5.6]{BassatMuk} and the second one is \cite[Proposition 5.22]{BassatMuk}. 
 \end{proof}
 
It makes sense to consider the bounded  (analytic) version of  K\"ahler differentials and their exterior powers. Let $A_K$ be a commutative Banach, or more generally bornological, $K$-algebra concentrated in degree zero. We denote 
\[ \hat{\Omega}^1_{A_K/K} = \frac{I}{I^2} \]
where
\[ I = \ker(A_K \wotimes_K A_K \to A_K) \]
is the kernel of the multiplication map. 
We also have its exterior powers (as bornological $A_K$-module)
\[ \hat{\Omega}^n_{A_K/K} =\Lambda^n \hat{\Omega}^1_{A_K/K} \]
 which combined form the \emph{analytic de Rham complex} of $A_K/K$, $ \hat{\Omega}^{\bullet}_{{A}_K^{}/K}$. 

 Finally, because the functor $(-)_{\mathrm{fine}}$ is fully faithful, exact, and strongly symmetric monoidal, for an ordinary $K$-algebra $B_K$ one has a natural identification \[ \widehat\Omega^\bullet_{(B_K)_{\mathrm{fine}}/K} \cong \Omega^\bullet_{B_K/K}. \] We will use this identification without further comment when the context is clear.

 \medskip

 \begin{thm}\label{co-fibrant} 
 Assume that $B_K$ is a finitely generated smooth $K$-algebra and that $A_K$ is an affinoid completion of $B_K$. Endow $B_K$ with the fine bornology. Then there are natural equivalences in $\dercat_\infty(\sf{Mod}_{A_K})$ 
 $$
 {\hat{\mathbb L}}_{{A_K}/{ K}} \cong  A_K  {\hat \otimes}^{\mathbb L}_{B_K} {\hat{\mathbb L}_{B_K/{ K}}}   \cong  A_K  {\hat \otimes}_{B_K} {\Omega^1_{B_K/{ K}}} \cong    \hat{\Omega}^1_{A_K/K}.
 $$
\end{thm}
\begin{proof}
By Proposition \ref{open}, the map $B_K \rightarrow A_K$ is a homotopy epimorphism. Therefore, we can localize the cotangent complex applying \cite[Proposition 1.2.1.6]{Vez} because the relative cotangent complex of $B_K \rightarrow A_K$ vanishes. This gives the first isomorphism.
 
 We remark that for $B_K$ we have
\[ \hat{\mathbb L}_{(B_K)_{\rm{fine}}/K} \cong \mathbb L_{B_K/K} \cong \Omega_{B_K/K}^1 \cong {\hat \Omega}_{(B_K)_{\rm{fine}}/K}^1 \]
as $B_K$-modules, because we suppose $B_K$ smooth and the functor $(\cdot)_{\rm{fine}}$ is strongly exact and strongly monoidal, therefore it sends cofibrant $K$-algebras (i.e. polynomial algebras) to cofibrant bornological $K$-algebras (i.e. polynomial algebra with the fine bornology). We conclude from  the fact that ${\Omega^1_{B_K/{ K}}}$ is locally free and finite rank as $B_K$-module and hence flat for the functor $A_K\wotimes_{B_K} - $. Moreover the sheaf of differentials on the rigid analytification of the affine scheme $\operatorname{Spec} B_K$, is a coherent sheaf (it is the analytification of the coherent module of the algebraic differential, see \cite{BALD}). The affinoid algebra $A_K$ identifies ad an admissible affinoid open in such a space, hence the last isomorphism. 
\end{proof}
 
\begin{rmk} 
The same argument applies in the relative case. Namely, for a morphism of affinoid completions arising from a morphism of smooth algebraic $K$-algebras, the relative analytic cotangent complex is obtained by derived completed base change from the corresponding algebraic relative cotangent complex.
\end{rmk}

 \medskip

All these constructions are local for the homotopy Zariski topology, and therefore can be glued and we can associate a quasi-coherent sheaf of modules which we will indicate by $\hat{\mathbb L}_{{\hat P}_K^{an}/K} \in \sf{QCoh}_{\hat P_K^{\rm an}}$ on the derived analytic space $\hat P_K^{\rm an}$. The object $\hat{\mathbb L}_{{\hat P}_K^{an}/K}$ is well defined because by Remark \ref{rmk:derived_analytic_space_atlas}, derived analytic spaces are $0$-geometric  stacks and, by general results like \cite[Proposition 1.4.1.8 and Proposition 1.4.1.11]{Vez}, one can check that such geometric stacks have a well-defined cotangent complex. Alternatively, one can check that the compatibility of the cotangent complex with base change through homotopy epimorphisms give a well-defined object of $\sf{QCoh}_X$.

\medskip
\begin{rmk}
    This construction can be applied also to  the various exterior powers of  $\hat{\mathbb L}_{{\hat P}_K^{an}/K}$, $ n \in \N$: they still are quasi-coherent sheaves of modules over  ${\hat P}_K^{\rm{an}}$. 
\end{rmk}
\medskip

We define our cohomology theory  using analytic  derived de Rham complex. Following \cite{raksit} (and \cite{kelly2021analytic})  we define the functor
\[ \mathbb{L} \hat{\Omega}_{-/K}^{+ \bullet}: \sf{Comm}(\dercat_{\infty}( \sf{Mod}_{ {K}})) \to \sf{DG}_+^{\ge 0}\sf{Comm}(\dercat_{\infty}( \sf{Mod}_{ {K}})) \]
where the target is the category of non-negative $h_+$-differential graded algebras in the sense of \cite[Notation 5.3.1]{raksit}. Explicitly \cite[Theorem  5.3.6]{raksit} (see also \S5 of \cite{kelly2021analytic}), for an  object $A_K \in \sf{Comm}(\dercat_{\infty}( \sf{Mod}_{ {K}})) $ we have a canonical isomorphism
\[ \mathbb{L} \hat{\Omega}_{A_K/K}^{+ n} \cong \mathbb{L} \sf{Sym}_-^n(\hat{\mathbb{L}}_{A_K/K}[1](1)) \cong \mathbb{L} \Lambda^n (\hat{\mathbb{L}}_{A_K/K})[n] \]
of graded pieces and the $h_+$-differential is given by the  derivation $A_K \rightarrow  \Lambda^1 (\hat{\mathbb{L}}_{A_K/K})[1]= (\hat{\mathbb{L}}_{A_K/K})[1] $. The object $\mathbb{L} \hat{\Omega}_{A_K/K}^{+ \bullet}$ gives a double complex over $K$, of which we can compute the direct sum realization 
\[ \mathbb L\widehat\Omega^\bullet_{A/K} = \operatorname{Tot}^{\oplus} \bigl(\mathbb L\widehat\Omega^{+\bullet}_{A/K}\bigr). \] 
More explicitly, 
\[ \mathbb L\widehat\Omega^\bullet_{A/K} \simeq \mathbb L \operatorname*{colim}_{N \in \N} \operatorname{Tot}^{\oplus} \left( \tau_{\leq N} \mathbb L\widehat\Omega^{+\bullet}_{A/K} \right). \] 
We call $\mathbb{L} \hat{\Omega}_{-/K}^{\bullet}$ it the \emph{  analytic derived de Rham complex}. In the algebraic setting the analogous functor $\mathbb{L} \Omega_{-/K}^{\bullet}$ is equivalent to the \emph{derived de Rham complex} studied in \cite{Bhatt}. 

The complex $\mathbb{L} \hat{\Omega}_{-/K}^{\bullet}$ differs in general from the Hodge-completed analytic derived de Rham complex, which is obtained by taking the direct product realization of the double complex. Indeed, the \emph{Hodge completed analytic derived de Rham complex} is defined
\[
\mathbb L\widehat\Omega^{\wedge, \bullet}_{A/K} = \operatorname{Tot}^{\scalebox{0.55}{$\prod$}} \bigl(\mathbb L\widehat\Omega^{+\bullet}_{A/K}\bigr) \cong \mathbb{R} \lim_{n \in \N} \operatorname{Tot}^{\scalebox{0.55}{$\prod$}} \bigl(\mathbb L\widehat\Omega^{+\bullet}_{A/K}/F^N \bigr)  \] 
where $F^N$ denotes the Hodge filtration.
In the smooth and finite dimensional situation considered in this paper, however, the cotangent complex is a finite projective module of finite rank. Its exterior powers therefore vanish in sufficiently high degrees, and the direct sum and Hodge-completed realizations coincide.

Our goal  (Theorem \ref{derived de rham}) is to compute $ \mathbb{L} \hat{\Omega}_{A_K/K}^{\bullet}$ for $A_K$ a smooth affinoid $K$-algebra which is the generic fiber of the $p$-adic completion of a finitely generated smooth $K$-algebra $B_K$. 

\begin{pro} \label{prop:comparison_derived_de_rham}
Let $K \to B_K$ be a smooth finitely generated $K$-algebra. Then there is a canonical quasi-isomorphism
\[ \mathbb{L} \Omega_{B_K/K}^{\bullet} \cong \Omega_{B_K/K}^\bullet. \]
After equipping $B_K$ and its modules of differentials with the fine bornology, one also has 
\[ \mathbb L\widehat\Omega^{\bullet}_{(B_K)_{\mathrm{fine}}/K} \simeq \bigl(\Omega^\bullet_{B_K/K}\bigr)_{\mathrm{fine}}. \]
\end{pro}
\begin{proof}
Since $B_K$ is smooth over $K$, its cotangent complex is concentrated in degree zero and the canonical morphism $\mathbb L_{B_K/K}\longrightarrow \Omega^1_{B_K/K}$ is an equivalence. Moreover, $\Omega^1_{B_K/K}$ is a finite projective $B_K$-module. It follows that 
\[ \mathbb L\Lambda^n (\mathbb L_{B_K/K}) \simeq \Lambda^n (\Omega^1_{B_K/K}) = \Omega^n_{B_K/K}. \]
Under these identifications, the $h_+$-differential induced by the universal derivation is precisely the usual de Rham differential.  The corresponding statement in the bornological setting follows from the fact that the fine bornology functor is exact and strongly symmetric monoidal.
\end{proof}

As the cotangent complex, the analytic derived de Rham complex is compatible with extension of scalars along homotopy epimorphisms.

\begin{pro} \label{prop:base_change_graded_derived_de_rham}
Let $B \to A$ be a homotopy epimorphism of commutative algebras in $\dercat_\infty(\sf{Mod}_K)$. Then there is a natural equivalence of $h_+$-differential graded commutative algebras 
\[ \mathbb L\widehat\Omega^{+\bullet}_{A/K} \simeq \mathbb L\widehat\Omega^{+\bullet}_{B/K} \widehat\otimes_B^{\mathbb L} A. \] 
\end{pro}

\begin{proof}
Since $B\to A$ is a homotopy epimorphism, we have that $\widehat{\mathbb L}_{A/K} \simeq A\widehat\otimes_B^{\mathbb L} \widehat{\mathbb L}_{B/K}$, by \cite[Proposition 1.2.1.6]{Vez}. Also, derived exterior powers commute with extension of scalars, and hence, for every $n\geq 0$
\[ \mathbb L\Lambda^n \bigl( \widehat{\mathbb L}_{A/K} \bigr) \cong A\widehat\otimes_B^{\mathbb L} \mathbb L\Lambda^n \bigl(\widehat{\mathbb L}_{B/K}\bigr). \] 
These equivalences are compatible with the $h_+$-differentials, giving the equivalence.
\end{proof}

Proposition \ref{prop:base_change_graded_derived_de_rham} in particular applies to the case of the homotopy epimorphism $B_K \to A_K$, of a finitely generated $K$-algebra and its $p$-adic completion.

\begin{thm} \label{derived de rham} 
Let $B_K$ be a finitely generated smooth $K$-algebra, endowed with the fine bornology, and let $A_K$ be the affinoid completion of $B_K$. Then there are natural quasi-isomorphisms 
\[ \mathbb L\widehat\Omega^{ \bullet}_{A_K/K} \cong A_K\widehat\otimes^{\mathbb L}_{(B_K)_{\mathrm{fine}}} \mathbb L\widehat\Omega^{ \bullet}_{(B_K)_{\mathrm{fine}}/K} \cong A_K\widehat\otimes_{(B_K)_{\mathrm{fine}}} \Omega^\bullet_{B_K/K} \cong \widehat\Omega^\bullet_{A_K/K}. \] 
In particular, the analytic derived de Rham complex of $A_K$ agrees with its usual analytic de Rham complex of bounded differentials.
\end{thm}

\begin{proof}  
By Proposition \ref{open}, the morphism $(B_K)_{\mathrm{fine}} \to A_K$ is a homotopy epimorphism. Proposition \ref{prop:base_change_graded_derived_de_rham} therefore gives 
\[ \mathbb L\widehat\Omega^{+ \bullet}_{A_K/K} \cong A_K\widehat\otimes^{\mathbb L}_{(B_K)_{\mathrm{fine}}} \mathbb L\widehat\Omega^{+ \bullet}_{(B_K)_{\mathrm{fine}}/K}. \]
By Proposition \ref{prop:comparison_derived_de_rham}
\[ \mathbb L\widehat\Omega^{\bullet}_{(B_K)_{\mathrm{fine}}/K} \cong (\Omega^ \bullet_{B_K/K})_{\mathrm{fine}}. \]
Since $\Omega^n_{B_K/K}$ is a finite projective $B_K$-module, it follows that the derived completed tensor product agrees with the ordinary completed tensor product, hence by Theorem \ref{co-fibrant}:
\[ A_K\widehat\otimes^{\mathbb L}_{(B_K)_{\mathrm{fine}}} \Omega^n_{B_K/K} \cong A_K\widehat\otimes_{(B_K)_{\mathrm{fine}}} (\Omega^n_{B_K/K})_{\mathrm{fine}} \cong \widehat\Omega^n_{A_K/K}. \] 
These identifications are compatible with the de Rham differentials, and therefore give an isomorphism of complexes 
\[ A_K\widehat\otimes_{(B_K)_{\mathrm{fine}}} (\Omega^\bullet_{B_K/K})_{\mathrm{fine}} \cong \widehat\Omega^\bullet_{A_K/K}. \]

\end{proof}

The final identification  in terms of differentials in Theorem \ref{derived de rham} uses the fact that $A_K$ is an affinoid algebra in the sense of rigid analytic geometry, obtained from the affine $K$-scheme given by $B_K$.  For a general homotopy epimorphism $B_{K,\rm{fine}/K} \to A$, we can only localize the cotangent complex. Thus, the argument of the proof of Theorem \ref{derived de rham} gives the following result.

\begin{cor} \label{tensor2}
Let $(B_{K})_{\mathrm{fine}} \to A$ be a  homotopical epimorphism in $\dercat_{\infty}( \sf{Mod}_{K})$, where again $B_K$ is a smooth finitely generated  $K$-algebra.  Then, there is canonical quasi-isomorphism
\[  \mathbb{L} \hat{\Omega}_{A/K}^{\bullet} \cong  (\mathbb{L} \Omega_{B_K/K}^{ \bullet})_{\mathrm{fine}} \wotimes_{(B_{K})_{\mathrm{fine}}}^{\mathbb L} A \cong
\Omega_{B/K}^\bullet \wotimes_{(B_{K})_{\mathrm{fine}}} A . \]
\end{cor}

\bigskip

\subsection{Tempered convergent rigid cohomology}\label{temcoh} 
We now have all the ingredients to define the tempered convergent rigid cohomology.  Throughout this section, $X$ will be a variety over $k$, and assume that we are given a regular closed immersion $X\hookrightarrow {\hat P}_k$, where $\hat P$ is a  formal $\cal V$-scheme, smooth and  separated. Under these hypotheses, we define the tempered tube $\temptube{X}{{\hat P}}$ by gluing the affine tempered tubes introduced above (see Definition \ref{defn:tempered_tube}).
Indeed, let ${\hat P}_i$ be an affine open cover of ${\hat P}$ in which the closed immersions $X_i=X\cap {\hat P}_{i,k} \hookrightarrow {\hat P}_{i,k} $ are defined by a regular sequence (such a cover always exists by \cite[\href{https://stacks.math.columbia.edu/tag/0E9J}{Lemma 0E9J}]{stacks-project}).
We have that $\temptube{X_i}{{\hat P}_i}$ defines an open derived analytic subspace in ${\hat P}_i$ by Proposition \ref{prop:tempered_tube_open}. Since by Proposition \ref{prop tempered is independent of the presentation} $\temptube{X_i\cap X_j}{{\hat P}_i\cap {\hat 
P}_j}$ is independent of the presentation (up to equivalence), we have 
$$\temptube{X_i\cap X_j}{{\hat P}_i\cap {\hat P}_j} \cong \ \temptube{X_i\cap X_j}{{\hat P}_i} \cong \ \temptube{X_i\cap X_j}{{\hat P}_j}.$$
This data defines a derived analytic space in the sense of Definition \ref{defn:derived_analytic_space}. 

We can now apply the results of Subsection \ref{cotangent}. Since $\widehat P$ is smooth over $\mathcal V$, its analytic cotangent complex $\widehat{\mathbb L}_{{\widehat P^{\mathrm{an}}_K}/K} \in \sf{QCoh}_{\widehat P^{\mathrm{an}}}$ is locally concentrated in degree zero. This is a consequence of Theorem \ref{co-fibrant} and the fact that every formally smooth algebra over $\mathcal V$ is the completion of a smooth finitely generated $\mathcal V$-algebra by \cite[Theorem 7 in page 582]{elkik1973solutions}, and is it identified with the sheaf of bounded analytic differentials
\[ \widehat{\mathbb L}_{{\widehat P^{\mathrm{an}}_K}/K} \simeq \widehat\Omega^1_{{\widehat P^{\mathrm{an}}_K}/K}. \]
Moreover, by Theorem \ref{derived de rham} we can locally on an affine cover (and hence globally) identify analytic derived de Rham complex with the bounded analytic de Rham complex
\[ \mathbb L\widehat\Omega^{+ \bullet}_{{\widehat P^{\mathrm{an}}_K}/K} \cong  \widehat\Omega^{\bullet}_{{\widehat P^{\mathrm{an}}_K}/K}. \] 

\bigskip

\begin{dfn} \label{defn:tempered_cohomology}
    The \emph{$n$th-tempered convergent rigid cohomology group of $X$} is the $K$-vector space  defined as 
    $$\mathrm{H}^n_{\mathrm{temp}}(X,{\hat P})=
    \mathrm{H}^n( \mathbb{R} \Gamma_{\temptube{X}{{\hat P}}} ( \mathbb{L} \iota^*( {{\mathbb L}}{\hat \Omega}^{\bullet}_{{\hat P}_K^{\mathrm{an}}}))).
    $$
    where $\iota: \temptube{X}{{\hat P}} \hookrightarrow {\hat P}_K^{\mathrm{an}}$ is the open inclusion and $\mathbb{R} \Gamma_{\temptube{X}{{\hat P}}}: \sf{Shv}_{\temptube{X}{{\hat P}}}(\dercat(\sf{Mod}_K)) \to \dercat(\sf{Mod}_K)$ is the global sections functor.
\end{dfn}

\smallskip
\begin{rmk}
 In Definition \ref{defn:tempered_cohomology} we use the left $t$-structure of $\dercat(\sf{Mod}_K)$ to define cohomology groups. In the case when these cohomological groups are finite dimensional, they canonically agree with the classical cohomology of the underlying complex of $K$-vector spaces.
\end{rmk}

\smallskip

We keep the notation $\iota: \temptube{X}{{\hat P}} \hookrightarrow {\hat P}_K^{\mathrm{an}}$ for the open inclusion of the tempered tube, introduced in Definition \ref{defn:tempered_cohomology}.

\smallskip

\begin{lem} \label{complexdR} We can cover  ${\hat P}$ via the affine open formal subschemes ${\hat P}_i= \mathrm{Spf} A_i$, and we take on it the tempered tube of $X_i= U_i \cap X$ in it, $\temptube{X_i}{{\hat P}_i}$. Then, denoting the sections of the structure sheaf by 
$$ {\cal O_{\hat{P}}}(\temptube{X_i}{{\hat P}_i}) \cong \frac{\temp{A_{i, K}}{y_1,\dots,y_s}}{(y_1-\tilde{f}_1,\dots,y_s-\tilde{f}_s)} $$
(note that it is concentrated in degree $0$), we have 
$$
{{\mathbb L}}{\hat \Omega}^{ \bullet}_{\temptube{X_i}{{\hat P}_i}} \cong \mathbb{L} \iota_i^*({{\mathbb L}}{\hat \Omega}^{\bullet}_{{\hat P}_K^{an}}) \cong 
\hat{\Omega}^{\bullet}_{A_{i, K}} \wotimes_{A_{i, K}} {\cal O_{\hat{P}}}(\temptube{X_i}{{\hat P}_i})
$$
where $\iota_i: \temptube{X_i}{{\hat P}_i} \to {\hat P}_{i, K}^{an}$ denotes the opens embeddings of derived analytic spaces.
\end{lem}
\begin{proof} 
The first equivalence is a formal consequence of the local base change property of the cotangent complex \cite[Proposition 1.2.1.6]{Vez}. For the second equivalence, we notice that the formal scheme ${\hat P}$ is smooth, hence locally the cotangent complex is isomorphic to  $\Omega_{A_{i}}ˆ1$, which is locally free, and its analytification $\hat{\Omega}_{A_{i, K}}ˆ1$ as well.
However, the sections of the structural sheaf on the open given by the tempered tube are concentrated in degree $0$ as well. Hence, by Theorem \ref{derived de rham}  and Corollary \ref{tensor2} 
in  
$\dercat_{\infty}( \sf{Mod}_{K})$ applied to the homotopical epimorphism $A_{i, K} \to \frac{\temp{A_{i, K}}{y_1,\dots,y_s}}{(y_1-\tilde{f}_1,\dots,y_s-\tilde{f}_s)}$, opposite to the embedding $\iota_i$, we have 
$$
\mathbb{L} \iota_i({{\mathbb L}}{\hat \Omega}^{\bullet}_{{\hat P}_K^{an}}) \cong
\hat{\Omega}^{\bullet}_{A_{i,K}} \wotimes_{A_{i, K}} {\cal O_{\hat{P}}}(\temptube{X_i}{{\hat P}_i}).
$$  
\end{proof} 

\medskip

The next theorem is one of our main results.

\medskip

\begin{thm}\label{thm tempered cohomology is weel def}
Let $X$ be a  variety over $k$, equipped with a regular closed immersion in a formally smooth $p$-adic formal scheme $\hat{P}$. Then the cohomology groups $\mathrm{H}^n_{\mathrm{temp}}(X,{\hat P})$ are independent of the regular  embedding $X\hookrightarrow {\hat P}$. So, we may be indicated them as $\mathrm{H}^n_{\mathrm{temp}}(X)$.
     
\end{thm}
\begin{proof}
    Let $X\hookrightarrow {\hat P}$ and $X\hookrightarrow {\hat P}'$ be two regular embeddings into formally smooth $p$-adic formal schemes. We can always reduce to the situation where we have a smooth map
    \begin{equation}  \label{indepen}
     \begin{tikzcd}
  & {\hat P}' \arrow{dd}{u}\\
X \arrow{ur} \arrow[swap]{dr}&\\
  & {\hat P}, 
\end{tikzcd}   
    \end{equation}
    by taking fiber products. By \cite[Lemma 2.3.14]{LeStum},
     there exists an affine cover of ${\hat P}$, $\{{\hat P}_i\}_{i\in I}$ and a cover of ${\hat P}'$, $\{{\hat P}'_i\}_{i\in I}$ such that $u({\hat P}'_i)\subset {\hat P}_i$ with the property $X \cap {\hat P}_i= X \cap {\hat P}'_i$.  
    Therefore problem is local for the Zariski topology. Thus, it is enough to check that the morphism $u$ induces locally an equivalence of sheaves. Hence it is enough to prove the theorem in the local case of affine schemes. Hence we can suppose that we are in the situation of \eqref{indepen} and all objects are affine i.e. $X\cap {\hat P}_i=X_i= X \cap {\hat P}'_i$. Applying Theorem \ref{weakfibr}, this means that we can write 
    \[ \hat{P}_i \cong \mathrm{Spf}(A_i). \ \ \temptube{X_i}{{\hat P}_i} \cong \goth S \left (\frac{\temp{A_{i, K}}{y_1, \ldots, y_s}}{(y_1-\tilde{f}_1,\dots,y_s-\tilde{f}_s)} \right ) \]
    and $\hat{P}_i' \cong \mathrm{Spf}(\tate{A_i}{x_1, \ldots, x_d})$, where the map $u$ is just induced by the inclusion $A_i \to \tate{A_i}{x_1, \ldots, x_d}$.
    Because we are in the affine case, we can suppose that the cotangent complex sheaf is free, hence the same for all terms of the associated de Rham complex. 
    We have to compare the two complexes
    $\mathbb{L} \hat{\Omega}^{\bullet}_{ \temptube{X_i}{{\hat P}}}$ and $u_*(\mathbb{L} \hat{\Omega}^{\bullet}_{ \temptube{X_i}{{\hat P}'}})$. Both $\mathbb{L} \hat{\Omega}^{\bullet}_{ \temptube{X_i}{{\hat P}}}$ and $\mathbb{L} \hat{\Omega}^{\bullet}_{ \temptube{X_i}{{\hat P}'}}$ are computed as in Lemma \ref{complexdR} and $u_*$ is just restriction of scalars (that is exact for affine morphisms). 
    Hence, the de Rham complex  ${{\mathbb L}}{\hat \Omega}^{\bullet}_{\temptube{X_i}{{\hat P}_i}}$  
    is given by the complex of $K$-bornological spaces    
    $$
   \dots \rightarrow  \frac{\temp{A_{i, K}}{y_1,\dots,y_s}}{(y_1-\tilde{f}_1,\dots,y_s-\tilde{f}_s) } \wotimes_{A_{i, K}} \Omega^{n}_{A_{i,K}/K} \rightarrow \frac{\temp{A_{i, K}}{y_1,\dots,y_s}}{(y_1-\tilde{f}_1,\dots,y_s-\tilde{f}_s)}\wotimes_{A_{i, K}} \Omega^{n+1}_{A_K/K} \rightarrow  \dots
    $$
    While, again applying Theorem \ref{weakfibr}, the de Rham complex  ${{\mathbb L}}{\hat \Omega}^{\bullet}_{\temptube{X_i}{{\hat P'}_i}}$  
    is given by the complex of $K$-bornological spaces 
 $$
  \dots \rightarrow   \frac{\temp{A_{i, K}}{y_1,\dots,y_s, , x_1, \dots , x_d}}{(y_1-\tilde{f}_1,\dots,y_s-\tilde{f}_s)} \wotimes_{\tate{A_{i, K}}{x_1,\dots,x_d}} \Omega^{n}_{\tate{A_{i, K}}{x_1,\dots,x_d}/K} \rightarrow $$ 
  $$ \to \frac{\temp{A_{i, K}}{y_1,\dots,y_s, , x_1, \dots , x_d}}{(y_1-\tilde{f}_1,\dots,y_s-\tilde{f}_s)} \wotimes_{\tate{A_{i, K}}{x_1,\dots,x_d}} \Omega^{n+1}_{\tate{A_{i, K}}{x_1,\dots,x_d}/K} \rightarrow  \dots .
    $$
To prove that the map induced by $u$ is a  quasi-isomorphism, we can reduce by induction to the following tempered version of Poincar\'e lemma in one variable. This proves that the sheaves $\mathbb{L} \hat{\Omega}^{\bullet}_{\temptube{X}{\hat{P}}}, u_*(\mathbb{L} \hat{\Omega}^{ \bullet}_{\temptube{X}{\hat{P}}})$ are equivalent, hence they have equivalent global sections and associated cohomology groups.
\end{proof}

\medskip

\begin{lem}[Tempered Poincaré lemma]
    The complex \begin{equation*}0\rightarrow A\hookrightarrow\temp{A}{x}\xrightarrow{\diff{x}}\temp{A}{x}\rightarrow 0 \end{equation*} is strictly exact, where $A$ is any bornological $K$-algebra.
\end{lem}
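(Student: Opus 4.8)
The plan is to prove the lemma by exhibiting an explicit splitting of the complex, so that exactness --- in fact split, hence strict, exactness in $\Ind(\BanR)$ --- becomes formal once one controls how dividing coefficients by integers affects the log--growth order.

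First I would dispose of the two ends. The inclusion $\iota\colon A_K\hookrightarrow\temp{A_K}{t}$ as constant series is a strict monomorphism, split by the ``constant term'' map $\epsilon\colon\temp{A_K}{t}\to A_K$, $\sum_i a_it^i\mapsto a_0$, which is contracting on each $\tate{A_K}{t}_n$ and hence a morphism in $\Ind(\BanR)$. Since $\mathrm{char}\,K=0$, if $f=\sum_i a_it^i\in\temp{A_K}{t}$ satisfies $\frac{d}{dt}f=0$ then $(j+1)a_{j+1}=0$ in $A_K$ for every $j\ge 0$, and as $j+1\in K^{\times}$ this forces $a_i=0$ for $i\ge1$; thus $\ker(\frac{d}{dt})=\iota(A_K)$.

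The core of the argument is the construction of a term-by-term antiderivative
\[ I\colon\temp{A_K}{t}\to\temp{A_K}{t},\qquad I\Big(\sum_{i\ge0}b_it^i\Big)=\sum_{i\ge0}\frac{b_i}{i+1}\,t^{i+1}, \]
and the verification that it is a morphism of ind-Banach modules, i.e. that it raises the log-growth order only by a bounded amount. Here the only non-formal input enters: since $\cal V\supseteq\Z_p$, every positive integer $n$ satisfies $|n|_K=|p|_K^{v_p(n)}$ with $v_p(n)\le\log_p n$, and $|p|_K<1$ because $k$ has characteristic $p$; hence $|n|_K\ge n^{-c_K}$ with $c_K:=-\log_p|p|_K>0$. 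Setting $N_0:=\lceil c_K\rceil+1$, a direct estimate then shows $I$ maps $\tate{A_K}{t}_n$ contractively into $\tate{A_K}{t}_{n+N_0}$: for $g=\sum_i b_it^i\in\tate{A_K}{t}_n$ the coefficient of $t^j$ in $I(g)$ is $b_{j-1}/j$, and
\[ \frac{|b_{j-1}|_{A_K}}{|j|_K}(j+1)^{-(n+N_0)}\ \le\ |b_{j-1}|_{A_K}\,j^{c_K}(j+1)^{-(n+N_0)}\ \le\ |b_{j-1}|_{A_K}\,(j+1)^{-n-1}\ \le\ \|g\|_n, \]
which in particular tends to $0$; passing to the colimit over $n$ gives the morphism $I$.

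Finally I would assemble the splitting data: by construction $\epsilon\circ\iota=\id_{A_K}$, $\frac{d}{dt}\circ I=\id_{\temp{A_K}{t}}$, and $\iota\circ\epsilon+I\circ\frac{d}{dt}=\id_{\temp{A_K}{t}}$ (the last being the telescoping identity $a_0+\sum_{i\ge1}a_it^i=\sum_i a_it^i$). These are exactly the data of a split short exact sequence, so
\[ 0\to A_K\xrightarrow{\iota}\temp{A_K}{t}\xrightarrow{d/dt}\temp{A_K}{t}\to0 \]
is split exact in $\Ind(\BanR)$, in particular strictly exact, which proves the lemma. The expected main obstacle is purely the estimate $|n|_K\ge n^{-c_K}$ that makes the antiderivative a genuine morphism of ind-Banach modules (rather than only a map of abstract $A_K$-modules); once this is in place, everything is bookkeeping with the splitting maps.
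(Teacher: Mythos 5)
Your proof is correct and takes the same route as the paper's one-line argument (\say{the formal integration of a tempered power series is again a tempered one}): the term-by-term antiderivative $I$ together with the constant-term map furnish a contracting homotopy, and the only analytic input is the estimate $|n|_K\ge n^{-c_K}$ ensuring that $I$ raises the log-growth order by at most $N_0=\lceil c_K\rceil+1$. You have in effect spelled out the bound the paper elides, and recording the full splitting upgrades the conclusion to split, hence strict, exactness in $\Ind(\BanK)$, which is what is actually used downstream.
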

\begin{proof}
Define the evaluation map 
\[ \operatorname{ev}_0\colon\temp{A}{x}\longrightarrow A, \qquad \sum_{n\geq0}a_nx^n\mapsto a_0, \] 
and the formal integration operator 
\[ h\colon\temp{A}{x}\,dx\longrightarrow\temp{A}{x}, \qquad \sum_{n\geq0}a_nx^n\,dx \mapsto \sum_{n\geq0}\frac{a_n}{n+1}x^{n+1}. \] 
We check that $h$ is bounded. Since $K$ has characteristic zero and carries a non-archimedean valuation, there exist constants $C>0$ and $c\geq0$ such that \[ |n|^{-1}\leq Cn^c \] for every integer $n\geq1$. Thus division by $n+1$ increases the growth of the coefficients by at most a polynomial factor. Consequently, for every Banach level $\power{A}{x}_r$ of $\temp{A}{x}$, the operator $h$ induces a bounded map \[ \power{A}{x}_r\,dx \longrightarrow \power{A}{x}_{r+c}. \]
It therefore defines a bounded morphism on the filtered colimit $\temp{A}{x}$. Hence the complex is split exact in the bornological sense, and therefore strictly exact.
\end{proof}

We conclude with a comparison theorem relating our tempered cohomology to classical $p$-adic cohomology theories. In particular, this will imply the finite dimensionality of tempered cohomology for smooth proper varieties.

\begin{pro}\label{prop comparison with crystalline}
    Let $X$ be a smooth proper $k$-scheme admitting a regular closed immersion into a smooth $p$-adic formal $\mathcal V$-scheme of the type considered above. Then there are natural isomorphisms 
    $$
    \mathrm{H}^i_{\mathrm{temp}}(X_k) \cong \mathrm{H}^i_{\mathrm{crys}}(X_k) \otimes_{\mathcal{V}} K \cong \mathrm{H}^i_{\mathrm{rig}}(X_k).
    $$

    If moreover $X_k$  is the special fiber of a smooth and proper $\mathcal V$-scheme $X_{\mathcal V}$, then 
    $$
    \mathrm{H}^i_{\mathrm{temp}}(X_k) \cong  \mathrm{H}^i_{dR}(X_K).
    $$
    
\end{pro}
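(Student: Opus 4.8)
The plan is to compare the tempered tube with the classical rigid tube and to invoke the established comparison between rigid and crystalline (resp. de Rham) cohomology. First I would reduce, using Remark \ref{spectralseq} and a Mayer--Vietoris argument exactly as in the proof of Theorem \ref{thm tempered cohomology is weel def}, to the case where $X$ is affine, $X \hookrightarrow \hat P$ is a regular closed immersion into an affine smooth formal scheme $\hat P = \mathrm{Spf}(A)$, and the closed immersion is cut out by a regular sequence $\tilde f_1, \dots, \tilde f_s$. In that situation, by Lemma \ref{complexdR}, the tempered convergent cohomology is computed by the complex $\hat\Omega^\bullet_{A_K} \wotimes_{A_K} \temp{A_K}{y_1,\dots,y_s}/(y_1 - \tilde f_1, \dots, y_s - \tilde f_s)$, while the rigid convergent cohomology of $X$ is computed by $\hat\Omega^\bullet_{A_K} \wotimes_{A_K} \conv{A_K}{y_1,\dots,y_s}/(y_1 - \tilde f_1, \dots, y_s - \tilde f_s)$ (the de Rham complex of the rigid tube $]X[_{\hat P}$, by Remark \ref{the rigid tube}).

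The heart of the matter is therefore to show that the canonical injection of Corollary \ref{prop tempered tube is contained in rigid} induces, after tensoring with the de Rham complex, a quasi-isomorphism when $X$ is \emph{proper and smooth}. Here is where properness enters: the classical fact (Berthelot, Le Stum \cite[Chapter 2]{LeStum}) is that for $X$ proper the rigid cohomology is computed by \emph{any} neighbourhood of the tube, i.e. by the de Rham complex of a strict neighbourhood, and all these give the same answer. In our derived framework, what I would use is that $]X[_{\hat P} \subset \temptube{X}{\hat P}$ is an open immersion of derived analytic spaces (Corollary \ref{ open tempered tube in rigid}) sitting between the tube and the closed unit tube, and that for $X$ proper all the cohomologies of the de Rham complexes on this ascending chain of neighbourhoods agree. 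Concretely, globalise the Mayer--Vietoris reduction so that, when $\hat P$ is proper over $\mathcal V$ and $X$ is proper, the hypercohomology $\mathrm{H}^i_{\mathrm{temp}}(X)$ equals the hypercohomology of $\mathbb{L}\hat\Omega^\wedge$ on the rigid tube, which by the comparison theorem of rigid cohomology is $\mathrm{H}^i_{\mathrm{rig}}(X_k)$, and by Berthelot's comparison with crystalline cohomology equals $\mathrm{H}^i_{\mathrm{crys}}(X_k) \otimes K$. For the last statement, if $X_k = (X_\mathcal V)_k$ for a smooth proper $\mathcal V$-scheme $X_\mathcal V$, then $\mathrm{H}^i_{\mathrm{crys}}(X_k)\otimes K \cong \mathrm{H}^i_{\mathrm{dR}}(X_K)$ by the Berthelot--Ogus comparison.

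The main obstacle I anticipate is making precise, in the derived ind-Banach setting, the statement that ``for $X$ proper the de Rham cohomology of the tempered tube agrees with that of the rigid tube.'' The clean way to do this is to show that the map of derived de Rham complexes induced by $]X[_{\hat P} \hookrightarrow \temptube{X}{\hat P}$ is a quasi-isomorphism after taking global sections over a proper $\hat P$; by Mayer--Vietoris and Theorem \ref{weakfibr} (tempered weak fibration) together with its rigid counterpart \cite[Corollary 2.3.16]{LeStum}, one reduces to the local model $X = $ a point inside $\hat{\mathbf A}^d$, where both sides are computed by the respective Poincar\'e lemmas. The tempered Poincar\'e lemma is proved above; the rigid one is classical. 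Then the point is that the \emph{cohomology} of the small ($s$-fold) tempered Koszul--de Rham complex and of the rigid one coincide because both compute, after the weak fibration reduction, the cohomology of the structure algebra of $X$ itself, which for $X$ proper is finite-dimensional and independent of the neighbourhood. I would assemble these ingredients: (i) reduction to the affine regular case; (ii) the tempered and rigid weak fibration theorems reducing to affine space over $X$; (iii) the tempered and classical Poincar\'e lemmas; (iv) gluing back via Mayer--Vietoris and invoking properness so that the resulting spectral sequences for the tempered and rigid theories coincide; (v) the classical chain $\mathrm{H}^\bullet_{\mathrm{rig}} \cong \mathrm{H}^\bullet_{\mathrm{crys}}\otimes K \cong \mathrm{H}^\bullet_{\mathrm{dR}}$ of comparison isomorphisms.
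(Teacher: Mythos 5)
Your plan is in the right family as the paper's, but there are two confusions that disguise the key mechanism. The paper's proof is short and rests on one crucial trick that you do not state explicitly: after the Mayer--Vietoris reduction to a smooth affine piece $X_i$, choose as ambient space a smooth $p$-adic \emph{lift} $\hat Q_i = \mathrm{Spf}(A_i)$ of $X_i$ itself, i.e.\ one whose special fiber \emph{is} $X_i$. With this choice the closed immersion $X_i \hookrightarrow \hat Q_{i,k}$ is an isomorphism, so $s = 0$ and all three ``tubes'' --- divided-power envelope, rigid tube $]X_i[_{\hat Q_i}$, and tempered tube $\temptube{X_i}{\hat Q_i}$ --- coincide with the full generic fiber $\mathrm{Sp}(A_{i,K})$. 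The $E_1$-pages of the crystalline, convergent rigid, and tempered spectral sequences then literally agree, and independence of the embedding (Theorem \ref{thm tempered cohomology is weel def} on the tempered side, \cite[Chapter 2]{LeStum} on the rigid side, classical for crystalline) transports this agreement to the original embedding $X \hookrightarrow \hat P$. Berthelot \cite{Berth} gives crystalline $\cong$ rigid for proper smooth $X_k$, and Berthelot--Ogus \cite{BO} gives crystalline $\cong$ de Rham of $X_K$ when a smooth proper $\mathcal V$-lift exists.

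Against this, two things in your plan do not hold up. First, you attribute the local comparison ``$\temptube{X}{\hat P}$ and $]X[_{\hat P}$ have the same de Rham cohomology'' to properness of $X$ (``the classical fact \dots for $X$ proper the rigid cohomology is computed by any neighbourhood of the tube''). That stabilization phenomenon belongs to the \emph{overconvergent} theory over strict neighbourhoods in $]\bar X[$; the tempered tube is not a strict neighbourhood in that sense, and after your Mayer--Vietoris reduction you are working with affine $X_i$ anyway, so properness is not available locally. What actually makes the local comparison go through is the existence of a smooth lift of $X_i$ --- a purely smooth-affine fact --- not properness. Properness only enters at the very end, to guarantee that the right-hand sides in the statement ($\mathrm{H}^\bullet_{\mathrm{crys}}\otimes K$, $\mathrm{H}^\bullet_{\mathrm{rig}}$, $\mathrm{H}^\bullet_{\mathrm{dR}}$) agree with one another and with the convergent theory. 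Second, your proposed reduction ``to the local model $X$ = a point inside $\hat{\mathbf A}^d$'' via the weak fibration theorem is not correct: the weak fibration theorem compares $\temptube{X}{\hat P'}$ with $\temptube{X}{\hat P} \wotimes_K (\text{tempered polydisk})$ along a smooth $\hat P' \to \hat P$ under which $X$ stays fixed, so you can only reduce the \emph{ambient} space, never collapse $X$ to a point. The correct target of the reduction is $X_i$ sitting in its own lift $\hat Q_i$, where both tubes are trivial. With that correction your chain (weak fibration + tempered Poincar\'e lemma for the tempered theory, its rigid analogue for the rigid theory, then the classical comparisons) does reproduce the paper's argument.
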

\begin{proof}
 By Berthelot's comparison  theorem \cite{Berth}, we know that crystalline and rigid cohomologies of $X_k$ coincide. The same method of proof  will work to get an isomorphism with tempered cohomology.
 In fact, we can argue locally for the Zariski topology for all the aforementioned cohomology theories. As in Theorem \ref{thm tempered cohomology is weel def} can reduce to an affine covering $\{ X_i\hookrightarrow {\hat P}_i \} $ and compare the complexes that compute the cohomology groups on these affine open subspaces. Then, we all such complexes for the crystalline, (convergent) rigid and  tempered cohomologies, are independent upon the smooth closed embedding. But because $X_i$ is smooth affine a smooth lifting   can  be built and it will give  a closed smooth embedding. In this case the various tubes used to define the cohomology theories (the divided powers tube, the rigid one and tempered tube) are all defined by reduction on the maximal ideal. Hence, they all coincide, and this gives the first claim of  statement.
 The comparison between  the crystalline, hence for the tempered,  and de Rham cohomology of the generic fiber is classical from \cite{BO} (if the $k$-scheme is proper and smooth). 
\end{proof}

\bibliographystyle{plain}
\bibliography{bib}

\end{document}